\renewcommand*{\MR}[1]{ \href{http://www.ams.org/mathscinet-getitem?mr=#1}{MR \textbf{#1}}}
\newcommand*{\arxiv}[1]{\href{http://www.arxiv.org/abs/#1}{arXiv: #1}}
\renewcommand{\PrintDOI}[1]{\href{http://dx.doi.org/\detokenize{#1}}{doi: \detokenize{#1}}}
\setlist[enumerate,1]{label=\textup{(\arabic*)}}
\setlist[enumerate,2]{label=\textup{(\alph*)}}
\tikzset{cd/.style=matrix of math nodes,row sep=2em,column sep=2em, text height=1.5ex, text depth=0.5ex}
\tikzset{cdar/.style=->,auto}
\tikzset{mid/.style={anchor=mid}} 
\tikzset{narrowfill/.style={inner sep=1pt, fill=white}}
\theoremstyle{plain}
\newtheorem{theorem}[equation]{Theorem}
\newtheorem{lemma}[equation]{Lemma}
\newtheorem{proposition}[equation]{Proposition}
\newtheorem{corollary}[equation]{Corollary}
\theoremstyle{definition}
\newtheorem{definition}[equation]{Definition}
\theoremstyle{remark}
\newtheorem{remark}[equation]{Remark}
\newtheorem{example}[equation]{Example}
\DeclareMathOperator{\Prim}{Prim}
\DeclareMathOperator{\Prime}{Prime}
\DeclareMathOperator{\Bis}{Bis}
\DeclareMathOperator{\supp}{supp}
\newcommand{\twolinesubscript}[2]{\genfrac{}{}{0pt}{}{#1}{#2}}
\newcommand{\LL}{\mathcal L}
\newcommand{\cl}[1]{\overline{#1}}
\newcommand{\CC}{\mathcal C}
\newcommand*{\nb}{\nobreakdash}
\newcommand*{\Star}{\(^*\)\nobreakdash-}
\newcommand*{\C}{\mathbb C}
\newcommand*{\Z}{\mathbb Z}
\newcommand*{\R}{\mathbb R}
\newcommand*{\N}{\mathbb N}
\newcommand*{\Gr}{G}
\newcommand*{\Locmult}{\mathcal{M}_\mathrm{loc}}
\newcommand*{\Ideals}{\mathbb I}
\newcommand*{\Null}{\mathcal N}
\newcommand*{\Bound}{\mathbb B}
\newcommand*{\Comp}{\mathbb K}
\newcommand*{\Mat}{\mathbb M}
\newcommand{\Her}{\mathbb H}
\newcommand*{\red}{\mathrm r}
\newcommand*{\ess}{\mathrm{ess}}
\newcommand*{\alg}{\mathrm{alg}} 
\newcommand*{\hull}{\mathrm{I}} 
\newcommand*{\Cst}{\textup C^*}
\newcommand*{\Mult}{\mathcal M}
\newcommand*{\UMult}{\mathcal{UM}}
\newcommand*{\Cont}{\textup C}
\newcommand*{\Contc}{\Cont_\textup c} 
\newcommand*{\prid}[1][p]{\mathfrak{#1}} 
\newcommand{\idealin}{\mathrel{\triangleleft}} 
\newcommand*{\Slice}{\mathcal S}
\newcommand*{\Id}{\textup{Id}}
\newcommand*{\Ad}{\textup{Ad}}
\newcommand*{\Hils}{\mathcal H}
\newcommand*{\Hilm}[1][E]{\mathcal #1}
\newcommand*{\A}{\mathcal A}
\newcommand*{\defeq}{\mathrel{\vcentcolon=}}
\newcommand*{\congto}{\xrightarrow\sim}
\DeclarePairedDelimiter{\norm}{\lVert}{\rVert}
\DeclarePairedDelimiterX{\braket}[2]{\langle}{\rangle}{#1\,\delimsize\vert\,\mathopen{}#2}
\DeclarePairedDelimiterX{\BRAKET}[2]{\langle}{\rangle}{\!\delimsize\langle#1\,\delimsize\vert\,\mathopen{}#2\delimsize\rangle\!}
\DeclarePairedDelimiterX{\setgiven}[2]{\{}{\}}{#1\,{:}\,\mathopen{}#2}
\newcommand*{\dual}[1]{\widehat{#1}}
\newcommand*{\s}{s}
\newcommand*{\rg}{r}
\DeclareMathOperator*{\stlim}{s-lim}
\newcommand*{\into}{\rightarrowtail}
\newcommand*{\onto}{\twoheadrightarrow}
\begin{document}

\title[Purely infinite crossed products]{Ideal structure and pure infiniteness of\\
inverse semigroup crossed products}
\author{Bartosz Kosma Kwa\'sniewski}
\email{bartoszk@math.uwb.edu.pl}
 \address{Faculty of Mathematics\\
   University  of Bia\l ystok\\
   ul.\@ K.~Cio\l kowskiego 1M\\
   15-245 Bia\l ystok\\
   Poland}

\author{Ralf Meyer}
\email{rmeyer2@uni-goettingen.de}
\address{Mathematisches Institut\\
 Georg-August-Universit\"at G\"ottingen\\
 Bunsenstra\ss e 3--5\\
 37073 G\"ottingen\\
 Germany}

\begin{abstract}
  We give efficient conditions under which a \(\Cst\)-subalgebra
  \(A\subseteq B\) separates ideals in a \(\Cst\)\nb-algebra~\(B\),
  and~\(B\) is purely infinite if every positive element in~\(A\) is
  properly infinite in~\(B\).  We specialise to the case when~\(B\)
  is a crossed product for an inverse semigroup action by Hilbert
  bimodules or a section \(\Cst\)-algebra of a Fell bundle over an
  \'etale, possibly non-Hausdorff, groupoid.  Then our theory works
  provided~\(B\) is the recently introduced essential crossed
  product and the action is essentially exact and residually
  aperiodic or residually topologically free. These last notions are developed in the article.
\end{abstract}
\thanks{The first named author was supported by the National Science Center
  (NCN), Poland, grant no.~2019/35/B/ST1/02684.   We
    thank  Alcides Buss, Diego Mart\'inez and Jonathan Taylor for
    pointing out an error in the previous version of
    Proposition~\ref{prop:functoriality}.}

\subjclass[2020]{46L55, 20M18, 22A22}
\keywords{crossed product, inverse semigroup, Fell bundle, groupoid, purely infinite, primitive ideal space}
\maketitle


\section{Introduction}
\label{sec:introduction}

Many authors have given sufficient criteria for crossed products by
discrete group actions or for \(\Cst\)\nb-algebras associated to
étale locally compact groupoids to be purely infinite (see, for
instance, \cites{Anantharaman-Delaroche:Purely_infinite,
  Laca-Spielberg:Purely_infinite,
  Jolissaint-Robertson:Simple_purely_infinite,
  Rordam-Sierakowski:Purely_infinite,
  Pasnicu-Phillips:Spectrally_free,
  Kirchberg-Sierakowski:Strong_pure,
  Kwasniewski-Szymanski:Pure_infinite, Boenicke-Li:Ideal,
  Rainone-Sims:Dichotomy, Ma:Purely_infinite_groupoids}).  These
articles mostly deal with the case when the bigger
\(\Cst\)\nb-algebra~\(B\) is simple or when the
\(\Cst\)\nb-subalgebra \(A\subseteq B\) on which the action takes
place is commutative and has totally disconnected spectrum.  In
addition, étale groupoids are required to be Hausdorff.  These pure
infiniteness criteria also imply that~\(A\) separates ideals
in~\(B\).  Then the ideal lattice of~\(B\) is
isomorphic to the lattice of invariant ideals in~\(A\).  Here we
 formulate sufficient conditions for~\(A\) to separate ideals
in~\(B\) and for~\(B\) to be purely infinite, which allow~\(A\) to be
noncommutative and which impose no Hausdorffness restrictions.  In
this generality, it is natural to study actions of inverse
semigroups by Hilbert bimodules
(see~\cite{Buss-Meyer:Actions_groupoids}) or, equivalently, section
algebras of Fell bundles over inverse semigroups.  This contains
Fell bundles over discrete groups and over étale groupoids
--~possibly non-Hausdorff~-- as special cases.  Another special case
are Exel's noncommutative Cartan \(\Cst\)\nb-inclusions
(see~\cites{Exel:noncomm.cartan, Kwasniewski-Meyer:Cartan}), which
generalise Renault's (commutative) Cartan subalgebras.

This article is based on our recent papers,
\cites{Kwasniewski-Meyer:Stone_duality,
  Kwasniewski-Meyer:Aperiodicity,
  Kwasniewski-Meyer:Aperiodicity_pseudo_expectations,
  Kwasniewski-Meyer:Essential}, where two key concepts are
developed.  The first one is the \emph{essential crossed product}
introduced in~\cite{Kwasniewski-Meyer:Essential}, which is a
variation on the reduced crossed product that ``always'' has the
expected ideal structure --~even for general actions of inverse
semigroups and for actions of non-Hausdorff groupoids.  The second
concept is \emph{aperiodicity}.  It is a strong regularity property,
abstracted from the work of Kishimoto, Olesen--Pedersen, and others,
defined for a general \(\Cst\)\nb-inclusion \(A\subseteq B\)
in~\cite{Kwasniewski-Meyer:Essential}.  As shown
in~\cite{Kwasniewski-Meyer:Aperiodicity_pseudo_expectations},
aperiodicity implies that there is a \emph{unique
  pseudo-expectation} --~a unique generalised conditional
expectation for \(A\subseteq B\) taking values in Hamana's injective
hull of~\(A\).  If in addition this pseudo-expectation is almost
faithful, then~\(A\) \emph{supports}~\(B\) in the sense that every
element in \(B^+\setminus\{0\}\) is supported by an element in
\(A^+\setminus\{0\}\) in the Cuntz preorder
of~\cite{Cuntz:Dimension_functions}.  This, in turn, implies
that~\(A\) \emph{detects ideals} in~\(B\), that is,
\(J\cap A \neq 0\) for any ideal \(0\neq J\idealin B\).  All these
properties are closely related.  In fact, when~\(B\) is an essential
crossed product and~\(A\) is separable or of Type~I, then the
following conditions are equivalent: aperiodicity, unique
pseudo-expectation, supporting positive elements in all intermediate
\(\Cst\)\nb-algebras, detection of ideals in all intermediate
\(\Cst\)\nb-algebras, and \emph{topological freeness} of the dual
groupoid
(see~\cite{Kwasniewski-Meyer:Aperiodicity_pseudo_expectations}).

In the present paper, we study ``residual'' versions of these
conditions, that is, when they hold for quotient inclusions
\(A/I \subseteq B/BIB\) for all ideals \(I\idealin A\) that are
restricted from~\(B\).  The relationship between ideals in~\(B\) and
\emph{restricted ideals} in~\(A\) is thoroughly studied
in~\cite{Kwasniewski-Meyer:Stone_duality}.  For a crossed product
inclusion, the restricted ideals in~\(A\) are exactly those that are
\emph{invariant} under the action that produces~\(B\) from~\(A\).
The residual version of detection of ideals is separation of ideals.
We say that~\(A\) \emph{separates ideals} in~\(B\) if
\(I\cap A = J\cap A\) for ideals \(I,J\idealin B\) implies \(I=J\).
This identifies the ideal lattice of~\(B\) with the lattice of
restricted ideals in~\(A\).  Under some extra assumptions, we may
also identify the primitive ideal space of~\(B\) with the
\emph{quasi-orbit space} of the induced action on the primitive
ideal space of~\(A\) (see~\cite{Kwasniewski-Meyer:Stone_duality}).
A residual version of supporting is closely related to the formally
stronger condition called \emph{filling}, which was used in
\cites{Kirchberg-Sierakowski:Strong_pure,
  Kirchberg-Sierakowski:Filling_families} to prove strong pure
infiniteness.  For a large class of \(\Cst\)\nb-inclusion
\(A\subseteq B\), we show that~\(A\) residually supports~\(B\) if
and only if~\(A\) fills~\(B\).  Namely, this holds for the
\emph{symmetric} inclusions defined
in~\cite{Kwasniewski-Meyer:Stone_duality}, which include all sorts
of \(\Cst\)\nb-inclusions coming from crossed products.  For general
residually supporting \(\Cst\)\nb-inclusions \(A\subseteq B\) and a
family \(\mathcal{F}\subseteq A^+\) of elements in~\(A\) that are
properly infinite in~\(B\), we give sufficient conditions for~\(B\)
to be purely infinite (see
Theorem~\ref{thm:pure_infiniteness_criteria} below).

To ensure that~\(A^+\) residually supports~\(B\) we assume that the
inclusion \(A\subseteq B\) is \emph{residually aperiodic}, in the
sense that for any restricted ideal \(I\idealin A\), the inclusion
\(A/I\to B/BIB\) is aperiodic.  We also need to assume that the
pseudo-expectations for these quotient inclusions are almost
faithful.  For actions by discrete groups this ``residual
faithfulness of conditional expectations'' is also called
\emph{exactness} (see
\cites{Sierakowski:IdealStructureCrossedProducts,
  Abadie-Abadie:Ideals, Kwasniewski-Szymanski:Pure_infinite}) and
for groupoids inner exactness (see
\cites{AnantharamanDelaroch:Weak_containment, Boenicke-Li:Ideal}).
We generalise this concept to inverse semigroup actions by Hilbert
bimodules and Fell bundles over \'etale groupoids.  
 In particular, 
we prove that the full crossed product is an exact functor and the
reduced crossed product is an injective functor, but only when restricted
to special homomorphisms between actions (Propositions
\ref{prop:functoriality} and~\ref{prop:exactness_of_universal_rep}). 
The essential crossed product is not functorial.  Therefore,
exactness for these crossed products, which we call \emph{essential
  exactness}, is more subtle.

As an appetizer, we formulate here a theorem that summarises and
illustrates some of our results.  An action of a unital inverse
semigroup~\(S\) by Hilbert bimodules on a \(\Cst\)\nb-algebra~\(A\)
is a semigroup \(\Hilm=(\Hilm_t)_{t\in S}\), where each
fibre~\(\Hilm_t\) is a Hilbert \(A\)\nb-bimodule and the semigroup
product is compatible with the internal tensor product (see
Definition~\ref{def:S_action_Cstar} below).  This induces an
\(S\)-action by partial homeomorphisms on the spectrum~\(\dual{A}\)
and the primitive ideal space~\(\check{A}\).  The corresponding
transformation groupoids \(\dual{A}\rtimes S\) and
\(\check{A}\rtimes S\) are called \emph{dual groupoids}
of~\(\Hilm\).  If the unit spaces in these groupoids are closed, we
call~\(\Hilm\) a \emph{closed action}.  Then the essential crossed
product \(A\rtimes_\ess S\) coincides with the reduced crossed
product \(A\rtimes_\red S\).  The following theorem combines
Theorems \ref{the:residually_aperiodic_characterisations},
\ref{thm:residual_aperiodic_action}
and~\ref{thm:Kirchberg_Sierakowski2} and
Corollary~\ref{cor:pure_infiniteness_for_crossed_products}:

\begin{theorem}
  \label{thm:introduction}
  Let \(\Hilm=(\Hilm_t)_{t\in S}\) be an inverse semigroup action by
  Hilbert bimodules on a \(\Cst\)\nb-algebra~\(A\).  Assume
  that~\(\Hilm\) is essentially exact \textup{(}or exact when the
  action is closed\textup{)} and residually aperiodic \textup{(}this
  holds when \(\dual{A}\rtimes S\) is residually topologically
  free\textup{)}.  Then
  \begin{enumerate}
  \item \label{en:introduction1}%
    \(A\) fills the essential crossed product \(A\rtimes_\ess S\).
    So \(A\rtimes_\ess S\) is strongly purely infinite if and only
    if every pair of elements in~\(A^+\) satisfies the matrix
    factorisation property
    of~\cite{Kirchberg-Sierakowski:Filling_families};
  \item \label{en:introduction2}%
    the ideal lattice of \(A\rtimes_\ess S\) is isomorphic to the
    lattice \(\Ideals^{\Hilm}(A)\) of \(\Hilm\)\nb-invariant ideals
    in~\(A\).  If~\(\check{A}\) is second countable, the primitive
    ideal space of \(A\rtimes_\ess S\) is homeomorphic to the
    quasi-orbit space \(\check{A}/{\sim}\) of the dual groupoid
    \(\check{A}\rtimes S\);
  \item \label{en:introduction3}%
    if \(\mathcal{F}\subseteq A^+\) residually supports~\(A\) and
    consists of residually \(\Hilm\)\nb-infinite elements
    \textup{(}see Definition~\textup{\ref{def:infinite_elements_B})},
    and \(\Ideals^{\Hilm}(A)\) is finite or the projections
    in~\(\mathcal{F}\) separate the ideals
    in~\(\Ideals^{\Hilm}(A)\), then \(A\rtimes_\ess S\) is purely
    infinite and has the ideal property.
  \end{enumerate}
\end{theorem}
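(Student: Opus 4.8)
The plan is to derive all three assertions from the abstract theory developed earlier in the article, applied to the crossed product inclusion \(A\subseteq B\defeq A\rtimes_\ess S\) and its quotients by restricted ideals. The single substantive input one must organise first is that the standing hypotheses force \(A^+\) to \emph{residually support}~\(B\). Residual aperiodicity says that for every \(\Hilm\)\nb-invariant ideal \(I\idealin A\) --~and, by \cite{Kwasniewski-Meyer:Stone_duality}, these are exactly the ideals restricted from~\(B\)~-- the quotient inclusion \(A/I\subseteq B/BIB\) is aperiodic. By \cite{Kwasniewski-Meyer:Aperiodicity_pseudo_expectations}, aperiodicity produces a unique pseudo-expectation for each such quotient, and essential exactness is precisely the demand that all of these pseudo-expectations remain almost faithful; hence each quotient inclusion supports positive elements, which is what residual supporting means. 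The parenthetical hypotheses are legitimate: residual topological freeness of \(\dual{A}\rtimes S\) implies residual aperiodicity by Theorem~\ref{the:residually_aperiodic_characterisations}, and for a closed action \(B=A\rtimes_\red S\), so that ``exact'' and ``essentially exact'' amount to the same thing.

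For assertion~\ref{en:introduction1}, I would invoke that crossed product inclusions are \emph{symmetric} in the sense of \cite{Kwasniewski-Meyer:Stone_duality}, and that for symmetric inclusions residual supporting is equivalent to filling (established earlier in the paper); combined with the previous paragraph this shows that \(A\) fills~\(B\). The equivalence of strong pure infiniteness of~\(B\) with the matrix factorisation property for pairs in \(A^+\) is then the filling criterion of \cite{Kirchberg-Sierakowski:Filling_families} applied to this filling family. For assertion~\ref{en:introduction2}, residual supporting in particular yields residual detection of ideals, that is, separation of ideals, so \(J\mapsto J\cap A\) is a lattice isomorphism from the ideals of~\(B\) onto the restricted ideals of~\(A\), which are exactly the \(\Hilm\)\nb-invariant ones; when \(\check{A}\) is second countable one upgrades this to a homeomorphism \(\Prim(B)\cong \check{A}/{\sim}\) by combining separation of ideals with the identification of the quasi-orbit space of \(\check{A}\rtimes S\) from \cite{Kwasniewski-Meyer:Stone_duality}. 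This is exactly the content of Theorem~\ref{thm:residual_aperiodic_action}.

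For assertion~\ref{en:introduction3}, transitivity of the Cuntz comparison through quotients turns the hypothesis that \(\mathcal{F}\) residually supports~\(A\), together with the fact that \(A^+\) residually supports~\(B\), into the statement that \(\mathcal{F}\) residually supports~\(B\); moreover the hypothesis that \(\mathcal{F}\) consists of residually \(\Hilm\)\nb-infinite elements (Definition~\ref{def:infinite_elements_B}) means precisely that each \(a\in\mathcal{F}\) is properly infinite in every quotient \(B/BIB\). The abstract pure infiniteness criterion --~Theorem~\ref{thm:pure_infiniteness_criteria}, in the crossed product form of Theorem~\ref{thm:Kirchberg_Sierakowski2} and Corollary~\ref{cor:pure_infiniteness_for_crossed_products}~-- then makes~\(B\) purely infinite. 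Finally, the extra hypothesis that \(\Ideals^{\Hilm}(A)\) is finite, or that the projections in \(\mathcal{F}\) separate its members, is exactly what allows one to place inside each ideal of~\(B\) a projection from \(\mathcal{F}\) that is properly infinite there, and this upgrades pure infiniteness to the ideal property.

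The genuine obstacle is not internal to this theorem, which is in the end an assembly of the quoted results, but lies in the first reduction above: because the essential crossed product is \emph{not} functorial, the quotient \(B/BIB\) is in general not the essential crossed product of the quotient action of \(\Hilm\) on \(A/I\), and controlling this discrepancy is precisely the role of essential exactness. This in turn rests on the partial functoriality of the full and reduced crossed products (Propositions~\ref{prop:functoriality} and~\ref{prop:exactness_of_universal_rep}) together with the description of essential crossed products of quotient actions. A secondary, milder point is to match the action-side notion of a residually \(\Hilm\)\nb-infinite element from Definition~\ref{def:infinite_elements_B} with proper infiniteness in the pertinent quotient \(\Cst\)\nb-algebras, which uses the filling property from assertion~\ref{en:introduction1}.
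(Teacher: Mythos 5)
Your proposal is correct and follows essentially the same route as the paper, which presents Theorem~\ref{thm:introduction} precisely as a combination of Theorems \ref{the:residually_aperiodic_characterisations}, \ref{thm:residual_aperiodic_action}, \ref{thm:Kirchberg_Sierakowski2} and Corollary~\ref{cor:pure_infiniteness_for_crossed_products}, with all the substantive work (symmetry of regular inclusions, residual supporting versus filling, the equivalence of essential exactness with almost faithfulness of the unique pseudo-expectations under residual aperiodicity, and the passage from residually \(\Hilm\)\nb-infinite to properly infinite in \(A\rtimes_\ess S\)) residing in those results. Your diagnosis that the real difficulty lies in the non-functoriality of the essential crossed product, absorbed by the notion of essential exactness, matches the paper's own organisation.
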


The above theorem directly applies to (twisted) crossed products by
discrete groups.  As we explain in
Section~\ref{subsection:Pure_infiniteness_criteria}, it covers all
the pure infiniteness results in
\cites{Laca-Spielberg:Purely_infinite,
  Jolissaint-Robertson:Simple_purely_infinite,
  Rordam-Sierakowski:Purely_infinite,
  Pasnicu-Phillips:Spectrally_free,
  Kirchberg-Sierakowski:Strong_pure,
  Kwasniewski-Szymanski:Pure_infinite}.
Theorem~\ref{thm:introduction} has an analogue for Fell bundles over
\'etale groupoids (Corollary~\ref{cor:Ideals_in_section_algebras}).
In particular, it can be used to generalise the results in
\cites{Boenicke-Li:Ideal, Rainone-Sims:Dichotomy,
  Ma:Purely_infinite_groupoids} to twisted (not necessarily
Hausdorff) \'etale groupoids.  The twisted version covers all Cartan
inclusions of Renault~\cite{Renault:Cartan.Subalgebras} (see
Corollary~\ref{cor:purely_infinite_groupoid} and
Remark~\ref{rem:paradoxicality_for_groupoids}).  In fact,
Theorem~\ref{thm:introduction} may be applied to all regular
residually aperiodic \(\Cst\)\nb-inclusions \(A\subseteq B\) with a
residually faithful conditional expectation (see
Proposition~\ref{prop:residual_aperiodic_Cartan}).  This includes a
large class of noncommutative Cartan inclusions in the sense of
Exel~\cite{Exel:noncomm.cartan}.  See
\cite{Kwasniewski-Meyer:Cartan}*{Theorem~4.3} for a number of
equivalent characterisations of such inclusions.

The article is organised as follows.
Section~\ref{sec:detect_separate} reviews general results about
restriction and induction of ideals
from~\cite{Kwasniewski-Meyer:Stone_duality}, discusses residually
supporting and filling families and their relationship, and presents
our general pure infiniteness criteria for \(\Cst\)\nb-algebras.  In
Section~\ref{sec:isg_crossed}, we recall actions of inverse
semigroups by Hilbert bimodules and their crossed products.  In
Section \ref{sec:exactness} we discuss restrictions, functoriality
and exactness of inverse semigroup crossed products.
Section~\ref{sec:crossed_infinite} discusses our general pure
infiniteness criteria for crossed products by inverse semigroup
actions.

\section{Separation of ideals and pure infiniteness criteria for
  \texorpdfstring{$\Cst$}{C*}-inclusions}
\label{sec:detect_separate}
\numberwithin{equation}{section}

\subsection{Separation and detection of ideals}

Let \(A\subseteq B\) be a \(\Cst\)\nb-inclusion.  We recall some
results from~\cite{Kwasniewski-Meyer:Stone_duality} that relate the
ideal structure of the two \(\Cst\)\nb-algebras \(A\) and~\(B\).
(In~\cite{Kwasniewski-Meyer:Stone_duality}, we also consider the
more general situation of an inclusion \(A\to \Mult(B)\) into the
multiplier algebra of \(B\).)

\begin{definition}
  \label{def:restricted_induced_ideals}
  Let \(\Ideals(A)\) and \(\Ideals(B)\) be the complete lattices of
  (closed, two-sided) ideals in \(A\) and~\(B\), respectively.  For
  \(I\in\Ideals(A)\), let \(BIB\in\Ideals(B)\) be the ideal in~\(B\)
  generated by~\(I\).  We define the \emph{restriction map~\(r\)}
  and the \emph{induction map~\(i\)} by
  \begin{alignat*}{2}
    r&\colon \Ideals(B) \to \Ideals(A),&\qquad J&\mapsto J\cap A,\\
    i&\colon \Ideals(A) \to \Ideals(B),&\qquad I&\mapsto BIB.
  \end{alignat*}
  We call \(I\in\Ideals(A)\) \emph{restricted} if \(I=r(J)\) for
  some \(J\in\Ideals(B)\).  We call \(J\in\Ideals(B)\)
  \emph{induced} if \(J=i(I)\) for some \(I\in\Ideals(A)\).  Let
  \(\Ideals^B(A)\subseteq \Ideals(A)\) and
  \(\Ideals^A(B)\subseteq \Ideals(B)\) be the subsets of restricted
  and induced ideals, respectively.
\end{definition}

The maps \(r\) and~\(i\) form a (monotone) \emph{Galois connection},
that is, if \(I\in\Ideals(A)\), \(J\in\Ideals(B)\), then
\(I\subseteq r(J)\) if and only if \(i(I)\subseteq J\).  This
observation goes back to Green~\cite{Green:Local_twisted}.  It has a
number of consequences.  For instance, the maps \(i\) and~\(r\) are
monotone and satisfy \(r\circ i(I)\supseteq I\) and
\(i\circ r\circ i(I)=i(I)\) for all \(I\in\Ideals(A)\), and
\(i\circ r(J)\subseteq J\) and \(r\circ i\circ r(J)=r(J)\) for all
\(J\in\Ideals(B)\).  The map~\(i\) preserves joins and~\(r\)
preserves meets.  The maps
\(r\colon \Ideals(B) \to \Ideals^B(A)\subseteq \Ideals(A)\) and
\(i\colon \Ideals(A) \to \Ideals^A(B)\subseteq \Ideals(B)\) restrict
to mutually inverse isomorphisms of partially ordered sets
\[
  \Ideals^A(B)\cong \Ideals^B(A).
\]
Thus \(\Ideals^A(B)\cong \Ideals^B(A)\) are complete lattices with
inclusion as the partial order.  The map~\(r\) is injective if and
only if \(i\) is surjective, if and only if
\(\Ideals(B)=\Ideals^A(B)\).  Subalgebras with this property are
said to separate ideals.

\begin{definition}
  \label{def:detect_separate}
  We say that~\(A\) \emph{separates ideals} in~\(B\) if
  \(J_1\cap A\neq J_2\cap A\) for all \(J_1,J_2\idealin B\) with
  \(J_1\neq J_2\) or, equivalently, \(r\) is injective.  We say
  that~\(A\) \emph{detects ideals} in~\(B\) if \(J\cap A\neq0\) for
  all \(J\idealin B\) with \(J\neq0\) or, equivalently,
  \(r^{-1}(0)=\{0\}\).
\end{definition}

Detection of ideals is sometimes called the intersection property.
Separation of ideals is a residual version of detection of ideals.

\begin{lemma}[\cite{Kwasniewski-Meyer:Stone_duality}*{Proposition 2.12}]
  \label{lem:separate_induced}
  A \(\Cst\)\nb-subalgebra \(A\subseteq B\) separates ideals if and
  only if \(A/I \subseteq B/BIB\) detects ideals for all restricted
  ideals \(I\in\Ideals^B(A)\).
\end{lemma}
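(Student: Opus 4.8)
The plan is to prove both implications directly, using the Galois connection between restriction~\(r\) and induction~\(i\) together with the standard correspondence between ideals in a quotient \(B/BIB\) and ideals in~\(B\) containing \(BIB\).

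For the forward implication, suppose~\(A\) separates ideals in~\(B\), and fix a restricted ideal \(I\in\Ideals^B(A)\), so \(I=r(J_0)\) for some \(J_0\idealin B\). To show \(A/I\subseteq B/BIB\) detects ideals, let \(\bar J\idealin B/BIB\) with \(\bar J\cap (A/I)=0\); I want \(\bar J=0\). Write \(\bar J=J/BIB\) for the unique ideal \(J\idealin B\) with \(BIB\subseteq J\). The condition \(\bar J\cap(A/I)=0\) translates, via the canonical identification of \(A/I\) with its image \((A+BIB)/BIB\) inside \(B/BIB\) — note \(I=A\cap BIB\) because \(I\) is restricted, by the Galois-connection identity \(r\circ i\circ r=r\) — into the statement \(J\cap A\subseteq BIB\cap A=I\). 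Since always \(I\subseteq BIB\cap A\subseteq J\cap A\), this gives \(J\cap A=I=J_0\cap A\). Now I need a second ideal with the same restriction; the natural candidate is \(BIB\) itself, and indeed \(r(BIB)=i\circ r(J_0)\cap A\)… more cleanly: \(r(BIB)=BIB\cap A=I\) since \(I\) is restricted. Hence \(r(J)=I=r(BIB)\), and separation of ideals forces \(J=BIB\), i.e.\ \(\bar J=0\). This proves detection.

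For the converse, suppose \(A/I\subseteq B/BIB\) detects ideals for every restricted \(I\in\Ideals^B(A)\), and let \(J_1,J_2\idealin B\) with \(J_1\cap A=J_2\cap A=:I\). Since~\(I\) is then a restricted ideal, and \(i\circ r(J_k)\subseteq J_k\) gives \(BIB\subseteq J_k\) for \(k=1,2\), I may pass to the quotient \(B/BIB\). Put \(J:=J_1\cap J_2\); it suffices to show \(J_1=J\) and \(J_2=J\), so by symmetry it is enough to show \(J_1/BIB\) and \(J/BIB\) agree, and for that it suffices (again by symmetry, replacing \(J_1\) by \(J_1\) and \(J_2\) by \(J\), which also has restriction~\(I\) since \(I\subseteq r(J)\subseteq r(J_1)=I\)) to show: if \(J'\subseteq J''\) are ideals in~\(B\) containing \(BIB\) with \(r(J')=r(J'')=I\), then \(J'=J''\). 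Consider \(\bar J:=J''/J'\) as an ideal of \(B/J'\); but to apply the hypothesis I should instead work in \(B/BIB\) and look at the ideal \((J''/BIB)\big/(J'/BIB)\) of \((B/BIB)\big/(J'/BIB)\cong B/J'\). Detection for \(A/I\subseteq B/BIB\) is not quite what is applied to \(B/J'\); the clean route is to note \(r(J'')=I=A\cap BIB\subseteq A\cap J'\), hence \((J''/BIB)\cap(A/I)\subseteq (J'/BIB)\) inside \(B/BIB\), i.e.\ the image of \(J''\) in \(B/J'\) meets the image of \(A\) trivially; then one invokes detection of ideals for the inclusion \(A/(A\cap J')\subseteq B/J'\). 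Since \(A\cap J'=I\) is restricted in~\(B\), this inclusion is of the form covered by the hypothesis, so the image of \(J''\) in \(B/J'\) is zero, i.e.\ \(J''=J'\). Applying this with \((J',J'')=(J,J_1)\) and \((J,J_2)\) yields \(J_1=J=J_2\).

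The main obstacle is bookkeeping rather than depth: one must keep straight the three quotients \(B/BIB\), \(B/J_1\), \(B/J_2\) and verify that the relevant subalgebra images are precisely the quotients \(A/I\) appearing in the hypothesis — equivalently, that \(A\cap BIB=I\) and \(A\cap J_k=I\) for restricted~\(I\), which is exactly the Galois-connection identity \(r\circ i\circ r=r\) recalled above. Once these identifications are in place, both directions reduce to the elementary lattice fact that an ideal of a quotient which meets the subalgebra trivially must vanish when detection holds. I would also remark that the statement is really the ``residual'' reformulation: separation for \(A\subseteq B\) is detection for all quotients by restricted ideals, which is the intuition flagged in the text immediately after Definition~\ref{def:detect_separate}.
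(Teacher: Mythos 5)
The paper itself gives no proof of this lemma — it simply cites Proposition~2.12 of the Stone duality paper — so I assess your argument on its own terms. Your forward direction is correct: for restricted \(I\) one has \(BIB\cap A=I\) by the identity \(r\circ i\circ r=r\); an ideal \(J\supseteq BIB\) with \((J/BIB)\cap(A/I)=0\) then satisfies \(r(J)=I=r(BIB)\), and separation forces \(J=BIB\).

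The converse direction has a genuine gap at its crux. You reduce to the claim that \(J'\subseteq J''\) with \(BIB\subseteq J'\) and \(r(J')=r(J'')=I\) forces \(J'=J''\), and you close this by invoking detection for the inclusion \(A/(A\cap J')\subseteq B/J'\), asserting that since \(A\cap J'=I\) is restricted, ``this inclusion is of the form covered by the hypothesis.'' It is not: the hypothesis gives detection only for the inclusions \(A/I\subseteq B/BIB\) with \(BIB=i(I)\) the \emph{induced} ideal, whereas your \(J'\) is merely an ideal with restriction \(I\), which may a priori contain \(BIB\) strictly. Distinguishing induced ideals from arbitrary ideals with the same restriction is precisely what the lemma is about, so this identification cannot be waved through. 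The step is repairable — first apply detection in \(B/BIB\) to \(J'/BIB\) itself to conclude \(J'=BIB\), after which your invocation becomes legitimate — but once you make that observation the entire detour through \(J=J_1\cap J_2\) and the two-ideal claim is superfluous: for any \(J\idealin B\) with \(r(J)=I\) one has \(BIB\subseteq J\) and \((J/BIB)\cap(A/I)=0\) because \(J\cap A=I=BIB\cap A\), so detection in \(B/BIB\) gives \(J=BIB=i(I)\) directly, and hence \(r(J_1)=r(J_2)\) implies \(J_1=i(r(J_1))=i(r(J_2))=J_2\). In short, the mechanism is that residual detection forces every ideal to be induced; your write-up stops just short of saying this and substitutes an appeal to the hypothesis for a quotient it does not cover.
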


\subsection{Symmetric and regular
  \texorpdfstring{$\Cst$}{C*}-inclusions}

We will be mainly interested in regular inclusions, and these are
symmetric as in the following definition. 

\begin{definition}[\cite{Kwasniewski-Meyer:Stone_duality}*{Definition~5.2}]
  A \(\Cst\)\nb-inclusion \(A\subseteq B\) is \emph{nondegenerate}
  if \(AB=B\).  It is \emph{symmetric} if for all
  \(I\in\Ideals^B(A)\) the inclusion \(I\to BIB\) is nondegenerate.
  (This is equivalent to \(IB = B I\) by
  \cite{Kwasniewski-Meyer:Stone_duality}*{Lemma~5.1}.)
\end{definition}

Let \(\check{A}\) and~\(\check{B}\) denote the primitive ideal
spaces of \(A\) and~\(B\), respectively.  Let \(\Prime^B(A)\) denote
the space of prime ideals in the lattice \(\Ideals^B(A)\) of
restricted ideals.  For \(I\in \Ideals^B(A)\), let
\(U_I\defeq \setgiven{\prid \in \Prime^B(A)}{I\not\subseteq
  \prid}\).  We equip \(\Prime^B(A)\) with the topology
\(\{U_I\}_{I\in \Ideals^B(A)}\).  
The next theorem summarises several desirable properties of
symmetric inclusions.

\begin{theorem}[\cite{Kwasniewski-Meyer:Stone_duality}]
  \label{thm:quasi_orbit_map}
  Let \(A\subseteq B\) be a symmetric \(\Cst\)\nb-inclusion.
  \begin{enumerate}
  \item \label{enu:quasi_orbit_map1} If~\(\prid\) is a primitive ideal in~\(B\), then
    \(r(\prid)\in \Ideals^B(A)\) is prime, and this defines a
    continuous map \(r\colon \check{B} \to \Prime^B(A)\).
  \item \label{enu:quasi_orbit_map2} If~\(\prid\)
    is a primitive ideal in~\(A\),
    then there is a largest restricted ideal in~\(A\)
    that is contained in~\(\prid\),
    which we denote by~\(\pi(\prid)\).
    This element of\/~\(\Ideals^B(A)\)
    is prime, and the resulting map
    \(\pi\colon\check{A} \to \Prime^B(A)\)
    is continuous.  Define an equivalence relation on~\(\check{A}\)
    by \(\prid\sim \prid[q]\)
    if and only if \(\pi(\prid)=\pi(\prid[q])\).
    So~\(\pi\)
    descends to a continuous map
    \(\widetilde{\pi}\colon \check{A}/{\sim}\to\Prime^B(A)\).
  \item \label{enu:quasi_orbit_map3} If\/ \(\Prime^B(A)\) is first countable --~this always holds
    when~\(\check{A}\) is second countable~-- then~\(\pi\) is open
    and surjective and~\(\widetilde{\pi}\) is a homeomorphism.  Then
    there is a continuous map
    \[
    \varrho\colon \check{B} \to \check{A}/{\sim},\qquad
    \prid\mapsto \widetilde{\pi}^{-1}(r(\prid)).
    \]
    It is a homeomorphism if and only if~\(A\) separates ideals
    in~\(B\).
  \end{enumerate}
\end{theorem}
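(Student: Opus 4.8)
The plan is to prove the three assertions in turn, using throughout the Galois connection between \(\Ideals(A)\) and~\(\Ideals(B)\) recalled above, the mutually inverse order isomorphisms \(r\colon\Ideals^A(B)\to\Ideals^B(A)\) and \(i\colon\Ideals^B(A)\to\Ideals^A(B)\), and one structural input for symmetric inclusions, taken from~\cite{Kwasniewski-Meyer:Stone_duality}: both \(\Ideals^B(A)\subseteq\Ideals(A)\) and \(\Ideals^A(B)\subseteq\Ideals(B)\) are \emph{complete sublattices}. For \(\Ideals^B(A)\), closure under arbitrary intersections is automatic because \(r\) preserves meets, while closure under arbitrary joins is the nontrivial point and is where the symmetry condition \(IB=BI\) enters. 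Consequently \(i(I_1\cap I_2)=i(I_1)\cap i(I_2)\) for restricted \(I_1,I_2\).

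\emph{Part~\ref{enu:quasi_orbit_map1}.} Let \(\prid\in\check{B}\). Then \(r(\prid)\) is restricted by definition, and proper in~\(\Ideals^B(A)\) since \(i(r(\prid))\subseteq\prid\subsetneq B\). If \(I_1,I_2\in\Ideals^B(A)\) satisfy \(I_1\cap I_2\subseteq r(\prid)\), then \(i(I_1)\cap i(I_2)=i(I_1\cap I_2)\subseteq i(r(\prid))\subseteq\prid\); since \(\prid\) is prime in~\(\Ideals(B)\), some \(i(I_k)\subseteq\prid\), hence \(I_k=r(i(I_k))\subseteq r(\prid)\). Thus \(r(\prid)\) is prime in~\(\Ideals^B(A)\). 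Continuity is the computation \(r^{-1}(U_I)=\setgiven{\prid\in\check{B}}{I\not\subseteq r(\prid)}=\setgiven{\prid\in\check{B}}{BIB\not\subseteq\prid}\) for \(I\in\Ideals^B(A)\), using \(I\subseteq r(\prid)\Leftrightarrow i(I)\subseteq\prid\); the right-hand side is the basic open subset of~\(\check{B}\) attached to~\(BIB\).

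\emph{Part~\ref{enu:quasi_orbit_map2}.} For \(\prid\in\check{A}\), the join closure of~\(\Ideals^B(A)\) makes \(\pi(\prid)\defeq\bigvee\setgiven{I\in\Ideals^B(A)}{I\subseteq\prid}\) a well-defined restricted ideal, visibly the largest one contained in~\(\prid\). It is proper since \(\pi(\prid)\subseteq\prid\), and prime in~\(\Ideals^B(A)\): if \(I_1\cap I_2\subseteq\pi(\prid)\) with \(I_k\in\Ideals^B(A)\), then \(I_1\cap I_2\subseteq\prid\), so primeness of~\(\prid\) in~\(\Ideals(A)\) gives \(I_k\subseteq\prid\) for some~\(k\), whence \(I_k\subseteq\pi(\prid)\) by maximality. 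Since \(I\subseteq\pi(\prid)\Leftrightarrow I\subseteq\prid\) for \(I\in\Ideals^B(A)\), we get \(\pi^{-1}(U_I)=\setgiven{\prid\in\check{A}}{I\not\subseteq\prid}\), which is open, so \(\pi\) is continuous. Finally \(\sim\) is an equivalence relation by construction, and \(\pi\) factors through a map~\(\widetilde{\pi}\) on~\(\check{A}/{\sim}\) that is injective by construction and continuous by the universal property of the quotient topology.

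\emph{Part~\ref{enu:quasi_orbit_map3}.} The only non-formal step --~and the main obstacle~-- is to show, assuming \(\Prime^B(A)\) first countable, that \(\pi\) is open and surjective; I would prove this by a Baire category argument on~\(\check{A}\) of the kind used for quasi-orbit spaces of transformation groups. (First countability of~\(\Prime^B(A)\) when~\(\check{A}\) is second countable is a separate point-set fact.) Openness and surjectivity of~\(\pi\) promote the continuous injection~\(\widetilde{\pi}\) to a homeomorphism \(\check{A}/{\sim}\congto\Prime^B(A)\), and they also show \(\Ideals^B(A)\) is \emph{spatial}: each restricted ideal~\(I\) equals the intersection of the primes of~\(\Ideals^B(A)\) above it, since \(I\subseteq\bigcap\setgiven{\pi(\prid)}{\prid\in\check{A},\,I\subseteq\prid}\subseteq\bigcap\setgiven{\prid}{\prid\in\check{A},\,I\subseteq\prid}=I\). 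Then \(\varrho=\widetilde{\pi}^{-1}\circ r\) is continuous by Part~\ref{enu:quasi_orbit_map1} and the homeomorphism property of~\(\widetilde{\pi}\), and it is a homeomorphism if and only if \(r\colon\check{B}\to\Prime^B(A)\) is. Under the bijective correspondences between closed subsets of~\(\check{B}\), resp.\ of~\(\Prime^B(A)\), and ideals in~\(\Ideals(B)\), resp.\ in~\(\Ideals^B(A)\) (here spatiality is used), the map~\(r\) corresponds to the embedding \(i\colon\Ideals^B(A)\hookrightarrow\Ideals(B)\); hence \(r\) is a homeomorphism exactly when \(i\) is onto, i.e.\ when every ideal of~\(B\) is induced, which by the Galois connection means \(r\colon\Ideals(B)\to\Ideals(A)\) is injective --~that is, that~\(A\) separates ideals in~\(B\).
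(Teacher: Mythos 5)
The paper itself proves this theorem only by citation to \cite{Kwasniewski-Meyer:Stone_duality}, so there is no in-text argument to compare against; I assess your proposal on its own terms. Parts~(1) and~(2) are correct and essentially complete. You correctly isolate the one structural input, namely that for a symmetric inclusion the set \(\Ideals^B(A)\) is closed under joins computed in \(\Ideals(A)\) and that \(i(I_1\cap I_2)=i(I_1)\cap i(I_2)\) for restricted \(I_1,I_2\); both follow from \(BI B=IB=BI\) for restricted \(I\) (for instance \(i(I_1)\cap i(I_2)=(I_1B)(I_2B)=I_1I_2B=i(I_1\cap I_2)\)), and Example~\ref{exa:join_not_restricted} confirms this is exactly what fails without symmetry. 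The primeness and continuity computations in (1) and (2) are routine and you carry them out correctly.

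The genuine gap is in part~(3), and it is the heart of the theorem. First, the openness and surjectivity of \(\pi\) is precisely the nontrivial analytic content, and ``I would prove this by a Baire category argument'' is a plan, not a proof: for surjectivity one must exhibit, for a prime \(\prid[q]\in\Prime^B(A)\), a primitive ideal \(\prid\) of \(A\) containing \(\prid[q]\) with \(I\not\subseteq\prid\) for \emph{every} restricted \(I\not\subseteq\prid[q]\); this requires showing that each such condition cuts out a dense open subset of the closed set of primitive ideals containing \(\prid[q]\) (this is where primeness of \(\prid[q]\) enters), reducing to countably many conditions via first countability of \(\Prime^B(A)\), and invoking the Baire property of primitive ideal spaces. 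None of this is carried out, nor is the claim that second countability of \(\check{A}\) forces first countability of \(\Prime^B(A)\). Second, your closing paragraph contains an unacknowledged gap of the same nature: an isomorphism of open-set lattices does not by itself produce a homeomorphism. When \(A\) separates ideals, your argument yields that \(r^{-1}\) is a bijection between the topologies of \(\Prime^B(A)\) and \(\check{B}\) and that \(r\colon\check{B}\to\Prime^B(A)\) is injective, but the surjectivity of \(r\) on \emph{points} --- that every prime restricted ideal equals \(\prid\cap A\) for some primitive \(\prid\idealin B\) --- cannot be read off from the lattices, because \(\check{B}\) need not be sober (the induced ideal \(i(\prid[q])\) is prime in \(B\) but prime ideals need not be primitive in nonseparable \(\Cst\)\nb-algebras). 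Closing this requires a second Baire category argument parallel to the one for \(\pi\). So the proposal is a correct skeleton for (1)--(2) but does not actually prove (3).
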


\begin{proof}
  By \cite{Kwasniewski-Meyer:Stone_duality}*{Corollary~5.5} we may apply  \cite{Kwasniewski-Meyer:Stone_duality}*{Lemmas~4.8 and 4.1}
		 to get \ref{enu:quasi_orbit_map1} and \ref{enu:quasi_orbit_map2}, 
		and \cite{Kwasniewski-Meyer:Stone_duality}*{Theorem~4.5}
    gives \ref{enu:quasi_orbit_map3}, see also \cite{Kwasniewski-Meyer:Stone_duality}*{Corollary 4.7}.\end{proof}

\begin{definition}[\cite{Kwasniewski-Meyer:Stone_duality}*{Definitions 4.4, 4.10}] 
  The space~\(\check{A}/{\sim}\) in
  Theorem~\ref{thm:quasi_orbit_map} is called the \emph{quasi-orbit
    space} and the map
  \(\varrho\colon \check{B} \to \check{A}/{\sim}\) is called the
  \emph{quasi-orbit map} of \(A\subseteq B\).
\end{definition}

\begin{remark}
  If~\(B\) is the full or reduced crossed product --~or an exotic
  crossed product, for an action of a discrete group~$G$ on a
  \(\Cst\)\nb-algebra~\(A\), then~\(\check{A}/{\sim}\) coincides
  with the usual quasi-orbit space of the dual action of~\(G\)
  on~\(\check{A}\).  In~\cite{Kwasniewski-Meyer:Stone_duality}, the
  quasi-orbit space is also described in several other cases.
\end{remark}

Next we turn to regular inclusions.  To link them to crossed
products for inverse semigroup actions, we describe them through
gradings by inverse semigroups.

\begin{definition}[\cite{Kwasniewski-Meyer:Stone_duality}*{Definition 6.15}]
  \label{def:S-graded}
  Let~\(S\) be an inverse semigroup with unit \(1\in S\).  An
  \emph{\(S\)\nb-graded \(\Cst\)\nb-algebra} is a
  \(\Cst\)\nb-algebra~\(B\) with a family of closed linear subspaces
  \((B_t)_{t\in S}\) such that \(B_g^* = B_{g^*}\),
  \(B_g\cdot B_h \subseteq B_{g h}\) for all \(g,h\in S\) and
  \(B_g \subseteq B_h\) if \(g \le h\) in~\(S\) (that is,
  \(g = h g^* g\)), and \(\sum B_t\) is dense in~\(B\).  We call
  \(A\defeq B_1\subseteq B\) the \emph{unit fibre} of the
  \(S\)\nb-grading.  The grading is \emph{saturated} if
  \(B_g\cdot B_h = B_{g h}\) for all \(g,h\in S\).
\end{definition}

\begin{definition}[\cites{Kumjian:Diagonals, Renault:Cartan.Subalgebras}]
  Let \(A\subseteq B\) be a \(\Cst\)\nb-subalgebra.  We call
  \(b\in B\) a \emph{normaliser} of~\(A\) in~\(B\) if
  \(b A b^*\subseteq A\) and \(b^* A b\subseteq A\).  The inclusion
  \(A\subseteq B\) is \emph{regular} if it is nondegenerate
  and~\(B\) is the closed linear span of the normalisers of~\(A\)
  in~\(B\).
\end{definition}

\begin{proposition}
  \label{prop:regular_vs_inverse_semigroups}
  Let \(A\subseteq B\) be a \(\Cst\)\nb-inclusion.  The following
  are equivalent:
  \begin{enumerate}
  \item \label{enu:non_commutative_cartans1}%
    \(A\) is a regular subalgebra of~\(B\);
  \item \label{enu:non_commutative_cartans2}%
    \(A\) is the unit fibre for some \(S\)\nb-grading on~\(B\);
  \item \label{enu:non_commutative_cartans3}%
    \(A\)
    is the unit fibre for some saturated \(S\)\nb-grading on~\(B\).
  \end{enumerate}
  If the inclusion is regular, then it is symmetric, and the set
  \(\Slice(A,B)\) of all closed linear \(A\)\nb-\hspace*{0pt}subbimodules
  \(M\subseteq B\) that consist entirely of normalisers is an
  inverse semigroup with the operations
  \(M\cdot N\defeq \cl{\operatorname{span}} {}\setgiven{m n}{m \in
    M,\ n\in N}\) and \(M^*\defeq \setgiven{m^*}{m \in M}\), and it
  gives a saturated grading on~\(B\).
\end{proposition}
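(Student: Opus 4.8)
The plan is to prove the cycle \((3)\Rightarrow(2)\Rightarrow(1)\Rightarrow(3)\) and then read off the remaining assertions from the construction used for \((1)\Rightarrow(3)\). The implication \((3)\Rightarrow(2)\) is trivial. For \((2)\Rightarrow(1)\): if \(A=B_1\) for an \(S\)\nb-grading \((B_t)_{t\in S}\), then \(tt^*\) and \(t^*t\) are idempotents, hence \(\le 1\), so \(B_{tt^*}\cup B_{t^*t}\subseteq B_1=A\); thus every \(b\in B_t\) normalises~\(A\) (because \(bAb^*\subseteq B_{tt^*}\subseteq A\) and \(b^*Ab\subseteq B_{t^*t}\subseteq A\)) and, moreover, \(bb^*\in A\). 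Writing \(b=\lim_k h_k(bb^*)\,b\) with \(h_k(bb^*)\defeq bb^*(1/k+bb^*)^{-1}\in C^*(bb^*)\subseteq A\) (the estimate \(\lVert b-h_k(bb^*)b\rVert^2=\lVert (b^*b)(1-h_k(b^*b))^2\rVert\le 1/4k\) does this) shows \(b\in\cl{AB_t}\); summing over~\(t\) gives \(AB=B\) and that the normalisers of~\(A\) span~\(B\), i.e.\ regularity.

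The substance is \((1)\Rightarrow(3)\): I would take \(S\defeq\Slice(A,B)\) with the operations from the statement and show it is a unital inverse semigroup with unit~\(A\) carrying a saturated grading \(B_M\defeq M\) with unit fibre~\(A\). First fix an approximate unit \((e_\lambda)\) of~\(A\); regularity gives \(AB=B=BA\), so \((e_\lambda)\) is a two\nb-sided approximate unit for~\(B\), whence \(n^*n=\lim n^*e_\lambda n\in A\) and \(nn^*\in A\) for every normaliser~\(n\). A routine polarisation (using \(mm'^*=\tfrac14\sum_k i^k(m+i^km')(m+i^km')^*\), and that a linear subspace contains the vectors \(m+i^km'\)) then yields, for \(M,N\in\Slice(A,B)\), that \(n a n'^*\in A\) and \(m m'^*\in A\) for \(m,m'\in M\), \(n,n'\in N\), \(a\in A\); from this \(M\cdot N\) consists of normalisers (compute \(xAx^*\) for \(x=\sum m_in_i\)) and lies in \(\Slice(A,B)\), and \(MM^*,M^*M\subseteq A\). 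Being closed two\nb-sided \(A\)\nb-submodules of~\(A\), the latter are \emph{ideals} of~\(A\). Next, \(\cl{AM}=M=\cl{MA}\) and a functional\nb-calculus argument as above give \(C^*(mm^*)\subseteq MM^*\) and hence \(MM^*M=M\), \(M^*MM^*=M^*\); with associativity this makes \(\Slice(A,B)\) a regular semigroup with unit~\(A\) (since \(\cl{AM}=M=\cl{MA}\)) in which \(M^*\) is a generalised inverse of~\(M\).

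The step I expect to be the crux is that the idempotents of~\(\Slice(A,B)\) commute. My plan is to show that every idempotent \(E\) (i.e.\ \(E\cdot E=E\)) is an ideal of~\(A\): from \((E\cdot E)^*=E^*\cdot E^*=E^*\) and associativity one gets \((E\cdot E^*)\cdot(E^*\cdot E)=E\cdot E^*\cdot E=E\); but \(F\defeq E\cdot E^*\) and \(G\defeq E^*\cdot E\) are ideals of~\(A\), so \(F\cdot G=\cl{FG}=F\cap G\), and therefore \(E=F\cap G\subseteq A\) is an ideal of~\(A\). Since ideals of~\(A\) commute (\(I\cdot J=\cl{IJ}=I\cap J\)), all idempotents of~\(\Slice(A,B)\) commute, so it is an inverse semigroup with inversion \(M\mapsto M^*\). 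The grading axioms for \(B_M\defeq M\) are then easy: saturation is \(B_M\cdot B_N=M\cdot N=B_{MN}\); if \(M\le N\) then \(M=N\cdot(M^*M)=\cl{N(M^*M)}\subseteq\cl{NA}=N\); and \(\sum_M B_M\) is dense since every normaliser~\(n\) lies in the slice \(\cl{AnA}\in\Slice(A,B)\) while \(B\) is the closed span of normalisers. Finally, for ``regular \(\Rightarrow\) symmetric'': for \(I=J\cap A\in\Ideals^B(A)\) and a normaliser~\(n\) one has \(nIn^*\subseteq nJn^*\cap nAn^*\subseteq J\cap A=I\), so for \(i\in I^+\) the element \(b\defeq n i^{1/2}\) has \(bb^*=nin^*\in I\); then \(b=\lim_k h_k(bb^*)b\) with \(h_k(bb^*)\in C^*(bb^*)\subseteq I\) gives \(ni^{1/2}\in\cl{IB}\) and hence \(ni\in\cl{IB}\). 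As normalisers span~\(B\) and positive elements span~\(I\), this gives \(\cl{BI}\subseteq\cl{IB}\), and taking adjoints \(\cl{IB}=\cl{BI}\); so \(I\to BIB\) is nondegenerate and the inclusion is symmetric.
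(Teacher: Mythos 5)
Your proof is correct. The paper itself does not argue this proposition but simply combines Lemma~6.25 and Proposition~6.26 of \cite{Kwasniewski-Meyer:Stone_duality}; your argument is a self-contained reconstruction along the standard lines underlying those results: the cycle \((3)\Rightarrow(2)\Rightarrow(1)\Rightarrow(3)\) via \(\Slice(A,B)\), with the key step that every idempotent slice equals \((E\cdot E^*)\cap(E^*\cdot E)\) and is therefore an ideal of~\(A\), so idempotents commute. The polarisation and functional-calculus details (\(nn^*\in A\), \(M=\cl{MM^*M}\), and \(BI\subseteq\cl{IB}\) for restricted ideals) all check out.
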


\begin{proof}
  Combine \cite{Kwasniewski-Meyer:Stone_duality}*{Lemma~6.25 and
    Proposition~6.26}.
\end{proof}

\begin{definition}
  Let \((B_t)_{t\in S}\) be an \(S\)\nb-grading on~\(B\) with
  \(A=B_1\).  An ideal \(I\in \Ideals(A)\) is called
  \emph{\((B_t)_{t\in S}\)\nb-invariant} if \(I B_t =B_t I\) for all
  \(t\in S\).  An ideal \(J\in \Ideals(B)\) is called
  \emph{\(S\)\nb-graded} if
  \(J=\cl{\sum_{t\in S} (J\cap B_t)}\).
\end{definition}

\begin{proposition}[\cite{Kwasniewski-Meyer:Stone_duality}*{Propositions
    6.19 and~6.20}]
  \label{prop:invariant_ideals_in_regular}
  Let \((B_t)_{t\in S}\) be an \(S\)\nb-grading on~\(B\) with
  \(A=B_1\).  An ideal \(I\in \Ideals(A)\) is restricted,
  \(I\in \Ideals^B(A)\), if and only if it is
  \((B_t)_{t\in S}\)\nb-invariant.  An ideal \(J\in \Ideals(B)\) is
  induced, \(J\in \Ideals^A(B)\), if and only if~\(J\) is
  \(S\)\nb-graded.
\end{proposition}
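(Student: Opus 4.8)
The plan is to establish both equivalences by proving the four implications, dealing first with the ones that drop straight out of the grading axioms. I shall use repeatedly that for every \(t\in S\) the idempotents \(t t^*\) and \(t^* t\) lie below~\(1\), so that \(B_{t t^*}, B_{t^* t}\subseteq B_1=A\), and that for \(b\in B_t\) one has \(b (b^* b)^{1/n}\to b\) and \((b b^*)^{1/n} b\to b\), with \((b^*b)^{1/n}\) in the \(\Cst\)-algebra generated by \(b^*b\) and \((bb^*)^{1/n}\) in the one generated by \(bb^*\).

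I would begin with the second assertion. If \(J=BIB\) is induced, then~\(J\) is the closed span of products \(b i c\) with \(b,c\in B\) and \(i\in I\); approximating \(b\) and~\(c\) by finite sums from \(\sum_{s\in S}B_s\) and using \(I\subseteq A=B_1\), every approximating term \(b_s i c_u\) lies in \(B_s B_1 B_u\subseteq B_{su}\) and in~\(BIB\), hence in \(BIB\cap B_{su}\); so \(BIB=\cl{\sum_{t\in S}(BIB\cap B_t)}\), i.e.\ \(J\) is \(S\)-graded. Conversely, if \(J\) is \(S\)-graded, put \(I\defeq r(J)=J\cap A\). Then \(BIB\subseteq J\) since~\(J\) is an ideal containing~\(I\), and for the reverse inclusion it is enough to see \(J\cap B_t\subseteq BIB\) for each~\(t\): if \(b\in J\cap B_t\) then \(b^*b\in B_{t^*t}\cap J\subseteq A\cap J=I\), so \(b=\lim_n b(b^*b)^{1/n}\) with each \(b(b^*b)^{1/n}\in B_t I\subseteq BIB\). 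Hence \(J=BIB\) is induced.

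For the first assertion, ``restricted \(\Rightarrow\) invariant'' goes by showing \(I B_t = J\cap B_t = B_t I\) for \(I=J\cap A\) and each~\(t\). The inclusions \(I B_t\subseteq J\cap B_t\) and \(B_t I\subseteq J\cap B_t\) are clear; conversely, for \(b\in J\cap B_t\) one has \(b b^*\in B_{tt^*}\cap J\subseteq A\cap J=I\), so \(b=\lim_n (bb^*)^{1/n}b\in\cl{I B_t}=I B_t\), and symmetrically (using \(b^*b\)) \(b\in B_t I\); hence \(I B_t=B_t I\), i.e.\ \(I\) is \((B_t)_{t\in S}\)-invariant. For ``invariant \(\Rightarrow\) restricted'', since \(I\subseteq BIB\cap A\) always, it suffices to prove \(BIB\cap A\subseteq I\), so that \(I=r(BIB)\) is restricted. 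The crucial step is the identity \(BIB=\cl{\sum_{t\in S}B_t I}\), valid for invariant~\(I\): the inclusion ``\(\supseteq\)'' is clear, and for ``\(\subseteq\)'' one checks that \(\cl{\sum_t B_t I}\) is a closed two-sided ideal of~\(B\) containing \(I=\cl{AI}=\cl{B_1 I}\) — left-invariance follows from \(B_sB_t\subseteq B_{st}\), and right-invariance follows precisely from \(IB=\cl{\sum_s I B_s}=\cl{\sum_s B_s I}\) (here the invariance hypothesis enters), which gives \(B_t I B\subseteq\cl{\sum_s B_{ts}I}\). Granting this identity, for \(a\in BIB\cap A\) and an approximate unit \((u_\lambda)\) of~\(I\), the membership of~\(a\) in \(\cl{\sum_t B_t I}\) forces \(a u_\lambda\to a\) (each \(B_t I\) absorbs \((u_\lambda)\) on the right, which passes to finite sums and to norm-limits), and then \(a u_\lambda\in A I\subseteq I\) yields \(a\in\cl I=I\).

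I expect the one genuinely delicate point to be this identity \(BIB=\cl{\sum_t B_t I}\) for invariant~\(I\), and within it the verification that \(\cl{\sum_t B_t I}\) is a \emph{two-sided} ideal — the single place where the invariance hypothesis \(I B_s=B_s I\) is really used, and without which the claim fails. Everything else amounts to routine bookkeeping with approximate units, functional calculus on a single positive element \(b^*b\) or \(bb^*\), and the relations \(tt^*\le 1\), \(t^*t\le 1\) that push \(B_{tt^*}\) and \(B_{t^*t}\) into \(B_1=A\).
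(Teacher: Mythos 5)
Your proof is correct: all four implications are established soundly, and you correctly isolate the one place where invariance is genuinely needed, namely that \(\cl{\sum_t B_t I}\) is a two\nobreakdash-sided (not just left) ideal, which yields \(BIB=\cl{\sum_t B_t I}\) and hence \(BIB\cap A=I\) via an approximate unit of~\(I\); the remaining steps (pushing \(b^*b\) and \(bb^*\) into \(A\cap J=I\) via \(t^*t,tt^*\le 1\) and factoring \(b=\lim b(b^*b)^{1/n}\)) are exactly the standard devices. The paper itself gives no argument here — it only cites \cite{Kwasniewski-Meyer:Stone_duality}*{Propositions 6.19 and 6.20} — and your write-up is essentially the direct proof one finds there.
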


The following lemma on quotients of \(\Cst\)\nb-inclusions is not
yet considered in~\cite{Kwasniewski-Meyer:Stone_duality}.

\begin{lemma}
  \label{lem:quotients_of_regular}
  Let \(A\subseteq B\) be a \(\Cst\)\nb-subalgebra and
  \(J\in \Ideals(B)\).  Let \(I\defeq J\cap A\).  Let
  \(q\colon B\to B/J\) denote the quotient map.  View~\(A/I\)
  as a \(\Cst\)\nb-subalgebra of~\(B/J\).
  \begin{enumerate}
  \item \label{enu:quotients_of_inclusions4}%
    If \((B_t)_{t\in S}\) is an \(S\)\nb-grading of~\(B\) with~\(A\)
    as unit fibre, then \((q(B_t))_{t\in S}\) is an \(S\)\nb-grading
    on~\(B/J\) with unit fibre~\(A/I\).  And
    \(q(B_t)\cong B_t/B_tI\) as Banach spaces --~and even as Hilbert
    \(A/I\)-bimodules~-- for all \(t\in S\).
  \item \label{enu:quotients_of_inclusions3}%
    If \(A\subseteq B\) is regular, then \(A/I\subseteq B/J\) is
    regular.
  \end{enumerate}
\end{lemma}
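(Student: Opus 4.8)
The plan is to prove part~\ref{enu:quotients_of_inclusions4} first and then deduce part~\ref{enu:quotients_of_inclusions3} from it using Proposition~\ref{prop:regular_vs_inverse_semigroups}, since that proposition tells us that regularity is equivalent to being the unit fibre of some \(S\)\nb-grading. For~\ref{enu:quotients_of_inclusions4}, the data is an \(S\)\nb-grading \((B_t)_{t\in S}\) of~\(B\) with \(B_1=A\), and we must check that \((q(B_t))_{t\in S}\) is an \(S\)\nb-grading of \(B/J\) with unit fibre \(A/I\), and identify \(q(B_t)\) with \(B_t/B_tI\). First I would verify the grading axioms: each \(q(B_t)\) is a closed linear subspace of \(B/J\) because \(q\) is a surjective \(\Cst\)\nb-homomorphism and therefore proper/open onto its image, so the image of the closed subspace \(B_t\) under the quotient is closed (alternatively, \(q(B_t)\cong B_t/(B_t\cap J)\) is complete). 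The algebraic relations \(q(B_t)^*=q(B_{t^*})\), \(q(B_t)q(B_s)\subseteq q(B_{ts})\), and \(q(B_t)\subseteq q(B_s)\) for \(t\le s\) are immediate from the corresponding relations for \((B_t)\) and the fact that \(q\) is a \(*\)-homomorphism. Density of \(\sum_t q(B_t)\) in \(B/J\) follows since \(q\) is surjective and continuous and \(\sum_t B_t\) is dense in~\(B\). The unit fibre is \(q(B_1)=q(A)\); since \(q^{-1}(q(A))\cap B = A+J\), as a subalgebra of \(B/J\) this is \((A+J)/J\cong A/(A\cap J)=A/I\), which is exactly the copy of \(A/I\) we want.

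The technical heart is the identification \(q(B_t)\cong B_t/B_tI\) as Banach spaces, and then as Hilbert \(A/I\)\nb-bimodules. Here \(B_t\) is a Hilbert \(A\)\nb-bimodule inside~\(B\): the left and right inner products are \({}_A\!\langle x,y\rangle = xy^*\) and \(\langle x,y\rangle_A = x^*y\) (these land in~\(A\) because \(B_tB_{t^*}\subseteq B_1=A\)). The kernel of \(q|_{B_t}\) is \(B_t\cap J\), so \(q(B_t)\cong B_t/(B_t\cap J)\) isometrically; the real content is \(B_t\cap J = B_t I = I B_t\). One inclusion is easy: \(B_tI\subseteq B_tJ\subseteq J\) and \(B_tI\subseteq B_tA\subseteq B_t\), so \(B_tI\subseteq B_t\cap J\), and likewise \(IB_t\subseteq B_t\cap J\). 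For the reverse inclusion I would use that \(I\) is a \((B_t)_{t\in S}\)\nb-invariant ideal — this is automatic here because \(I=J\cap A\) is restricted, hence invariant by Proposition~\ref{prop:invariant_ideals_in_regular} — so \(B_tI=IB_t\) and this is already a closed subspace; then given \(x\in B_t\cap J\), approximate using an approximate unit \((u_\lambda)\) of~\(I\). The point is that \(x^*x\in B_{t^*}B_t\cap J\subseteq A\cap J = I\) (using that \(J\) is an ideal and \(B_{t^*}B_t\subseteq A\)), hence \(x u_\lambda \to x\) while \(x u_\lambda\in B_t I\) since \(x^*x\in I\) forces \(\lim x(1-u_\lambda)^{1/2}(1-u_\lambda)^{1/2} \) to vanish via \(\|x(1-u_\lambda)\|^2=\|(1-u_\lambda)x^*x(1-u_\lambda)\|\to 0\). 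Thus \(B_t\cap J=B_tI\) and \(q(B_t)\cong B_t/B_tI\) isometrically. For the Hilbert bimodule structure on the quotient: the inner products descend because \(q({}_A\!\langle x,y\rangle)=q(xy^*)\) depends only on the classes of \(x,y\) modulo \(B_tI\) (as \(B_tI\cdot B_{t^*}\subseteq I\) and similarly on the other side), giving \(A/I\)\nb-valued inner products on \(B_t/B_tI\); one checks these agree with those coming from viewing \(q(B_t)\subseteq B/J\) as a Hilbert \(A/I\)\nb-bimodule, and that the resulting norm is the quotient norm, which matches the isometric identification above.

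Part~\ref{enu:quotients_of_inclusions3} is then short. If \(A\subseteq B\) is regular, by Proposition~\ref{prop:regular_vs_inverse_semigroups} it is the unit fibre of a (saturated) \(S\)\nb-grading \((B_t)_{t\in S}\) on~\(B\), say with \(S=\Slice(A,B)\). By part~\ref{enu:quotients_of_inclusions4}, \((q(B_t))_{t\in S}\) is an \(S\)\nb-grading on \(B/J\) with unit fibre \(A/I\); applying the equivalence \ref{enu:non_commutative_cartans2}\(\Rightarrow\)\ref{enu:non_commutative_cartans1} of Proposition~\ref{prop:regular_vs_inverse_semigroups} to this grading shows \(A/I\subseteq B/J\) is regular. (Concretely, each \(q(b)\) with \(b\) a normaliser of~\(A\) is a normaliser of \(A/I\) in \(B/J\), since \(q(b)(A/I)q(b)^*=q(bAb^*)\subseteq q(A)=A/I\) and symmetrically, and these span \(B/J\) because their span contains the dense set \(\sum_t q(B_t)\); nondegeneracy \((A/I)(B/J)=B/J\) follows from \(AB=B\) by applying~\(q\).) The main obstacle I anticipate is the closedness argument \(B_t\cap J=B_tI\); everything else is bookkeeping. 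If the approximate-unit argument above needs the invariance \(IB_t=B_tI\) to guarantee \(B_tI\) is already closed (so no extra closure is needed), that invariance is supplied for free by Proposition~\ref{prop:invariant_ideals_in_regular} since \(I=J\cap A\in\Ideals^B(A)\); without regularity one could instead argue with \(\cl{B_tI}\) and note the quotient norm is unaffected.
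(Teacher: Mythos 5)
Your proposal is correct and follows essentially the same route as the paper: the crux in both is the identification \(B_t\cap J=B_tI\) (equivalently, that the Hilbert-module norm on \(B_t/B_tI\) agrees with the quotient norm), established via an approximate unit \((u_\lambda)\) of \(I\) and the identity \(\norm{b(1-u_\lambda)}^2=\norm{(1-u_\lambda)b^*b(1-u_\lambda)}\) together with \(b^*b\in A\cap J=I\), after which part (2) follows from Proposition~\ref{prop:regular_vs_inverse_semigroups} exactly as in the paper. The only cosmetic difference is that you prove the equality \(B_t\cap J=B_tI\) directly where the paper cites the Rieffel correspondence for \(B_tI=\setgiven{b\in B_t}{\braket{b}{b}\in I}\); your aside about openness of \(q\) is not the right reason for closedness of \(q(B_t)\), but your fallback (completeness of \(B_t/(B_t\cap J)\) plus the norm computation) is.
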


\begin{proof}
  The canonical map from~\(A/I\) to~\(B/J\) is injective because
  \(J\cap A=I\).  Thus we may view~\(A/I\) as a
  \(\Cst\)\nb-subalgebra of~\(B/J\).  We
  prove~\ref{enu:quotients_of_inclusions4}.  It is easy to see that
  \((q(B_t))_{t\in S}\) is an \(S\)\nb-grading on~\(B/J\).
  Each~\(B_t\) is naturally a right Hilbert \(A\)\nb-module with
  inner product \(\braket{a}{b}\defeq a^* b\in A\) for
  \(a,b\in B_t\) and the right multiplication in~\(B\).  The proof
  of the Rieffel correspondence between ideals in~\(A\) and
  \(\Comp(B_t)\) shows that
  \(B_t I = \setgiven{b \in B_t}{\braket{b}{b}\in I}\).  The
  quotient Banach space \(B_t/B_tI\) is a right Hilbert
  \(A/I\)\nb-module with the induced multiplication and the inner
  product
  \(\braket{a +B_tI}{b+B_tI}\defeq \braket{a}{b} +I = q(a^* b)\in
  A/I\) for \(a,b\in B_t\).  We claim that the norm defined by this
  inner product is equal to the quotient norm on \(B_t/B_tI\).  To
  show this, let~\((u_n)_{n\in N}\) be an approximate unit
  for~\(I\).  Then
  \[
    \norm{q(b)}^2
    = \lim {}\norm{b-b u_i}^2
    = \lim {}\norm{(1-u_i)b^* b(1- u_i)}
    = \norm{q(b^* b)}
    = \norm{\braket{b+ B_t I}{b+B_t I}}.
  \]
  This finishes the proof of~\ref{enu:quotients_of_inclusions4}.
  Assertion~\ref{enu:quotients_of_inclusions3} follows
  from~\ref{enu:quotients_of_inclusions4} and
  Proposition~\ref{prop:regular_vs_inverse_semigroups}.
\end{proof}

\subsection{Generalised expectations}

\begin{definition}
  \label{def:gen_expectation}
  A \emph{generalised expectation} for a \(\Cst\)\nb-inclusion
  \(A \subseteq B\) consists of another \(\Cst\)\nb-inclusion
  \(A\subseteq \tilde{A}\) and a completely positive, contractive
  map \(B \to \tilde{A}\) that restricts to the identity map
  on~\(A\).  If \(\tilde{A} = A\), \(\tilde{A} = A''\), or
  \(\tilde{A}=\hull(A)\) is Hamana's injective envelope of~\(A\),
  then we speak of a \emph{conditional expectation}, a \emph{weak
    expectation}, or a \emph{pseudo-expectation}, respectively.
\end{definition}

Let \(E\colon B\to \tilde{A} \supseteq A\) be a generalised
expectation.  It is called \emph{faithful} if \(E(b^* b)=0\) for
some \(b\in B\) implies \(b=0\), \emph{almost
  faithful} if \(E((b c)^* b c)=0\) for all \(c\in B\) and some
\(b\in B\) implies \(b=0\), and \emph{symmetric} if
\(E(b^* b)=0\) for some \(b\in B\) implies \(E(b b^*)=0\).
The largest two-sided ideal in~\(B\) contained in~\(\ker E\) is
equal to
\[
  \Null_E
  \defeq\setgiven{b\in B}{E((b c)^*b c)=0 \text{ for all }c\in B}
  = \setgiven{b\in B}{E(x b y)=0 \text{ for all }x,y\in B}
\]
(see \cite{Kwasniewski-Meyer:Essential}*{Proposition~3.6}), and
\(\Null_E=0\) if and only if \(E\) is almost faithful.

Since \(E|_A=\Id_A\) and \(E|_{\Null_E} = 0\), it follows that
\(A\cap \Null_E = 0\).  Hence the composite map
\(A \to B \to B/\Null_E\) is injective and we may identify~\(A\)
with its image in \(B_\red\defeq B/\Null_E\).  The map~\(E\)
descends to a generalised expectation
\(E_\red \colon B_\red \to \tilde{A} \supseteq A\) that we call the
\emph{reduced generalised expectation} associated to~\(E\)
(see \cite{Kwasniewski-Meyer:Essential}*{Definition~3.5}).  The
reduced generalised expectation~\(E_\red\) is always almost
faithful.  It is faithful if and only if~\(E\) is symmetric (see
\cite{Kwasniewski-Meyer:Essential}*{Corollary~3.8}).

We will mainly work with pseudo-expectations below.  The injectivity
of \(\hull(A)\) implies that any \(\Cst\)\nb-inclusion has a
pseudo-expectation.  The following lemma links detection of ideals
to almost faithfulness of pseudo-expectations:

\begin{lemma}
  \label{lem:detection_ideals_almost_faithfulness}
  Let \(A\subseteq B\) be a \(\Cst\)\nb-inclusion.  The following
  are equivalent:
  \begin{enumerate}
  \item \label{enu:detection_ideals_almost_faithfulness1}%
    \(A\) detects ideals in~\(B\);
  \item \label{enu:detection_ideals_almost_faithfulness2}%
    every generalised expectation for the \(\Cst\)\nb-inclusion
    \(A\subseteq B\) is almost faithful;
  \item \label{enu:detection_ideals_almost_faithfulness3}%
    every pseudo-expectation for \(A\subseteq B\) is almost
    faithful.
  \end{enumerate}
\end{lemma}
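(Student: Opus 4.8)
The plan is to prove a cycle of implications $\ref{enu:detection_ideals_almost_faithfulness1}\Rightarrow\ref{enu:detection_ideals_almost_faithfulness2}\Rightarrow\ref{enu:detection_ideals_almost_faithfulness3}\Rightarrow\ref{enu:detection_ideals_almost_faithfulness1}$. The implication $\ref{enu:detection_ideals_almost_faithfulness2}\Rightarrow\ref{enu:detection_ideals_almost_faithfulness3}$ is trivial, since every pseudo-expectation is in particular a generalised expectation (taking $\tilde A=\hull(A)$). So the content is in the other two steps, and both rely on the description of $\Null_E$ recalled just above the lemma: for a generalised expectation $E\colon B\to\tilde A\supseteq A$, the set $\Null_E=\setgiven{b\in B}{E((bc)^*bc)=0\text{ for all }c\in B}$ is the largest two-sided ideal of~$B$ contained in $\ker E$, and $\Null_E=0$ iff $E$ is almost faithful.

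For $\ref{enu:detection_ideals_almost_faithfulness1}\Rightarrow\ref{enu:detection_ideals_almost_faithfulness2}$, let $E\colon B\to\tilde A\supseteq A$ be any generalised expectation and consider the ideal $\Null_E\idealin B$. Since $E|_A=\Id_A$ and $\Null_E\subseteq\ker E$, we get $\Null_E\cap A\subseteq\ker(E|_A)=\ker(\Id_A)=0$. If $A$ detects ideals in~$B$, then an ideal of~$B$ meeting $A$ trivially must be zero, so $\Null_E=0$; by the quoted equivalence this means $E$ is almost faithful. Hence every generalised expectation is almost faithful.

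For $\ref{enu:detection_ideals_almost_faithfulness3}\Rightarrow\ref{enu:detection_ideals_almost_faithfulness1}$, argue by contraposition. Suppose $A$ does not detect ideals, so there is a nonzero ideal $J\idealin B$ with $J\cap A=0$. The canonical map $A\to B/J$ is then injective, so we may regard $A\subseteq B/J$ as a $\Cst$-inclusion. Since $\hull(A)$ is injective, this inclusion admits a pseudo-expectation $E'\colon B/J\to\hull(A)$. Composing with the quotient map $q\colon B\to B/J$ yields a completely positive contractive map $E\defeq E'\circ q\colon B\to\hull(A)$ that restricts to the identity on~$A$, i.e.\ a pseudo-expectation for $A\subseteq B$, with $J\subseteq\ker E$. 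Since $J\neq0$, pick $b\in J$ with $b\neq0$; as $J$ is an ideal, $(bc)^*bc\in J\subseteq\ker E$ for all $c\in B$, so $E((bc)^*bc)=0$ for all $c$, witnessing that $E$ is not almost faithful. This proves the contrapositive.

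The only mild subtlety—the ``main obstacle'' such as it is—is the construction in the last step: one needs that injectivity of Hamana's hull $\hull(A)$ yields a pseudo-expectation not just abstractly for $A\subseteq B$ but for the quotient inclusion $A\subseteq B/J$, and that precomposition with $q$ preserves complete positivity, contractivity, and the property of restricting to $\Id_A$. All three are immediate, and the identity $\hull(A)=\hull(A)$ used in both quotients is automatic because the injective hull depends only on~$A$. Everything else is a direct application of the displayed formula for $\Null_E$ together with $E|_A=\Id_A$.
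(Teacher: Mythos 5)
Your proposal is correct and follows essentially the same route as the paper: the same use of $\Null_E\cap A=0$ plus detection of ideals for \ref{enu:detection_ideals_almost_faithfulness1}$\Rightarrow$\ref{enu:detection_ideals_almost_faithfulness2}, and the same contrapositive construction of a non-almost-faithful pseudo-expectation $E'\circ q$ through the quotient $B/J$ for \ref{enu:detection_ideals_almost_faithfulness3}$\Rightarrow$\ref{enu:detection_ideals_almost_faithfulness1}. No gaps.
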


\begin{proof}
  Let \(E\colon B\to \widetilde{A}\supseteq A\) be a generalised
  expectation.  Since \(\Null_E\cap A=0\), we must have
  \(\Null_E=0\) if~\(A\) detects ideals in~\(B\).  That is,
  \ref{enu:detection_ideals_almost_faithfulness1}
  implies~\ref{enu:detection_ideals_almost_faithfulness2}.
  That~\ref{enu:detection_ideals_almost_faithfulness2}
  implies~\ref{enu:detection_ideals_almost_faithfulness3} is
  obvious.  We prove by contradiction
  that~\ref{enu:detection_ideals_almost_faithfulness3}
  implies~\ref{enu:detection_ideals_almost_faithfulness1}.  Assume
  that~\(A\) does not detect ideals in~\(B\).  Then there is a
  nonzero ideal~\(\Null\) in~\(B\) with \(\Null\cap A=0\).  The
  inclusion \(A\hookrightarrow B/\Null\) has a
  pseudo-expectation \(E\colon B/\Null\to\hull(A)\).  Let
  \(q\colon B\to B/\Null\) be the quotient map.  Then
  \(E\circ q\colon B\to \hull(A)\) is a pseudo-expectation for the
  inclusion \(A\subseteq B\).  It is not almost faithful because
  \(0\neq \Null\subseteq \Null_{E\circ q}\).
\end{proof}

\begin{remark}[\cite{Pitts-Zarikian:Unique_pseudoexpectation}*{Theorem~3.5}]
  Every pseudo-expectation for \(A\subseteq B\) is faithful --~not
  only almost faithful~-- if and only if~\(A\) detects ideals
  in~\(C\) for each intermediate \(\Cst\)\nb-algebra
  \(A\subseteq C \subseteq B\).
\end{remark}

Lemma~\ref{lem:separate_induced} says that~\(A\) separates ideals
in~\(B\) if and only if it ``residually detects'' ideals in~\(B\).
The residual version of
Lemma~\ref{lem:detection_ideals_almost_faithfulness} says that~\(A\)
separates ideals in~\(B\) if and only if the following happens: if
\(I\in\Ideals^B(A)\) is a restricted ideal and
\(E^I\colon B/B I B \to \hull(A/I)\) is a pseudo-expectation for the
inclusion \(A/I \hookrightarrow B/B I B\), then~\(E^I\) is almost
faithful.

In general, it is not practical to check that \emph{all}
pseudo-expectations \(B\to \hull(A)\) are faithful or almost
faithful.  And the residual version of this statement looks even
more hopeless.  There are, however, inclusions with a unique
pseudo-expectation.  This
is the case for aperiodic inclusions by \cite{Kwasniewski-Meyer:Aperiodicity_pseudo_expectations}*{Theorem 3.6}.  When the inclusion is even
``residually'' aperiodic, then the inclusions
\(A/I \hookrightarrow B/B I B\) have a unique pseudo-expectation for
all \(I\in \Ideals^B(A)\).  And if we know pseudo-expectations
\(E^I\colon B/B I B \to \hull(A/I)\) for all \(I\in \Ideals^B(A)\),
then it becomes possible to check whether they are all (almost)
faithful (see Theorem~\ref{thm:residual_faithful+aperiodic_gives_supports} below).

The residual version of
Lemma~\ref{lem:detection_ideals_almost_faithfulness} discussed above
uses pseudo-expectations for the inclusion
\(A/I \hookrightarrow B/B I B\) for \(I\in\Ideals^B(A)\).  The
following examples show that these need not be closely related to
pseudo-expectations for the original inclusion \(A\subseteq B\).  In
fact, for inclusions that are not symmetric, even a genuine
conditional expectation \(E\colon B\to A\) need not ``induce'' a
conditional expectation \(E\colon B/ B I B\to A/I\).

\begin{example}
  \label{exa:symmetric_pseudo_does_not_induce}
  Let \(B=\Bound(H)\) be the algebra of bounded operators on a
  separable Hilbert space \(H\), and let \(A=\Comp(H) +1\) be the
  minimal unitisation of the compacts.  The inclusion
  \(A\subseteq B\) is symmetric,
  \(\Ideals^{B}(A)=\{0, \Comp(H), A\}\), and
  \(\hull(A)=\Bound(H)=B\).  We may take the identity map as the
  pseudo-expectation \(E\colon B\to B\) for \(A\subseteq B\).  Let
  \(I\defeq \Comp(H)\).  Then, on the one hand, \(E\) descends to
  the identity map \(E^I\colon B/I\to B/I\) on the Calkin algebra.
  On the other hand, pseudo-expectations for
  \(\C\cong A/I\subseteq B/I\) are just states on the Calkin
  algebra.  It seems that there is no universal way how to produce a
  state from the identity map.
\end{example}

\begin{example}[see \cite{Kwasniewski-Meyer:Stone_duality}*{Examples
    2.16 and~7.9}]   \label{exa:join_not_restricted}
	  Let \(B\defeq \Mat_2(\C)\oplus\Mat_2(\C)\) and consider
 the commutative \(\Cst\)\nb-subalgebra 	\(A\subseteq B\) spanned
  by the orthogonal diagonal projections \((P_{00},0)\),
  \((0,P_{00})\), and \((P_{11},P_{11})\).  Let
  \(E\colon B\to A\subseteq B\) be any faithful conditional
  expectation.  For instance,
  \[
  E\left(
    \begin{pmatrix}
      a_{00} & a_{01} \\
      a_{10} & a_{11}
    \end{pmatrix}
    \oplus
    \begin{pmatrix}
      b_{00} & b_{01} \\
      b_{10} & b_{11}
    \end{pmatrix}
  \right) \defeq
  \frac{1}{2}
  \begin{pmatrix}
    2 a_{00} & 0\\
    0 & a_{11}+b_{11}
  \end{pmatrix}
  \oplus
  \frac{1}{2}
  \begin{pmatrix}
    2 b_{00} &0 \\
    0 & a_{11}+b_{11}
  \end{pmatrix}.
  \]
  Let \(J=\Mat_2(\C)\oplus0\).  Then
  \(I\defeq J\cap A = \C\cdot (P_{00},0)\) is a restricted ideal in
  \(A\cong\C^3\).  We have \(J=B I B\) and
  \(E(J)=\setgiven{\lambda_1(P_{00},0) +\lambda_2(P_{11},P_{11})}
  {\lambda_1,\lambda_2 \in \C} \not\subseteq I\).  Hence
  \(E\colon B\to A\) does not factor through a map \(B/J\to A/I\).
	Note that the inclusion \(A\subseteq B\) is not symmetric.
      \end{example}

\begin{lemma}
  \label{lem:symmetric_quotients_conditional_expectations}
  Let \(E\colon B\to A\) be a conditional expectation for a
  symmetric \(\Cst\)\nb-inclusion \(A\subseteq B\) and let
  \(I\in \Ideals^B(A)\).  Then~\(E\) descends to a conditional
  expectation \(E^I\colon B/BIB \to A/I\),
  \(b+ J\mapsto E(b) + I \), for the inclusion \(A/I \to B/BIB\).
\end{lemma}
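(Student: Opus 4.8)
The plan is to show that the ideal $BIB$ is contained in $\ker(q_A\circ E)$, where $q_A\colon A\to A/I$ is the quotient map, so that the composite $q_A\circ E\colon B\to A/I$ descends to a well-defined completely positive contractive map $E^I\colon B/BIB\to A/I$; one then checks that $E^I$ restricts to the identity on $A/I$. First I would record that $E$ is $A$\nobreakdash-bilinear, i.e.\ $E(a_1 b a_2)=a_1 E(b) a_2$ for $a_1,a_2\in A$, $b\in B$, which is standard for conditional expectations onto a $\Cst$\nobreakdash-subalgebra. The key point is then that, because the inclusion is symmetric and $I\in\Ideals^B(A)$, the Galois-theoretic identity $i\circ r\circ i(I)=i(I)$ together with $r(BIB)=I$ (which holds for restricted $I$ since $r\circ i\circ r = r$ and $I$ is restricted) gives $BIB\cap A = I$; more importantly, symmetry yields $IB=BI$, hence $BIB=\cl{BI}=\cl{IB}$ is the closed linear span of elements $ib$ (equivalently $bi$) with $i\in I$, $b\in B$.

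With that description of $BIB$ in hand, the main computation is short: for $i\in I$ and $b\in B$ we have, using an approximate unit $(u_\lambda)$ for $I$ and $A$\nobreakdash-bilinearity of $E$, that $E(ib)=E(\lim_\lambda u_\lambda i b)=\lim_\lambda u_\lambda E(ib)\in \cl{IA}\subseteq I$ (here I use that $E(ib)\in A$ and $u_\lambda E(ib)\to$ the component of $E(ib)$ in $I$ under the Rieffel-type decomposition; more directly, $E(ib)=\lim_\lambda E((i u_\lambda) b)=\lim_\lambda E(i (u_\lambda b))=\lim_\lambda i\,E(u_\lambda b)\in I$ since $i\in I$ and $I$ is an ideal of $A$). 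By continuity and linearity this gives $E(BIB)\subseteq I$, so $q_A\circ E$ kills $BIB$ and factors through $B/BIB$. The induced map $E^I\colon B/BIB\to A/I$, $b+BIB\mapsto E(b)+I$, is completely positive and contractive as a composite of such maps followed by a $^*$\nobreakdash-homomorphism, and for $a\in A$ we get $E^I(a+BIB)=E(a)+I=a+I$, i.e.\ $E^I$ is a conditional expectation for $A/I\subseteq B/BIB$. Finally, since $J=BIB$ in the notation used by the lemma (the inclusion is symmetric, so $r$ is the relevant map and the stated $J$ is exactly $BIB$), the formula $b+J\mapsto E(b)+I$ is precisely $E^I$.

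The step I expect to be the only real obstacle is justifying $E(BIB)\subseteq I$ cleanly, i.e.\ not just $E(I)\subseteq I$ (which is trivial) but $E(ib)\in I$ for all $b\in B$. The honest way to get this is to write $i=\lim_\lambda i u_\lambda$ with $(u_\lambda)$ an approximate unit of $I$, then $E(ib)=\lim_\lambda E(i u_\lambda b)=\lim_\lambda i\,E(u_\lambda b)$ by $A$\nobreakdash-bilinearity, and each $i\,E(u_\lambda b)\in I$ because $I\idealin A$ and $E(u_\lambda b)\in A$. Symmetry enters to guarantee that $BIB$ is spanned by such products $ib$ (via $BIB=\cl{IB}$ and density of $I\cdot B$ from $AB=B$ nondegeneracy), so that controlling $E$ on these generators suffices. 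Everything else — complete positivity, contractivity, and the identity-on-$A/I$ property of $E^I$ — is formal.
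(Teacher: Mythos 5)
Your proof is correct and follows essentially the same route as the paper: both arguments use symmetry to rewrite the induced ideal (the paper as $BIB=IBI$, you as $\cl{IB}$, which coincide since $IB=BI$) and then apply $A$\nobreakdash-bilinearity of $E$ to conclude $E(BIB)\subseteq I$. The paper's version is slightly slicker --- $E(IBI)\subseteq IE(B)I=I$ needs no approximate unit --- but your one-sided factorisation with an approximate unit of $I$ is an equally valid way to make the same point.
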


\begin{proof}
  Let \(J\in\Ideals^A(B)\)
  and put \(I=J\cap A\in\Ideals^B(A)\).
Since  \(A\subseteq B\) is symmetric we have \(J=I B I\).
  Thus \(E(J) = E(I B I)\subseteq I E(B) I =I\)
  because~\(E\)
  is \(A\)\nb-bilinear.
  Since \(I\subseteq E(J)\)
  always holds, this is equivalent to \(I=E(J)\).  Then~\(E^I\) is
  well defined.
\end{proof}

Many \(\Cst\)\nb-algebras, including section algebras for Fell bundles
over Hausdorff \'etale groupoids, are naturally equipped with a
conditional expectation, which is residually symmetric in the sense
described in the following proposition.  Then the residual
faithfulness of~\(E\) is called exactness of the corresponding
action \cites{Sierakowski:IdealStructureCrossedProducts,
  Abadie-Abadie:Ideals, Kwasniewski-Szymanski:Pure_infinite,
  AnantharamanDelaroch:Weak_containment, Boenicke-Li:Ideal}.

\begin{proposition}
  \label{prop:separate_ideal_vs_conditional_exp}
  Let \(A\subseteq B\) be a symmetric \(\Cst\)\nb-inclusion with a
  conditional expectation \(E\colon B\to A\) which is residually
  symmetric, that is, for each \(I\in \Ideals^B(A)\), the
  conditional expectation~\(E^I\) in
  Lemma~\textup{\ref{lem:symmetric_quotients_conditional_expectations}}
  is symmetric.  Then~\(A\) separates ideals in~\(B\) if and only
  if~\(E\) is residually faithful --~that is, \(E^I\) is faithful
  for each \(I\in \Ideals^B(A)\)~-- and~\(E\) preserves ideals
  --~that is, \(E(J)\subseteq J\) for each \(J\in \Ideals(B)\).
\end{proposition}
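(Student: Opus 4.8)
The plan is to deduce everything from Lemma~\ref{lem:separate_induced}: \(A\) separates ideals in~\(B\) if and only if, for every restricted ideal \(I\in\Ideals^B(A)\), the quotient inclusion \(A/I\subseteq B/BIB\) detects ideals. By Lemma~\ref{lem:symmetric_quotients_conditional_expectations} the conditional expectation~\(E\) descends to a conditional expectation \(E^I\colon B/BIB\to A/I\) for this quotient inclusion, and residual symmetry means each~\(E^I\) is symmetric, so Lemma~\ref{lem:detection_ideals_almost_faithfulness} and the remarks on reduced generalised expectations can be applied to~\(E^I\). Both implications will thus be reduced to statements about the family \((E^I)_{I\in\Ideals^B(A)}\).

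For the ``if'' implication, fix \(I\in\Ideals^B(A)\). Every ideal of \(B/BIB\) has the form \(q(J)\) for an ideal \(J\idealin B\) with \(J\supseteq BIB\), where \(q\colon B\to B/BIB\) is the quotient map; since \(E(J)\subseteq J\) by hypothesis and \(E^I(q(b))=q(E(b))\) for \(b\in B\) (using the inclusion \(A/I\subseteq B/BIB\)), the expectation~\(E^I\) preserves ideals of \(B/BIB\). Also \(E^I\) is faithful, by residual faithfulness. A faithful conditional expectation that preserves ideals detects ideals: given a nonzero ideal \(K\idealin B/BIB\), choose \(0\neq c\in K\); then \(0\neq c^*c\in K\) and \(0\neq E^I(c^*c)\in K\cap (A/I)\), so \(K\cap (A/I)\neq0\). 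Thus \(A/I\subseteq B/BIB\) detects ideals for every restricted~\(I\), and Lemma~\ref{lem:separate_induced} gives separation.

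For the ``only if'' implication, assume \(A\) separates ideals, so that \(r\colon\Ideals(B)\to\Ideals(A)\) is injective. I would first observe that then \emph{every} ideal of~\(B\) is induced: for \(J\idealin B\) with \(I\defeq J\cap A\), the Galois connection yields \(r(BIB)=r\bigl(i(r(J))\bigr)=r(J)\), whence \(J=BIB\) by injectivity of~\(r\). Consequently Lemma~\ref{lem:symmetric_quotients_conditional_expectations} applies to every ideal of~\(B\) and gives that~\(E\) maps \(BIB\) into~\(I\), i.e.\ \(E(J)=E(BIB)\subseteq I\subseteq J\); so~\(E\) preserves ideals. For residual faithfulness, fix a restricted~\(I\); by Lemma~\ref{lem:separate_induced}, \(A/I\) detects ideals in \(B/BIB\), so by Lemma~\ref{lem:detection_ideals_almost_faithfulness} the generalised expectation~\(E^I\) is almost faithful, i.e.\ \(\Null_{E^I}=0\). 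Hence the reduced generalised expectation associated to~\(E^I\) is~\(E^I\) itself, and since~\(E^I\) is symmetric it is faithful.

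The routine inputs are the \(A\)\nb-bilinearity computation \(E(BIB)\subseteq I\) (already carried out in Lemma~\ref{lem:symmetric_quotients_conditional_expectations}) and the verification that~\(E^I\) inherits ideal-preservation from~\(E\). The one point that deserves care is the observation in the ``only if'' direction that separation forces every ideal of~\(B\) to be induced: this is where the symmetry hypothesis becomes essential, upgrading Lemma~\ref{lem:symmetric_quotients_conditional_expectations} from a statement about induced ideals to one about all ideals and thereby yielding ``\(E\) preserves ideals''. The other nontrivial step is the passage from almost faithful to faithful for~\(E^I\), which uses precisely the stated fact that a reduced generalised expectation is faithful exactly when the original one is symmetric.
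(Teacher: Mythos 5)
Your proof is correct and follows essentially the same route as the paper's: both directions reduce to detection in the quotient inclusions $A/I\subseteq B/BIB$ via Lemma~\ref{lem:separate_induced}, with the Galois connection forcing every ideal to be induced and Lemma~\ref{lem:symmetric_quotients_conditional_expectations} yielding $E(J)\subseteq J$. You are in fact slightly more explicit than the paper at one point, namely in upgrading ``almost faithful'' to ``faithful'' for $E^I$ by combining Lemma~\ref{lem:detection_ideals_almost_faithfulness} with the residual symmetry hypothesis and the fact that the reduced generalised expectation is faithful exactly when the original one is symmetric.
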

\begin{proof}
  Assume first that~\(A\) separates ideals in~\(B\).  Lemmas
  \ref{lem:separate_induced}
  and~\ref{lem:detection_ideals_almost_faithfulness} imply
  that~\(E^I\) is faithful for all \(I\in \Ideals^B(A)\).  Since we
  assume that \(\Ideals(B)=\Ideals^A(B)\) and \(A\subseteq B\) is
  symmetric, this implies \(E(J)\subseteq J\) by
  Lemma~\ref{lem:symmetric_quotients_conditional_expectations}.
  Conversely, assume that~\(E\) is residually faithful and preserves ideals.

  Let \(J\in \Ideals(B)\) and \(I\in\Ideals(A)\) satisfy
  \(BIB \subsetneq J\).  Then \(E(J) \neq I\) because~\(E^I\) is
  faithful.  Since~\(E\) preserves ideals, \(E(J)=J\cap A\).  Thus
  \(I \subsetneq J\cap A\).  This shows that \(A/I \subseteq B/BIB\)
  detects ideals for any \(I\in\Ideals^B(A)\).  This implies
  that~\(A\) separates ideals in~\(B\) by
  Lemma~\ref{lem:separate_induced}.
\end{proof}

\subsection{Supporting and filling families}
\label{subsec:filling_aperiodic_conditional_expectations}

Let~\(B^+\) be the set of positive elements in a
\(\Cst\)\nb-algebra~\(B\).  We equip~\(B^+\) with the
\emph{Cuntz preorder}~\(\precsim\) introduced
in~\cite{Cuntz:Dimension_functions}: for
\(a, b\in B^+\), we write \(a \precsim b\) and say
that~\(a\) \emph{supports}~\(b\) if, for every \(\varepsilon>0\),
there is \(x \in B\) with \(\norm{a-x^* b x} <\varepsilon\).  We
call \(a,b\in B^+\) \emph{Cuntz equivalent} and write
\(a\approx b\) if \(a \precsim b\) and \(b\precsim a\).  We call
\(a,b\in B^+\) \emph{Murray--von Neumann equivalent}
and write \(a\sim b\) if there is \(z\in B\) with \(a=z^*z\) and
\(b=zz^*\).  Both \(\sim\) and \(\approx\) are equivalence
relations, and \(a\sim b\) implies \(a\approx b\).  In the converse
direction, only a weaker result is true (see
\cite{Kirchberg-Rordam:Infinite_absorbing}*{Lemma 2.3(iv)}).
Namely, for \(\varepsilon >0\) and \(a\in B^+\setminus\{0\}\), let
\((a-\varepsilon)_+\in B\) be the positive part of
\(a-\varepsilon\cdot 1\in \Mult(B)\).  Let \(b B b\) be the
hereditary subalgebra generated by~\(b\), that is, the closure of
\(\setgiven{b x b}{x\in A}\).  Then \(a \precsim b\) if and only if
every \(\varepsilon\)\nb-cut-down of~\(a\) is Murray--von Neumann
equivalent to an element in~\(b B b\), that is
\[
 a \precsim b \quad \Longleftrightarrow\quad
  \forall_{\varepsilon >0}\ \exists_{z\in B}\quad
  (a-\varepsilon)_+=z^*z\ \text{and}\ zz^*\in b B b.
\]
In particular, \(a\in b B b\) implies \(a\precsim b\),
\(a B a = b B b\) implies \(a\approx b\), and \(a\precsim b\) implies
\(a\in B b B\).  A \(\Cst\)\nb-algebra~\(B\) is \emph{purely
  infinite} \cite{Kirchberg-Rordam:Non-simple_pi} if it admits no
characters and \(a,b\in B^+\setminus\{0\}\) satisfy \(a \preceq b\)
if and only if \(a\in B b B\).

\begin{definition}[compare \cite{Kwasniewski:Crossed_products}*{Definition~2.39}]
  \label{def:support}
  A subset \(\mathcal{F}\subseteq B^+\) \emph{supports~\(B\)} if,
  for each \(b \in B^+ \setminus \{0\}\), there is
  \(a\in \mathcal{F} \setminus \{0\}\) with \(a\precsim b\).  It
  \emph{residually supports~\(B\)} if, for each \(J\in \Ideals(B)\),
  the image of~\(\mathcal{F}\) in the quotient~\(B/J\)
  supports~\(B/J\).
\end{definition}

\begin{lemma}
  \label{lem:support_implies_separate}
  Let~\(B\) be a \(\Cst\)\nb-algebra and let
  \(\mathcal{F}\subseteq B^+\).
  \begin{enumerate}
  \item \label{lem:support_implies_separate1}%
    Let~\(\mathcal{F}\) support~\(B\).  If \(J \in \Ideals(B)\),
    then \(J\cap \mathcal{F}\) supports~\(J\).  So~\(\mathcal{F}\)
    detects ideals in~\(B\).
  \item \label{lem:support_implies_separate2}%
    Let~\(\mathcal{F}\) residually support~\(B\).  If
    \(J\in \Ideals(B)\), then \(J\cap \mathcal{F}\) residually
    supports~\(J\).  In addition, \(\mathcal{F}\) separates ideals
    in~\(B\).
  \end{enumerate}
\end{lemma}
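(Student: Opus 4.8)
The plan is to prove the two claims essentially in order, bootstrapping the second from the first. For part~\ref{lem:support_implies_separate1}, the point is that the Cuntz preorder inside an ideal behaves well. Let $J\in\Ideals(B)$ and take $b\in J^+\setminus\{0\}$. Since $\mathcal{F}$ supports~$B$, there is $a\in\mathcal{F}\setminus\{0\}$ with $a\precsim b$ in~$B$. I would then observe that $a\precsim b$ together with $b\in J$ forces $a\in\overline{BbB}\subseteq J$, using the implication ``$a\precsim b$ implies $a\in \overline{BbB}$'' recalled just before Definition~\ref{def:support}; hence $a\in J\cap\mathcal{F}$. It remains to check that $a\precsim b$ computed in~$B$ is the same as computed in~$J$: given $\varepsilon>0$, choose $x\in B$ with $\norm{a-x^*bx}<\varepsilon$; replacing $x$ by $b^{1/n}x$ for large~$n$ (which changes $x^*bx$ by an arbitrarily small amount since $b^{1/n}$ acts as an approximate unit on the hereditary subalgebra containing $x^*bx$ — more carefully, $b^{1/n}bb^{1/n}\to b$ appropriately) gives an element of~$J$ witnessing the same inequality up to~$2\varepsilon$. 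This shows $J\cap\mathcal{F}$ supports~$J$. Detection of ideals is then immediate: if $J\neq0$, pick any $b\in J^+\setminus\{0\}$, get $a\in J\cap\mathcal{F}$ with $a\neq0$, so $J\cap\mathcal{F}\neq\{0\}$, and in particular $J\cap A\supseteq J\cap\mathcal{F}\neq 0$ once we note $\mathcal{F}\subseteq B^+$; strictly, detection should be read as $J\cap\mathcal{F}\neq\{0\}$, or one invokes that $\mathcal{F}$ supporting~$B$ already gives detection by Lemma~\ref{lem:support_implies_separate1} applied to the supporting family.

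For part~\ref{lem:support_implies_separate2}, suppose $\mathcal{F}$ residually supports~$B$ and fix $J\in\Ideals(B)$. To see that $J\cap\mathcal{F}$ residually supports~$J$, take any ideal $K$ of~$J$; since $J\idealin B$, the ideal $K$ is automatically an ideal of~$B$ (ideals of an ideal in a $\Cst$-algebra are ideals of the ambient algebra), and $J/K\idealin B/K$. By hypothesis the image of~$\mathcal{F}$ in $B/K$ supports $B/K$, so by part~\ref{lem:support_implies_separate1} applied to the inclusion $J/K\idealin B/K$, the set $(J/K)\cap(\text{image of }\mathcal{F})$ supports $J/K$. Finally I would identify this set with the image of $J\cap\mathcal{F}$ in $J/K$: the image of $J\cap\mathcal{F}$ in $B/K$ certainly lands in $(J/K)\cap(\text{image of }\mathcal{F})$, and conversely any element of $\mathcal{F}$ whose class lies in $J/K$ differs from an element of $J$ by an element of $K\subseteq J$, hence already lies in $J$. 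Thus the image of $J\cap\mathcal{F}$ supports $J/K$, which is what residual support of~$J$ means.

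It remains to show $\mathcal{F}$ separates ideals in~$B$, and here I would invoke Lemma~\ref{lem:separate_induced}: it suffices to show that for every restricted ideal $I\in\Ideals^B(A)$ — or more simply, by the detection criterion, for every $J\in\Ideals(B)$ — the quotient situation detects ideals. Concretely, given $J_1\subsetneq J_2$ in $\Ideals(B)$, the image of $J_2$ in $B/J_1$ is a nonzero ideal; since $\mathcal{F}$ residually supports~$B$, its image in $B/J_1$ supports $B/J_1$, hence detects ideals in $B/J_1$ by part~\ref{lem:support_implies_separate1}, so there is $a\in\mathcal{F}$ whose image in $B/J_1$ is a nonzero element of $J_2/J_1$. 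Then $a\in J_2$ but $a\notin J_1$, so $J_1\cap\mathcal{F}\neq J_2\cap\mathcal{F}$, and a fortiori $J_1\neq J_2$; this is exactly separation of ideals by~$\mathcal{F}$ (and hence by any larger set such as $A$ if $\mathcal{F}\subseteq A^+$, though the statement is phrased for~$\mathcal{F}$ itself). I expect the main obstacle to be the bookkeeping around ``$a\precsim b$ in $B$ versus in $J$'' and the analogous identification of the image of $J\cap\mathcal{F}$ inside the quotient; both are routine approximate-unit arguments but need to be stated cleanly so that the reductions in the two parts fit together without circularity. Since these are standard facts about the Cuntz preorder and about ideals in $\Cst$-algebras, I would keep the write-up terse and cite the recollections preceding Definition~\ref{def:support}.
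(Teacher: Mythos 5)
Your proposal is correct and follows essentially the same route as the paper: part (1) rests on the observation that \(a\precsim b\) and \(b\in J\) force \(a\in J\) (so that the supporting element already lies in the ideal), and part (2) is obtained by running part (1) in the quotients \(B/K\) and identifying the image of \(J\cap\mathcal{F}\) there, exactly as in the paper's proof. The only elision is that separation must be checked for arbitrary distinct ideals \(J_1\neq J_2\), not just nested ones; as in the paper, this reduces to your nested case by noting that \(J_1\cap J_2\) is a proper ideal of \(J_1\) or of \(J_2\) and applying the argument to that inclusion.
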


\begin{proof}
  We first prove~\ref{lem:support_implies_separate1}.  Let
  \(J\in\Ideals(B)\) and \(b\in J^+\setminus\{0\}\).  Then
  \(x^* b x\in J\) for each \(x\in B\).  If \(a\precsim b\) for some
  \(a\in B^+\setminus\{0\}\), then \(a = \lim x_k^* b x_k\) for some
  sequence~\((x_k)_{k\in\N}\) in~\(B\).  Therefore, any
  \(a\in \mathcal{F}^+\setminus\{0\}\) with \(a\preceq b\) and
  \(b\in J^+\setminus\{0\}\) belongs to~\(J\).  So
  if~\(\mathcal{F}\) supports~\(B\) and \(J\neq\{0\}\), then
  \(J\cap\mathcal{F} \neq\{0\}\).

  Next we prove~\ref{lem:support_implies_separate2}.  Let
  \(J\in\Ideals(B)\) and \(I\in\Ideals(J)\).  Let
  \(q\colon B\to B/I\) be the quotient map.  For any
  \(b\in q(J)^+\setminus \{0\}\), there is \(a\in \mathcal{F}\) with
  \(0\neq q(a)\precsim b\).  The proof above shows that
  \(q(a)\in q(J)\).  Since \(q(a)\neq0\), we have
  \(a\in J\setminus I\).  Thus \(J\cap \mathcal{F}\) residually
  supports~\(J\).  If \(I, J\in \Ideals(B)\) and \(I\neq J\), then
  \(I\cap J\) is a proper ideal of \(I\) or~\(J\).  Assume, say,
  that \(I\cap J\subsetneq J\).  As we have seen above, there is
  \(a\in (J \cap\mathcal{F})\setminus I\).  Hence
  \(I\cap \mathcal{F}\neq J\cap\mathcal{F}\).
\end{proof}

\begin{corollary}
  \label{cor:residually_support_only_induced}
  Let \(A\subseteq B\) be a \(\Cst\)\nb-inclusion.  A family
  \(\mathcal{F}\subseteq A^+\) residually supports~\(B\) if and only
  if the image of~\(\mathcal{F}\) supports~\(B/J\) for each
  \(J\in \Ideals^A(B)\).
\end{corollary}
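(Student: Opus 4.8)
The plan is to prove the nontrivial implication, since the other one is immediate: if $\mathcal{F}$ residually supports~$B$, then in particular the image of~$\mathcal{F}$ supports $B/J$ for every induced ideal $J\in\Ideals^A(B)\subseteq\Ideals(B)$. So assume the image of~$\mathcal{F}$ supports $B/J$ for every $J\in\Ideals^A(B)$, and fix an arbitrary $J\in\Ideals(B)$; we must show the image of~$\mathcal{F}$ supports $B/J$. First I would replace~$J$ by the induced ideal $J_0\defeq i\circ r(J)=B(J\cap A)B$. By the Galois connection properties recalled after Definition~\ref{def:restricted_induced_ideals}, $J_0\in\Ideals^A(B)$, $J_0\subseteq J$, and $r(J_0)=r\circ i\circ r(J)=r(J)$, that is, $J_0\cap A=J\cap A$. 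The point of this reduction is that $B/J$ is a further quotient of $B/J_0$, via a surjective $*$\nobreakdash-homomorphism $q\colon B/J_0\onto B/J$, while the subalgebra~$A$ maps into $B/J_0$ and into $B/J$ with the \emph{same} kernel $J\cap A$.

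Next I would transport the supporting property from $B/J_0$ to $B/J$ along~$q$. Take $b\in(B/J)^+\setminus\{0\}$. Using that positive elements of a $\Cst$\nobreakdash-quotient lift to positive elements, choose $\tilde b\in(B/J_0)^+$ with $q(\tilde b)=b$; note $\tilde b\neq0$. By hypothesis there is $a\in\mathcal{F}$ whose image $\bar a$ in $B/J_0$ is nonzero and satisfies $\bar a\precsim\tilde b$. Applying~$q$ and using that Cuntz subequivalence is preserved by $*$\nobreakdash-homomorphisms (if $\norm{\bar a-x^*\tilde b x}<\varepsilon$ then $\norm{q(\bar a)-q(x)^* b\, q(x)}<\varepsilon$), the image $q(\bar a)$ of~$a$ in $B/J$ satisfies $q(\bar a)\precsim q(\tilde b)=b$.

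It then remains to check that $q(\bar a)\neq0$, i.e.\ $a\notin J$. Here I use crucially that $a\in A$: were $a\in J$, then $a\in J\cap A=J_0\cap A$, whence $\bar a=0$ in $B/J_0$, contradicting $\bar a\neq0$. So $q(\bar a)$ is a nonzero element of the image of~$\mathcal{F}$ with $q(\bar a)\precsim b$. As $b\in(B/J)^+\setminus\{0\}$ was arbitrary, the image of~$\mathcal{F}$ supports $B/J$; and as $J\in\Ideals(B)$ was arbitrary, $\mathcal{F}$ residually supports~$B$.

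The argument uses only two standard facts — positive lifting through a $\Cst$\nobreakdash-quotient and preservation of~$\precsim$ under $*$\nobreakdash-homomorphisms — together with the bookkeeping identity $J_0\cap A=J\cap A$, which is precisely $r\circ i\circ r=r$. None of these is a genuine obstacle; the only thing to notice is that replacing an arbitrary $J$ by the induced ideal $J_0$ costs nothing exactly because $\mathcal{F}$ is drawn from~$A^+$, so that elements of~$\mathcal{F}$ are nonzero in $B/J$ if and only if they are nonzero in $B/J_0$.
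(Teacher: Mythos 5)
Your proof is correct, but it takes a genuinely different route from the paper's. The paper argues globally: the hypothesis implies, via Lemma~\ref{lem:support_implies_separate}, that \(A/(J\cap A)\) detects ideals in \(B/J\) for every induced \(J\), hence by Lemma~\ref{lem:separate_induced} that \(A\) separates ideals in~\(B\); this forces \(\Ideals(B)=\Ideals^A(B)\), so the hypothesis already covers all ideals and there is nothing left to prove. You instead work locally with an arbitrary \(J\in\Ideals(B)\): you pass to the induced ideal \(J_0=B(J\cap A)B\subseteq J\), lift a positive element of \(B/J\) to \(B/J_0\), apply the hypothesis there, and push the Cuntz subequivalence forward along the quotient map \(B/J_0\onto B/J\), using \(J_0\cap A=J\cap A\) to see that the chosen element of \(\mathcal{F}\) survives in \(B/J\). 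Both arguments are sound. The paper's is shorter given the lemmas already on record and records the extra conclusion that every ideal of~\(B\) is induced; yours is more self-contained --- it needs only positive lifting through a quotient and functoriality of \(\precsim\), not the detection/separation machinery --- and it makes transparent exactly where the assumption \(\mathcal{F}\subseteq A^+\) enters, namely in guaranteeing that an element of \(\mathcal{F}\) is nonzero modulo \(J\) if and only if it is nonzero modulo \(J_0\).
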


\begin{proof}
  If the image of~\(\mathcal{F}\) supports the quotient~\(B/J\) for
  every \(J\in \Ideals^A(B)\), then by
  Lemma~\ref{lem:support_implies_separate}, \(A/(J\cap A)\) detects
  ideals in~\(B/J\) for each \(J\in \Ideals^A(B)\).  Then~\(A\)
  separates ideals in~\(B\) by Lemma~\ref{lem:separate_induced}.
  Thus \(\Ideals(B)=\Ideals^A(B)\).  So our assumption already says
  that~\(\mathcal{F}\) residually supports~\(B\).
\end{proof}

A \(\Cst\)\nb-algebra~\(B\) has the \emph{ideal property} if
projections in~\(B\) separate ideals in~\(B\).

\begin{corollary}
  \label{cor:supporting_projection}
  If the set of projections in~\(B\) residually supports~\(B\),
  then~\(B\) has the ideal property.  Conversely, a purely infinite
  \(\Cst\)\nb-algebra with the ideal property is residually
  supported by its projections.
\end{corollary}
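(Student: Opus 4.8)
The plan is to prove the two implications separately; both are short, so the main issue is just to identify the right inputs.

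For the forward implication I would simply specialise Lemma~\ref{lem:support_implies_separate}\ref{lem:support_implies_separate2} to the family $\mathcal{F}$ of all projections in~$B$: if~$\mathcal{F}$ residually supports~$B$, then it separates ideals in~$B$, and that is precisely what it means for~$B$ to have the ideal property. Nothing else is needed here.

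For the converse I would assume~$B$ purely infinite with the ideal property, fix $J\in\Ideals(B)$ with quotient map $q\colon B\to B/J$, and show that the image under~$q$ of the set of projections of~$B$ supports~$B/J$. Given $b\in(B/J)^+\setminus\{0\}$, I would write the closed ideal of~$B/J$ generated by~$b$ as $I/J$ for the ideal $I\in\Ideals(B)$ to which it corresponds under the usual ideal correspondence; here $J\subsetneq I$ since this ideal is nonzero. The ideal property of~$B$ then yields a projection $p\in B$ with $p\in I$ but $p\notin J$ --~there is a projection distinguishing the ideals $J\subsetneq I$, and since $J\subseteq I$ it cannot lie in $J\setminus I$~-- so that $q(p)$ is a nonzero projection of~$B/J$ lying in the ideal of~$B/J$ generated by~$b$, that is, $q(p)\in (B/J)\,b\,(B/J)$. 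Finally I would invoke that quotients of purely infinite $\Cst$-algebras are purely infinite \cite{Kirchberg-Rordam:Non-simple_pi}; since~$B/J$ is then purely infinite, the membership $q(p)\in (B/J)\,b\,(B/J)$ upgrades to $q(p)\precsim b$ in~$B/J$. As~$J$ and~$b$ are arbitrary, the projections of~$B$ residually support~$B$.

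The only genuinely external ingredient is the stability of pure infiniteness under quotients, which I would quote from \cite{Kirchberg-Rordam:Non-simple_pi}; everything else --~the ideal correspondence between $\Ideals(B/J)$ and ideals of~$B$ containing~$J$, and the extraction of a projection in $I\setminus J$ from the ideal property together with $J\subseteq I$~-- is routine bookkeeping. I do not expect any serious obstacle, and in particular no separability or countability hypotheses enter.
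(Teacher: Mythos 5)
Your proof is correct and follows essentially the same route as the paper: the forward direction is exactly the specialisation of Lemma~\ref{lem:support_implies_separate}\ref{lem:support_implies_separate2}, and the converse uses the same three ingredients (a projection in $I\setminus J$ supplied by the ideal property, the fact that quotients of purely infinite $\Cst$\nb-algebras are purely infinite from \cite{Kirchberg-Rordam:Non-simple_pi}, and the characterisation of $\precsim$ in a purely infinite algebra). The only cosmetic difference is that the paper works with the ideal $BbB+J\supsetneq J$ upstairs while you lift the ideal generated by $q(b)$ to $I\supseteq J$; these are the same bookkeeping.
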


\begin{proof}
  Lemma
  \ref{lem:support_implies_separate}.\ref{lem:support_implies_separate2}
  gives the first statement.  For the second, let \(J\in\Ideals(B)\)
  and \(b\in B^+\setminus J^+\).  Let \(q\colon B\to B/J\) be the
  quotient map.  Since~\(B\) has the ideal property, the ideal
  \(BbB+J\supsetneq J\) contains a projection \(p\notin J\).  Then
  \(0 \neq q(p)\in q(BbB)=q(B)q(b)q(B)\).  The quotient~\(B/J\) is
  purely infinite by
  \cite{Kirchberg-Rordam:Non-simple_pi}*{Proposition 4.3}.  Hence we
  get \(q(p)\preceq q(b)\).
\end{proof}




Let \(\Her(B)\) be the set of all \emph{nonzero hereditary
  \(\Cst\)\nb-subalgebras} of~\(B\).

\begin{lemma}
  \label{lem:support_vs_filling}
  Let \(\mathcal{F}\subseteq B^+\).  The following conditions are
  equivalent:
  \begin{enumerate}
  \item \label{lem:support_vs_filling2}%
    \(\mathcal{F}\) supports~\(B\);
  \item \label{lem:support_vs_filling0}%
    for each \(b\in B^+\setminus\{0\}\), there is \(x\in B\) with
    \(x^*bx\in \mathcal{F}\setminus\{0\}\);
  \item \label{lem:support_vs_filling1}%
    for each \(D\in\Her(B)\), there is \(z\in B\) with
    \(z z^* \in D\) and \(z^* z \in \mathcal{F}\setminus\{0\}\).
  \end{enumerate}
\end{lemma}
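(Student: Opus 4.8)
The plan is to prove the three conditions equivalent cyclically, as
$\ref{lem:support_vs_filling2}\Rightarrow\ref{lem:support_vs_filling0}\Rightarrow\ref{lem:support_vs_filling1}\Rightarrow\ref{lem:support_vs_filling2}$
(the implication $\ref{lem:support_vs_filling0}\Rightarrow\ref{lem:support_vs_filling2}$ also drops out for free, since $x^*bx\precsim b$ by the very definition of~$\precsim$). First I would dispatch the two soft implications. For $\ref{lem:support_vs_filling0}\Rightarrow\ref{lem:support_vs_filling1}$, given $D\in\Her(B)$ pick any $d\in D^+\setminus\{0\}$; since $D$ is a hereditary \(\Cst\)\nb-subalgebra and $d\in D$, also $d^{1/2}\in D$ and $\overline{dBd}\subseteq D$. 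Applying~\ref{lem:support_vs_filling0} to~$d$ gives $x\in B$ with $x^*dx\in\mathcal F\setminus\{0\}$, and then $z\defeq d^{1/2}x$ works: $z^*z=x^*dx\in\mathcal F\setminus\{0\}$ while $zz^*=d^{1/2}(xx^*)d^{1/2}\in DBD\subseteq D$. For $\ref{lem:support_vs_filling1}\Rightarrow\ref{lem:support_vs_filling2}$, given $b\in B^+\setminus\{0\}$ apply~\ref{lem:support_vs_filling1} to $D\defeq\overline{bBb}\in\Her(B)$ to get $z$ with $zz^*\in\overline{bBb}$ and $z^*z\in\mathcal F\setminus\{0\}$. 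Then $z^*z\sim zz^*$, hence $z^*z\approx zz^*$, and $zz^*\in bBb$ yields $zz^*\precsim b$, so $z^*z\precsim b$ by transitivity; thus the nonzero element $z^*z\in\mathcal F$ supports~$b$, and $\mathcal F$ supports~$B$.

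The real content is $\ref{lem:support_vs_filling2}\Rightarrow\ref{lem:support_vs_filling0}$. The naive attempt---apply ``$\mathcal F$ supports $B$'' to~$b$ and take $a\in\mathcal F\setminus\{0\}$ with $a\precsim b$---is not enough, because $a\precsim b$ only places $a$ in the norm\nb-closure of $\{x^*bx:x\in B\}$, which need not be closed, whereas~\ref{lem:support_vs_filling0} asks for an honest factorisation. The remedy is to apply the supporting hypothesis to a \emph{proper cut\nb-down}: choose $\varepsilon>0$ with $(b-\varepsilon)_+\neq0$ and pick $a\in\mathcal F\setminus\{0\}$ with $a\precsim(b-\varepsilon)_+$, then upgrade this to an exact $a=x^*bx$. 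Concretely $(b-\varepsilon)_+=g(b)^{1/2}\,b\,g(b)^{1/2}$ with $g(t)\defeq(t-\varepsilon)_+/t\in[0,1]$ vanishing on $[0,\varepsilon]$; feeding this into the (equivalent, exact) form of the cut\nb-down characterisation of $\precsim$ recalled above---for each $\delta>0$ there is $w_\delta$ with $(a-\delta)_+=w_\delta^*(b-\varepsilon)_+w_\delta=(g(b)^{1/2}w_\delta)^*\,b\,(g(b)^{1/2}w_\delta)$---one exploits the spectral gap of size~$\varepsilon$ now built into~$b$ to perform these factorisations consistently and pass to the limit $\delta\to0$, producing a single $x\in B$ with $x^*bx=a$. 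This is the standard fact that Cuntz\nb-subequivalence to a proper cut\nb-down $(b-\varepsilon)_+$ is witnessed by an actual factorisation through~$b$ (cf.\ \cite{Kirchberg-Rordam:Non-simple_pi}). Since then $x^*bx=a\in\mathcal F\setminus\{0\}$, condition~\ref{lem:support_vs_filling0} follows.

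The main obstacle is precisely this last upgrade: ``supports'' only delivers the approximate relation $a\precsim(b-\varepsilon)_+$, while~\ref{lem:support_vs_filling0} and~\ref{lem:support_vs_filling1} demand \emph{exact} membership of a cut\nb-down or factorisation in~$\mathcal F$; everything hinges on first passing from~$b$ to a cut\nb-down, so that the ``$\varepsilon$'' unavoidably lost in the \(\varepsilon\)\nb-cut-down description of $\precsim$ has already been paid for. The remaining implications, by contrast, use only the elementary facts $x^*bx\precsim b$, $z^*z\sim zz^*$, and $a\in bBb\Rightarrow a\precsim b$ recorded in the preceding text.
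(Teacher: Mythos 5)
Your proof is correct and follows essentially the same route as the paper: the cycle \ref{lem:support_vs_filling2}\(\Rightarrow\)\ref{lem:support_vs_filling0}\(\Rightarrow\)\ref{lem:support_vs_filling1}\(\Rightarrow\)\ref{lem:support_vs_filling1}\(\Rightarrow\)\ref{lem:support_vs_filling2} with \(z=d^{1/2}x\) for the middle step, \(z^*z\sim zz^*\precsim b\) for the last, and, for the first, applying the supporting hypothesis to a proper cut-down \((b-\varepsilon)_+\) and invoking R\o{}rdam's lemma (the paper cites \cite{Kirchberg-Rordam:Infinite_absorbing}*{Lemma~2.4(ii)}) to upgrade \(a\precsim(b-\varepsilon)_+\) to an exact factorisation \(a=x^*bx\). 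You correctly isolated that upgrade as the only nontrivial point.
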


\begin{proof}
  \ref{lem:support_vs_filling2}\(\Rightarrow\)%
  \ref{lem:support_vs_filling0}: Let \(b\in B^+\setminus\{0\}\).
  Let \(\delta \in (0, \norm{b})\).
  Then~\ref{lem:support_vs_filling2} gives
  \(a\in \mathcal{F}\setminus\{0\}\) with
  \(a\precsim (b-\delta)_+\).  And
  \cite{Kirchberg-Rordam:Infinite_absorbing}*{Lemma~2.4(ii)} gives
  \(x\in B\) with \(x^*bx=a\in \mathcal{F}\setminus\{0\}\).
	
  \ref{lem:support_vs_filling0}\(\Rightarrow\)%
  \ref{lem:support_vs_filling1}: Let \(b\in D^+\setminus\{0\}\)
  and choose \(x\in B\)
  with \(x^*bx\in \mathcal{F}\setminus\{0\}\)
  as in~\ref{lem:support_vs_filling0}.  Then
  \(z\defeq \sqrt{b} \cdot x\)
  satisfies \(z z^* \in D\)
  and \(z^* z \in \mathcal{F}\setminus\{0\}\),
  as required in~\ref{lem:support_vs_filling1}.

  \ref{lem:support_vs_filling1}\(\Rightarrow\)%
  \ref{lem:support_vs_filling2}: Let \(b\in B^+\setminus\{0\}\).
  Then \(D\defeq b B b\) is a nonzero hereditary
  \(\Cst\)\nb-subalgebra of~\(B\).
  Condition~\ref{lem:support_vs_filling1} gives \(z\in B\) with
  \(z z^* \in D\) and
  \(a\defeq z^* z \in \mathcal{F}\setminus\{0\}\).  Then
  \(a\sim z z^*\) and hence \(a\approx z z^*\).  And
  \(z z^* \precsim b\) by
  \cite{Kirchberg-Rordam:Non-simple_pi}*{Proposition 2.7(i)}.  So
  \(a \precsim b\).
\end{proof}

\begin{definition}[\cite{Kirchberg-Sierakowski:Filling_families}*{Definition~4.2}]
  \label{def:filling_family}
  Let~\(B\) be a \(\Cst\)\nb-algebra.  A set
  \(\mathcal{F}\subseteq B^+\) \emph{fills}~\(B\) (is a
  \emph{filling family} for~\(B\)) if, for each \(D\in \Her(B)\) and
  each \(J\in \Ideals(B)\) with \(D\not\subseteq J\), there is
  \(z\in B\setminus J\) with \(zz^*\in D\) and
  \(z^*z \in \mathcal{F}\).
\end{definition}

In this definition, we may also require \(z\in B\) with
\(zz^*\in D\) and \(z^*z \in \mathcal{F}\setminus J\) because
\(z\in J\) if and only if \(z^* z \in J\).  For \(J=0\), this is the
condition in Lemma
\ref{lem:support_vs_filling}.\ref{lem:support_vs_filling1}.  This
suggests that filling and residually supporting families are closely
related.  We are going to prove some results to this effect.

\begin{proposition}
  \label{pro:filling_residually_supports}
  If \(\mathcal{F}\subseteq B^+\)
  fills~\(B\), then it residually supports~\(B\).
\end{proposition}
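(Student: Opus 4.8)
The plan is to unwind the definition of residual support directly, reducing it to the filling property applied to a suitable quotient. Let $J\in\Ideals(B)$ and let $q\colon B\to B/J$ be the quotient map. I need to show that $q(\mathcal{F})$ supports $B/J$, i.e.\ that for every $\bar b\in (B/J)^+\setminus\{0\}$ there is $a\in\mathcal{F}$ with $0\neq q(a)\precsim \bar b$ in $B/J$. By Lemma~\ref{lem:support_vs_filling}, it is enough to produce, for each $D'\in\Her(B/J)$, an element $\bar z\in B/J$ with $\bar z\bar z^*\in D'$ and $\bar z^*\bar z\in q(\mathcal{F})\setminus\{0\}$. So the first step is to lift the hereditary subalgebra $D'$ of $B/J$ to a hereditary subalgebra $D$ of $B$ with $q(D)=D'$ and $D\not\subseteq J$; this is routine, e.g.\ pick $b\in B^+$ with $q(b)\in (D')^+\setminus\{0\}$ and set $D\defeq \cl{bBb}$, which is hereditary in $B$, satisfies $q(D)=\cl{q(b)(B/J)q(b)}\subseteq D'$, and $D\not\subseteq J$ since $q(b)\neq 0$.

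The second step is to apply the filling property of $\mathcal{F}$ to the pair $(D,J)$: since $D\not\subseteq J$, Definition~\ref{def:filling_family} gives $z\in B\setminus J$ with $zz^*\in D$ and $z^*z\in\mathcal{F}$. Then $\bar z\defeq q(z)$ satisfies $\bar z\bar z^*=q(zz^*)\in q(D)\subseteq D'$ and $\bar z^*\bar z=q(z^*z)\in q(\mathcal{F})$, and $\bar z^*\bar z\neq 0$ because $z\notin J$ forces $z^*z\notin J$ (as $z\in J\iff z^*z\in J$, noted after Definition~\ref{def:filling_family}). This is exactly condition~\ref{lem:support_vs_filling}.\ref{lem:support_vs_filling1} for the family $q(\mathcal{F})$ and the hereditary subalgebra $D'$ of $B/J$, so by the equivalence in Lemma~\ref{lem:support_vs_filling}, $q(\mathcal{F})$ supports $B/J$. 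Since $J$ was arbitrary, $\mathcal{F}$ residually supports $B$.

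I do not expect any genuine obstacle here: the statement is essentially the assertion that filling is the case ``$J$ arbitrary, quotient down'' of the supporting-via-$\Her$ criterion, and Lemma~\ref{lem:support_vs_filling} has already done the work of translating between Cuntz support and the hereditary-subalgebra formulation. The only point requiring a little care is the compatibility of taking hereditary subalgebras with quotients — namely that a nonzero hereditary subalgebra of $B/J$ can be realised as the image of a hereditary subalgebra of $B$ not contained in $J$ — but the cheap choice $D=\cl{bBb}$ above suffices, since the filling hypothesis only needs \emph{some} hereditary $D$ in $B$ mapping into $D'$, not all of $D'$ itself. Thus the argument is a short two-step reduction: lift $D'$, apply filling, push back down.
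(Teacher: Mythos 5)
Your proof is correct and follows essentially the same route as the paper's: lift a nonzero positive element of \(B/J\) to \(b\in B^+\setminus J\), take the hereditary subalgebra \(D=\cl{bBb}\), apply the filling property to the pair \((D,J)\), and push the resulting \(z\) down to the quotient. The only cosmetic difference is that you invoke the equivalence in Lemma~\ref{lem:support_vs_filling} inside \(B/J\) to conclude, whereas the paper verifies \(q(z)q(z)^*\approx q(z)^*q(z)\precsim b\) directly; both amount to the same computation.
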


\begin{proof}
  Let \(J\in \Ideals(B)\).  Let \(q\colon B\to B/J\) denote the
  quotient map.  Let \(b\in q(B)^+\setminus \{0\}\).  There is
  \(d\in B^+\setminus J\) with \(q(d)=b\).  For \(D \defeq d B d\)
  we have \(q(D)= b q(B) b\) and \(D\not\subseteq J\).
  Since~\(\mathcal{F}\) fills~\(B\), there is \(z\in B\) with
  \(z^*z\in D\) and \(zz^* \in \mathcal{F} \setminus J\).  Then
  \(q(z)^*q(z)\in b q(B) b\) and
  \(q(z)q(z)^*\in q(\mathcal{F})\setminus \{0\}\).  Hence
  \(q(z)q(z)^* \approx q(z)^*q(z)\precsim b\).  So
  \(q(\mathcal{F})\) supports~\(q(B)\).
\end{proof}

\begin{proposition}
  \label{prop:residual_supp_vs_filling}
  Let \(A\subseteq B\) be a symmetric \(\Cst\)\nb-inclusion.
  Then~\(A^+\) residually supports~\(B\) if and only if~\(A^+\)
  fills~\(B\).
\end{proposition}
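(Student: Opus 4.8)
The statement to prove is Proposition~\ref{prop:residual_supp_vs_filling}: for a symmetric $\Cst$\nb-inclusion $A\subseteq B$, the family $A^+$ residually supports~$B$ if and only if $A^+$ fills~$B$. The backward implication is already free: Proposition~\ref{pro:filling_residually_supports} says any filling family residually supports. So the real content is the forward direction, and the plan is to prove: if $A^+$ residually supports~$B$, then $A^+$ fills~$B$.

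Let me think about what filling requires. Given $D\in\Her(B)$ and $J\in\Ideals(B)$ with $D\not\subseteq J$, I must produce $z\in B\setminus J$ with $zz^*\in D$ and $z^*z\in A^+$. The natural move is to pass to the quotient $B/J$ and use that $A^+$ residually supports~$B$. But here's the subtlety: residual supporting tells me about the image of $A^+$ in $B/J$, but $A$ need not inject into $B/J$ unless $J\cap A$ behaves well — and filling wants $z^*z\in A^+$ on the nose, with $z\notin J$, which after quotienting means I need $z^*z$ to come from a genuinely nonzero element of $A^+$, not just $A^+/(J\cap A)$.

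Now think about where symmetry enters. Write $I\defeq J\cap A\in\Ideals^B(A)$. By symmetry (and Proposition~\ref{prop:invariant_ideals_in_regular}-type reasoning, or directly $IB=BI$), $J$ is induced: $J=BIB=IBI$ — wait, more carefully, symmetry gives $I\to BIB$ nondegenerate, but I need $J=BIB$ which requires $J$ induced, i.e. $J\in\Ideals^A(B)$. Hmm — that's not automatic. Let me reconsider. Actually the right approach: $A^+$ residually supports~$B$ implies (Lemma~\ref{lem:support_implies_separate}.\ref{lem:support_implies_separate2}) that $A$ separates ideals in~$B$, hence $\Ideals(B)=\Ideals^A(B)$, so $J=BIB$ automatically! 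So I get $J$ induced for free from the hypothesis.

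**Key steps.** First, from "$A^+$ residually supports~$B$" deduce via Lemma~\ref{lem:support_implies_separate} that $A$ separates ideals, hence every $J\in\Ideals(B)$ equals $BIB$ for $I=J\cap A$. Second, fix $D\in\Her(B)$, $J=BIB\in\Ideals(B)$ with $D\not\subseteq J$; pick $b\in D^+\setminus J$ with $q(b)\neq 0$ where $q\colon B\to B/J$. Third, apply Lemma~\ref{lem:quotients_of_regular}-style facts: $A/I\subseteq B/BIB$ is again a $\Cst$\nb-inclusion (indeed symmetric, since restricted ideals correspond correctly). Since $A^+$ residually supports~$B$, the image $q(A^+)=(A/I)^+$ supports $B/J$, so there is $a\in A^+$ with $0\neq q(a)\precsim q(b)$ in $(B/J)^+$; by Lemma~\ref{lem:support_vs_filling} (condition~\ref{lem:support_vs_filling0} or~\ref{lem:support_vs_filling1}) applied inside $B/J$ to the hereditary subalgebra $q(bBb)=q(D)$, I can actually arrange $q(z)^*q(z)\in q(A)^+\setminus\{0\}$ and $q(z)q(z)^*\in q(D)$ for some $z\in B$. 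Fourth — the crux — lift this: I have found $\tilde z\in B/J$ with $\tilde z\tilde z^*\in q(D)$ and $\tilde z^*\tilde z=q(a)$ for some $a\in A^+$, $q(a)\neq 0$. I want to replace this by $z\in B$ (not $z\in B/J$) with $zz^*\in D$, $z^*z\in A^+$, $z\notin J$. This is where I must push the quotient-level equality back up, and symmetry of the inclusion is the tool that makes $A^+\to (A/I)^+$ and hereditary subalgebras of $B$ versus $B/J$ compatible enough to lift.

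**The main obstacle.** The hard part is exactly the lifting in step four: translating a witness $z$ for filling in the quotient $B/J$ into a witness in $B$ itself with the inner product $z^*z$ landing in $A^+$ on the nose. The clean way is probably to avoid literal lifting and instead argue directly in $B$: given $D\in\Her(B)$ and $J$ with $D\not\subseteq J$, set $b\in D^+$, $b\notin J$, form $d\defeq (b-\varepsilon)_+\notin J$ for small $\varepsilon$ (possible since $b\notin J$ means $\norm{q(b)}>0$), note $d\in bBb=D$ up to hereditary containment, and use that $q(d)$ is supported by some $q(a)$, $a\in A^+$, then run the argument of Lemma~\ref{lem:support_vs_filling}'s proof \emph{relative to the quotient} but recording that $\sqrt{d}\cdot x$ for suitable $x\in B$ (lifting the $x$ that realizes $q(x)^*q(b)q(x)=q(a')$ for a cut-down $a'$ of $a$) gives $z\defeq\sqrt d\,x$ with $zz^*\in dBd\subseteq D$ automatically in $B$, while $z^*z=x^*dx$ satisfies $q(z^*z)=q(a')\in q(A)^+$. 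The remaining gap — that $q(z^*z)\in q(A)$ forces $z^*z$ to be close to, or replaceable by, an element of $A^+$ — is precisely closed by symmetry: since $A\subseteq B$ is symmetric and $I=J\cap A$ is restricted, $q^{-1}(A/I)\cap B$ relates to $A+J$, and $A+J\cap$ (hereditary data) lets one correct $z^*z$ by an element of $J$ to land in $A^+$ while staying outside $J$; one may also need to invoke that $z^*z\precsim z^*z$ trivially and perturb within its Cuntz class. I expect the cleanest writeup passes through Proposition~\ref{prop:invariant_ideals_in_regular}/symmetry to identify $(A+J)/J\cong A/I$ with its preimage, making the lift routine. If that identification is delicate, the fallback is to cite Lemma~\ref{lem:support_vs_filling} in $B/J$, obtain $\tilde z$, lift it arbitrarily to $z_0\in B$, correct $z_0z_0^*$ into $D$ using functional calculus (replacing $z_0$ by $\sqrt{b}\,y$ for $y$ with $q(y)=\tilde z$-adjusted), and finally correct $z_0^*z_0$ into $A^+$ using a conditional-expectation-free argument that $\{c\in B^+: q(c)\in (A/I)^+\}$ maps onto what we need because $A^+ + I^+$ is cofinal — this last cofinality again being where symmetry ($IB=BI$, so $I$ is "big enough" inside $J$) does the work.
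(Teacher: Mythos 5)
Your overall strategy coincides with the paper's: deduce separation of ideals from Lemma~\ref{lem:support_implies_separate} so that \(J=BIB\) with \(I=J\cap A\), pass to \(B/J\), apply Lemma~\ref{lem:support_vs_filling} there to get \(x\in B/J\) with \(x^*q(d)x\in (A/I)^+\setminus\{0\}\), and then lift. You also correctly locate the crux in the lifting step and correctly guess that symmetry is the ingredient that closes it. But the proposal stops exactly there: the phrases ``correct \(z^*z\) by an element of \(J\) to land in \(A^+\) while staying outside \(J\)'', ``perturb within its Cuntz class'', and ``\(A^++I^+\) is cofinal'' do not constitute an argument. One cannot subtract an element of \(J\) from a positive element and expect to land in \(A^+\), and Cuntz-class perturbation never yields exact membership in a subalgebra, which is what the definition of filling demands (\(z^*z\in\mathcal{F}=A^+\) on the nose).

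The actual mechanism is a specific two-step device that is absent from your sketch. Lift \(x\) to \(w\in B\) and \(a\defeq x^*q(d)x\) to \(c\in A^+\); then \(c=w^*dw+v\) with \(v\in J\). Symmetry gives \(IB=BI\), hence an approximate unit of \(I\) is an approximate unit of \(J\), so there is \(f\in I^+\), \(\norm{f}\le1\), with \(\norm{v-fv}<\varepsilon\defeq\norm{a}/2\). Setting \(g\defeq 1-f\), which is a positive multiplier of \(A\), one gets \(\norm{gw^*dwg-gcg}=\norm{gvg}<\varepsilon\) with \(gcg\in A^+\). Now \cite{Kirchberg-Rordam:Infinite_absorbing}*{Lemma~2.2} converts this approximate equality into the exact identity \(h^*(gw^*dwg)h=(gcg-\varepsilon)_+\in A^+\) for some \(h\in B\), and \(z\defeq d^{1/2}wgh\) satisfies \(zz^*\in D\), \(z^*z=(gcg-\varepsilon)_+\in A^+\), and \(\norm{q(z^*z)}=\norm{(a-\varepsilon)_+}\ge\norm{a}/2>0\), so \(z\notin J\). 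Without the conjugation by \(1-f\) (which simultaneously kills the error \(v\) and preserves \(A^+\)) and the Kirchberg--R{\o}rdam cut-down lemma, the forward implication is not proved; this is a genuine gap, not a routine detail.
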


\begin{proof}
  If~\(A^+\) fills~\(B\), then it residually supports~\(B\) by
  Proposition~\ref{pro:filling_residually_supports}.  Conversely,
  assume that~\(A^+\) residually supports~\(B\).  We are going to
  prove that~\(A^+\) fills~\(B\).  Pick \(D\in\Her(B)\) and
  \(J\in\Ideals(B)\) with \(D\not\subseteq J\).  We need \(z\in A\)
  with \(z^*z\in D\) and \(zz^* \in A^+ \setminus J\).  By Lemma
  \ref{lem:support_implies_separate}.\ref{lem:support_implies_separate2},
  \(A\) separates ideals in~\(B\).  Hence
  \(\Ideals^A(B)=\Ideals(B)\).  So \(J=B I B\) with
  \(I\defeq A\cap J\).  Let \(q\colon B\to B/J\) be the quotient
  map.  There is \(d\in D^+\setminus J\).  Let
  \(b\defeq q(d)\in (B/J)^+\setminus\{0\}\).
  Lemma~\ref{lem:support_vs_filling} gives \(x\in B /J\) with
  \(a\defeq x^*bx \in (A/I)^+ \setminus \{0\}\).  There are
  \(c\in A^+\) with \(q(c)=a\) and \(w\in B\) with \(q(w)=x\).  Then
  \(q(c) = x^* b x = q(w^* d w)\).  So \(c=w^* d w + v\) for some
  \(v\in J\).

  Let \(\varepsilon\defeq \norm{a}/2\).  By assumption, an
  approximate unit in~\(I\) is also one for~\(J\).  So there is
  \(f\in I^+\) with \(\norm{f}\le1\) and
  \(\norm{v-fv}<\varepsilon\).  Let~\(1\) denote the formal unit in
  the unitisation of~\(B\) and let \(g\defeq 1-f\in \Mult(A)^+\).
  Then \(\norm{g}\le 1\) and
  \[
    \norm{g w^* d w g - g c g}
    = \norm{g v g}
    \le \norm{v-f v}
    < \varepsilon.
  \]
  Now \cite{Kirchberg-Rordam:Infinite_absorbing}*{Lemma~2.2} gives
  \(h\in B\) with
  \[
    h^* (g w^* d w g) h = (g c g - \varepsilon)_+ \in  A^+.
  \]
  Let \(z\defeq d^{\nicefrac12} w g h\).  Then \(z z^*\in D\)
  because \(d\in D\), and \(z^* z = (g c g-\varepsilon)_+ \in A^+\).
  Since \(q(g c g) = q(c-c f-f c+ f c f) = q(c) = a\), we get
  \[
    \norm{q(z^* z)}
    = \norm{q\big((g c g-\varepsilon)_+\big)}
    = \norm{(a-\varepsilon)_+}
    \ge \norm{a} - \varepsilon
    = \norm{a}/2
    > 0.
  \]
  Hence \(z^* z\notin J\), which is equivalent to \(z z^*\notin J\).
\end{proof}

\begin{example}
  \label{exa:noncommutative_bases}
  Let \(B=\Cont_0(\Omega)\) be commutative and let
  \(\mathcal{F}\subseteq B^+\).  The following conditions are
  equivalent:
  \begin{enumerate}
  \item \label{en:noncommutative_bases1}%
    \(\mathcal{F}\) fills~\(B\);
  \item \label{en:noncommutative_bases2}%
    \(\mathcal{F}\) residually supports~\(B\);
  \item \label{en:noncommutative_bases3}%
    the open supports of elements of~\(\mathcal{F}\)
    form a basis of the topology of~\(\Omega\).
  \end{enumerate}
  Proposition~\ref{pro:filling_residually_supports} implies
  \ref{en:noncommutative_bases1}\(\Rightarrow\)%
  \ref{en:noncommutative_bases2}, and
  \ref{en:noncommutative_bases3}\(\Rightarrow\)%
  \ref{en:noncommutative_bases1} is straightforward.  We show
  \ref{en:noncommutative_bases2}\(\Rightarrow\)%
  \ref{en:noncommutative_bases3}.  Fix an open subset
  \(U\subseteq \Omega\) and a point \(x_0 \in U\).  There is a
  function \(b\in \Cont_0(\Omega)\) with \(b(x_0)=1\) and
  \(b|_{\Omega\setminus U}\equiv 0\).  Let~\(J\) be the ideal
  in~\(B\) consisting of functions vanishing on
  \(\Omega\setminus U \cup \{x_0\}\).  There is
  \(a \in \mathcal{F}\setminus J\) with \(a+J\precsim b+ J\).  Then
  \(V\defeq \setgiven{x\in \Omega }{a(x)>0}\) is an open subset
  of~\(U\) that contains~\(x_0\).  This
  implies~\ref{en:noncommutative_bases3}.
\end{example}

Example~\ref{exa:noncommutative_bases} suggests to view filling
families and residually supporting subsets as noncommutative
analogues of bases for topologies.

\subsection{Residual aperiodicity and criteria for pure
  infiniteness}
\label{subsec:residual_aperiodicity}

We introduce the residual version of aperiodicity and use it to
characterise when a \(\Cst\)\nb-inclusion \(A \subseteq B\)
separates ideals.  We also formulate some criteria for~\(B\) to be
purely infinite.

\begin{definition}[\cite{Kwasniewski-Meyer:Essential}*{Definition 5.14}]
  \label{def:aperiodic_inclusion}
  A \(\Cst\)\nb-inclusion \(A\subseteq B\) is \emph{aperiodic} if
  the Banach \(A\)\nb-bimodule \(B/A\) is aperiodic, that is, if for
  every \(x\in B\), \(D\in \Her(A)\) and \(\varepsilon>0\), there
  are \(a\in D^+\) and \(y\in A\) with
  \(\norm{a x a - y}<\varepsilon\) and \(\norm{a}=1\).
\end{definition}

\begin{remark}[\cite{Kwasniewski-Meyer:Aperiodicity_pseudo_expectations}*{Theorem~5.5}]
  If a \(\Cst\)\nb-inclusion \(A\subseteq B\) has the almost
  extension property introduced
  in~\cite{Nagy-Reznikoff:Pseudo-diagonals}, then \(A\subseteq B\)
  is aperiodic.  The converse implication holds if~\(B\) is
  separable.
\end{remark}

\begin{definition}
  \label{def:residual_aperiodic_inclusion}
  A \(\Cst\)\nb-inclusion \(A\subseteq B\) is \emph{residually
    aperiodic} if, for all \(I\in\Ideals^B(A)\) and \(J\defeq BIB\),
  the \(\Cst\)\nb-inclusion \(A/I\subseteq B/J\) is aperiodic.
\end{definition}

\begin{theorem}
  \label{thm:residual_faithful+aperiodic_gives_supports}
  Let \(A\subseteq B\) be a residually aperiodic
  \(\Cst\)\nb-inclusion.  Then for each \(I\in \Ideals^B(A)\) there
  is a unique pseudo-expectation \(E^I\colon B/BIB\to \hull(A/I)\)
  for the \(\Cst\)\nb-inclusion \(A/I\subseteq B/BIB\), and the
  following are equivalent:
  \begin{enumerate}
  \item \label{it:residual_faithful+aperiodic_gives_supports1}%
    the unique pseudo-expectation~\(E^I\) is almost faithful for all
    \(I\in \Ideals^B(A)\);
  \item \label{it:residual_faithful+aperiodic_gives_supports2}%
    \(A\) separates ideals in~\(B\);
  \item \label{it:residual_faithful+aperiodic_gives_supports3}%
    \(A^+\) residually supports~\(B\).
  \end{enumerate}
  If, in addition, the inclusion \(A\subseteq B\) is symmetric, then
  \ref{it:residual_faithful+aperiodic_gives_supports1}--\ref{it:residual_faithful+aperiodic_gives_supports3}
  are equivalent to
  \begin{enumerate}[resume]
  \item \label{it:residual_faithful+aperiodic_gives_supports4}%
    \(A^+\) fills~\(B\).
  \end{enumerate}
  If \(A\subseteq B\) is symmetric, \(\check{A}\) is second
  countable, and the above equivalent conditions hold, then
  \(\check{B}\cong \check{A}/{\sim}\) via the quasi-orbit map.
\end{theorem}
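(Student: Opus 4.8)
The plan is to establish the uniqueness of $E^I$ first, then run the cycle of implications $\ref{it:residual_faithful+aperiodic_gives_supports1}\Rightarrow\ref{it:residual_faithful+aperiodic_gives_supports3}\Rightarrow\ref{it:residual_faithful+aperiodic_gives_supports2}\Rightarrow\ref{it:residual_faithful+aperiodic_gives_supports1}$, and finally add the refinements for symmetric inclusions. For uniqueness, I would fix $I\in\Ideals^B(A)$ and set $J\defeq BIB$; residual aperiodicity says exactly that $A/I\subseteq B/J$ is aperiodic, and an aperiodic inclusion has a unique pseudo-expectation by \cite{Kwasniewski-Meyer:Aperiodicity_pseudo_expectations}*{Theorem~3.6}, which we name $E^I\colon B/J\to\hull(A/I)$.

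For $\ref{it:residual_faithful+aperiodic_gives_supports1}\Rightarrow\ref{it:residual_faithful+aperiodic_gives_supports3}$, I would invoke Corollary~\ref{cor:residually_support_only_induced}: it suffices to show that the image of $A^+$ supports $B/J$ for every induced ideal $J\in\Ideals^A(B)$. Given such a $J$, put $I\defeq J\cap A=r(J)$, so $I\in\Ideals^B(A)$, and since $J$ is induced the identity $i\circ r\circ i=i$ gives $J=i(r(J))=BIB$. Hence $A/I\subseteq B/J$ is aperiodic by residual aperiodicity, and its unique pseudo-expectation $E^I$ is almost faithful by hypothesis~\ref{it:residual_faithful+aperiodic_gives_supports1}. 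Aperiodicity together with an almost faithful pseudo-expectation forces $(A/I)^+$ to support $B/J$ (this is the supporting result of~\cite{Kwasniewski-Meyer:Aperiodicity_pseudo_expectations}), and since the quotient map $A\to A/I$ sends $A^+$ onto $(A/I)^+$, the image of $A^+$ supports $B/J$, as needed.

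The implication $\ref{it:residual_faithful+aperiodic_gives_supports3}\Rightarrow\ref{it:residual_faithful+aperiodic_gives_supports2}$ is immediate from Lemma~\ref{lem:support_implies_separate}.\ref{lem:support_implies_separate2}. For $\ref{it:residual_faithful+aperiodic_gives_supports2}\Rightarrow\ref{it:residual_faithful+aperiodic_gives_supports1}$, Lemma~\ref{lem:separate_induced} recasts separation of ideals as: $A/I\subseteq B/BIB$ detects ideals for every $I\in\Ideals^B(A)$; Lemma~\ref{lem:detection_ideals_almost_faithfulness} then says every pseudo-expectation for this inclusion is almost faithful, and by the uniqueness established above this pseudo-expectation is exactly $E^I$. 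When $A\subseteq B$ is symmetric, $\ref{it:residual_faithful+aperiodic_gives_supports3}\Leftrightarrow\ref{it:residual_faithful+aperiodic_gives_supports4}$ is precisely Proposition~\ref{prop:residual_supp_vs_filling}. Finally, if in addition $\check{A}$ is second countable and the equivalent conditions hold, then $A$ separates ideals in $B$ by~\ref{it:residual_faithful+aperiodic_gives_supports2}, so Theorem~\ref{thm:quasi_orbit_map}.\ref{enu:quasi_orbit_map3} shows that the quasi-orbit map $\varrho\colon\check{B}\to\check{A}/{\sim}$ is a homeomorphism.

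The analytic heart --~that aperiodicity plus an almost faithful pseudo-expectation makes $A^+$ support $B$~-- is imported from~\cite{Kwasniewski-Meyer:Aperiodicity_pseudo_expectations}, so within this proof the only point requiring care is the Galois-connection bookkeeping in $\ref{it:residual_faithful+aperiodic_gives_supports1}\Rightarrow\ref{it:residual_faithful+aperiodic_gives_supports3}$: one must remember that ``residually supports'', as reformulated in Corollary~\ref{cor:residually_support_only_induced}, only tests quotients by induced ideals, match each such $J$ with the restricted ideal $I=r(J)$, and verify $J=BIB$ so that residual aperiodicity really applies to $A/I\subseteq B/J$. For non-symmetric inclusions there may be ideals that are neither restricted nor induced, which is exactly why this reduction is essential rather than cosmetic; in the symmetric case every ideal is induced and this subtlety disappears.
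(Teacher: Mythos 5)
Your proof is correct and follows essentially the same route as the paper: uniqueness and the implication \ref{it:residual_faithful+aperiodic_gives_supports1}$\Rightarrow$\ref{it:residual_faithful+aperiodic_gives_supports3} are imported from \cite{Kwasniewski-Meyer:Aperiodicity_pseudo_expectations}*{Theorem~3.6}, \ref{it:residual_faithful+aperiodic_gives_supports3}$\Rightarrow$\ref{it:residual_faithful+aperiodic_gives_supports2} is Lemma~\ref{lem:support_implies_separate}, \ref{it:residual_faithful+aperiodic_gives_supports2}$\Rightarrow$\ref{it:residual_faithful+aperiodic_gives_supports1} is the Lemma~\ref{lem:separate_induced}/Lemma~\ref{lem:detection_ideals_almost_faithfulness} argument (which the paper phrases contrapositively), and the symmetric and quasi-orbit statements are handled identically. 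Your explicit reduction to induced ideals via Corollary~\ref{cor:residually_support_only_induced} in the first implication is a welcome clarification of a step the paper leaves implicit, but it is not a different method.
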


\begin{proof}
  It follows from
  \cite{Kwasniewski-Meyer:Aperiodicity_pseudo_expectations}*{Theorem~3.6}
  that the expectation~\(E^I\) is unique and
  that~\ref{it:residual_faithful+aperiodic_gives_supports1}
  implies~\ref{it:residual_faithful+aperiodic_gives_supports3}.
  Lemma~\ref{lem:support_implies_separate} shows that
  \ref{it:residual_faithful+aperiodic_gives_supports3}
  implies~\ref{it:residual_faithful+aperiodic_gives_supports2}.
  Next we prove
  that~\ref{it:residual_faithful+aperiodic_gives_supports2}
  implies~\ref{it:residual_faithful+aperiodic_gives_supports1}.
  Indeed, if~\(E^I\) is not almost faithful,
  then~\(\Null_{E^I}\) witnesses that~\(A/I\) does not detect
  ideals in~\(B/J\).  Then~\(A\) does not separate ideals in~\(B\)
  by Lemma~\ref{lem:separate_induced}.  In the symmetric case,
  Proposition~\ref{prop:residual_supp_vs_filling} shows
  that~\ref{it:residual_faithful+aperiodic_gives_supports3} is
  equivalent
  to~\ref{it:residual_faithful+aperiodic_gives_supports4}.  The
  claims in the last sentence follow from
  Theorem~\ref{thm:quasi_orbit_map}.
\end{proof}

We end this section with pure infiniteness criteria that use filling
and residually supporting families.  Infinite and properly infinite
elements in~\(B^+\) are defined
in~\cite{Kirchberg-Rordam:Non-simple_pi}.  We recall their
equivalent descriptions in
\cite{Kwasniewski-Szymanski:Pure_infinite}*{Lemma~2.1}.  We also
recall the notion of a separated pair of elements in~\(B^+\) from
\cite{Kwasniewski-Meyer:Aperiodicity}*{Definition~5.1} and relate it
to the matrix diagonalisation property.  We write
\(a\approx_\varepsilon b\) if \(\norm{a-b}\le \varepsilon\) for
\(a,b \in B\).

\begin{definition}
  \label{def:kinds_of_elements}
  Let~\(B\) be a \(\Cst\)\nb-algebra and \(a\in B^+\setminus\{0\}\).
  \begin{enumerate}
  \item We call \(a\in B^+\) \emph{infinite} in~\(B\) if there is
    \(b\in B^+\setminus\{0\}\) such that for all \(\varepsilon >0\)
    there are \(x,y\in a B\) with \(x^*x\approx_\varepsilon a\),
    \(y^*y\approx_\varepsilon b\) and \(x^*y\approx_\varepsilon 0\).
  \item We call \(a\in B^+\setminus\{0\}\) \emph{properly infinite}
    if for all \(\varepsilon >0\) there are \(x,y\in a B\) with
    \(x^*x\approx_\varepsilon a\), \(y^*y\approx_\varepsilon a\) and
    \(x^*y\approx_\varepsilon 0\).
  \item We call \(a,b\in B^+\) \emph{separated in~\(B\)} if for all
    \(\varepsilon >0\) there are \(x\in a B\) and \(y\in bB\) with
    \(x^*x\approx_\varepsilon a\), \(y^*y\approx_\varepsilon b\) and
    \(x^*y\approx_\varepsilon 0\).
  \end{enumerate}
\end{definition}

By \cite{Kirchberg-Rordam:Non-simple_pi}*{Theorem~4.16}, a
\(\Cst\)\nb-algebra is purely infinite if and only if each element
\(a\in B^+\setminus\{0\}\) is properly infinite.  By
\cite{Kirchberg-Rordam:Infinite_absorbing}*{Remark~5.10}, \(B\) is
\emph{strongly purely infinite} if and only if each pair of elements
\(a,b\in B^+\setminus\{0\}\) is separated in~\(B\).

We say that a pair of elements \(a,b\in B^+\) \emph{has the matrix
  diagonalisation property in}~\(B\), if for each \(x\in B\) with
\(\left(\begin{smallmatrix} a&x^*\\x&b \end{smallmatrix}\right) \in
M_2(B)^+\) and each \(\varepsilon>0\) there are \(d_1\in B\) and
\(d_2\in B\) such that
\[
  d_1^*a d_1 \approx_\varepsilon a,\qquad
  d_2^* b d_2\approx_\varepsilon b,\qquad
  d_1^* x d_2\approx_\varepsilon 0.
\]
We say that a subset \(\mathcal{F}\subseteq B^+\) is \emph{invariant
  under \(\varepsilon\)\nb-cut-downs} if
\((a-\varepsilon)_+\in \mathcal{F}\) for all \(a\in \mathcal{F}\)
and arbitrarily small \(\varepsilon > 0\).

\begin{theorem}[\cite{Kirchberg-Sierakowski:Filling_families}*{Theorem~1.1}]
  \label{thm:Kirchberg_Sierakowski}
  Let~\(\mathcal{F}\) fill~\(B\) and be invariant under
  \(\varepsilon\)\nb-cut-downs.  Then~\(B\) is strongly purely
  infinite if and only if each pair of elements
  \(a,b\in \mathcal{F}\) has the matrix diagonalisation property
  in~\(B\).
\end{theorem}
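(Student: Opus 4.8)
\emph{Proof strategy.} The plan is to split the equivalence. The forward implication is formal: by Kirchberg--Rørdam \cite{Kirchberg-Rordam:Infinite_absorbing}, a \(\Cst\)-algebra \(B\) is strongly purely infinite if and only if every pair \(a,b\in B^+\) has the matrix diagonalisation property in~\(B\) (equivalently, as recalled before the theorem, if and only if every pair in \(B^+\setminus\{0\}\) is separated). Hence, if \(B\) is strongly purely infinite, every pair of elements of \(\mathcal F\subseteq B^+\) has the property and nothing is left to prove. The whole content is in the converse, so from now on I assume that every pair of elements of \(\mathcal F\) has the matrix diagonalisation property in~\(B\) and aim to show the same for an arbitrary pair \(a,b\in B^+\); the cases \(a=0\) and \(b=0\) are trivial, so let \(a,b\neq 0\). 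Fix \(x\in B\) with \(\left(\begin{smallmatrix} a&x^*\\x&b \end{smallmatrix}\right)\in M_2(B)^+\) and \(\varepsilon>0\); I look for \(d_1,d_2\in B\) with \(d_1^*ad_1\approx_\varepsilon a\), \(d_2^*bd_2\approx_\varepsilon b\) and \(d_1^*xd_2\approx_\varepsilon 0\).

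First I would reduce this instance to one living inside~\(\mathcal F\). Applying the filling property to the hereditary subalgebra \(D_a\defeq\cl{aBa}\) and the zero ideal gives \(z_a\in B\) with \(z_az_a^*\in D_a\) and \(a_0\defeq z_a^*z_a\in\mathcal F\setminus\{0\}\); then \(a_0\sim z_az_a^*\precsim a\), and likewise one obtains \(b_0\in\mathcal F\setminus\{0\}\) with \(b_0\precsim b\). Next one transports the off-diagonal entry to an \(x_0\in B\) with \(\left(\begin{smallmatrix} a_0&x_0^*\\x_0&b_0 \end{smallmatrix}\right)\in M_2(B)^+\), applies the hypothesis to the pair \((a_0,b_0)\in\mathcal F\) to diagonalise this \(2\times 2\) matrix up to a suitably small \(\varepsilon'>0\), and transports the resulting \(e_1,e_2\) back to~\(a\) and~\(b\). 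All these transport steps are routine applications of the Kirchberg--Rørdam calculus for the relations \(\approx_\varepsilon\), \(\sim\) and \(\precsim\) (the lemmas of \cites{Kirchberg-Rordam:Infinite_absorbing,Kirchberg-Rordam:Non-simple_pi} used repeatedly in this article). The invariance of~\(\mathcal F\) under \(\varepsilon\)-cut-downs is used exactly here: at each stage the elements of~\(\mathcal F\) produced by the filling property have to be replaced by cut-downs \((a_0-\delta)_+\), \((b_0-\delta)_+\) to absorb the approximation errors, and those cut-downs must stay in~\(\mathcal F\) for the matrix diagonalisation hypothesis to remain applicable.

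The hard part --~and the actual content of \cite{Kirchberg-Sierakowski:Filling_families}*{Theorem~1.1}~-- is the transport \emph{back} from \((a_0,b_0)\) to \((a,b)\): this requires \(a_0\) to dominate a cut-down of~\(a\), say \((a-\varepsilon)_+\precsim a_0\), in addition to \(a_0\precsim a\), whereas a single use of the filling property delivers only \(a_0\precsim a\). Overcoming this is where the filling property is needed in full, with \emph{nonzero} ideals~\(J\): one uses it repeatedly to produce elements of~\(\mathcal F\) ``filling up'' the parts of \((a-\varepsilon)_+\) not yet dominated, and assembles them into a single element that dominates \((a-\varepsilon)_+\) while still lying below~\(a\): in the separable case this is the supremum in the Cuntz semigroup of a countable increasing family, in general it needs a transfinite argument. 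Residual supporting of~\(B\) by~\(\mathcal F\) (Proposition~\ref{pro:filling_residually_supports}) enters here, guaranteeing that no part of~\(a\) stays invisible to~\(\mathcal F\) even after cutting down and passing to quotients. Accordingly I would structure the proof by first isolating a lemma of the form ``if \(\mathcal F\) fills~\(B\) and is invariant under \(\varepsilon\)-cut-downs, then for every \(a\in B^+\setminus\{0\}\) and \(\varepsilon>0\) there is \(a_0\in\mathcal F\) with \((a-\varepsilon)_+\precsim a_0\precsim a\)'' --~this is where the ideal parameter and the bookkeeping live~-- and then deducing the theorem from it together with a transfer principle, proved by the \(\approx_\varepsilon\)-calculus, that moves the matrix diagonalisation property of a pair along such two-sided Cuntz comparisons.
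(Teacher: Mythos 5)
First, a remark on scope: the paper does not prove this statement at all --- it is quoted verbatim from \cite{Kirchberg-Sierakowski:Filling_families}*{Theorem~1.1} --- so there is no in-paper proof to compare against, and your sketch has to stand on its own. Your treatment of the easy direction is fine, and you correctly locate the difficulty in the converse; but the key lemma you propose there is false, and with it the overall strategy collapses. There is in general \emph{no} single \(a_0\in\mathcal{F}\) with \((a-\varepsilon)_+\precsim a_0\precsim a\): take \(B=\Cont([0,1])\otimes\OO_2\) (strongly purely infinite, so every pair in \(B^+\) has the matrix diagonalisation property) and let \(\mathcal{F}\) consist of all positive elements lying in \(\Cont_0(U)\otimes\OO_2\) for some open \(U\) with \(\cl{U}\neq[0,1]\). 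This \(\mathcal{F}\) fills \(B\) and is invariant under \(\varepsilon\)\nb-cut-downs, yet for \(a=1\) every \(a_0\in\mathcal{F}\) lies in a proper ideal not containing \((a-\varepsilon)_+\), so \((a-\varepsilon)_+\not\precsim a_0\). The same phenomenon is already visible in Example~\ref{exa:noncommutative_bases}: a filling family is only a noncommutative \emph{basis}, and a basis element need not cover a given compact set. Your fallback --- assembling infinitely many elements of \(\mathcal{F}\) into a supremum in the Cuntz semigroup --- does not repair this: such a supremum need not be realised by any element of \(B^+\), and even if it were, it would not belong to \(\mathcal{F}\), so the matrix diagonalisation hypothesis could not be applied to it.

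The actual argument of Kirchberg--Sierakowski keeps everything finite and applies the hypothesis to \emph{many} pairs rather than one. Given \(a,b\) and a compatible off-diagonal entry \(x\), one uses the filling property (with nonzero ideals \(J\), so as to see every primitive quotient) to write \((a-\varepsilon)_+\) and \((b-\varepsilon)_+\) approximately as finite sums \(\sum_i a_i^* a_i\) and \(\sum_j b_j^* b_j\) with the relevant elements Cuntz-dominated by members of \(\mathcal{F}\); one then invokes the matrix diagonalisation property for each of the finitely many resulting pairs from \(\mathcal{F}\) and recombines the contractions so obtained via the combination lemma \cite{Kirchberg-Sierakowski:Filling_families}*{Lemma~5.6} --- the very lemma used in the proof of Lemma~\ref{lem:from separation to diagonalisation} above. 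So the correct shape of the reduction is ``one element of \(B^+\) versus finitely many elements of \(\mathcal{F}\),'' not ``one versus one,'' and that is the idea missing from your proposal.
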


The following theorem improves upon
\cite{Kwasniewski-Meyer:Aperiodicity}*{Proposition~5.4}.

\begin{theorem}
  \label{thm:pure_infiniteness_criteria}
  Let \(A\subseteq B\) be a \(\Cst\)\nb-subalgebra for which~\(A^+\)
  residually supports~\(B\); this is the case, for instance, if
  \(A\subseteq B\) is residually aperiodic and for each
  \(I\in \Ideals^B(A)\) the unique pseudo-expectation
  \(E^I\colon B/BIB\to \hull(A/I)\) is almost faithful.  Let
  \(\mathcal{F}\subseteq A^+\) residually support~\(A\).  Assume
  that \(\Ideals^B(A)\) is finite or that the projections in
  \(\mathcal{F}\) separate the ideals in~\(\Ideals^B(A)\)
  \textup{(}this is automatic when~\(\mathcal{F}\) consists of
  projections\textup{)}. Then the following statements are
  equivalent:
  \begin{enumerate}
  \item \label{item:pure_infiniteness_criteria1}%
    \(\mathcal{F}\setminus\{0\}\) consists of elements that are
    properly infinite in~\(B\);
  \item \label{item:pure_infiniteness_criteria2}%
    \(B\) is purely infinite;
  \item \label{item:pure_infiniteness_criteria3}%
    \(B\) is purely infinite and \(\Prim B\) has topological
    dimension zero;
  \item \label{item:pure_infiniteness_criteria4}%
    \(B\) is purely infinite and has the ideal property;
  \item \label{item:pure_infiniteness_criteria6}%
    \(B\) is strongly purely infinite.
  \end{enumerate}
\end{theorem}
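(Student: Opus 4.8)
The plan is to prove the cycle of implications
\ref{item:pure_infiniteness_criteria6} \(\Rightarrow\)
\ref{item:pure_infiniteness_criteria4} \(\Rightarrow\)
\ref{item:pure_infiniteness_criteria3} \(\Rightarrow\)
\ref{item:pure_infiniteness_criteria2} \(\Rightarrow\)
\ref{item:pure_infiniteness_criteria1} \(\Rightarrow\)
\ref{item:pure_infiniteness_criteria6}, exploiting that most of these
are either standard facts about purely infinite \(\Cst\)\nb-algebras
or almost immediate from the hypotheses. The implications
\ref{item:pure_infiniteness_criteria6}\(\Rightarrow\)\ref{item:pure_infiniteness_criteria4}
and \ref{item:pure_infiniteness_criteria4}\(\Rightarrow\)\ref{item:pure_infiniteness_criteria3}
and \ref{item:pure_infiniteness_criteria3}\(\Rightarrow\)\ref{item:pure_infiniteness_criteria2}
I would handle by citing \cite{Kirchberg-Rordam:Non-simple_pi} and
\cite{Pasnicu-Phillips:Spectrally_free} (or the relevant structure
theory): strong pure infiniteness implies pure infiniteness, a purely
infinite \(\Cst\)\nb-algebra with the ideal property has \(\Prim B\) of
topological dimension zero, and topological dimension zero is only an
extra decoration that we drop. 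The implication
\ref{item:pure_infiniteness_criteria2}\(\Rightarrow\)\ref{item:pure_infiniteness_criteria1}
is trivial, since if \(B\) is purely infinite then \emph{every} nonzero
positive element of \(B\) — in particular every element of
\(\mathcal{F}\setminus\{0\}\subseteq B^+\setminus\{0\}\) — is properly
infinite in~\(B\) by \cite{Kirchberg-Rordam:Non-simple_pi}*{Theorem~4.16}.

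The substantive implication is
\ref{item:pure_infiniteness_criteria1}\(\Rightarrow\)\ref{item:pure_infiniteness_criteria6},
so the bulk of the argument goes there. First I would pass from
``\(\mathcal{F}\) residually supports~\(A\)'' to ``\(\mathcal{F}\)
residually supports~\(B\)'': since \(A^+\) residually supports~\(B\),
for any \(J\in\Ideals(B)\) and \(b\in(B/J)^+\setminus\{0\}\) there is a
nonzero \(a\in A^+\) with image Cuntz-below~\(b\) in~\(B/J\); and since
\(\mathcal{F}\) residually supports~\(A\) we can find \(a'\in\mathcal{F}\)
with nonzero image Cuntz-below the image of~\(a\) in a suitable quotient
of~\(A\) — one has to be a little careful here that the relevant ideal
of~\(A\) is \(J\cap A\) and that transitivity of \(\precsim\) patches
the two steps together. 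Composing, \(\mathcal{F}\) residually supports~\(B\);
by Lemma~\ref{lem:support_implies_separate}.\ref{lem:support_implies_separate2}
this already forces \(\mathcal{F}\) to separate ideals in~\(B\), so in
fact \(\Ideals(B)=\Ideals^A(B)\) and the ideal lattice of~\(B\) is the
finite (or projection-separated) lattice \(\Ideals^B(A)\). Next, from
Lemma~\ref{lem:support_vs_filling} and the structure of the ideal
lattice, I would upgrade residual support to a \emph{filling} statement:
in each quotient \(B/J\) with \(D\not\subseteq J\) we produce \(z\) with
\(zz^*\in D\), \(z^*z\in\mathcal{F}\setminus J\); the hypothesis that
\(\Ideals^B(A)\) is finite or that projections in~\(\mathcal{F}\)
separate ideals is exactly what lets us choose the witness \emph{outside}
the given ideal. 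Then \(\mathcal{F}\) fills~\(B\), so
Theorem~\ref{thm:Kirchberg_Sierakowski} reduces strong pure infiniteness
of~\(B\) to the matrix diagonalisation property for pairs in~\(\mathcal{F}\)
— after first replacing \(\mathcal{F}\) by its closure under
\(\varepsilon\)\nb-cut-downs, which still fills~\(B\) and still consists
of properly infinite elements (proper infiniteness is inherited by
\(\varepsilon\)\nb-cut-downs).

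Finally I would verify the matrix diagonalisation property for a pair
\(a,b\in\mathcal{F}\) of properly infinite elements. The idea is that
for \(\left(\begin{smallmatrix} a&x^*\\x&b\end{smallmatrix}\right)\in
M_2(B)^+\), proper infiniteness of~\(a\) and of~\(b\) supplies, for
each \(\varepsilon>0\), pairs of almost-orthogonal ``copies'' inside
\(aB\) and \(bB\); using that \(x^*x\le\|b\|a\) and \(xx^*\le\|a\|b\)
one traps the off-diagonal part between the two diagonal corners and
arranges cut-downs \(d_1\in B\), \(d_2\in B\) compressing \(a\) onto
one copy and \(b\) onto another copy so that \(d_1^*xd_2\) is forced
to be \(\varepsilon\)\nb-small while \(d_1^*ad_1\approx_\varepsilon a\)
and \(d_2^*bd_2\approx_\varepsilon b\) — this is the content of
\cite{Kwasniewski-Meyer:Aperiodicity}*{Proposition~5.4}, which the
theorem claims to improve, and I would follow and streamline that
argument. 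The main obstacle, and the place where the hypotheses on
\(\mathcal{F}\) and on the ideal lattice really earn their keep, is the
step turning residual support into the honest filling property with
witnesses \emph{outside} a prescribed ideal: residual support by
itself only controls behaviour in quotients, and without either a finite
ideal lattice or enough projections in~\(\mathcal{F}\) one cannot in
general lift a quotient witness to one missing~\(J\). Once filling is
in hand, the rest is bookkeeping plus the known
\cite{Kirchberg-Sierakowski:Filling_families} and
\cite{Kirchberg-Rordam:Non-simple_pi} machinery.
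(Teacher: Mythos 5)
Your list of ``easy'' implications already contains a false step: \ref{item:pure_infiniteness_criteria6}\(\Rightarrow\)\ref{item:pure_infiniteness_criteria4} is not a general fact, and the justification you offer (``strong pure infiniteness implies pure infiniteness'') only gives \ref{item:pure_infiniteness_criteria6}\(\Rightarrow\)\ref{item:pure_infiniteness_criteria2}. Strongly purely infinite \(\Cst\)\nb-algebras need not have the ideal property: \(\Cont([0,1],\mathcal O_2)\) is \(\mathcal O_\infty\)\nb-absorbing, hence strongly purely infinite, but its ideal lattice is that of open subsets of \([0,1]\) and every projection generates a clopen-supported ideal, so projections do not separate ideals. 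The general facts from \cite{Pasnicu-Rordam:Purely_infinite_rr0} run in the opposite direction, namely \ref{item:pure_infiniteness_criteria4}\(\Rightarrow\)\ref{item:pure_infiniteness_criteria3} and \ref{item:pure_infiniteness_criteria4}\(\Rightarrow\)\ref{item:pure_infiniteness_criteria6}; in the present theorem \ref{item:pure_infiniteness_criteria6}\(\Rightarrow\)\ref{item:pure_infiniteness_criteria4} is true only because of the hypotheses, via \ref{item:pure_infiniteness_criteria6}\(\Rightarrow\)\ref{item:pure_infiniteness_criteria1}\(\Rightarrow\)\ref{item:pure_infiniteness_criteria4}, so it cannot be used as an unproved link in your cycle.

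The more serious gap is in your substantive step \ref{item:pure_infiniteness_criteria1}\(\Rightarrow\)\ref{item:pure_infiniteness_criteria6}, which you reduce to the claim that any pair \(a,b\) of properly infinite elements has the matrix diagonalisation property. No ``trap the off-diagonal corner'' argument can deduce this from proper infiniteness of \(a\) and \(b\) alone: since \(B^+\) always fills \(B\) and is invariant under \(\varepsilon\)\nb-cut-downs, your claim combined with Theorem~\ref{thm:Kirchberg_Sierakowski} would show that every purely infinite \(\Cst\)\nb-algebra is strongly purely infinite --- exactly the problem that the matrix diagonalisation hypothesis in \cite{Kirchberg-Sierakowski:Filling_families} is designed to avoid. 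You also miscite \cite{Kwasniewski-Meyer:Aperiodicity}*{Proposition~5.4}: what that proposition (and the paper's actual proof) establishes is \ref{item:pure_infiniteness_criteria1}\(\Rightarrow\)\ref{item:pure_infiniteness_criteria4}, by a Cuntz-comparison sandwich \(b\oplus b\precsim a\oplus a\precsim a\precsim b\) for every \(b\in B^+\setminus\{0\}\) against a properly infinite \(a\in\mathcal F\) generating the same ideal --- this is where the finiteness or projection-separation hypothesis on \(\Ideals^B(A)\) is actually used --- after which \ref{item:pure_infiniteness_criteria6} follows from \ref{item:pure_infiniteness_criteria4} by \cite{Pasnicu-Rordam:Purely_infinite_rr0}. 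Finally, your entry into Theorem~\ref{thm:Kirchberg_Sierakowski} is itself not secured: the upgrade from ``residually supports'' to ``fills'' is proved in Proposition~\ref{prop:residual_supp_vs_filling} only for \(\mathcal F=A^+\) and for symmetric inclusions, and neither assumption is available in this theorem.
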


\begin{proof}
  The implications
  \ref{item:pure_infiniteness_criteria3}\(\Leftarrow\)%
  \ref{item:pure_infiniteness_criteria4}\(\Rightarrow\)%
  \ref{item:pure_infiniteness_criteria6} are general facts (see
  \cite{Pasnicu-Rordam:Purely_infinite_rr0}*{Propositions 2.11 and
    2.14}).  The implications
  \ref{item:pure_infiniteness_criteria3}\(\Rightarrow\)%
  \ref{item:pure_infiniteness_criteria2}\(\Rightarrow\)%
  \ref{item:pure_infiniteness_criteria1}\(\Leftarrow\)%
  \ref{item:pure_infiniteness_criteria6} are clear.  To close the
  cycles of implications, it suffices to show that
  \ref{item:pure_infiniteness_criteria1}\(\Rightarrow\)%
  \ref{item:pure_infiniteness_criteria4}.  We have
  \(\Ideals(B)=\Ideals^A(B)\cong \Ideals^B(A)\) by
  Lemma~\ref{lem:support_implies_separate}.  And~\(\mathcal{F}\)
  residually supports~\(B\) because~\(\preceq\) is transitive.  In
  particular, \(\mathcal{F}\) separates ideals in \(\Ideals^B(A)\).
  Under our assumptions, the implication
  \ref{item:pure_infiniteness_criteria1}\(\Rightarrow\)%
  \ref{item:pure_infiniteness_criteria4} follows from the proof of
  \cite{Kwasniewski-Meyer:Aperiodicity}*{Proposition~5.4}.  We
  considered there the case \(\mathcal{F}=A^+\).  The proof still
  works, however, for any \(\mathcal{F}\) that residually
  supports~\(A\).
\end{proof}

When~\(A\) separates ideals in~\(B\), the assumption that~\(A^+\)
residually supports~\(B\) seems necessary for
Theorem~\ref{thm:pure_infiniteness_criteria} to hold.  Evidence for
this is the following result abstracted from
\cite{Rordam-Sierakowski:Purely_infinite}*{Proposition~2.1} (see
also \cite{Boenicke-Li:Ideal}*{Proposition~4.1}).

\begin{proposition}
  \label{prop:separation_pure_infiniteness}
  Let \(A\subseteq B\) be a symmetric \(\Cst\)\nb-inclusion with a
  residually symmetric conditional expectation \(E\colon B\to A\) as
  in
  Proposition~\textup{\ref{prop:separate_ideal_vs_conditional_exp}}.
  If~\(A\) separates ideals in~\(B\), then \(B\) is purely infinite
  if and only if every \(a\in A^+\setminus\{0\}\) is properly
  infinite in~\(B\) and \(E(b)\precsim b\) for every \(b\in B^+\).
  If~\(B\) is purely infinite and~\(A\) separates ideals in~\(B\),
  then~\(A^+\) fills~\(B\).
\end{proposition}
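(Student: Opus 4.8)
The plan is to handle the three assertions in turn, using throughout that, by Proposition~\ref{prop:separate_ideal_vs_conditional_exp}, the hypothesis that~\(A\) separates ideals in~\(B\) forces~\(E\) to be residually faithful and to \emph{preserve ideals}, i.e., \(E(J)\subseteq J\) for every \(J\in\Ideals(B)\); in particular~\(E\) itself is faithful and \(\Ideals(B)=\Ideals^A(B)\), so every ideal of~\(B\) equals \(BIB\) for \(I=J\cap A\in\Ideals^B(A)\). For the ``only if'' implication, assume~\(B\) is purely infinite. Then every element of \(B^+\setminus\{0\}\), and hence every element of \(A^+\setminus\{0\}\), is properly infinite by \cite{Kirchberg-Rordam:Non-simple_pi}*{Theorem~4.16}. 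For \(b\in B^+\), ideal preservation gives \(E(b)\in BbB\), so \(E(b)\precsim b\): this is trivial if \(E(b)=0\), and otherwise it follows from pure infiniteness, since \(a\precsim b\Leftrightarrow a\in BbB\) for nonzero positive elements.

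For the converse, assume every \(a\in A^+\setminus\{0\}\) is properly infinite in~\(B\) and \(E(b)\precsim b\) for all \(b\in B^+\); I want to show every \(b\in B^+\setminus\{0\}\) is properly infinite, which gives pure infiniteness by \cite{Kirchberg-Rordam:Non-simple_pi}*{Theorem~4.16}. Fix such a~\(b\). Faithfulness of~\(E\) makes \(E(b)\) a nonzero element of~\(A^+\), hence properly infinite in~\(B\). The key step, which I expect to be the only real obstacle, is to show that \(b\) lies in \(BE(b)B\), the ideal generated by~\(E(b)\). To see this I would put \(J\defeq BE(b)B\) and \(I\defeq J\cap A\in\Ideals^B(A)\), so that \(J=BIB\) and, by ideal preservation, \(E(J)\subseteq J\cap A=I\); thus~\(E\) descends to the conditional expectation \(E^I\colon B/BIB\to A/I\) of Lemma~\ref{lem:symmetric_quotients_conditional_expectations}, which is faithful because~\(E\) is residually faithful. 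Since \(E(b)\in J\cap A=I\), the map~\(E^I\) kills the class of~\(b\) in \(B/J\), and faithfulness forces \(b\in J\). Granting \(b\in BE(b)B\): a properly infinite element Cuntz-dominates every positive element of the ideal it generates (see \cite{Kirchberg-Rordam:Non-simple_pi}), so \(b\precsim E(b)\); combined with the hypothesis \(E(b)\precsim b\) this gives \(b\approx E(b)\). Then \(b\oplus b\approx E(b)\oplus E(b)\precsim E(b)\approx b\), so~\(b\) is properly infinite, as required.

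For the last assertion, assume~\(B\) is purely infinite and~\(A\) separates ideals in~\(B\). As \(A\subseteq B\) is symmetric, Proposition~\ref{prop:residual_supp_vs_filling} lets me replace ``\(A^+\) fills~\(B\)'' by ``\(A^+\) residually supports~\(B\)''. So fix \(J\in\Ideals(B)\), write \(J=BIB\) with \(I=J\cap A\), and note that \(B/J\) is purely infinite by \cite{Kirchberg-Rordam:Non-simple_pi}*{Proposition~4.3}, that~\(E\) descends to the faithful conditional expectation \(E^I\colon B/J\to A/I\) (Lemma~\ref{lem:symmetric_quotients_conditional_expectations} together with residual faithfulness), and that \(E^I\) preserves ideals of \(B/J\) because~\(E\) preserves ideals of~\(B\). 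The argument of the ``only if'' implication now runs verbatim inside \(B/J\): for \(\bar b\in(B/J)^+\setminus\{0\}\) one has \(E^I(\bar b)\in(A/I)^+\setminus\{0\}\) by faithfulness, \(E^I(\bar b)\) lies in the ideal generated by~\(\bar b\) by ideal preservation, and hence \(E^I(\bar b)\precsim\bar b\) by pure infiniteness of~\(B/J\). Since \(E^I(\bar b)\) is the image of some element of~\(A^+\), the image of~\(A^+\) in \(B/J\) supports \(B/J\). Hence~\(A^+\) residually supports~\(B\), and Proposition~\ref{prop:residual_supp_vs_filling} finishes the proof.
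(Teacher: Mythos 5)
Your proof is correct and follows the same overall strategy as the paper's: both directions of the equivalence reduce to the Cuntz-comparison $b\approx E(b)$ via ideal preservation and proper infiniteness, and the filling claim is obtained by running the argument in the quotients and invoking Proposition~\ref{prop:residual_supp_vs_filling}. The only place you genuinely diverge is the key step of the converse, showing $b\in BE(b)B$: the paper takes $J$ to be the ideal generated by $b$ and uses the identity $J\cap A=E(J)$ together with $J=BE(J)B$, whereas you take $J=BE(b)B$ from the start and deduce $b\in J$ from faithfulness of the induced expectation $E^I\colon B/BIB\to A/I$ applied to the class of $b$. Your variant is if anything a little more self-contained, since it avoids having to pass from $BE(J)B$ to the ideal generated by the single element $E(b)$; both arguments rest on the same input (Proposition~\ref{prop:separate_ideal_vs_conditional_exp} and Lemma~\ref{lem:symmetric_quotients_conditional_expectations}), so this is a cosmetic rather than substantive difference.
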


\begin{proof}
  Assume that~\(A\) separates ideals in~\(B\).  Suppose first
  that~\(B\) is purely infinite.  By
  Proposition~\ref{prop:separate_ideal_vs_conditional_exp}, for any
  \(b\in B^+\setminus\{0\}\), \(E(b)\) is in the ideal in~\(B\)
  generated by~\(b\).  Then \(E(b)\precsim b\) because~\(b\) is
  properly infinite (see
  \cite{Kirchberg-Rordam:Non-simple_pi}*{Theorem~4.16}).  Now
  suppose that every \(a\in A^+\setminus\{0\}\) is properly infinite
  in~\(B\) and \(E(b)\precsim b\) for every \(b\in B^+\).  Let~\(J\)
  be the ideal in~\(B\) generated by \(b\in B^+\setminus\{0\}\).
  Proposition~\ref{prop:separate_ideal_vs_conditional_exp} implies
  \(J\cap A=E(J)\).  Since every ideal in~\(B\) is induced, this
  implies \(J=BE(J)B\).  So~\(b\) is in the ideal generated
  by~\(E(b)\).  This implies that \(b \precsim E(b)\) using that
  \(E(b)\) is properly infinite (see
  \cite{Kirchberg-Rordam:Non-simple_pi}*{Proposition 3.5(ii)}).
  Since
  \(b\oplus b \precsim E(b) \oplus E(b) \precsim E(b) \precsim b\)
  in~\(B\), we conclude that~\(b\) is properly infinite.  This shows
  that~\(B\) is purely infinite.

  If~\(A\) separates ideals in~\(B\) and~\(B\) is purely infinite,
  the same holds for all the quotient inclusions
  \(A/I \subseteq B/J\), \(I=J\cap A\), \(J\in \Ideals(B)\) (see
  \cite{Kirchberg-Rordam:Non-simple_pi}*{Proposition~4.3}).  By
  Proposition~\ref{prop:separate_ideal_vs_conditional_exp}, the
  conditional expectation \(E^I\colon B/J\to A/I\) is faithful.
  Hence the first part of the assertion shows that
  \(b \in (B/J)^+\setminus \{0\}\) implies
  \(0\neq E^I(b)\precsim b\).  Thus~\(A^+\) residually
  supports~\(B\).  Then~\(A^+\) fills~\(B\) by
  Proposition~\ref{prop:residual_supp_vs_filling}.
\end{proof}

\section{Inverse semigroup actions and  their crossed products}
\label{sec:isg_crossed}

In this section, we briefly recall inverse semigroup actions by
Hilbert bimodules and their crossed products, referring to
\cites{Buss-Exel-Meyer:Reduced, Kwasniewski-Meyer:Essential} for
more details.

\subsection{Inverse semigroup actions by Hilbert bimodules}

Throughout this paper, \(S\) is an inverse semigroup with unit
\(1\in S\).

\begin{definition}[\cite{Buss-Meyer:Actions_groupoids}]
  \label{def:S_action_Cstar}
  An \emph{action} of~\(S\) on a \(\Cst\)\nb-algebra~\(A\) (by
  Hilbert bimodules) consists of Hilbert
  \(A\)\nb-bimodules~\(\Hilm_t\) for \(t\in S\) and Hilbert bimodule
  isomorphisms
  \(\mu_{t,u}\colon \Hilm_t\otimes_A \Hilm_u\congto \Hilm_{tu}\) for
  \(t,u\in S\), such that
  \begin{enumerate}[label=\textup{(A\arabic*)}]
  \item \label{enum:AHB3}%
    for all \(t,u,v\in S\), the following diagram commutes
    (associativity):
    \[
    \begin{tikzpicture}[baseline=(current bounding box.west)]
      \node (1) at (0,1) {\((\Hilm_t\otimes_A \Hilm_u) \otimes_A \Hilm_v\)};
      \node (1a) at (0,0) {\(\Hilm_t\otimes_A (\Hilm_u \otimes_A \Hilm_v)\)};
      \node (2) at (5,1) {\(\Hilm_{tu} \otimes_A \Hilm_v\)};
      \node (3) at (5,0) {\(\Hilm_t\otimes_A \Hilm_{uv}\)};
      \node (4) at (7,.5) {\(\Hilm_{tuv}\)};
      \draw[<->] (1) -- node[swap] {ass} 			(1a);
      \draw[cdar] (1) -- node {\(\mu_{t,u}\otimes_A \Id_{\Hilm_v}\)} (2);
      \draw[cdar] (1a) -- node[swap] {\(\Id_{\Hilm_t}\otimes_A\mu_{u,v}\)} (3);
      \draw[cdar] (3.east) -- node[swap] {\(\mu_{t,uv}\)} (4);
      \draw[cdar] (2.east) -- node {\(\mu_{tu,v}\)} (4);
    \end{tikzpicture}
    \]
  \item \label{enum:AHB1}%
    \(\Hilm_1\) is the identity Hilbert \(A,A\)-bimodule~\(A\);
  \item \label{enum:AHB2}%
    \(\mu_{t,1}\colon \Hilm_t\otimes_A A\congto \Hilm_t\) and
    \(\mu_{1,t}\colon A\otimes_A \Hilm_t\congto \Hilm_t\) for
    \(t\in S\) are the maps defined by
    \(\mu_{1,t}(a\otimes\xi)=a\cdot\xi\) and
    \(\mu_{t,1}(\xi\otimes a) = \xi\cdot a\) for \(a\in A\),
    \(\xi\in\Hilm_t\).
  \end{enumerate}
\end{definition}

Any \(S\)\nb-action by Hilbert bimodules comes with canonical
involutions \(J_t\colon \Hilm_t^*\to \Hilm_{t^*}\) and inclusion
maps \(j_{u,t}\colon \Hilm_t\to\Hilm_u\) for \(t \le u\) that
satisfy the conditions required for a saturated Fell bundle
in~\cite{Exel:noncomm.cartan} (see
\cite{Buss-Meyer:Actions_groupoids}*{Theorem~4.8}).  Thus
\(S\)\nb-actions by Hilbert bimodules are equivalent to saturated
Fell bundles over~\(S\).  A nonsaturated Fell bundle over~\(S\) is
turned into a saturated Fell bundle over another inverse semigroup
in~\cite{BussExel:InverseSemigroupExpansions}, such that the full
and reduced section \(\Cst\)\nb-algebras stay the same.  Therefore,
we usually restrict attention to saturated Fell bundles, which we
may replace by inverse semigroup actions as in
Definition~\ref{def:S_action_Cstar}.
Definition~\ref{def:S_action_Cstar} contains (twisted) actions by
partial automorphisms.

\begin{example}[Twisted actions of inverse semigroups, see
  \cite{BussExel:Regular.Fell.Bundle}*{Definition~4.1}]
  \label{ex:twisted actions}
  A \emph{twisted action of an inverse semigroup}~\(S\)
  \emph{by partial automorphisms} on a \(\Cst\)\nb-algebra~\(A\)
  consists of partial automorphisms
  \(\alpha_t\colon D_{t^*}\to D_t\)
  of~\(A\)
  for \(t\in S\)
  -- that is, \(D_t\)
  is an ideal in~\(A\)
  and~\(\alpha_t\)
  is a \Star{}isomorphism -- and unitary multipliers
  \(\omega(t, u)\in \UMult(D_{t u})\)
  for \(t,u \in S\),
  such that \(D_1=A\)
  and the following conditions hold for \(r,t,u\in S\)
  and \(e,f\in E(S)\defeq 
	\setgiven{s\in S}{s^2=s}\):
  \begin{enumerate}
  \item \(\alpha_r\circ \alpha_t= \Ad_{\omega(r,t)}\alpha_{r t}\);
  \item \(\alpha_r\big(a \omega(t,u)\big) \omega(r,t u)=
    \alpha_r(a)\omega(r,t)\omega(r t,u)\) for \(a\in D_{r^*}\cap
    D_{t u}\);
  \item \(\omega(e,f) = 1_{e f}\)
    and \(\omega(r,r^*r)=\omega(r r^*,r)=1_r\),
    where~\(1_r\) is the unit of \(\Mult(D_r)\);
  \item \(\omega(t^*,e)\omega(t^*e,t)a = \omega(t^*,t) a\)
    for all \(a\in D_{t^* e t}\).
  \end{enumerate}
  Let \( ( (\alpha_t)_{t\in S}, \omega(t, u))_{t,u\in S}\)
  be a twisted action as above.
  For \(t\in S\),
  let~\(\Hilm_t\)
  be the Hilbert \(A\)\nb-bimodule associated to the partial
  homeomorphism~\(\alpha_t\);
  this is~\(D_t\)
  as a Banach space, and we denote elements by~\(b\delta_t\)
  to highlight the \(t\in S\)
  in which we view \(b\in D_t \subseteq A\)
  as an element; the Hilbert \(A\)\nb-bimodule structure is defined
  by
  \begin{alignat*}{2}
    a \cdot (b\delta_t) &\defeq (ab)\delta_t,&\qquad
    (b\delta_t) \cdot a &\defeq
    \alpha_t(\alpha_t^{-1}(b)a)\delta_t,\\
    \BRAKET{c\delta_t}{b\delta_t} &\defeq c b^*,&\qquad
    \braket{c\delta_t}{b \delta_t}&\defeq \alpha_t^{-1}(c^*b)
  \end{alignat*}
  for \(a\in A\), \(b,c\in D_t\).
  The formula
  \[
  \mu_{t,u}(a\delta_t\otimes b\delta_u)
  = \alpha_t(\alpha_t^{-1}(b)a)\omega(t,u)\delta_t
  \]
  well defines a Hilbert bimodule isomorphism
  \(\mu_{t,u}\colon \Hilm_t\otimes_A \Hilm_u\congto \Hilm_{tu}\)
  for  \(t,u\in S\) by \cite{BussExel:Regular.Fell.Bundle}*{Theorem~4.12}.
  And \((\Hilm_t, \mu_{t,u})_{t,u\in S}\)
  is an action of~\(S\)
  on~\(A\)
  by Hilbert bimodules.
  Fell bundles over~\(S\)
  that come from twisted partial actions are characterised in
  \cite{BussExel:Regular.Fell.Bundle}*{Corollary~4.16},
  where they are called ``regular.''
\end{example}

\begin{example}[Inverse semigroup gradings]
  An \(S\)\nb-grading \((B_t)_{t\in S}\) of a
  \(\Cst\)\nb-algebra~\(B\) as in Definition~\ref{def:S-graded}
  gives a Fell bundle over~\(S\), using the multiplication and
  involution in~\(B\).  This bundle is saturated if and only the
  grading is saturated.  Then it is an action of~\(S\) by Hilbert
  bimodules on \(A\defeq B_1\subseteq B\).  Thus inverse semigroup
  actions by Hilbert bimodules are inevitable in the study of
  regular inclusions (see also
  Proposition~\ref{prop:regular_vs_inverse_semigroups}).  Any
  inverse semigroup action
  \(\Hilm=\bigl((\Hilm_t)_{t\in S}, (\mu_{t,u})_{t,u\in S}\bigr)\)
  on a \(\Cst\)\nb-algebra~\(A\) comes from an \(S\)\nb-graded
  \(\Cst\)\nb-algebra.  Namely, embed the spaces~\(\Hilm_t\) for
  \(t\in S\) into, say, the full crossed product.  They form an
  \(S\)\nb-grading of the full crossed product.
\end{example}

Fell bundles over étale groupoids may be described through
\(S\)\nb-actions.

\begin{example}[Fell bundles over groupoids]
  \label{ex:fell_bundles_over_groupoids}
  Let~\(\Gr\) be an \emph{étale groupoid} with locally compact and
  Hausdorff unit space~\(X\).  So the range and source maps
  \(\rg,\s\colon \Gr \rightrightarrows X\) are local homeomorphisms.
  A \emph{Fell bundle over~\(\Gr\)} is defined, for instance, in
  \cite{BussExel:Fell.Bundle.and.Twisted.Groupoids}*{Section~2}.  It
  is an upper semicontinous bundle \(\A=(A_\gamma)_{\gamma\in \Gr}\)
  of complex Banach spaces equipped with a continuous involution
  \(^*\colon \A\to \A\) and a continuous multiplication
  \({\cdot}\colon \setgiven{(a,b)\in \A\times \A} {a\in
    A_{\gamma_1},\ b \in A_{\gamma_2},\ (\gamma_1,\gamma_2)\in
    \Gr^{(2)}} \to \A\), which satisfy some natural properties.  The
  set of (open) bisections
  \[
    \Bis(\Gr)\defeq \setgiven{U\subseteq \Gr}{U\text{ is open and }
      s|_U,r|_U \text{ are injective}}
  \]
  is a unital inverse semigroup
  with
  \(U\cdot V\defeq \setgiven{\gamma\cdot\eta}{\gamma\in U,\ \eta\in
    V}\) for \(U,V\in \Bis(\Gr)\).  Namely, \(X\in \Bis(\Gr)\) is
  the unit element and
  \(U^*\defeq \setgiven{\gamma^{-1}}{\gamma\in U}\) for
  \(U\in \Bis(\Gr)\).  Let~\(A_U\) for \(U\in \Bis(\Gr)\) be the
  space of continuous sections of \((A_\gamma)_{\gamma\in \Gr}\)
  vanishing outside~\(U\).  The spaces \(A_{r(U)}\) and~\(A_{s(U)}\)
  are closed two-sided ideals in \(A=A_X\), and~\(A_U\) becomes a
  Hilbert \(A_{r(U)}\)-\(A_{s(U)}\)-bimodule with the bimodule
  structure
  \((a\cdot \xi \cdot b)(\gamma)\defeq a(r(\gamma))\xi(\gamma)
  b(s(\gamma))\) and the right and left inner products
  \(\braket{\xi}{\eta}(x)\defeq
  \xi(s|_U^{-1}(x))^*\eta(s|_U^{-1}(x))\),
  \(\BRAKET{\xi}{\eta}(x)\defeq
  \xi(r|_U^{-1}(x))\eta(r|_U^{-1}(x))^*\).  For \(U, V\in S\), the
  formula
  \[
    \mu_{U,V}(\xi\otimes \eta) (\gamma)\defeq
    \xi\Bigl(r|_U^{-1}\bigl(r(\gamma)\bigr)\Bigr)
    \eta\Bigl(s|_V^{-1}(s\bigl(\gamma)\bigr)\Bigr), \qquad
    \gamma \in UV,
  \]
  defines a Hilbert bimodule map
  \(\mu_{U,V}\colon A_U\otimes_A A_V\to A_{UV}\).  This data defines
  a Fell bundle over~\(\Bis(\Gr)\), which is saturated if \(\A\) is
  (see \cites{BussExel:Fell.Bundle.and.Twisted.Groupoids,
    Kwasniewski-Meyer:Essential} for details).  If \(\A\) is not
  saturated, this may be naturally turned into a saturated Fell
  bundle over another inverse semigroup in a number of ways.  For
  instance, for any inverse subsemigroup \(S\subseteq \Bis(\Gr)\) we
  may let~\(\tilde{S}\) be the family of all Hilbert subbimodules
  of~\(A_U\) for \(U\in S\).  Equivalently, elements
  of~\(\tilde{S}\) are of the form \(A_U\cdot I\) for \(U\in S\) and
  \(I \triangleleft A\).  Then~\(\tilde{S}\), with operations
  defined as above, forms an inverse semigroup that acts by Hilbert
  bimodules on~\(A\) (see
  \cite{Kwasniewski-Meyer:Essential}*{Lemma~7.3}).
\end{example}

Let~\(A\) be a \(\Cst\)\nb-algebra with an action~\(\Hilm\) of a
unital inverse semigroup~\(S\).  Let \(\dual{A}\) and
\(\widecheck{A}=\Prim(A)\) be the space of irreducible
representations and the primitive ideal space of~\(A\),
respectively.  The action of~\(S\) on~\(A\) induces actions
\(\dual{\Hilm}= (\dual{\Hilm}_t)_{t\in S}\) and
\(\widecheck{\Hilm}=(\widecheck{\Hilm}_t)_{t\in S}\) of~\(S\) by
partial homeomorphisms on \(\dual{A}\) and~\(\widecheck{A}\),
respectively (see \cite{Buss-Meyer:Actions_groupoids}*{Lemma~6.12},
\cite{Kwasniewski-Meyer:Essential}*{Section~2.3}).  The
homeomorphisms
\[
  \widecheck{\Hilm}_t\colon \widecheck{\s(\Hilm)}_t \congto
  \widecheck{\rg(\Hilm)}_{t},\qquad
  \dual{\Hilm}_t\colon \dual{\s(\Hilm)}_t \congto
  \dual{\rg(\Hilm)}_{t}
\]
are given by Rieffel's correspondence and induction of
representations, respectively.  Any action by partial homeomorphisms
has a transformation groupoid, which is \'etale (see
\cite{Exel:Inverse_combinatorial}*{Section~4} or
\cite{Kwasniewski-Meyer:Essential}*{Section~2.1} for details).

\begin{definition}[\cites{Kwasniewski-Meyer:Stone_duality, Kwasniewski-Meyer:Essential}]
  \label{def:dual_groupoid}
  We call \(\dual{\Hilm}\) and~\(\widecheck{\Hilm}\) \emph{dual
    actions} to the action~\(\Hilm\) of~\(S\) on~\(A\).  The
  transformation groupoids \(\dual{A}\rtimes S\) and
  \(\widecheck{A}\rtimes S\) are called \emph{dual groupoids}
  of~\(\Hilm\).
\end{definition}

\begin{example}[Dual groupoids to Fell bundles]
  \label{exa:transformation_groupoid_Fell}
  Let~\(\A\) be a Fell bundle over an étale groupoid~\(\Gr\) with
  locally compact Hausdorff object space~\(X\).  Then~\(\Gr\) acts
  naturally both on the primitive ideal space~\(\check{A}\) and the
  spectrum~\(\dual{A}\) of the \(\Cst\)\nb-algebra
  \(A\defeq \Cont_0(X, \A)\).  More specifically, every irreducible
  representation of~\(A\) factors through the evaluation map
  \(A \to A_x\) for some \(x\in X\), and this defines a continuous
  map \(\psi\colon \dual{A} \to X\), which is the anchor map of the
  \(\Gr\)\nb-action on~\(\dual{A}\) given by the partial
  homeomorphisms
  \(\psi_\gamma\colon\dual{A_{\s(\gamma)}} \to
  \dual{A_{\rg(\gamma)}}\) induced by the Hilbert
  \(A_{\rg(\gamma)}\)-\(A_{\s(\gamma)}\)-bimodules \(A_\gamma\) for
  \(\gamma\in \Gr\) (see, for instance,
  \cite{Ionescu-Williams:Remarks_ideal_structure}*{Section~2}).  The
  corresponding transformation groupoid is
  \[
    \dual{A}\rtimes \Gr\defeq \setgiven{([\pi],  \gamma)\in
      \dual{A}\times  \Gr}{\psi([\pi])=s(\gamma)}.
  \]
  Two elements \(([\rho],\eta)\) and \(([\pi], \gamma)\) are
  composable if and only if \([\rho]=\psi_{\gamma}([\pi])\), and
  then their composite is \(([\pi],\eta\gamma)\).  The inverse is
  \(([\pi],\gamma)^{-1} = (\psi_{\gamma}([\pi]),\gamma^{-1})\).  The
  maps \(\psi\colon \dual{A} \to X\) and
  \(\psi_\gamma\colon\dual{A_{\s(\gamma)}} \to
  \dual{A_{\rg(\gamma)}}\) for \(\gamma\in \Gr\), factor through to
  a \(\Gr\)\nb-action on~\(\check{A}\), which defines a
  transformation groupoid \(\check{A}\rtimes \Gr\)
  (see~\cite{Ionescu-Williams:Remarks_ideal_structure}).  These
  actions give rise to transformation groupoids
  \(\check{A}\rtimes \Gr\) and \(\dual{A}\rtimes \Gr\).  Let
  \(S\subseteq \Bis(\Gr)\) be a unital, inverse subsemigroup of
  bisections of~\(\Gr\) which is \emph{wide} in the sense that
  \(\bigcup S = \Gr\) and \(U\cap V\) is a union of bisections
  in~\(S\) for all \(U,V\in S\).  Then turning this into the inverse
  semigroup action~\(\tilde{S}\) on~\(A\) described in
  Example~\ref{ex:fell_bundles_over_groupoids}, we have natural
  isomorphisms of groupoids (see
  \cite{Kwasniewski-Meyer:Essential}*{Remark~7.4})
  \[
    \dual{A}\rtimes \Gr\cong \dual{A}\rtimes\tilde{S},\qquad
    \dual{A}\rtimes \Gr\cong \dual{A}\rtimes\tilde{S}.
  \]
  We call \(\check{A}\rtimes \Gr\) and \(\dual{A}\rtimes \Gr\)
  \emph{dual groupoids} for~\(\A\).
\end{example}

\subsection{Crossed products}

Fix an action
\(\Hilm=\bigl((\Hilm_t)_{t\in S}, (\mu_{t,u})_{t,u\in S}\bigr)\)
of~\(S\) on~\(A\).  For any \(t\in S\), let \(\rg(\Hilm_t)\) and
\(\s(\Hilm_t)\) be the ideals in~\(A\) generated by the left and
right inner products of vectors in~\(\Hilm_t\), respectively.
Thus~\(\Hilm_t\) is an
\(\rg(\Hilm_t)\)-\(\s(\Hilm_t)\)-imprimitivity bimodule.  If
\(v\le t\), then the inclusion map~\(j_{t,v}\) restricts to a
Hilbert bimodule isomorphism
\(\Hilm_v \congto \rg(\Hilm_v) \cdot \Hilm_t = \Hilm_t\cdot
\s(\Hilm_v)\).  For \(t,u\in S\) and for all \(v\le t,u\) this gives
Hilbert bimodule isomorphisms
\(\vartheta^v_{u,t}\colon \Hilm_t\cdot \s(\Hilm_v)
\xleftarrow[\cong]{j_{t,v}} \Hilm_v \xrightarrow[\cong]{j_{u,v}}
\Hilm_u\cdot \s(\Hilm_v)\).  Let
\begin{equation}
  \label{eq:Itu}
  I_{t,u} \defeq \cl{\sum_{v \le t,u} \s(\Hilm_v)}
\end{equation}
be the closed ideal generated by~\(\s(\Hilm_v)\) for \(v\le t,u\).
It is contained in \(\s(\Hilm_t)\cap\s(\Hilm_u)\), and the inclusion
may be strict.  There is a unique Hilbert bimodule isomorphism
\begin{equation}
  \label{eq:Def-thetas}
  \vartheta_{u,t}\colon \Hilm_t\cdot I_{t,u}
  \congto \Hilm_u\cdot I_{t,u}
\end{equation}
that restricts to~\(\vartheta_{u,t}^v\) on
\(\Hilm_t\cdot \s(\Hilm_v)\) for all \(v\le t,u\) by
\cite{Buss-Exel-Meyer:Reduced}*{Lemma~2.4}.  The
\emph{\Star{}algebra} \(A\rtimes_\alg S\) of the action \(\Hilm\) is
defined as the quotient vector space of
\(\bigoplus_{t\in S} \Hilm_t\) by the linear span of
\(\vartheta_{u,t}(\xi)\delta_u-\xi\delta_t\) for all \(t,u\in S\)
and \(\xi\in\Hilm_t\cdot I_{t,u}\).  The algebraic structure on
\(A\rtimes_\alg S\) is given by the multiplication
maps~\(\mu_{t,u}\) and the involutions~\(J_t\).

\begin{definition}
  The \emph{\textup{(}full\textup{)} crossed product} \(A\rtimes S\)
  of the action~\(\Hilm\) is the maximal \(\Cst\)\nb-completion of
  the \Star{}algebra \(A\rtimes_\alg S\) described above.
\end{definition}

\begin{definition}
  \label{def:representation}
  A \emph{representation} of~\(\Hilm\) in a
  \(\Cst\)\nb-algebra~\(B\) is a family of linear maps
  \(\pi_t\colon \Hilm_t\to B\) for \(t\in S\) such that
  \(\pi_{tu}(\mu_{t,u}(\xi\otimes\eta)) = \pi_t(\xi)\pi_u(\eta)\),
  \(\pi_t(\xi_1)^*\pi_t(\xi_2) = \pi_1(\braket{\xi_1}{\xi_2})\) and
  \(\pi_t(\xi_1)\pi_t(\xi_2)^* = \pi_1(\BRAKET{\xi_1}{\xi_2})\) for
  all \(t,u\in S\), \(\xi, \xi_1,\xi_2 \in\Hilm_t\),
  \(\eta\in\Hilm_u\).  The representation is called \emph{injective}
  if~\(\pi_1\) is injective; then all the maps~\(\pi_t\) for
  \(t\in S\) are isometric.
\end{definition}

Any representation~\(\pi\) of~\(\Hilm\) in~\(B\) induces a
\Star{}homomorphism \(\pi\rtimes S\colon A\rtimes S\to B\).
 Conversely, every \Star{}homomorphism
\(A\rtimes S\to B\) is equal to \(\pi\rtimes S\) for a unique
representation~\(\pi\) (compare
\cite{Buss-Exel-Meyer:Reduced}*{Proposition~2.9}).  This universal
property determines \(A\rtimes S\) uniquely up to isomorphism.

The \(\Cst\)\nb-algebra \(A\rtimes S\) is canonically isomorphic to
the full section \(\Cst\)\nb-algebra of the Fell bundle over~\(S\)
corresponding to~\(\Hilm\).  The reduced section \(\Cst\)\nb-algebra
of a Fell bundle over~\(S\) was first defined using inducing pure states, 
see~\cite{Exel:noncomm.cartan}. 
An equivalent definition appears
in~\cite{Buss-Exel-Meyer:Reduced}, where this is called the reduced
crossed product \(A\rtimes_\red S\) of the action~\(\Hilm\) 
(the reduced \(\Cst\)\nb-algebra obtained in \cite{MR3690239},  
using a regular representation, is in general different).  
  The
main ingredient in the construction
in~\cite{Buss-Exel-Meyer:Reduced} is the weak conditional
expectation for \(A\rtimes S \to A''\) described in
\cite{Buss-Exel-Meyer:Reduced}*{Lemma~4.5} through the formula
\begin{equation}
  \label{eq:formula-cond.exp}
  E(\xi\delta_t)=\stlim_i\vartheta_{1,t}(\xi\cdot u_i),
\end{equation}
where \(\xi\in \Hilm_t\), \(t\in S\), \((u_i)\) is an approximate
unit for~\(I_{1,t}\) and \(\stlim\) denotes the limit in the strict
topology on \(\Mult(I_{1,t})\subseteq A''\).  This weak expectation is
symmetric by \cite{Kwasniewski-Meyer:Essential}*{Theorem~3.22}.

\begin{definition}
  \label{def:reduced_crossed}
  The \emph{reduced crossed product} is the quotient
  \(\Cst\)\nb-algebra \(A\rtimes_\red S \defeq (A\rtimes S)/\Null_E\).
  Hence there is a canonical surjection
  \(\Lambda\colon A\rtimes S \to A\rtimes_\red S\), and
  \begin{equation}
    \label{eq:kernel}
    \ker \Lambda =\Null_E= \setgiven{b\in A\rtimes S}{E(b^* b)=0}.
  \end{equation}
  So the induced weak expectation~\(E_\red\) on \(A\rtimes_\red S\)
  is faithful.
\end{definition}

\begin{remark}
  \label{rem:cross_product_graded}
  The canonical maps from \(A\rtimes_\alg S\) to \(A\rtimes S\) and
  to \(A\rtimes_\red S\) are injective by
  \cite{Buss-Exel-Meyer:Reduced}*{Proposition~4.3}.  In particular,
  both \(A\rtimes S\) and \(A\rtimes_\red S\) are naturally
  \(S\)\nb-graded with the same Fell bundle \((\Hilm_t)_{t\in S}\)
  over~\(S\).
\end{remark}

\begin{definition}
  \label{def:Hausdorffness}
  The action~\(\Hilm=(\Hilm_t)_{t\in S}\) is called \emph{closed} if
  the weak expectation \(E\colon A\rtimes S\to A''\) given
  by~\eqref{eq:formula-cond.exp} is \(A\)\nb-valued.  So it is a
  genuine conditional expectation
  \(A\rtimes S\to A\subseteq A\rtimes S\).
\end{definition}

\begin{remark}
  \label{rem:closed_actions_vs_groupoids}
  The action~\(\Hilm\) is closed if and only if the unit space
  \(\dual{A}\) is closed in the dual groupoid \(\dual{A}\rtimes S\)
  or, equivalently, \(\check{A}\) is closed
  in~\(\check{A}\rtimes S\) (see
  \cite{Buss-Exel-Meyer:Reduced}*{Theorem~6.5} and
  \cite{Kwasniewski-Meyer:Essential}*{Proposition~3.20}).  This
  explains the name.  By
  \cite{Buss-Exel-Meyer:Reduced}*{Proposition~6.3}, \(\Hilm\) is
  closed if and only if the ideal~\(I_{t,1}\) defined
  in~\eqref{eq:Itu} is complemented in~\(s(\Hilm_t)\) for each
  \(t\in S\).
\end{remark}

\begin{example}[\(\Cst\)\nb-algebras of Fell bundles over groupoids]
  \label{ex:full_reduced_section_algebras}
  Retain the notation from
  Example~\ref{ex:fell_bundles_over_groupoids}.  The \Star{}algebra
  associated to the Fell bundle~\(\A\) over the \'etale
  groupoid~\(\Gr\) is denoted by \(\mathfrak{S}(\Gr,\A)\).  It is
  the linear span of compactly supported continuous sections
  \(A_{U}=\Contc(U,\A)\) for all bisections \(U\in \Bis( \Gr)\) with
  a convolution and involution given by
  \[
    (f*g)(\gamma) \defeq \sum_{\rg(\eta) = \rg(\gamma)}
    f(\eta)\cdot g(\eta^{-1}\cdot \gamma),\qquad
    (f^*)(\gamma) \defeq f(\gamma^{-1})^*
  \]
  for all \(f,g\in \mathfrak{S}(\Gr,\A)\), \(\gamma\in \Gr\).  The
  \emph{full section \(\Cst\)\nb-algebra} \(\Cst(\Gr,\A)\) is
  defined as the maximal \(\Cst\)\nb-completion of the
  \Star{}algebra \(\mathfrak{S}(\Gr,\A)\).  This \(\Cst\)\nb-algebra
  contains \(A\defeq \Cst(X,\A)\) as a \(\Cst\)\nb-subalgebra.  It
  is equipped with a generalised expectation
  \(E\colon \Cst(\Gr,\A) \to \mathfrak{B}(X,\A)\), where
  \(\mathfrak{B}(X,\A)\) is the \(\Cst\)\nb-algebra of bounded Borel
  sections of the \(\Cst\)\nb-bundle~\(\A|_X\) and~\(E\) on
  \(\mathfrak{S}(\Gr,\A)\) restricts sections
  to~\(X\).  The \emph{reduced section \(\Cst\)\nb-algebra} can be
  defined as the quotient
  \[
    \Cst_\red(\Gr,\A)\defeq  \Cst(\Gr,\A)/\Null_E.
  \]
  All this follows from
  \cite{Kwasniewski-Meyer:Essential}*{Proposition~7.10}.  Let
  \(S\subseteq \Bis(\Gr)\) be a unital, inverse subsemigroup of
  bisections of~\(\Gr\) which is \emph{wide} in the sense that
  \(\bigcup S = \Gr\) and \(U\cap V\) is a union of bisections
  in~\(S\) for all \(U,V\in S\).  If~\(\A\) is saturated, then the spaces
  \((A_U)_{U\in S}\), with operations inherited from
  \(\mathfrak{S}(\Gr,\A)\), form an action of~\(S\) on~\(A\) by
  Hilbert bimodules, and the associated crossed products are
  isomorphic to the corresponding section \(\Cst\)-algebras.  In
  general, we modify the construction as follows.  The convolution
  and multiplication from \(\mathfrak{S}(\Gr,\A)\) make
  \((A_U\cdot I)_{{U\in S, I \triangleleft A} }\) an action by
  Hilbert bimodules on~\(A\), and there are natural isomorphisms
  \[
    \Cst(\Gr,\A)\cong A\rtimes \tilde{S},\qquad
    \Cst_\red(\Gr,\A)\cong A\rtimes_{\red} \tilde{S}
  \]
  (see \cite{Kwasniewski-Meyer:Essential}*{Propositions 7.6 and~7.9}).
  If the groupoid~\(\Gr\) is Hausdorff, that is, the unit
  space~\(X\) is closed in~\(\Gr\), then the inverse semigroup
  action~\(\tilde{S}\) is closed.  The converse implication holds
  if~\(\A\) is a Fell line bundle.  Fell line bundles over~\(\Gr\)
  are equivalent to twists of~\(\Gr\).  If \((\Gr,\Sigma)\) is a
  twisted groupoid and~\(\LL\) the corresponding line bundle, then
  \(\Cst(\Gr,\Sigma)\cong \Cst(\Gr,\LL)\cong A\rtimes S\) and
  \(\Cst_\red(\Gr,\Sigma)\cong \Cst_\red(\Gr,\LL)\cong
  A\rtimes_{\red} S\).  That is, twisted groupoid
  \(\Cst\)\nb-algebras are also modelled by inverse semigroup
  actions.
\end{example}

\subsection{Essential crossed products}

The \emph{local multiplier algebra} \(\Locmult(A)\) of~\(A\) is the
inductive limit of the multiplier algebras \(\Mult(J)\), where~\(J\)
runs through the directed set of essential ideals in~\(A\)
(see~\cite{Ara-Mathieu:Local_multipliers}).  A key idea
in~\cite{Kwasniewski-Meyer:Essential} is a natural generalised
expectation \(EL\colon A\rtimes S\to \Locmult(A)\) with values in
\(\Locmult(A)\).  It is defined as follows: for each
\(\xi\in \Hilm_t\), \(t\in S\), the element
\(EL(\xi)\in \Mult(I_{1,t}\oplus I_{1,t}^\bot)\subseteq
\Locmult(A)\) is given by
\[
  EL(\xi)(u+v)\defeq \vartheta_{1,t}(\xi u) \in A
\]
for \(u\in I_{1,t}\), \(v\in I_{1,t}^\bot\).  This generalised
expectation is symmetric by
\cite{Kwasniewski-Meyer:Essential}*{Theorem~4.11}.  Hence~\(EL\)
factors through a faithful pseudo-expectation on the quotient
\[
  A\rtimes_\ess S\defeq (A\rtimes S)/\Null_{EL}.
\]
There is a canonical embedding
\(\iota\colon \Locmult(A) \hookrightarrow \hull(A)\) compatible with
the inclusions \(A\subseteq \Locmult(A)\) and
\(A\subseteq \hull(A)\) (see
\cite{Frank:Injective_local_multiplier}*{Theorem~1}).  Thus
the \emph{canonical \(\Locmult\)-expectation}~\(EL\)
may be viewed as a pseudo-expectation.

\begin{definition}[\cite{Kwasniewski-Meyer:Essential}*{Definition~4.4}]
  We call \(A\rtimes_\ess S\) the \emph{essential crossed product}.
\end{definition}
 For a closed action, both
\(E\) and~\(EL\) take values in~\(A\) and then
\(A\rtimes_\ess S = A\rtimes_\red S\).  In general,
\(\Null_{E}\subseteq \Null_{EL}\) and  there are surjective maps
\[
  A\rtimes S \onto  A\rtimes_\red S \onto A\rtimes_\ess S.
\]
A \(\Cst\)\nb-algebra~\(B\) with \Star{}epimorphisms
\(A\rtimes S \onto B \onto A\rtimes_\ess S\) that compose to the
canonical quotient map \(A\rtimes S \to A\rtimes_\ess S\) is called
an \emph{exotic crossed product}
(see~\cite{Kwasniewski-Meyer:Essential}).  The following
proposition characterises when the reduced and essential crossed
products coincide:

\begin{proposition}[\cite{Kwasniewski-Meyer:Essential}*{Corollary~4.17}]
  \label{prop:ess_vs_reduced}
  \(A\rtimes_\ess S = A\rtimes_\red S\) if and only if for every
  \(b\in (A\rtimes S)^+\setminus \{0\}\) there is \(\varepsilon>0\)
  such that
  \(\setgiven{\pi\in\dual{A}}{\norm{\pi''(E(b))}> \varepsilon}\) has
  nonempty interior, if and only if for every
  \(b\in (A\rtimes S)^+\setminus \{0\}\) the set
  \(\setgiven{\pi\in\dual{A}}{\norm{\pi''(E(b))}\neq0}\) is not
  meagre in \(\dual{A}\).
\end{proposition}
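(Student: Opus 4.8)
The plan is to reduce the statement to the equality of null ideals \(\Null_{EL}=\Null_E\) in \(A\rtimes S\), and then to a topological condition on the operator field \(\pi\mapsto\pi''(E(b))\) over \(\dual A\). Since \(A\rtimes_\red S=(A\rtimes S)/\Null_E\) and \(A\rtimes_\ess S=(A\rtimes S)/\Null_{EL}\) with \(\Null_E\subseteq\Null_{EL}\), the first condition is equivalent to \(\Null_{EL}=\Null_E\). Both \(E\) and \(EL\) are symmetric, so for either of them the set \(\setgiven{b}{E(b^*b)=0}\) is a left ideal (because \((ab)^*(ab)\le\norm{a}^2 b^*b\) and \(E\) is positive), lies in \(\ker E\) (Kadison--Schwarz), and is closed under adjoints (symmetry); hence it is a two-sided ideal contained in \(\ker E\), so it coincides with \(\Null_E\), and likewise \(\Null_{EL}=\setgiven{b}{EL(b^*b)=0}\). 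Thus \(\Null_{EL}=\Null_E\) amounts to: \(EL(x)=0\) implies \(E(x)=0\) for every \(x\in(A\rtimes S)^+\). A positive \(x\) lies in \(\Null_E\) exactly when \(E(x)=0\) (from \(x^2\le\norm{x}x\) we get \(E(x^2)\le\norm{x}E(x)\), and the converse is Kadison--Schwarz), and \(EL\) factors through \(A\rtimes_\red S\) since \(\Null_E\subseteq\ker EL\); so the condition becomes: every \(\bar b\in(A\rtimes_\red S)^+\setminus\{0\}\) has \(EL(\bar b)\neq0\) in \(\Locmult(A)\), where \(E_\red(\bar b)=E(b)\) and \(EL(\bar b)\) denote the induced maps.

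The key lemma I would prove is: \(EL(\bar b)=0\) in \(\Locmult(A)\) if and only if \(Z_{\bar b}\defeq\setgiven{\pi\in\dual A}{\pi''(E_\red(\bar b))\neq0}\) is nowhere dense in \(\dual A\). From the defining formula for \(EL\), for \(\xi\in\Hilm_t\) the multiplier \(EL(\xi\delta_t)\in\Mult(I_{1,t}\oplus I_{1,t}^\bot)\) acts on the essential ideal \(I_{1,t}\oplus I_{1,t}^\bot\) just as \(E(\xi\delta_t)\) does; hence for finitely supported \(b=\sum_{t\in F}\xi_t\delta_t\) the multipliers \(EL(b)\) and \(E(b)\) agree on the essential ideal \(J_0\defeq\bigcap_{t\in F}(I_{1,t}\oplus I_{1,t}^\bot)\), and by density the same holds for all \(b\in A\rtimes S\) with a suitable essential ideal. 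Now \(\Locmult(A)=\varinjlim_J\Mult(J)\) over essential ideals \(J\idealin A\), which correspond bijectively to dense open subsets \(\dual J\subseteq\dual A\); hence \(EL(\bar b)=0\) iff its image in \(\Mult(J)\) vanishes for some essential \(J\subseteq J_0\). That image is the restriction of \(E_\red(\bar b)\) to \(J\), a multiplier of \(J\), and since the irreducible representations of \(J\) separate \(\Mult(J)\), it vanishes iff \(\pi''(E_\red(\bar b))=0\) for all \(\pi\in\dual J\). So \(EL(\bar b)=0\) iff \(Z_{\bar b}\) misses some dense open set, i.e.\ iff \(\overline{Z_{\bar b}}\) has empty interior. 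Combined with the first paragraph: \(A\rtimes_\ess S=A\rtimes_\red S\) iff \(\overline{Z_{\bar b}}\) has nonempty interior for every \(\bar b\in(A\rtimes_\red S)^+\setminus\{0\}\).

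It remains to match this with the two stated conditions, which I would arrange in a cycle. If for every \(b\) some cut-down \(\setgiven{\pi}{\norm{\pi''(E(b))}>\varepsilon}\) has nonempty interior, then every \(Z_{\bar b}\) is not meagre, because this set is contained in \(Z_{\bar b}\) and nonempty open subsets of the Baire space \(\dual A\) are not meagre; and ``\(Z_{\bar b}\) not meagre'' forces ``\(\overline{Z_{\bar b}}\) has nonempty interior'', since nowhere dense sets are meagre. This yields (cut-down condition) \(\Rightarrow\) (non-meagreness of all \(Z_{\bar b}\)) \(\Rightarrow\) \((A\rtimes_\ess S=A\rtimes_\red S)\). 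To close the cycle, assume \(A\rtimes_\ess S=A\rtimes_\red S\) and take \(\bar b\in(A\rtimes_\red S)^+\setminus\{0\}\); then \(\overline{Z_{\bar b}}\) has nonempty interior, and the point is to turn this into a genuine \(\varepsilon\)-cut-down with nonempty interior. For this I would replace \(\bar b\) by \(a\bar{b}a\) for a positive \(a\in A\) whose open support lies inside an open set on which \(Z_{\bar b}\) is dense --~using that \(E_\red\) is \(A\)\nb-bilinear, so \(\pi''(E_\red(a\bar{b}a))=\pi(a)\pi''(E_\red(\bar b))\pi(a)\)~-- and then run a Baire-category argument on \(\dual A\): writing \(Z_{\bar b}=\bigcup_n\setgiven{\pi}{\norm{\pi''(E_\red(\bar b))}\ge 1/n}\), one locates an \(n\) for which the \(1/n\)-level set is dense in some nonempty open set, and after a further localisation this becomes a cut-down with nonempty interior. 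This last step --~promoting ``\(\overline{Z_{\bar b}}\) has nonempty interior'' to ``some \(\varepsilon\)-cut-down has nonempty interior'', which fails for arbitrary Borel operator fields and must use both the \(A\)\nb-bilinearity of \(E_\red\) and the Baire property of \(\dual A\)~-- is where I expect the main difficulty.
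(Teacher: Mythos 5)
The paper does not prove this statement; it is quoted verbatim from \cite{Kwasniewski-Meyer:Essential}*{Corollary~4.17}, so I am judging your argument on its own. Its skeleton is right: the reduction to \(\Null_{EL}=\Null_E\), hence to faithfulness of the induced map on \((A\rtimes_\red S)^+\setminus\{0\}\) (you rightly read the quantifier this way -- taken literally over \((A\rtimes S)^+\setminus\{0\}\) the conditions would fail at any nonzero positive element of \(\Null_E\)), and the computation that \(E(b)\) and \(EL(b)\) act identically on the essential ideal \(J_0=\bigcap_{t\in F}(I_{1,t}\oplus I_{1,t}^\bot)\) when \(b\) is supported on a finite set \(F\), are both correct. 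The first genuine gap is the phrase ``by density \dots with a suitable essential ideal'': for general \(b\), \(EL(b)\) lies only in the completion \(\Locmult(A)=\varinjlim \Mult(J)\) and need not belong to any single \(\Mult(J)\), so there is no essential ideal on which it agrees with \(E(b)\). Consequently your key lemma is false in one direction: \(EL(b)=0\) forces each level set \(\setgiven{\pi}{\norm{\pi''(E(b))}>\varepsilon}\) to be nowhere dense and hence \(Z_b\) to be \emph{meagre}, but not \(Z_b\) to be nowhere dense (take \(b=\sum 2^{-n}b_n\) with finitely supported positive \(b_n\in\Null_{EL}\) whose supports \(Z_{b_n}\) accumulate densely; then \(EL(b)=0\) while \(Z_b=\bigcup_n Z_{b_n}\) is dense). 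Your implication (non-meagreness) \(\Rightarrow\) (\(A\rtimes_\ess S=A\rtimes_\red S\)) invokes exactly this false direction; with the corrected lemma (``meagre'') it becomes immediate.

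The second gap is the step you flag yourself, (\(A\rtimes_\ess S=A\rtimes_\red S\)) \(\Rightarrow\) (\(\varepsilon\)\nb-cut-down), and the sketched repair does not work: \(\pi''(E(aba))=\pi(a)\pi''(E(b))\pi(a)\) can vanish at representations where \(\pi''(E(b))\neq0\), so cutting by \(a\in A^+\) does not localise the support of \(E(b)\); and since the level sets are not closed, Baire category only yields non-meagre pieces of them, never open subsets. The missing observation, which repairs both gaps at once and makes all three conditions equivalent element by element, is that for finitely supported \(b\) the element \(m\defeq EL(b)\) is a genuine multiplier of the essential ideal~\(J_0\), and for \(m\in\Mult(J_0)\) the set
\[
  \setgiven{\pi\in\dual{J_0}}{\norm{\pi''(m)}>\varepsilon}
  =\bigcup_{a\in J_0}\setgiven{\pi\in\dual{J_0}}{\pi\bigl((m^*m-\varepsilon^2)_+a\bigr)\neq0}
\]
is \emph{open}. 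Hence \(\norm{EL(b)}=\sup_{\pi\in\dual{J_0}}\norm{\pi''(E(b))}\), and \(EL(b)\neq0\) immediately produces a nonempty open set on which \(\norm{\pi''(E(b))}>\varepsilon\); the case of general \(b\) follows by approximating in norm with finitely supported elements, since an \(\eta\)\nb-close approximant perturbs \(\norm{\pi''(E(\cdot))}\) by at most \(\eta\) uniformly in \(\pi\). After this, the only Baire-category input needed is that nonempty open subsets of \(\dual{A}\) are non-meagre.
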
	

\begin{example}[Essential section \(\Cst\)\nb-algebras]
  \label{ex:essential_section_algebras}
  Let~\(\A\) be a Fell bundle over an \'etale groupoid~\(\Gr\) with
  locally compact Hausdorff unit space~\(X\).  There is a canonical
  generalised expectation \(EL\colon \Cst(\Gr,\A) \to \Locmult(A)\)
  (see \cite{Kwasniewski-Meyer:Essential}*{Section~7.4}).  Let
  \[
    \mathfrak{M}(X, \A) \defeq \setgiven{f\in \mathfrak{B}(X,\A)}
    { f \text{ vanishes on a comeagre set}}.
  \]
  If the bundle~\(\A|_X\) is continuous, then there is a natural
  embedding
  \(\Locmult(A)\hookrightarrow \mathfrak{B}(X,\A) / \mathfrak{M}(X,
  \A)\), and~\(EL\) is the composite of~\(E\) and the quotient map
  \(\mathfrak{B}(X,\A) \onto \mathfrak{B}(X,\A)/\mathfrak{M}(X,
  \A)\).  If the bundle is discontinuous, we define~\(EL\) using the
  isomorphism \(\Cst(\Gr,\A)\cong A\rtimes \tilde{S}\) from
  Example~\ref{ex:full_reduced_section_algebras}.  The
  \emph{essential section \(\Cst\)\nb-algebra} is defined in
  \cite{Kwasniewski-Meyer:Essential}*{Definition~7.12} as the
  quotient
  \[
    \Cst_\ess(\Gr,\A)\defeq  \Cst(\Gr,\A)/\Null_{EL}.
  \]
\end{example}

\begin{example}[Essential twisted groupoid \(\Cst\)\nb-algebras]
  \label{ex:essential_groupoid_algebras}
  We define the \emph{essential groupoid \(\Cst\)\nb-algebra}
  \(\Cst_\ess(\Gr,\Sigma)\) of a twisted groupoid \((\Gr,\Sigma)\)
  as \(\Cst_\ess(\Gr,\LL)\) for the corresponding Fell line
  bundle~\(\LL\). Denoting by~\(X\) the unit space of~\(\Gr\),
  \cite{Kwasniewski-Meyer:Essential}*{Proposition~7.18} implies
  that the following are equivalent:
  \begin{enumerate}
  \item \(\Cst_\red(\Gr,\Sigma)=\Cst_\ess(\Gr,\Sigma)\);
  \item \(\setgiven*{x\in X}{E_\red(f)(x)\neq 0}\) is not meagre for
    every \(f\in \Cst_\red(\Gr,\Sigma)^+\setminus\{0\}\);
  \item if \(f\in \Cst_\red(\Gr,\Sigma)^+\setminus\{0\}\), then
    \(\setgiven*{x\in X}{\norm{E_\red(f)(x)}>\varepsilon}\) has
    nonempty interior for some \(\varepsilon >0\).
  \end{enumerate}
  Here \(E_\red\colon \Cst_\red(\Gr,\Sigma)\to \mathfrak{B}(X)\) is
  the canonical generalised expectation that restricts
  sections of the corresponding Fell line bundle to~\(X\).
\end{example}

\begin{example}[Twisted crossed products by partial automorphisms]
  \label{ex:essential_partial_algebras}
  Let \((\alpha,\omega)\) be a twisted action of an inverse
  semigroup~\(S\) by partial automorphisms on a
  \(\Cst\)\nb-algebra~\(A\) as in Example~\ref{ex:twisted actions}.
  By \cite{BussExel:Regular.Fell.Bundle}*{Definition~6.2}, a
  \emph{covariant representation} of \((\alpha,\omega)\) on a
  Hilbert space~\(\Hils\) is a pair \((\rho, v)\) consisting of a
  \Star{}homomorphism \(\rho\colon A\to \Bound(\Hils)\) and a family
  \(v=(v_t)_{t\in S}\) of partial isometries in \(\Bound(\Hils)\)
  such that
  \[
    \rho(\alpha_t(b))=v_t \rho(b) v_t^*,\quad
    \overline{\rho}(\omega(t,u)) = v_t v_u v_{tu}^*,\quad
    v_tv_t^*= \overline{\rho}(1_{tt^*}),\quad
    v_t^*v_t= \overline{\rho}(1_{t^*t}),
  \]
  for all \(b\in D_t\), \(t,u\in S\).  Here \(\overline{\rho}\) is
  the extension of~\(\rho\) to the enveloping von Neumann algebra of
  \(A\), so that \(\overline{\rho}(\omega(t,u))\) and
  \(\overline{\rho}(1_e)\) make sense.  By definition, the
  \emph{full crossed product} for \((\alpha,\omega)\) is the
  universal \(\Cst\)-algebra for covariant representations, and by
  \cite{BussExel:Regular.Fell.Bundle}*{Theorem~6.3} it is naturally
  isomorphic to the full crossed product \(A\rtimes S\) for the
  associated inverse semigroup action~\(\Hilm\) by Hilbert bimodules
  (see Example~\ref{ex:twisted actions}).  The \emph{reduced crossed
    product} for \((\alpha,\omega)\) may be identified with the
  reduced crossed product \(A\rtimes_{\red} S\) by
  \cite{BussExel:Regular.Fell.Bundle}*{Definition~6.6}.  We define
  the \emph{essential crossed product} for \((\alpha,\omega)\) as
  \(A\rtimes_{\ess} S\).  By
  \cite{BussExel:Regular.Fell.Bundle}*{Theorem~7.2}, for any twisted
  groupoid \((\Gr,\Sigma)\) the bisections~\(S\) of~\(\Gr\) that
  trivialise the twist~\(\Sigma\) give rise to a twisted inverse
  semigroup action \((\alpha,\omega)\) by partial automorphisms of
  \(A\defeq \Cont_0(\Gr^0)\) such that
  \(\Cst_\red(\Gr,\Sigma) \cong A\rtimes_{\red} S\).  By definition,
  this descends to an isomorphism
  \(\Cst_\ess(\Gr,\Sigma)\cong A\rtimes_{\ess} S\).
\end{example}

The reduced section \(\Cst\)\nb-algebra \(\Cst_\red(\Gr,\A)\) is
usually defined through the regular representation, which is the
direct sum of representations
\[
  \lambda_x\colon \Cst(\Gr,\A) \to \Bound\bigl(\ell^2(\Gr_x,\A)\bigr).
\]
Here \(\ell^2(\Gr_x,\A)\) is the Hilbert \(A_x\)\nb-module completion
of \(\bigoplus_{\s(\gamma)=x} A_\gamma\) with the obvious right
multiplication and the standard inner product
\(\braket{f}{g} \defeq \sum_{\s(\gamma)=x} f(\gamma)^* g(\gamma)\).
For \(f\in \mathfrak{S}(H,\A)\) and
\(g\in \bigoplus_{\s(\gamma)=x} A_\gamma\), define
\(\lambda_x(f)(g)(\gamma)\defeq \sum_{\rg(\eta) = \rg(\gamma)}
f(\eta) g(\eta^{-1}\gamma)\).  The kernel of
\(\bigoplus_{x\in X} \lambda_x\) is~\(\Null_E\).  So the reduced
\(\Cst\)\nb-algebra \(\Cst_\red(H,\A)\) is isomorphic to the
completion of \(\mathfrak{S}(H,\A)\) in the reduced norm
\(\norm{f}_\red \defeq \sup_{x\in X} {}\norm{\lambda_x(f)}\).  We
now describe essential algebras in a similar fashion:

\begin{definition}[\cite{Kwasniewski-Meyer:Essential}*{Definition~7.14}]
  \label{def:dangerous}
  Call \(x\in X\) \emph{dangerous} if there is a net~\((\gamma_n)\)
  in~\(\Gr\) that converges towards two different points
  \(\gamma\neq\gamma'\) in~\(\Gr\) with
  \(\s(\gamma) = \s(\gamma') = x\).
\end{definition}

\begin{proposition}
  \label{prop:dangerous_give_essential}
  Let~\(\A\) be a continuous Fell bundle over an étale
  groupoid~\(\Gr\) with locally compact and Hausdorff unit
  space~\(X\).  Assume~\(\Gr\) is covered by countably many
  bisections and let \(D\subseteq X\) be the set of dangerous
  points.  Then
  \[
    \ker \bigoplus_{x\in X\setminus D} \lambda_x = \Null_{EL}.
  \]
  That is, \(\Cst_{\ess}(\Gr,\A)\) is isomorphic to the Hausdorff
  completion of \(\mathfrak{S}(\Gr,\A)\) in the seminorm
  \(\norm{f}_\ess \defeq \sup_{x\in X\setminus D}
  {}\norm{\lambda_x(f)}\).
\end{proposition}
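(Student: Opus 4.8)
The plan is to pass to the reduced algebra $\Cst_\red(\Gr,\A)=\Cst(\Gr,\A)/\Null_E$, realise its elements as Borel sections of~$\A$, and identify both $\Null_{EL}$ and $\ker\bigoplus_{x\in X\setminus D}\lambda_x$ with one explicit ideal. Since $\Null_E\subseteq\Null_{EL}$, since $\Null_E=\ker\bigoplus_{x\in X}\lambda_x\subseteq\ker\lambda_x$ for every~$x$, and since $\Null_E\subseteq\ker EL$, both of these ideals contain~$\Null_E$, so it suffices to compare the induced ideals of $\Cst_\red(\Gr,\A)$. I would use the canonical contractive linear map $j\colon\Cst_\red(\Gr,\A)\to\mathfrak{B}(\Gr,\A)$ into bounded Borel sections of~$\A$ that extends $\mathfrak{S}(\Gr,\A)$, satisfies $E_\red(b)=j(b)|_X$ and $\norm{j(b)}_\infty\le\norm b$, and implements the regular representation by convolution: $(\lambda_x(b)\delta_\gamma^{c})(\eta)=j(b)(\eta\gamma^{-1})\,c$ for $\gamma,\eta\in\Gr_x$, $c\in A_\gamma$ (part of the construction of $\Cst_\red(\Gr,\A)$, see~\cite{Kwasniewski-Meyer:Essential}). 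Recalling from Example~\ref{ex:essential_section_algebras} that, as~$\A|_X$ is continuous, $EL$ is $E$ followed by the quotient map $\mathfrak{B}(X,\A)\onto\mathfrak{B}(X,\A)/\mathfrak{M}(X,\A)$, the target is the equivalence, for $b\in\Cst_\red(\Gr,\A)$, of: \textup{(i)}~$b\in\Null_{EL}$; \textup{(ii)}~$j(b)$ vanishes on $\Gr\setminus\s^{-1}(D)$; \textup{(iii)}~$\lambda_x(b)=0$ for all $x\in X\setminus D$.

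The first ingredient is that \emph{$D$ is meagre}. Fixing a countable cover of~$\Gr$ by open bisections $(U_n)$ and noting that each $\s(U_n\cap U_m)$ is open (as $\s$ is a local homeomorphism), a point $x\in D$, witnessed by $\gamma_n\to\gamma\neq\gamma'$ with $\s(\gamma)=\s(\gamma')=x$, forces $\gamma_n$ eventually into some $U_n\cap U_m$ (with $\gamma\in U_n$, $\gamma'\in U_m$), with $\s(\gamma_n)\to x$ and $x\notin\s(U_n\cap U_m)$ because $\gamma\neq\gamma'$; hence $D$ lies in the countable union of the closed nowhere dense sets $\overline{\s(U_n\cap U_m)}\setminus\s(U_n\cap U_m)$ and is meagre. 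Therefore $\s^{-1}(D)$ is meagre in~$\Gr$, and — since right translation along a bisection through a dangerous point yields dangerous points with source at every $\Gr$-translate — $X\setminus D$ is $\Gr$-invariant. The second ingredient is that \emph{$j(b)$ is continuous on $\Gr\setminus\s^{-1}(D)$}: for $g\in\Contc(U,\A)$ with $U$ a bisection, the extension of~$g$ by zero can be discontinuous only at a point $\gamma\notin U$ in the closure of $\supp g$, and a net in $\supp g$ converging to~$\gamma$ then has, by compactness of $\supp g$, a subnet converging inside~$U$ to some $\gamma''\neq\gamma$ with $\s(\gamma'')=\s(\gamma)$, so $\s(\gamma)\in D$; as $j$ is contractive for the sup-norm, writing $b$ as a uniform limit of elements of $\mathfrak{S}(\Gr,\A)$ gives the claim.

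For \textup{(i)}$\Rightarrow$\textup{(ii)}, suppose $b\in\Null_{EL}$ but $j(b)(\gamma_0)\neq0$ for some $\gamma_0\notin\s^{-1}(D)$; pick $U\ni\gamma_0$ from the cover and $c\in\Contc(U,\A)$ with $c(\gamma_0)^*j(b)(\gamma_0)\neq0$ in $A_{\s(\gamma_0)}$, so that $x\mapsto E_\red(c^*b)(x)=c(\s|_U^{-1}(x))^*\,j(b)(\s|_U^{-1}(x))$ is continuous on $\s(U)\setminus D$ and nonzero at $\s(\gamma_0)$; since $\s(U)\setminus D$ is dense in the nonempty open set $\s(U)$, the set where $E_\red(c^*b)$ is nonzero contains a comeagre subset of a nonempty open set, hence is non-meagre, contradicting $EL(c^*b)=0$ (true because $c^*b\in\Null_{EL}\subseteq\ker EL$). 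For \textup{(ii)}$\Rightarrow$\textup{(i)}, if $j(b)$ vanishes on $\Gr\setminus\s^{-1}(D)$ then $j(xby)$ is supported on a countable union of translates of the meagre set $\s^{-1}(D)$, hence on a meagre subset of~$\Gr$, so $E(xby)$ vanishes off a comeagre subset of~$X$ and $EL(xby)=0$ for all $x,y$, i.e.\ $b\in\Null_{EL}$. Finally, the convolution formula and fullness of the fibres~$A_\gamma$ give $\lambda_x(b)=0$ if and only if $j(b)$ vanishes on $\setgiven{\eta\in\Gr}{\s(\eta)\in\Gr\cdot x}$; since $\bigcup_{x\in X\setminus D}\setgiven{\eta\in\Gr}{\s(\eta)\in\Gr\cdot x}=\Gr\setminus\s^{-1}(D)$ by $\Gr$-invariance of $X\setminus D$, this is \textup{(iii)}$\Leftrightarrow$\textup{(ii)}. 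Hence $\Null_{EL}=\ker\bigoplus_{x\in X\setminus D}\lambda_x$, and since $\bigoplus_{x\in X\setminus D}\lambda_x$ descends to an injective, thus isometric, \Star{}homomorphism on $\Cst_\ess(\Gr,\A)=\Cst(\Gr,\A)/\Null_{EL}$, the quotient norm restricts to $\norm{\cdot}_\ess$ on the dense subspace $\mathfrak{S}(\Gr,\A)$, which gives the stated completion description.

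The hard part will be the second half of the second paragraph together with the setup of the section-valued map~$j$ and its convolution formula: the real content is that a Borel section arising from $\Cst_\red(\Gr,\A)$ can only be discontinuous over dangerous points, and that $\Null_{EL}$ consists precisely of those $b$ whose section vanishes off the dangerous locus. Once these are in hand, the remaining arguments reduce to bookkeeping with bisections, the Baire category theorem, and fullness of the fibres.
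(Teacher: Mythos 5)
Your argument is correct and reaches the same structural endpoint as the paper's proof --- both identify \(\ker\bigoplus_{x\in X\setminus D}\lambda_x\) and \(\Null_{EL}\) with the ideal of elements whose conditional expectation vanishes off the dangerous locus, and both need the \(\Gr\)\nb-invariance of~\(D\) to pass from individual orbits to all of \(X\setminus D\). The difference is in how the identification of \(\Null_{EL}\) is obtained. The paper's proof is a three-line reduction: it computes \(\norm{\lambda_\gamma(f)1_\gamma}^2=E(f^*f)(\rg(\gamma))\) to get \(\ker\lambda_x=\setgiven{f}{E(f^*f)(\rg(\Gr_x))=0}\), checks invariance of~\(D\), and then simply cites \cite{Kwasniewski-Meyer:Essential}*{Proposition~7.18} for the equality \(\Null_{EL}=\setgiven{f}{E(f^*f)|_{X\setminus D}=0}\). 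You instead re-derive that cited proposition from scratch: meagreness of~\(D\) from the countable cover by bisections, continuity of the realised Borel sections \(j(b)\) on \(\Gr\setminus\s^{-1}(D)\), and a Baire-category argument comparing the description of \(EL\) via \(\mathfrak{B}(X,\A)/\mathfrak{M}(X,\A)\) with pointwise nonvanishing. This is more work but also more informative --- it makes explicit where the countability and continuity hypotheses enter, which the paper's citation hides. Two cosmetic points: your appeal to ``fullness of the fibres'' in the step \(\lambda_x(b)=0\Leftrightarrow j(b)|_{\s^{-1}(\Gr\cdot x)}=0\) is cleaner if you test against the unit arrow \(\gamma=x\in\Gr_x\), where nondegeneracy of \(A_\zeta\) as a right \(A_{\s(\zeta)}\)-module (a Fell bundle axiom) does the job without any saturation assumption; and in the translation argument for invariance of~\(D\), the composite \(\gamma_n\eta^{-1}\) is only defined after perturbing \(\eta^{-1}\) within a bisection so that sources match --- a standard fix that the paper's own proof also elides.
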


\begin{proof}
  For each \(x\in X\), \(\gamma \in \Gr_x=s^{-1}(x)\) and
  \(f\in \Cst(\Gr,\A)\),
  \[
    \norm{\lambda_\gamma(f) 1_\gamma}^2
    = \braket{1_\gamma}{\lambda_x(f^*f)1_\gamma}
    =E(f^*f) (r(\gamma)).
  \]
  Thus
  \(\ker\lambda_x=\setgiven{f\in \Cst(\Gr,\A) }{E(f^*f)
    (r(\Gr_x))=0}\).  We claim that The set of dangerous points is
  \(\Gr\)\nb-invariant.  Indeed, if \(\eta\in s^{-1}(x)\) and there
  is a net~\((\gamma_n)\) that converges towards two different
  \(\gamma\neq\gamma'\in H\) with \(\s(\gamma) = \s(\gamma') = x\),
  then the net~\((\gamma_n\eta^{-1})\) converges to
  \(\gamma\eta^{-1}\neq\gamma'\eta^{-1}\in H\) with
  \(\s(\gamma\eta^{-1}) = \s(\gamma'\eta^{-1}) = r(\eta)\).  Hence
  \(x\in D\) implies that \(r(\Gr_x)\subseteq D\).  Then
  \[
    \ker \bigoplus_{x\in X\setminus D} \lambda_x
    = \bigcap_{x\in X\setminus D} \ker\lambda_x
    = \setgiven{f\in \Cst(\Gr,\A)}
    {E(f^*f) (x)=0 \text{ for all }x\in X\setminus D}.
  \]
  The set on the right hand side is equal to~\(\Null_{EL}\) by
  \cite{Kwasniewski-Meyer:Essential}*{Proposition~7.18}.
\end{proof}

\section{Exactness of inverse semigroup actions}\label{sec:exactness}

\subsection{Functoriality}

For group actions, the full and reduced crossed products are
functors, and the reduced one preserves injective homomorphisms.  We
extend this to inverse semigroup actions by Hilbert bimodules.

 \begin{definition} 
  \label{def:homomorphism}
  Let
  \(\Hilm=\bigl((\Hilm_t)_{t\in S}, (\mu_{t,u})_{t,u\in S}\bigr)\)
  and
  \(\Hilm[F] = \bigl( (\Hilm[F]_t)_{t\in S}, (\nu_{t,u})_{t,u\in
    S}\bigr)\) be two actions of~\(S\) by Hilbert bimodules on \(A\)
  and~\(B\), respectively.  A \emph{homomorphism}~\(\psi\)
  from~\(\Hilm\) to~\(\Hilm[F]\) is a family of linear maps
  \(\psi_t\colon \Hilm_t\to \Hilm[F]_t\) for \(t\in S\) such that   for all \(t,u\in S\), \(\xi\in\Hilm_t\), \(\eta\in\Hilm_u\) we have
\[
    \psi_{tu}(\mu_{t,u}(\xi\otimes\eta))
    = \nu_{t,u}(\psi_t(\xi)\otimes\psi_u(\eta)),
  \,\,\braket{\psi_t(\xi)}{\psi_t(\eta)}
    = \psi_1(\braket{\xi}{\eta}),\,\,
   \BRAKET{\psi_t(\xi)}{\psi_t(\eta)}
    = \psi_1(\BRAKET{\xi}{\eta}).
  \]
  The maps~\(\psi_t\) are always contractive.  We call~\(\psi\)
  \emph{injective} if~\(\psi_1\) is injective; then the
  maps~\(\psi_t\) are isometric for all \(t\in S\).  We
  call~\(\psi\) an \emph{isomorphism} if all~\(\psi_t\) are
  isomorphisms.
\end{definition}

We use the superscripts \({^{\Hilm}}\) and~\({^{\Hilm[F]}}\) to
distinguish between objects defined for the actions \({\Hilm}\)
and~\(\Hilm[F]\).

\begin{proposition}
  \label{prop:functoriality}
  Let~\(\psi\) be a homomorphism from an action
  \(\Hilm=(\Hilm_t)_{t\in S}\) on \(A\) to an action
  \(\Hilm[F] = (\Hilm[F]_t)_{t\in S}\) on~\(B\).  It induces a
  \Star{}homomorphism
  \(\psi\rtimes S\colon A\rtimes S\to B\rtimes S\) where
  \((\psi\rtimes S)(\xi)=\psi_t(\xi)\) for \(\xi\in \Hilm_t\),
  \(t\in S\). In particular, \(\psi\) respects the involution and
  inclusions maps on \(\Hilm\) and~\(\Hilm[F]\), and
  \(\psi\rtimes S\) restricts to a \Star{}homomorphism
  \(\psi\rtimes_\alg S\colon A\rtimes_\alg S\to B\rtimes_\alg S\).
  Moreover the following conditions are equivalent:
  \begin{enumerate}
  \item \label{item:functoriality0}%
    \(\psi\rtimes S\)  descends to a \Star{}homomorphism
    \(\psi\rtimes_\red S\colon A\rtimes_\red S\to B\rtimes_\red S\) that respects the canonical weak expectations, that is the following diagram
    comutes
  \[
      \begin{tikzcd}
        A\rtimes_\alg S \ar[r, "\subseteq"] \ar[d,"\psi\rtimes_\alg S"] &
        A\rtimes S \ar[r,"\Lambda^{\Hilm}"] \ar[d,"\psi\rtimes S"] &
        A\rtimes_\red S \ar[r,"E^{\Hilm}"] \ar[d,"\psi\rtimes_\red S"] &
        A'' \ar[d,"\psi_1''"] \\
        B\rtimes_\alg S \ar[r, "\subseteq"'] &
        B\rtimes S \ar[r,"\Lambda^{\Hilm[F]}"'] &
        B\rtimes_\red S \ar[r,"E^{\Hilm[F]}"'] &
        B''
      \end{tikzcd}
  \]
    where~\(\Lambda^{\Hilm}\)  (resp.~\(\Lambda^{\Hilm[F]}\)) is the regular representation
    and~\(E^{\Hilm}\) (resp.~\(E^{\Hilm[F]}\)) is the canonical weak conditional expectation 
    associated to the action \(\Hilm\) (resp.~\(\Hilm[F]\)).

  \item \label{item:functoriality3}%
    \(\psi_1''([I_{1,t}^{\Hilm}])=
    \psi_1''([s(\Hilm_t)])[I_{1,t}^{\Hilm[F]}]\), for all
    \(t\in S\), where \([I_{1,t}^{\Hilm}], [s(\Hilm_t)]\in A''\) and
    \([I_{1,t}^{\Hilm[F]}]\in B''\) are the support projections of
    the ideals \(I_{1,t}^{\Hilm}, s(\Hilm_t)\subseteq A\) and
    \(I_{1,t}^{\Hilm[F]} \subseteq B\), respectively.
\end{enumerate}
  If the above equivalent conditions hold and \(\psi\)~is injective, then so are \(\psi\rtimes_\red S\),
  \(\psi\rtimes_\alg S\), and~\(\psi_1''\).
\end{proposition}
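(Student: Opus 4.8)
The plan is to establish the three injectivity statements in turn, using the commuting diagram of~\ref{item:functoriality0} together with the faithfulness of the reduced weak expectation. I would begin with~\(\psi_1''\). Since~\(\psi\) is injective, the \Star{}homomorphism \(\psi_1\colon A\to B\) is isometric by Definition~\ref{def:homomorphism}; hence, by the Hahn--Banach theorem, every bounded functional on~\(A\) extends to one on~\(B\), so the transpose \(\psi_1^*\colon B^*\to A^*\) is surjective. The bitranspose \(\psi_1''=\psi_1^{**}\colon A''\to B''\) is therefore injective, because the transpose of a surjective bounded linear operator is injective.

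Next I would treat \(\psi\rtimes_\red S\). Recall from Definition~\ref{def:reduced_crossed} that the weak expectation \(E^{\Hilm}\colon A\rtimes_\red S\to A''\) occurring in the right-hand column of the diagram in~\ref{item:functoriality0} is faithful (it is the induced expectation \(E_\red\), not the possibly non-faithful weak expectation on the full crossed product). Let \(b\in A\rtimes_\red S\) satisfy \((\psi\rtimes_\red S)(b)=0\); as \(\psi\rtimes_\red S\) is a \Star{}homomorphism, \((\psi\rtimes_\red S)(b^*b)=0\) as well, so commutativity of the right-hand square of the diagram gives
\[
  \psi_1''\bigl(E^{\Hilm}(b^*b)\bigr)
  = E^{\Hilm[F]}\bigl((\psi\rtimes_\red S)(b^*b)\bigr) = 0 .
\]
Injectivity of~\(\psi_1''\) forces \(E^{\Hilm}(b^*b)=0\), and faithfulness of~\(E^{\Hilm}\) then gives \(b=0\). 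Hence \(\psi\rtimes_\red S\) is injective. Finally, \(\psi\rtimes_\alg S\) is injective because by Remark~\ref{rem:cross_product_graded} the canonical maps \(A\rtimes_\alg S\to A\rtimes_\red S\) and \(B\rtimes_\alg S\to B\rtimes_\red S\) are injective, while the left-hand part of the diagram identifies \(\psi\rtimes_\alg S\), composed with \(B\rtimes_\alg S\hookrightarrow B\rtimes_\red S\), with the composite \(A\rtimes_\alg S\hookrightarrow A\rtimes_\red S\xrightarrow{\psi\rtimes_\red S}B\rtimes_\red S\); the latter is injective, so \(\psi\rtimes_\alg S\) is injective too.

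I do not expect a genuine obstacle in this last part of the proposition: it is essentially a short diagram chase. The two points that need a little care are the standard fact that an isometric map has injective bitranspose, and --~more importantly~-- the bookkeeping ensuring that the vertical map \(\psi_1''\) in the diagram is applied to the output of the \emph{faithful} reduced weak expectation \(E_\red\), so that one may conclude \(b=0\) rather than merely \(b\in\Null_{E}\) for the full-crossed-product expectation.
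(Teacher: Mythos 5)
There is a substantial gap: your proposal only addresses the final sentence of the proposition (the three injectivity claims under the hypothesis that \(\psi\) is injective and the equivalent conditions hold), and says nothing about the rest of the statement. The main content of the proposition is (i) that \(\psi\) integrates to \(\psi\rtimes S\colon A\rtimes S\to B\rtimes S\) at all --~in the paper this is done by viewing the maps \(\psi_t\colon\Hilm_t\to\Hilm[F]_t\subseteq B\rtimes S\) as a representation of~\(\Hilm\) in \(B\rtimes S\) and invoking the universal property~-- and (ii) the equivalence of conditions \ref{item:functoriality0} and~\ref{item:functoriality3}. The latter is where the real work lies: one first reduces \ref{item:functoriality0} to the intertwining identity \(E^{\Hilm[F]}\circ\Lambda^{\Hilm[F]}\circ(\psi\rtimes S)=\psi_1''\circ E^{\Hilm}\circ\Lambda^{\Hilm}\), then passes to biduals and uses the formula \(E^{\Hilm}(\Lambda^{\Hilm}(\xi))=\xi\cdot[I_{1,t}^{\Hilm}]\) for \(\xi\in\Hilm_t\) to rewrite this as \(\psi_t(\xi)\psi_1''([I_{1,t}^{\Hilm}])=\psi_t(\xi)[I_{1,t}^{\Hilm[F]}]\), and finally removes the factor \(\psi_t(\xi)\) by an approximate-unit/strong-limit argument to arrive at the support-projection identity in~\ref{item:functoriality3}. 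None of this appears in your proposal, and it cannot be dismissed as routine: the whole point of the proposition (and of the erratum acknowledged in the paper) is that \ref{item:functoriality3} can fail, so the existence of a compatible \(\psi\rtimes_\red S\) is genuinely conditional.

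The part you do prove is correct and close in spirit to the paper's argument. Your Hahn--Banach argument for injectivity of~\(\psi_1''\) is fine, and your direct diagram chase --~\((\psi\rtimes_\red S)(b)=0\) forces \(\psi_1''(E^{\Hilm}(b^*b))=0\), hence \(E^{\Hilm}(b^*b)=0\) by injectivity of~\(\psi_1''\), hence \(b=0\) by faithfulness of the reduced weak expectation~-- is essentially the paper's reversal of the one-sided implication in its chain of equivalences, phrased at the reduced rather than the full level. Your deduction of injectivity of \(\psi\rtimes_\alg S\) from that of \(\psi\rtimes_\red S\) via Remark~\ref{rem:cross_product_graded} also matches the paper. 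You correctly identified that the key point is to apply \(\psi_1''\) to the output of the \emph{faithful} reduced expectation. But to count as a proof of the proposition, the construction of \(\psi\rtimes S\) and the equivalence \ref{item:functoriality0}\(\Leftrightarrow\)\ref{item:functoriality3} must be supplied.
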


\begin{proof}
  The Hilbert bimodules \(\Hilm[F]_t\) for \(t\in S\) embed
  into~\(B\rtimes S\).  Hence we may treat the maps
  \(\psi_t\colon \Hilm_t\to \Hilm[F]_t\) as taking values
  in~\(B\rtimes S\).  Then~\(\psi\) is a representation of~\(\Hilm\)
  in~\(B\rtimes S\).  It integrates to a \Star{}homomorphism
  \(\psi\rtimes S\colon A\rtimes S\to B\rtimes S\) by \cite{Buss-Exel-Meyer:Reduced}*{Proposition~2.9}.  This restricts
  to a \Star{}homomorphism
  \(\psi\rtimes_\alg S\colon A\rtimes_\alg S\to B\rtimes_\alg S\) and therefore
  \(\psi\) respects the induced involutions and inclusions maps on 
  \(\Hilm\) and~\(\Hilm[F]\). 
  Our first goal is to show that~\ref{item:functoriality0} is
  equivalent to
  \begin{equation}
    \label{equ:functoriality1}
    E^{\Hilm[F]}\circ \Lambda^{\Hilm[F]} \circ (\psi\rtimes S) = \psi_1'' \circ E^{\Hilm}\circ
    \Lambda^{\Hilm}.
  \end{equation}
  It is clear that~\ref{item:functoriality0}
  implies~\eqref{equ:functoriality1}.  Conversely,
  assume~\eqref{equ:functoriality1}.  For any
  \(a\in (A\rtimes S)^+\), we get
  \begin{multline}
    \label{eq:reduced_crossed_functor}
    a\in \ker\Lambda^{\Hilm}
    \iff E\bigl(\Lambda^{\Hilm} (a)\bigr)=0
    \Longrightarrow \psi_1''\Bigl(E^{\Hilm}\bigl(\Lambda^{\Hilm} (a)\bigr)\Bigr)=0
    \\
    \iff E^{\Hilm[F]}\Bigl( \Lambda^{\Hilm[F]}\bigl((\psi\rtimes S) (a)\bigr)\Bigr)=0
    \iff (\psi\rtimes S)(a)\in \ker\Lambda^{\Hilm[F]}.
  \end{multline}
  Hence \((\psi\rtimes S)(\ker\Lambda^{\Hilm}) \subseteq \ker(\Lambda^{\Hilm[F]})\).
  So \(\psi\rtimes S\) descends to a \Star{}homomorphism
  \(\psi\rtimes_\red S\) as in~\ref{item:functoriality0}.  
  If, in addition, \(\psi\) is injective, then so
  is~\(\psi_1''\), and then the only one-sided implication
  in~\eqref{eq:reduced_crossed_functor} may be reversed.  Thus
  \((\psi\rtimes S)(\ker\Lambda^{\Hilm}) = \ker(\Lambda^{\Hilm[F]})\), so that
  \(\psi\rtimes_\red S\) is injective.  Since the canonical map
  \(A\rtimes_\alg S\to A\rtimes_\red S\) is still injective, it also
  follows that \(\psi\rtimes_\alg S\) is injective.  So we get all
  assertions in~\ref{item:functoriality0}.
	
  Next, we prove that~\ref{item:functoriality3} is equivalent
  to~\eqref{equ:functoriality1}.  By passing to biduals, we get a
  weakly continuous \Star{}homomorphism
  \((\psi\rtimes S)''\colon (A\rtimes S)'' \to (B\rtimes S)''\).
  The \(\Cst\)\nb-algebra~\(A''\) and the Hilbert
  bimodules~\(\Hilm_t''\) for \(t\in S\) embed naturally
  into~\((A\rtimes S) ''\).  Similarly, \(\Hilm[F]_t''\) for
  \(t\in S\) are embedded into \((B\rtimes S)''\).  Then
  \((\psi\rtimes S)''|_{ \Hilm_t''} = \psi_t''\) for \(t\in S\).
  Moreover, if \(\xi\in \Hilm_t\subseteq A\rtimes S\), then
  \(E^{\Hilm}( \Lambda^{\Hilm}(\xi))=\xi\cdot [I_{1,t}]\in A''\),
  where the product is taken in ~\((A\rtimes S) ''\) (see
  \eqref{eq:formula-cond.exp} or the proof of
  \cite{Buss-Exel-Meyer:Reduced}*{Lemma~4.5}).
  Hence~\eqref{equ:functoriality1} is equivalent to
  \begin{equation}
    \label{equ:functoriality2}
    \psi_{t}(\xi)\psi_1''([I_{1,t}^{\Hilm}])=\psi_t(\xi)[I_{1,t}^{\Hilm[F]}],\qquad  \text{ for every } \xi\in \Hilm_t, t\in S, 
  \end{equation}
  Since \(\xi=\xi \cdot [s(\Hilm_t)]\) holds inside \((A\rtimes S) ''\),
  \ref{item:functoriality3} implies~\eqref{equ:functoriality2}:
  \[
    \psi_{t}(\xi)\psi_1''([I_{1,t}^{\Hilm}])
    = \psi_{t}(\xi)\psi_1''([s(\Hilm_t)]) \psi_1''([I_{1,t}^{\Hilm}])
    = \psi_{t}(\xi) \psi_1''([s(\Hilm_t)])[I_{1,t}^{\Hilm[F]}]
    = \psi_{t}(\xi) [I_{1,t}^{\Hilm[F]}].
  \]
  Conversely,
  \eqref{equ:functoriality2} implies
  \(\psi_{t}(\xi_1)^* \psi_{t}(\xi_2)\psi_1''([I_{1,t}^{\Hilm}]) =
  \psi_{t}(\xi_1)^* \psi_{t}(\xi_2) [I_{1,t}^{\Hilm[F]}]\) for all
  \(\xi_1,\xi_2\in \Hilm_t\).  Taking linear combinations, we may
  then replace \(\psi_{t}(\xi_1)^* \psi_{t}(\xi_2)\) by an
  approximate unit for the ideal \(\s(\Hilm_t)\).  And then we may
  take a strong limit over this approximate unit to arrive at
  \(\psi_1''([s(\Hilm_t)]) \psi_1''([I_{1,t}^{\Hilm}]) =
  \psi_1''([s(\Hilm_t)]) [I_{1,t}^{\Hilm[F]}]\).  The left hand side
  then simplifies to \(\psi_1''([I_{1,t}^{\Hilm}])\) because
  \(I_{1,t}^{\Hilm}\subseteq s(\Hilm_t)\) implies
  \(\psi_1''([I_{1,t}^{\Hilm}])\leq \psi_1''([s(\Hilm_t)])\). Hence
  \eqref{equ:functoriality2} implies \ref{item:functoriality3}.
\end{proof}

\begin{remark}
  \label{rem:some_conditions_implying_conditions}
  Retain the notation from Proposition~\ref{prop:functoriality}.
  Since~$\psi$ respects inclusions, we have
  \(\psi_1(I_{1,t}^{\Hilm})\subseteq I_{1,t}^{\Hilm[F]}\) and
  therefore
  \(\psi_1''([I_{1,t}^{\Hilm}]) \leq [I_{1,t}^{\Hilm[F]}]\).  We
  also have
  \(\psi_1''([I_{1,t}^{\Hilm}])\leq \psi_1''([s(\Hilm_t)])\) because
  \(I_{1,t}^{\Hilm}\subseteq s(\Hilm_t)\) .  Thus every
  representation satisfies
  \[\psi_1''([I_{1,t}^{\Hilm}])\le
  \psi_1''([s(\Hilm_t)])[I_{1,t}^{\Hilm[F]}] \qquad\text{ for all }t\in S.
	\]
  So condition~\ref{item:functoriality3} only asserts the inverse
  inequality.  This condition is always satisfied when \(S=G\) is a
  group, as then \([I_{1,t}^{\Hilm}]=0\) for \(t\neq 1\) (and
  \([I_{1,1}^{\Hilm}]=1\)).
  %
  %
  For general actions, \ref{item:functoriality3} holds whenever
  \(\psi_1(I_{1,t}^{\Hilm})=\psi_1(s(\Hilm_t))I_{1,t}^{\Hilm[F]}\)
  for all \(t\in S\).  The latter equality is automatic for
  inclusion and quotient homomorphisms in
  Proposition~\ref{prop:exactness_of_universal_rep} below.
\end{remark}

Condition~\ref{item:functoriality3} in
Proposition~\ref{prop:functoriality} may fail (and perhaps for some
purposes one might want to include it in the definition of a
homomorphism).  We thank Alcides Buss, Diego Mart\'inez and Jonathan
Taylor for point this to us.

\begin{example}
  Consider the actions whose crossed products are described in
  \cite{Buss-Exel-Meyer:Reduced}*{Proposition~8.5}.  Namely, let
  \(S=\{-1,0,1\}\) be the inverse semigroup with the usual number
  multiplication.  Take any \(\Cst\)\nb-algebra~\(A\) and any
  ideal~\(I\) in~\(A\) different from~\(A\).  Let
  \(\Hilm_1=\Hilm_{-1}\defeq A\) and \(\Hilm_{0}=I\) be trivial
  Hilbert bimodules over~\(A\), and let
  \(\mu_{t,u}(a\otimes b)=a\cdot b\) for \(t,u\in S\) be just the
  multiplication in~\(A\).  Then
  \(A\rtimes^{\Hilm} S=A\rtimes_\red^{\Hilm} S=A\rtimes_\alg^{\Hilm}
  S\cong A\oplus A/I\) (see \cite{Buss-Exel-Meyer:Reduced}*{(8.6)}).
  We let~\(\Hilm[F]\) be the similar action with~\(I\) replaced
  by~\(A\).  Then
  \(A\rtimes^{\Hilm[F]} S=A\rtimes_\red^{\Hilm[F]}
  S=A\rtimes_\alg^{\Hilm[F]} S\cong A\).  The inclusion maps yield a
  homomorphism~\(\psi\) from~\(\Hilm\) to~\(\Hilm[F]\) where
  \(\psi\rtimes S=\psi\rtimes_\alg S \colon A\oplus A/I \to A\) is
  given by \((a\oplus b+I)\mapsto a\).  This homomorphism is not
  injective, although~\(\psi\) is.  So conditions
  in
  Proposition~\ref{prop:functoriality} are not satisfied.  Indeed,
  note that \(\psi\rtimes_{\red} S=\psi\rtimes S\) exists in this
  example, but it does not intertwine the canonical weak
  expectations, which are given by
  \(E^{\Hilm}(a\oplus (b+I))=\frac{a+b}{2}+\frac{a-b}{2}[I]\) and
  \(E^{\Hilm[F]}(a)=a\), for \(a,b \in A\), where \([I]\in A''\) is
  the support projection of~\(I\).
\end{example}

\subsection{Restrictions of  actions}

We fix an action
\(\Hilm=\bigl((\Hilm_t)_{t\in S}, (\mu_{t,u})_{t,u\in S}\bigr)\) of
a unital inverse semigroup~\(S\) on a \(\Cst\)\nb-algebra~\(A\) by
Hilbert bimodules.

\begin{definition}
  Let
  \(\Ideals^{\Hilm}(A)\defeq \setgiven{I\in\Ideals(A)}
  {I\Hilm_t=\Hilm_t I \text{ for every }t\in S}\) be the set of
  \emph{\(\Hilm\)\nb-invariant} ideals in~\(A\).
\end{definition}

\begin{lemma}
  \label{lem:invariance_vs_duals}
  Let \(I\in\Ideals(A)\) and let~\(B\) be any \(S\)\nb-graded
  \(\Cst\)\nb-algebra with grading \((\Hilm_t)_{t\in S}\).  The
  following are equivalent:
  \begin{enumerate}
  \item \label{enu:invariance_vs_duals1}%
    \(I\) is~\(\Hilm\)\nb-invariant, that is
    \(I\in \Ideals^{\Hilm}(A)\);
  \item \label{enu:invariance_vs_duals2}%
    \(I\) is restricted, that is, \(I\in \Ideals^{B}(A)\);
  \item \label{enu:invariance_vs_duals3}%
    the open subset \(\check{I}\subseteq \check{A}\) is invariant in
    the dual groupoid \(\widecheck{A}\rtimes S\);
  \item \label{enu:invariance_vs_duals4}%
    the open subset \(\dual{I}\subseteq \dual{A}\) is invariant in
    the dual groupoid \(\dual{A}\rtimes S\).
  \end{enumerate}
\end{lemma}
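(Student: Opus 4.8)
The plan is to deduce \(\ref{enu:invariance_vs_duals1}\Leftrightarrow\ref{enu:invariance_vs_duals2}\) from the general theory of graded \(\Cst\)\nb-algebras, to establish the crucial equivalence \(\ref{enu:invariance_vs_duals1}\Leftrightarrow\ref{enu:invariance_vs_duals3}\) by translating groupoid invariance of~\(\check{I}\) into a statement about Rieffel's correspondence, and finally to obtain \(\ref{enu:invariance_vs_duals3}\Leftrightarrow\ref{enu:invariance_vs_duals4}\) using that the canonical surjection \(\dual{A}\to\check{A}\) intertwines the two dual actions.

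For \(\ref{enu:invariance_vs_duals1}\Leftrightarrow\ref{enu:invariance_vs_duals2}\) I would note that~\(\Hilm\) makes~\(B\) into an \(S\)\nb-graded \(\Cst\)\nb-algebra with grading \((B_t)_{t\in S}=(\Hilm_t)_{t\in S}\) and unit fibre~\(A\), and that the left \(A\)\nb-module structure of~\(\Hilm_t\) is the restriction of the multiplication in~\(B\), so that \(IB_t=I\Hilm_t\) and \(B_tI=\Hilm_tI\) inside~\(B\) for every \(I\in\Ideals(A)\).  Thus ``\((B_t)_{t\in S}\)\nb-invariant'' coincides verbatim with ``\(\Hilm\)\nb-invariant'', and Proposition~\ref{prop:invariant_ideals_in_regular} applies.  (In particular \(\Ideals^B(A)\) does not depend on the choice of an \(S\)\nb-graded~\(B\) carrying the grading \((\Hilm_t)_{t\in S}\), as the statement tacitly assumes.)

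For \(\ref{enu:invariance_vs_duals1}\Leftrightarrow\ref{enu:invariance_vs_duals3}\) I would use three facts: \(\Hilm_t\) is an \(\rg(\Hilm_t)\)-\(\s(\Hilm_t)\)-imprimitivity bimodule; \(\check{\Hilm}_t\colon\widecheck{\s(\Hilm)}_t\to\widecheck{\rg(\Hilm)}_t\) is the homeomorphism of spectra induced by the Rieffel correspondence of~\(\Hilm_t\) (see \cite{Buss-Meyer:Actions_groupoids}*{Lemma~6.12}, \cite{Kwasniewski-Meyer:Essential}*{Section~2.3}); and \(\check{I}\) is invariant in \(\check{A}\rtimes S\) if and only if \(\check{\Hilm}_t\) maps \(\check{I}\cap\widecheck{\s(\Hilm)}_t\) onto \(\check{I}\cap\widecheck{\rg(\Hilm)}_t\) for each \(t\in S\) (the standard notion of invariance for the transformation groupoid of an inverse semigroup action, see \cite{Exel:Inverse_combinatorial}*{Section~4}).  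Identifying open subsets of~\(\check{A}\) with \(\Ideals(A)\), the set \(\check{I}\cap\widecheck{\s(\Hilm)}_t\) is the ideal \(I\cap\s(\Hilm_t)\) of \(\s(\Hilm_t)\); Rieffel's correspondence carries it to the ideal \(L_t\idealin\rg(\Hilm_t)\) characterised by \(L_t\Hilm_t=\Hilm_t\cdot(I\cap\s(\Hilm_t))=\Hilm_tI\), while \(\check{I}\cap\widecheck{\rg(\Hilm)}_t\) is the ideal \(I\cap\rg(\Hilm_t)\), which satisfies \((I\cap\rg(\Hilm_t))\cdot\Hilm_t=I\Hilm_t\).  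Because \(L\mapsto L\Hilm_t\) is injective on ideals of \(\rg(\Hilm_t)\), one gets \(L_t=I\cap\rg(\Hilm_t)\) --~i.e., \(\check{\Hilm}_t\) maps \(\check{I}\cap\widecheck{\s(\Hilm)}_t\) onto \(\check{I}\cap\widecheck{\rg(\Hilm)}_t\)~-- if and only if \(\Hilm_tI=I\Hilm_t\).  Ranging over \(t\in S\) yields \(\ref{enu:invariance_vs_duals1}\Leftrightarrow\ref{enu:invariance_vs_duals3}\).  The same argument with induction of representations replacing Rieffel's correspondence gives \(\ref{enu:invariance_vs_duals1}\Leftrightarrow\ref{enu:invariance_vs_duals4}\); alternatively, the canonical continuous open surjection \(\dual{A}\to\check{A}\), \([\pi]\mapsto\ker\pi\), intertwines \(\dual{\Hilm}_t\) with \(\check{\Hilm}_t\) and induces an isomorphism of the lattices of open subsets sending \(\dual{I}\) to \(\check{I}\), so invariance of~\(\dual{I}\) in \(\dual{A}\rtimes S\) is equivalent to invariance of~\(\check{I}\) in \(\check{A}\rtimes S\).

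The main obstacle will be the bookkeeping in the third step: one has to keep careful track of whether a given ideal is viewed inside~\(A\), inside \(\s(\Hilm_t)\), or inside \(\rg(\Hilm_t)\), and to invoke the precise form of Rieffel's correspondence --~namely that corresponding ideals \(L\idealin\rg(\Hilm_t)\) and \(K\idealin\s(\Hilm_t)\) satisfy \(L\Hilm_t=\Hilm_tK\).  No new estimates are needed.
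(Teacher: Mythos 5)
Your proposal is correct and follows essentially the same route as the paper: (1)$\Leftrightarrow$(2) via Proposition~\ref{prop:invariant_ideals_in_regular}, (3)$\Leftrightarrow$(4) via the canonical open surjection $\dual{A}\to\check{A}$, and (1)$\Leftrightarrow$(3) by identifying $\Hilm_t I = I\Hilm_t$ with the condition $\widecheck{\Hilm}_t(\check{I}\cap\widecheck{\s(\Hilm)}_t)=\check{I}\cap\widecheck{\rg(\Hilm)}_t$. The only difference is that the paper outsources this last equivalence to citations (\cite{Kwasniewski:Topological_freeness}*{p.~645}, \cite{Abadie-Abadie:Ideals}*{Proposition~3.10}), whereas you correctly spell out the Rieffel-correspondence bookkeeping yourself.
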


\begin{proof}
  Proposition~\ref{prop:invariant_ideals_in_regular} implies that
  \ref{enu:invariance_vs_duals1} and~\ref{enu:invariance_vs_duals2}
  are equivalent.  It is easy to see that
  \ref{enu:invariance_vs_duals3} and~\ref{enu:invariance_vs_duals4}
  are equivalent.  If \(t\in S\), then
  \(I\cdot \Hilm_t = \Hilm_t\cdot I\) is equivalent to
  \(\widecheck{\Hilm_t}(\widecheck{I} \cap \widecheck{D}_t) =
  \widecheck{I} \cap \widecheck{D}_{t^*}\) (see
  \cite{Kwasniewski:Topological_freeness}*{page~645} or the proof of
  \cite{Abadie-Abadie:Ideals}*{Proposition~3.10}).  That is,
  \ref{enu:invariance_vs_duals1} and~\ref{enu:invariance_vs_duals3}
  are equivalent.
\end{proof}

Let \(I\in\Ideals^{\Hilm}(A)\) be an \(\Hilm\)\nb-invariant ideal
in~\(A\).  There are natural induced actions of~\(S\) on \(I\)
and~\(A/I\).  Namely, the family
\(\Hilm|_I \defeq (\Hilm_t I)_{t\in S}\) of Hilbert
\(I\)\nb-bimodules, with the restrictions of the
isomorphisms~\(\mu_{t,u}\) to
\(\Hilm_t I\otimes_A \Hilm_uI =\Hilm_t \otimes_A \Hilm_u I\congto
\Hilm_{tu} I\) for \(t,u\in S\), forms an inverse semigroup action
by Hilbert bimodules on the \(\Cst\)\nb-algebra~\(I\).  For
\(t\in S\), the quotient Banach space \(\Hilm_t/\Hilm_t I\) is a
Hilbert \(A/I\)\nb-bimodule in a natural way because
\(\Hilm_t I = I\Hilm_t\).  If \(t,u\in S\), then the isomorphism
\(\mu_{t,u}\colon \Hilm_t \otimes_A \Hilm_u\congto \Hilm_{tu}\)
induces an isomorphism
\[
  \widetilde{\mu}_{t,u}\colon
  (\Hilm_t \otimes_A \Hilm_u) \bigm/ (\Hilm_t \otimes_A \Hilm_u I)
  \congto \Hilm_{tu} \mathbin/\Hilm_{tu} I.
\]
There are natural isomorphisms of Hilbert bimodules
\[
  q_{t,u}\colon (\Hilm_t/\Hilm_t I)\otimes_{A/I} (\Hilm_u/\Hilm_uI)
  \congto (\Hilm_t \otimes_A \Hilm_u) \bigm/ (\Hilm_t \otimes_A \Hilm_u I)
\]
because \(I\Hilm_u = \Hilm_u I\) (see, for instance,
\cite{Kwasniewski:Cuntz-Pimsner-Doplicher}*{Lemma~1.8}).  Now
\(\Hilm|_{A/I}\defeq (\Hilm_t/\Hilm_t I)_{t\in S}\) with the
isomorphisms
\(\widetilde{\mu}_{t,u}\circ q_{t,u}\colon (\Hilm_t/\Hilm_t I)
\otimes_A (\Hilm_u/\Hilm_uI) \congto (\Hilm_{tu}/\Hilm_{tu} I)\) for
\(t,u\in S\) is an action of~\(S\) by Hilbert bimodules on~\(A/I\).

\begin{definition}
  We call the actions \(\Hilm|_I\) and~\(\Hilm|_{A/I}\) above the
  \emph{restrictions} of~\(\Hilm\) to \(I\) and~\(A/I\),
  respectively.
\end{definition}

\begin{remark}
  \label{rem:homomorphism_actions_ideal}
  Let \(I\in \Ideals^{\Hilm}(A)\).  The inclusions
  \(\Hilm_t I\subseteq \Hilm_t\) for \(t\in S\) yield an injective
  homomorphism from~\(\Hilm|_I\) to~\(\Hilm\).  The quotient maps
  \(\Hilm_t \to \Hilm_t/\Hilm_t I\) for \(t\in S\) yield a
  homomorphism from~\(\Hilm\) to~\(\Hilm|_{A/I}\).
\end{remark}

\begin{remark}
  Let~\(B\) be an \(S\)\nb-graded \(\Cst\)\nb-algebra with grading
  \((B_t)_{t\in S}\) and let \(I\in\Ideals^B(A)\).  The induced
  ideal~\(BIB\) carries the \(S\)\nb-grading \(BIB \cap B_t\) by
  Proposition~\ref{prop:invariant_ideals_in_regular}.  The
  quotient~\(B/BIB\) is \(S\)\nb-graded by the images of~\(B_t\) by
  Lemma~\ref{lem:quotients_of_regular}.
\end{remark}

\begin{remark}
  \label{rem:dual_ideal}
  If~\(I\) is \(\Hilm\)\nb-invariant, then the dual actions for the
  restrictions \(\Hilm|_I\) and~\(\Hilm|_{A/I}\) are equal to the
  restrictions of the action \((\dual{\Hilm}_t)_{t\in S}\) to
  \(\dual{I}\) and \(\dual{A}\setminus \dual{I}\), respectively.
  Let \(\dual{I}\rtimes S\) and
  \(\dual{A}\setminus\dual{I}\rtimes S\) denote the transformation
  groupoids dual to \(\Hilm|_I\) and~\(\Hilm|_{A/I}\), respectively.
  Then
  \[
    \dual{I}\rtimes S
    = (\dual{A}\rtimes S)|_{\dual{I}},\qquad
    \dual{A}\setminus\dual{I}\rtimes S
    = (\dual{A}\rtimes S)|_{\dual{A}\setminus\dual{I}},
  \]
  where the right hand sides in the above equalities mean the
  restrictions of the transformation groupoid \(\dual{A}\rtimes S\)
  to the invariant subsets \(\dual{I}\)
  and~\(\dual{A}\setminus\dual{I}\) of the unit space~\(\dual{A}\)
  of \(\dual{A}\rtimes S\).  In particular, if the space of units in
  \(\dual{A}\rtimes S\) is closed, then the same holds for the
  transformation groupoids dual to the restrictions \(\Hilm|_I\)
  and~\(\Hilm|_{A/I}\).
\end{remark}

\begin{proposition}
  \label{pro:closed_unit_isg_extension}
  If the action~\(\Hilm\) on~\(A\) is closed, then so are the
  restrictions \(\Hilm|_I\) and~\(\Hilm|_{A/I}\) for each
  \(\Hilm\)\nb-invariant ideal \(I\idealin A\).
\end{proposition}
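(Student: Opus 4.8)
The plan is to reduce the statement to the characterisation of closedness in terms of dual groupoids recorded in Remark~\ref{rem:closed_actions_vs_groupoids}, namely that an inverse semigroup action by Hilbert bimodules is closed if and only if the unit space of its dual groupoid is a closed subset of that groupoid. This criterion applies verbatim to $\Hilm|_I$ and $\Hilm|_{A/I}$, since, as noted just before the definition of the restrictions, these are again actions of $S$ by Hilbert bimodules. So it is enough to prove that, under the hypothesis that $\dual{A}$ is closed in $\dual{A}\rtimes S$, the unit spaces of the dual groupoids of $\Hilm|_I$ and $\Hilm|_{A/I}$ are closed in those groupoids.

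First I would invoke Remark~\ref{rem:dual_ideal}: because $I$ is $\Hilm$-invariant, the dual groupoids of $\Hilm|_I$ and $\Hilm|_{A/I}$ are the restrictions $(\dual{A}\rtimes S)|_{\dual{I}}$ and $(\dual{A}\rtimes S)|_{\dual{A}\setminus\dual{I}}$ of $\dual{A}\rtimes S$ to the invariant subsets $\dual{I}$ and $\dual{A}\setminus\dual{I}$ of its unit space~$\dual{A}$. The remaining point is purely topological: for any subset $Y\subseteq\dual{A}$, the restricted groupoid $\mathcal{G}_Y\defeq(\dual{A}\rtimes S)|_Y=\setgiven{\gamma\in\dual{A}\rtimes S}{\rg(\gamma),\s(\gamma)\in Y}$ has unit space $Y=\dual{A}\cap\mathcal{G}_Y$, so if $\dual{A}$ is closed in $\dual{A}\rtimes S$, then $Y$ is closed in $\mathcal{G}_Y$ in the subspace topology. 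Applying this with $Y=\dual{I}$ and $Y=\dual{A}\setminus\dual{I}$ shows that the unit spaces of the dual groupoids of $\Hilm|_I$ and $\Hilm|_{A/I}$ are closed, and then Remark~\ref{rem:closed_actions_vs_groupoids} gives that $\Hilm|_I$ and $\Hilm|_{A/I}$ are closed. This is essentially the content of the last sentence of Remark~\ref{rem:dual_ideal}, so I do not expect a genuine obstacle; the only thing to be careful about is that a restriction of a groupoid to an invariant subset of its unit space really does have that subset as its unit space, which is immediate from the definition.

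If one prefers an argument staying inside the $\Cst$-algebras, the algebraic form of the criterion in Remark~\ref{rem:closed_actions_vs_groupoids} also works: $\Hilm$ is closed iff $I_{t,1}^{\Hilm}$ is complemented in $\s(\Hilm_t)$ for every $t\in S$. Writing $\s(\Hilm_t)=I_{t,1}^{\Hilm}\oplus K_t$ with $K_t\idealin A$, one checks for $\Hilm|_I$ that $\s(\Hilm_t)I=I_{t,1}^{\Hilm}I\oplus K_tI$ while $I_{t,1}^{\Hilm|_I}=I_{t,1}^{\Hilm}I$, so $K_tI$ is a complement, and for $\Hilm|_{A/I}$ that $\s(\Hilm_t/\Hilm_tI)=(\s(\Hilm_t)+I)/I$ and $I_{t,1}^{\Hilm|_{A/I}}=(I_{t,1}^{\Hilm}+I)/I$, with candidate complement $(K_t+I)/I$. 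In this last case the one nonroutine check is that the sum is direct, i.e.\ $(I_{t,1}^{\Hilm}+I)\cap(K_t+I)=I$; this follows from $\s(\Hilm_t)\cap I=I_{t,1}^{\Hilm}I\oplus K_tI$ together with $I_{t,1}^{\Hilm}\cap K_t=0$. Either way, the main (mild) difficulty is just bookkeeping of the source ideals under restriction and passage to the quotient, and I would present the dual-groupoid argument as the clean one.
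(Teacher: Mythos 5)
Your main argument is correct and is exactly the paper's proof: the paper derives the proposition by combining Remark~\ref{rem:closed_actions_vs_groupoids} (closedness of the action is equivalent to closedness of the unit space in the dual groupoid) with Remark~\ref{rem:dual_ideal} (the dual groupoids of $\Hilm|_I$ and $\Hilm|_{A/I}$ are the restrictions of $\dual{A}\rtimes S$ to the invariant subsets $\dual{I}$ and $\dual{A}\setminus\dual{I}$), just as you do. Your supplementary algebraic argument via complemented ideals is a fine alternative but is not needed.
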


\begin{proof}
  This follows from Remarks \ref{rem:closed_actions_vs_groupoids}
  and~\ref{rem:dual_ideal}.
\end{proof}

\subsection{Exact actions}

\begin{proposition}
  \label{prop:exactness_of_universal_rep}
  Let~\(\Hilm\) be an action of~\(S\) by Hilbert bimodules on a
  \(\Cst\)\nb-algebra~\(A\) and let \(I\in \Ideals^{\Hilm}(A)\).
  Let~\(\iota\) be the injective homomorphism from~\(\Hilm|_I\)
  into~\(\Hilm\) and let~\(\kappa\) be the quotient homomorphism
  from~\(\Hilm\) onto~\(\Hilm|_{A/I}\) in
 Remark~\textup{\ref{rem:homomorphism_actions_ideal}}. 
They induce an exact   
  sequence
 \[
    0 \to I\rtimes S
    \xrightarrow{\iota \rtimes S}
    A\rtimes S \xrightarrow{\kappa \rtimes S}
    A/I\rtimes S \to 0.
\]
  It descends to a sequence
  \begin{equation}
    \label{eq:sequence_to_be_exact}
    0 \to I\rtimes_\red S
    \xrightarrow{\iota \rtimes_\red S}
    A\rtimes_\red S \xrightarrow{\kappa\rtimes_\red S}
    A/I\rtimes_\red S \to 0,
 \end{equation}

 which may fail to be exact only in the middle: \(\iota \rtimes_\red S\)
  is injective, \(\kappa \rtimes_\red S\) is surjective and the
  range of \(\iota \rtimes_\red S\) is contained in the kernel of
  \(\kappa \rtimes_\red S\). 
\end{proposition}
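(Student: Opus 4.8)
The plan is to deduce the proposition from Proposition~\ref{prop:functoriality} applied to the injective homomorphism \(\iota\colon\Hilm|_I\to\Hilm\) and the quotient homomorphism \(\kappa\colon\Hilm\to\Hilm|_{A/I}\) of Remark~\ref{rem:homomorphism_actions_ideal}. By Remark~\ref{rem:some_conditions_implying_conditions} both~\(\iota\) and~\(\kappa\) satisfy condition~\ref{item:functoriality3} of Proposition~\ref{prop:functoriality}; concretely, using the description of the ideals~\(I_{1,t}\) for the restricted actions from the previous subsection one gets \(\iota_1(I_{1,t}^{\Hilm|_I})=I\cdot I_{1,t}^{\Hilm}=\iota_1(s((\Hilm|_I)_t))\,I_{1,t}^{\Hilm}\), and similarly for~\(\kappa\). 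Proposition~\ref{prop:functoriality} then supplies the \Star{}homomorphisms \(\iota\rtimes S\), \(\kappa\rtimes S\), \(\iota\rtimes_\red S\) and~\(\kappa\rtimes_\red S\) together with the commuting diagram relating the full crossed products, the regular representations~\(\Lambda\) and the weak expectations --~which is exactly the assertion that~\eqref{eq:sequence_to_be_exact} descends from the top sequence~-- and it gives that~\(\iota\rtimes_\red S\) is injective because~\(\iota\) is.

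For the top sequence, surjectivity of~\(\kappa\rtimes S\) is immediate since its image is a \(\Cst\)\nb-subalgebra containing the generators \(\Hilm_t/\Hilm_t I=(\Hilm|_{A/I})_t\) of~\(A/I\rtimes S\); the same argument gives surjectivity of~\(\kappa\rtimes_\red S\). Since \(I\Hilm_t=\Hilm_t I\), the closed linear span \(\mathcal{J}\defeq\cl{\sum_{t\in S}\Hilm_t I}\) is a two-sided ideal of~\(A\rtimes S\); it is the ideal generated by \(I\subseteq A\subseteq A\rtimes S\), and, as \(\sum_t\Hilm_t I\) is a dense \Star{}subalgebra of~\(I\rtimes S\) and \Star{}homomorphisms have closed range, it is also the range of~\(\iota\rtimes S\). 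Now \(\mathcal{J}\subseteq\ker(\kappa\rtimes S)\) because~\(\kappa_t\) kills~\(\Hilm_t I\), and for the reverse inclusion I would show that \((A\rtimes S)/\mathcal{J}\) has the universal property of~\(A/I\rtimes S\): every representation~\(\sigma\) of~\(\Hilm|_{A/I}\) pulls back along~\(\kappa\) to a representation~\(\tilde\sigma\) of~\(\Hilm\) with \(\tilde\sigma_t(\xi i)=\tilde\sigma_t(\xi)\,\tilde\sigma_1(i)=0\) for \(i\in I\), so \(\tilde\sigma\rtimes S\) annihilates~\(\mathcal{J}\) and factors through \((A\rtimes S)/\mathcal{J}\); pairing this with the map \((A\rtimes S)/\mathcal{J}\to A/I\rtimes S\) induced by~\(\kappa\rtimes S\) gives mutually inverse isomorphisms, so \(\ker(\kappa\rtimes S)=\mathcal{J}\) is the range of~\(\iota\rtimes S\).

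The remaining, and I expect hardest, point is injectivity of~\(\iota\rtimes S\), which I would get by extending representations: if every representation~\(\rho=(\rho_t)_{t\in S}\) of~\(\Hilm|_I\) on a Hilbert space extends to a representation~\(\bar\rho\) of~\(\Hilm\) on the same space, then taking~\(\rho\) with \(\rho\rtimes S\) faithful on~\(I\rtimes S\) gives \((\bar\rho\rtimes S)\circ(\iota\rtimes S)=\rho\rtimes S\), forcing~\(\iota\rtimes S\) to be injective. To construct~\(\bar\rho\), first cut~\(\Hils\) down to \(\cl{\rho_1(I)\Hils}\), which each~\(\rho_t(\xi)\) maps into itself and whose orthogonal complement it kills, since the range of~\(\rho_t(\xi)\) lies in \(\cl{\rho_1(I)\Hils}\) because \(\rho_t(\xi)\rho_t(\xi)^*=\rho_1(\BRAKET{\xi}{\xi})\in\rho_1(I)\) and likewise for~\(\rho_t(\xi)^*\); thus~\(\rho_1\) becomes nondegenerate and extends to \(\bar\rho_1\colon A\to\Bound(\Hils)\). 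Nondegeneracy of~\(\rho_1|_I\) makes \(\Hilm_t I\otimes_A\Hils\) dense in \(\Hilm_t\otimes_A\Hils\) for the inner products defined through~\(\bar\rho_1\) --~since \(\xi\otimes h=\lim_n\xi u_n\otimes h\) for an approximate unit~\((u_n)\) of~\(I\)~-- so \(\xi\otimes h\mapsto\rho_t(\xi)h\) extends to an isometry \(U_t\colon\Hilm_t\otimes_A\Hils\to\Hils\), and \(\bar\rho_t(\xi)h\defeq U_t(\xi\otimes h)\) gives the required extension; the representation identities for~\((\bar\rho_t)\) follow from those of~\(\rho\) on these dense subspaces and the compatibility of the~\(\mu_{t,u}\) with the internal tensor products. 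Granting this, the top sequence is exact; finally the range of~\(\iota\rtimes_\red S\) lies in \(\ker(\kappa\rtimes_\red S)\) because \(\kappa\circ\iota=0\) already on the algebraic crossed products, so~\eqref{eq:sequence_to_be_exact} has the stated properties.
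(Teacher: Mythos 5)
Your proposal is correct and follows essentially the same route as the paper's proof: Proposition~\ref{prop:functoriality} together with Remark~\ref{rem:some_conditions_implying_conditions} for the existence of all four maps and the injectivity of \(\iota\rtimes_\red S\), extension of a faithful representation of \(I\rtimes S\) to a representation of~\(\Hilm\) for the injectivity of \(\iota\rtimes S\), and the universal property of \(A/I\rtimes S\) applied to the quotient \((A\rtimes S)/(I\rtimes S)\) for exactness in the middle of the top sequence. The only cosmetic differences are that you build the extended representation via internal tensor products after cutting down to the essential subspace of \(\rho_1|_I\), where the paper takes a nondegenerate faithful representation of \(I\rtimes S\) and defines the extension by \(\widetilde{\pi}(\xi_t)\pi(a)h\defeq\pi(\xi_t a)h\); these yield the same operators.
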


\begin{proof} 

Note that \(\iota(I_{1,t}^{\Hilm I} )= I_{1,t}^{\Hilm}\cdot I =\iota(s(\Hilm_t I))\cdot I_{1,t}^{\Hilm} \)
and
\(\kappa(I_{1,t}^{\Hilm} )=I_{1,t}^{{\Hilm/\Hilm I}}\), for all \(t\in S\). Hence~\(\iota\) and~\(\kappa\) satisfy the equivalent  conditions  in Proposition \ref{prop:functoriality}
by the last part of Remark \ref{rem:some_conditions_implying_conditions}.
Thus, by Proposition \ref{prop:functoriality}, not only the \Star{}homomorphisms \(\iota \rtimes S\), \(\kappa \rtimes S\) but also \(\iota \rtimes_{\red} S\), \(\kappa \rtimes_{\red} S\) exist, 
and \(\iota \rtimes_\red S\) is injective. 
The maps 	\(\kappa \rtimes S\) and
  \(\kappa \rtimes_\red S\) are surjective because their images
  contain the dense \Star{}subalgebra \(A/I\rtimes_\alg S\).

  We prove that \(\iota \rtimes S\) is injective.  Let~\(\pi\) be a
  faithful, nondegenerate representation of \(I\rtimes S\) on a
  Hilbert space~\(\Hils\).  Since~\(I\) is nondegenerate in
  \(I\rtimes S\), the representation~\(\pi|_I\) is also
  nondegenerate.  Therefore, for each \(\xi_t\in \Hilm_t\), the
  formula
  \[
    \widetilde{\pi}(\xi_t)\pi(a)h \defeq \pi(\xi_t a)h
  \]
  for \(a\in I\) and \(h\in \Hils\) defines a bounded operator
  on~\(\Hils\).  Alternatively, \(\widetilde{\pi}(\xi_t)\) could be
  defined using an approximate identity~\((\mu_\lambda)\) for~\(I\),
  as the limit of the strongly convergent
  net~\(\pi(\xi_t \mu_\lambda)\).  A standard proof shows that
  \((\widetilde{\pi}_t)_{t\in S}\) is a
  representation~\(\widetilde{\pi}\)
  of~\(\Hilm\).  The integrated representation
  \(\widetilde{\pi}\rtimes S\colon A\rtimes S \to \Bound(\Hils)\)
  satisfies
  \((\widetilde{\pi}\rtimes S) \circ (\iota\rtimes S)= \pi\).  Hence
  \(\iota\rtimes S\) is injective.

  The composite maps \((\kappa \rtimes S) \circ (\iota \rtimes S)\)
  and \((\kappa \rtimes_\red S) \circ (\iota \rtimes_\red S)\)
  vanish.  Hence the range of \(\iota \rtimes S\) is contained in
  \(\ker(\kappa\rtimes S)\) and the range of
  \(\iota \rtimes_\red S\) is contained in
  \(\ker(\kappa\rtimes_\red S)\).  Conversely, we claim that the
  range of \(\iota \rtimes S\) contains \(\ker(\kappa\rtimes S)\).
  We identify \(I\rtimes S\) with its image in \(A\rtimes S\).  Let
  \(q\colon A\rtimes S \to (A\rtimes S)\mathbin/(I\rtimes S)\) be
  the quotient map.  For \(t\in S\), the restriction of~\(q\)
  to~\(\Hilm_t I\) vanishes.  Hence~\(q\) induces maps
  \[
    \psi_t\colon \Hilm_t/ \Hilm_t I \to (A\rtimes S)/(I\rtimes S).
  \]
  They form a representation of~\(\Hilm_{A/I}\).  It integrates to a
  homomorphism
  \[
    \psi\rtimes S\colon
    (A/I)\rtimes S \to (A\rtimes S) / (I\rtimes S).
  \]
  We have \((\psi\rtimes S) \circ (\kappa\rtimes S)=q\).  Hence
  \(\ker(\kappa\rtimes S) \subseteq \ker(q)=I\rtimes S\).
\end{proof}

\begin{definition}
  The action~\(\Hilm\) is \emph{exact} if the
  sequence~\eqref{eq:sequence_to_be_exact} is exact for each
  \(I\in \Ideals^{\Hilm}(A)\).
\end{definition}

\begin{example}[Exactness of twisted groupoids]
  \label{ex:exactness_twisted_groupoids}
  An action~\(\Hilm\) of an inverse semigroup~\(S\) on a commutative
  \(\Cst\)\nb-algebra \(A\cong \Cont_0(X)\) corresponds to a twisted
  étale groupoid \((\Sigma,\Gr)\) with unit space~\(X\) (see
  \cite{BussExel:Fell.Bundle.and.Twisted.Groupoids}).  Here
  \(\Gr= X\rtimes S\) is the dual groupoid of~\(\Hilm\).  The
  action~\(\Hilm\) is exact if and only if the corresponding \emph{twisted
  groupoid \((\Gr,\Sigma)\) is exact} in the sense that
  for any open invariant subset \(U\subseteq X\), the sequence of
  reduced twisted groupoid \(\Cst\)\nb-algebras
  \[
    \Cst_\red(\Gr|_U,\Sigma|_U)
    \into \Cst_\red(\Gr,\Sigma)
    \onto \Cst_\red(\Gr|_{X\setminus U},\Sigma|_{X\setminus U})
  \]
  is exact.  If the twist is trivial, the name \emph{inner exact} for
  such groupoids is introduced
  in~\cite{AnantharamanDelaroch:Weak_containment}.  This example
  generalises as follows.
\end{example}

\begin{example}[Exactness of Fell bundles over groupoids]
  \label{ex:exactness_fell_bundles_groupoids}
  Let \(\A=(A_\gamma)_{\gamma\in \Gr}\) be a Fell bundle over an
  étale groupoid~\(\Gr\) with locally compact Hausdorff unit
  space~\(X\).  Let \(S\subseteq \Bis(\Gr)\) be a wide inverse
  semigroup of bisections and turn the Fell bundle~\(\A\) into an
  action~\(\Hilm\) of~\(\tilde{S}\) on the section
  \(\Cst\)\nb-algebra \(A=A|_X\), such that the associated universal
  and reduced \(\Cst\)\nb-algebras remain the same (see Examples
  \ref{ex:fell_bundles_over_groupoids}
  and~\ref{ex:full_reduced_section_algebras}).  This natural
  correspondence extends to invariant ideals and the associated
  algebras.  Indeed, by Lemma~\ref{lem:invariance_vs_duals} an ideal
  \(I\) in \(A\) is \(\Hilm\)-invariant if and only if it is
  \(\Gr\)-invariant in the sense
  of~\cite{Ionescu-Williams:Remarks_ideal_structure} (\(\check{I}\)
  is invariant under the dual groupoid
  \(\check{A}\rtimes \Gr\cong \widecheck{A}\rtimes \tilde{S}\)).
  The restricted actions \(\Hilm|_I\) and~\(\Hilm|_{A/I}\)
  of~\(\tilde{S}\) correspond to\emph{ restricted Fell bundles}
  \(\A|_{I}=(A_\gamma I_{s(\gamma)})_{\gamma\in \Gr}\)
  and~\(\A_{A/I}=(A_{\gamma}/(A_\gamma I_{s(\gamma)}))_{\gamma\in
    \Gr}\) over~\(\Gr\) as defined in
  \cite{Ionescu-Williams:Remarks_ideal_structure}*{Propositions 3.3
    and~3.4}.  The authors
  of~\cite{Ionescu-Williams:Remarks_ideal_structure} consider
  separable Fell bundles over locally compact Hausdorff groupoids.
  However, neither separability nor Hausdorffness are used in the
  construction of \(\A|_{I}\) and~\(\A_{A/I}\).
  Proposition~\ref{prop:exactness_of_universal_rep} implies that
 the sequence
  \[
    \Cst(\A|_{I})
    \into \Cst(\A)
    \onto \Cst(\A|_{A/I})
  \]
  is exact.  This extends the main result
  of~\cite{Ionescu-Williams:Remarks_ideal_structure} to \'etale
  groupoids that are not separable or not Hausdorff.  We will say
  that the \emph{Fell bundle~\(\A\) is exact} if for every
  \(\Gr\)\nb-invariant ideal~\(I\) in~\(A\), the sequence
  \[
    \Cst_\red(\A|_{I})
    \into \Cst_\red(\A)
    \onto \Cst_\red(\A|_{A/I})
  \]
  is exact.  Equivalently, the action~\(\Hilm\) corresponding
  to~\(\A\) is exact.
\end{example}

\subsection{Exactness for essential crossed products}

The essential crossed product is not functorial (see
\cite{Kwasniewski-Meyer:Essential}*{Remark~4.8}).  This complicates
the definition of ``exactness'' for essential crossed products.
Only the quotient maps cause extra problems:

\begin{lemma}\label{lem:essential_crossed_product_ideal_embedding}
  Let~\(\Hilm\) be an action of~\(S\) by Hilbert bimodules on a
  \(\Cst\)\nb-algebra~\(A\) and let \(I\in \Ideals^{\Hilm}(A)\).
  The injective homomorphism~\(\iota\) from~\(\Hilm|_I\)
  into~\(\Hilm\) induces an injective \Star{}homomorphism
  \(\iota\rtimes_\ess S\colon I\rtimes_\ess S \to A\rtimes_\ess S\).
  Its image is the ideal in \(A\rtimes_\ess S\) generated by~\(I\).
\end{lemma}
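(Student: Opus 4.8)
The plan is to reduce the whole statement to the full crossed product, where Proposition~\ref{prop:exactness_of_universal_rep} already provides an injective \Star{}homomorphism \(\iota\rtimes S\colon I\rtimes S\to A\rtimes S\), and then to transport the obstruction ideal \(\Null_{EL}\) along this inclusion. By Proposition~\ref{prop:exactness_of_universal_rep} the map \(\iota\rtimes S\) is injective and its image equals \(\ker(\kappa\rtimes S)\); since this image contains \(I=\Hilm_1 I\), is an ideal, and is contained in \(\cl{(A\rtimes S)\,I\,(A\rtimes S)}\) because every graded piece \(\Hilm_t I=\Hilm_t\cdot I\) lies there, it equals the ideal \(\tilde I\defeq\cl{(A\rtimes S)\,I\,(A\rtimes S)}\) generated by~\(I\). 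Write \(EL\colon A\rtimes S\to\Locmult(A)\) and \(EL_I\colon I\rtimes S\to\Locmult(I)\) for the canonical \(\Locmult\)\nb-expectations of \(\Hilm\) and of~\(\Hilm|_I\), so that \(A\rtimes_\ess S=(A\rtimes S)/\Null_{EL}\) and \(I\rtimes_\ess S=(I\rtimes S)/\Null_{EL_I}\). I claim the lemma follows at once from the identity
\begin{equation}
  \label{eq:plan_ess_ideal_key}
  (\iota\rtimes S)(\Null_{EL_I})=\tilde I\cap\Null_{EL}.
\end{equation}
Indeed, \eqref{eq:plan_ess_ideal_key} says that \(\Null_{EL_I}\) is exactly the kernel of the composite \(I\rtimes S\xrightarrow{\iota\rtimes S}A\rtimes S\onto A\rtimes_\ess S\), so this composite descends to an injective \Star{}homomorphism \(\iota\rtimes_\ess S\colon I\rtimes_\ess S\to A\rtimes_\ess S\). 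Its image is the image of~\(\tilde I\) under the quotient map \(A\rtimes S\onto A\rtimes_\ess S\), which is \(\cl{(A\rtimes_\ess S)\,I\,(A\rtimes_\ess S)}\) -- the ideal generated by~\(I\) -- because \(A\), and hence~\(I\), embeds into \(A\rtimes_\ess S\) (as \(\Null_{EL}\cap A=0\)).

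To prove \eqref{eq:plan_ess_ideal_key} I would first construct a canonical injective \Star{}homomorphism \(\theta\colon\Locmult(I)\hookrightarrow\Locmult(A)\) extending the inclusions \(I\subseteq\Locmult(I)\) and \(A\subseteq\Locmult(A)\): for an essential ideal \(J\idealin I\) the ideal \(J\oplus I^{\perp}\) is essential in~\(A\), and \(\theta\) sends \(m\in\Mult(J)\) to the class of \((m,0)\in\Mult(J)\oplus\Mult(I^{\perp})=\Mult(J\oplus I^{\perp})\); this is compatible with the connecting maps of the inductive systems and is injective, since if \((m,0)\) vanishes on an essential ideal \(L\idealin A\) then \(m\) vanishes on the essential ideal \(L\cap I\) of~\(I\). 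The decisive step is then the compatibility
\begin{equation}
  \label{eq:plan_ess_ideal_compat}
  EL\circ(\iota\rtimes S)=\theta\circ EL_I\qquad\text{on }I\rtimes S,
\end{equation}
which it suffices to check on \(\xi\in\Hilm_t I\) for \(t\in S\). Here one uses the explicit formula \(EL(\xi)(u+v)=\vartheta^{\Hilm}_{1,t}(\xi u)\) for \(u\in I^{\Hilm}_{1,t}\), \(v\in(I^{\Hilm}_{1,t})^{\perp}\), together with the facts recorded in the proof of Proposition~\ref{prop:exactness_of_universal_rep} that \(I^{\Hilm|_I}_{1,t}=I\cap I^{\Hilm}_{1,t}\) and that the inclusion maps, hence \(\vartheta^{\Hilm|_I}_{1,t}\), for \(\Hilm|_I\) are restrictions of those for~\(\Hilm\): since \(\xi\in\Hilm_t I\) one gets \(\xi u\in\Hilm_t I^{\Hilm|_I}_{1,t}\) and \(\vartheta^{\Hilm}_{1,t}(\xi u)\in I^{\Hilm|_I}_{1,t}\), so \(EL(\xi)\) is carried by \(I^{\Hilm}_{1,t}\), maps it into \(I^{\Hilm|_I}_{1,t}\), and does so by the very formula defining \(EL_I(\xi)\). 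Comparing the resulting multiplier classes in \(\Locmult(A)\) on a common essential ideal then reduces to the assertion that \(\xi p=0\) whenever \(p\in I^{\Hilm}_{1,t}\) annihilates~\(I\), which holds because \(\xi\in\Hilm_t I\) forces \(\braket{\xi}{\xi}\in I\); identifying the relevant difference of cut-downs of \(u\) as such a~\(p\) is a short computation with the support projection \([I^{\Hilm}_{1,t}]\in A''\).

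Granting \eqref{eq:plan_ess_ideal_compat}, the identity \eqref{eq:plan_ess_ideal_key} follows by an approximate-unit argument using that \(\Null_{EL}\) (resp.\ \(\Null_{EL_I}\)) is the largest ideal of \(A\rtimes S\) (resp.\ of \(I\rtimes S\)) contained in \(\ker EL\) (resp.\ \(\ker EL_I\)), together with the description \(\Null_E=\setgiven{b}{E(xby)=0\text{ for all }x,y}\). For \(b\in\Null_{EL_I}\) and \(x,y\in A\rtimes S\) pick an approximate unit \((u_\lambda)\) for~\(\tilde I\); then \(xby=\lim_\lambda (xu_\lambda)\,b\,(u_\lambda y)\) with \(xu_\lambda,u_\lambda y\in\tilde I\cong I\rtimes S\), so by the description of \(\Null_{EL_I}\) and by \eqref{eq:plan_ess_ideal_compat} each \(EL\bigl((xu_\lambda)\,b\,(u_\lambda y)\bigr)=\theta\bigl(EL_I((xu_\lambda)\,b\,(u_\lambda y))\bigr)=0\), whence \(EL(xby)=0\) by continuity of~\(EL\); thus \(b\in\Null_{EL}\). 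Conversely, if \(b\in\tilde I\cap\Null_{EL}\) and \(x,y\in\tilde I\), then \(xby\in\Null_{EL}\subseteq\ker EL\), so \(\theta\bigl(EL_I(xby)\bigr)=EL(xby)=0\) and injectivity of~\(\theta\) gives \(EL_I(xby)=0\); hence \(b\in\Null_{EL_I}\). This proves \eqref{eq:plan_ess_ideal_key}.

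The hard part will be \eqref{eq:plan_ess_ideal_compat}: matching the two canonical \(\Locmult\)\nb-valued expectations under~\(\theta\). Concretely one must carefully unwind the strict-limit formulas defining \(EL\) and \(EL_I\) and keep track of the ideals \(I^{\Hilm}_{1,t}\) and \(I^{\Hilm|_I}_{1,t}\) and of annihilators taken inside~\(I\) versus inside~\(A\); everything else is routine manipulation of approximate units and of the Galois connection for ideals. When the action is closed the statement is immediate from Proposition~\ref{prop:exactness_of_universal_rep}, since then \(\rtimes_\ess\) coincides with \(\rtimes_\red\) and the image of \(\iota\rtimes_\red S\) is the ideal generated by~\(I\) by the same argument as above.
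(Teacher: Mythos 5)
Your proposal is correct and follows essentially the same route as the paper's proof: the key step in both is the commuting square identifying $EL\circ(\iota\rtimes S)$ with the composite of $EL_I$ and the canonical embedding $\Locmult(I)\hookrightarrow\Locmult(A)$ built from the essential ideals $J\oplus I^\perp$, checked on each fibre $\Hilm_t I$ via the explicit strict-limit formula and the vanishing of $EL(\xi)$ on the pieces involving $I^\perp$. The only difference is that you spell out in more detail (via approximate units and the characterisation of $\Null_E$) the deduction of $(\iota\rtimes S)(\Null_{EL_I})=\tilde I\cap\Null_{EL}$, which the paper compresses into the remark that $\iota\rtimes S(I\rtimes S)$ is an ideal, and you make the identification of the image with the ideal generated by $I$ explicit.
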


\begin{proof}
  If~\(J\) is an essential ideal in~\(I\), then \(J\oplus I^\bot\)
  is an essential ideal in~\(A\).  The obvious inclusions
  \(\Mult(J) \to \Mult(J\oplus I^\bot)\) for the essential ideals
  \(J\subseteq I\) induce a natural isomorphism from \(\Locmult(I)\)
  onto an ideal in~\(\Locmult(A)\).  Let
  \(EL_I\colon I\rtimes S\to \Locmult(I)\) be the canonical
  essential expectation.  We are going to prove below that the
  following diagram commutes:
  \begin{equation}
    \label{eq:diagram_for_ideals_and_essential}
    \begin{tikzcd}
      I\rtimes  S \ar[r, hookrightarrow, "\iota\rtimes S"] \ar[d,"EL_I"] &
      A\rtimes S \ar[d,"EL"] & \\
      \Locmult(I) \ar[r, hookrightarrow,] &
      \Locmult(A)
    \end{tikzcd}
  \end{equation}
  Then
  \(\Null_{EL}\cap \iota\rtimes S(I\rtimes S)= \iota\rtimes
  S(\Null_{EL_I})\) because \(\iota\rtimes S(I\rtimes S)\) is an
  ideal in \(A\rtimes S\).  This, in turn, implies that the
  injective homomorphism \(\iota\rtimes S\) factors through an
  injective homomorphism
  \(\iota\rtimes_\ess S\colon I\rtimes_\ess S= I\rtimes
  S/\Null_{EL_I} \to A\rtimes_\ess S=A\rtimes S/ \Null_{EL}\).

  To check~\eqref{eq:diagram_for_ideals_and_essential}, let
  \(t\in S\).  If
  \(I_{t,1} = \cl{\sum_{v \le t,1} \s(\Hilm_v)}\) then
  \(I_{t,1} I = \cl{\sum_{v \le t,1} \s(\Hilm_v I)}\) and the
  restriction of the map
  \(\vartheta_{t,1}\colon \Hilm_t\cdot I_{t,1} \congto I_{t,1}\)
  (defined in~\eqref{eq:Def-thetas}) to \(\Hilm_t I \cdot I_{t,1}\)
  coincides with the corresponding map defined for the restricted
  action \((\Hilm_s I)_{s\in S}\).  Hence for each
  \(\xi\in \Hilm_t I\) the element
  \(EL(\xi)\in \Mult(I_{1,t}\oplus I_{1,t}^\bot)\subseteq
  \Mult(I_{1,t} I\oplus I_{1,t}^\bot I \oplus I_{1,t} I^\bot\oplus
  I_{1,t}^\bot I^\bot)\) acts on \(u\in I_{1,t}I\) in the same way
  as \(EL_I(\xi)\):
  \[
    EL(\xi)u=\vartheta_{1,t}(\xi u)=EL_I(\xi) u.
  \]
  Since
  \(EL(\xi) (I_{1,t}^\bot I \oplus I_{1,t} I^\bot\oplus I_{1,t}^\bot
  I^\bot) =0\), the embedding \(\Locmult(I)\subseteq \Locmult(A)\)
  maps \(EL_I(\xi)\) to \(EL(\xi)\).  This
  proves~\eqref{eq:diagram_for_ideals_and_essential}.
\end{proof}

\begin{example}[\cite{Kwasniewski-Meyer:Essential}*{Example~4.7}]
  \label{exa:group_bundle}
  Let \(S \defeq G \cup \{0\}\) be the inverse semigroup obtained by
  adjoining a zero element to an amenable discrete group~\(G\).
  Let~\(G\) act on \(A=\Cont[0,1]\) by \(\Hilm_g = \Cont[0,1]\) for
  \(g\in G\) and \(\Hilm_0 = \Cont_0(0,1]\), equipped with the usual
  involution and multiplication maps.  Then \(A\rtimes_\ess S= A\).
  Every ideal~\(I\) in~\(A\) is \(\Hilm\)\nb-invariant and
  \(I\rtimes_\ess S= I\), and so
  \(I\rtimes_\ess S\subseteq A\rtimes_\ess S\).  However, if
  \(I\defeq \Cont_0(0,1]\), then
  \(A/I\rtimes_\ess S =\C\rtimes G=\Cst(G)\) and the quotient
  homomorphism~\(\kappa\) from~\(\Hilm\) onto~\(\Hilm|_{A/I}\) does
  not induce a map from~\(A\) to~\(\Cst(G)\).
\end{example}

\begin{definition}\label{defn:essential exactness}
  We call the action~\(\Hilm\) \emph{essentially exact} if for each
  \(I\in \Ideals^{\Hilm}(A)\) there is a \Star{}homomorphism
  \(\kappa\rtimes_\ess S\colon A\rtimes_\ess S \to A/I\rtimes_\ess
  S\) whose restriction to each fibre~\(\Hilm_t\) is the quotient
  map onto \(\Hilm_t/\Hilm_tI\), \(t\in S\), and the kernel of
  \(\kappa\rtimes_\ess S\) is
  \(\iota\rtimes_\ess S(I\rtimes_\ess S)\).
\end{definition}
	\begin{remark}\label{rem:essential_for_closed_actions}
  When the action~\(\Hilm\) is closed, then the reduced and
  essential crossed products coincide for all restrictions of
  \(\Hilm\) (see Proposition~\ref{pro:closed_unit_isg_extension}).
  Thus essential exactness is the same as exactness for closed
  actions of inverse semigroups and for Fell bundles over Hausdorff
  groupoids.
\end{remark}
\begin{example}[Essentially exact Fell bundles]\label{exm:essential_exact_Fell_bundle}
  Consistently with Example
  \ref{ex:exactness_fell_bundles_groupoids}, we will call a Fell
  bundle \(\A=(A_\gamma)_{\gamma\in \Gr}\) over an étale groupoid
  essentially exact if the corresponding action~\(\Hilm\) is
  essentially exact.  More specifically, by
  Lemma~\ref{lem:essential_crossed_product_ideal_embedding}, for any
  \(\Gr\)\nb-invariant ideal~\(I\) in~\(A\), the inclusion
  \(\Contc(\A|_{I}) \subseteq \Contc(\A)\) extends to an injective
  \Star{}homomorphism \(\Cst_\ess(\A|_{I}) \into \Cst_\ess(\A)\).
  So \(\A\) is \emph{essentially exact} if and only if, for every
  \(\Gr\)\nb-invariant ideal~\(I\) in~\(A\), restriction of sections
  gives a well-defined \Star{}homomorphism
  \(\Cst_\ess(\A) \onto \Cst_\ess(\A|_{A/I})\) and the following
  sequence is exact:
  \[
    \Cst_\ess(\A|_{I})
    \into \Cst_\ess(\A)
    \onto \Cst_\ess(\A|_{A/I}).
  \]
\end{example}
If the \(S\)\nb-action on~\(A\) is residually aperiodic, then~\(A\)
separates ideals in~\(A\rtimes_\ess S\) if and only if the
\(S\)\nb-action on~\(A\) is essentially exact (see
Theorem~\ref{thm:residual_aperiodic_action} below).  In
Example~\ref{exa:group_bundle}, however, \(A\) separates ideals
in~\(A\rtimes_\ess S\) although the \(S\)\nb-action on~\(A\) is not
essentially exact.  The following proposition shows that the
\(S\)\nb-action on~\(A\) must be essentially exact if~\(A\)
separates ideals in \(A\rtimes_\red S\).

\begin{proposition}
  \label{prop:separation_of_ideals_in_reduced}
  The following are equivalent:
  \begin{enumerate}
  \item \label{enu:separation_of_ideals_in_reduced1}%
    \(A\) separates
    ideals in the reduced crossed product \(A\rtimes_\red S\);
  \item \label{enu:separation_of_ideals_in_reduced2}%
    the action is exact and for each \(I\in\Ideals^{\Hilm}(A)\),
    \(A/I\) detects ideals in \(A/I\rtimes_\red S\).
  \end{enumerate}
  If these equivalent conditions hold, then
  \(A/I\rtimes_\red S=A/I\rtimes_\ess S\) for every
  \(I\in\Ideals^{\Hilm}(A)\) and the action is essentially exact.
\end{proposition}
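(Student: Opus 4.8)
The plan is to identify, for \(B\defeq A\rtimes_\red S\) and \(I\in\Ideals^{\Hilm}(A)\), the quotient \(B/BIB\) with \(A/I\rtimes_\red S\) up to the possible failure of exactness of~\eqref{eq:sequence_to_be_exact}, and then feed this into Lemma~\ref{lem:separate_induced}. First I would record the standing facts: \(A\subseteq A\rtimes_\red S\) is a regular inclusion graded by \((\Hilm_t)_{t\in S}\) (Remark~\ref{rem:cross_product_graded}, Proposition~\ref{prop:regular_vs_inverse_semigroups}), hence symmetric, and by Proposition~\ref{prop:invariant_ideals_in_regular} we have \(\Ideals^{A\rtimes_\red S}(A)=\Ideals^{\Hilm}(A)\); moreover \(r(i(I))=I\) for \(I\in\Ideals^{\Hilm}(A)\) by the Galois connection. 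Then I would check that \(BIB\) equals the range of \(\iota\rtimes_\red S\): this range is the image under the surjection \(\Lambda^{\Hilm}\) of the ideal \(\iota\rtimes S(I\rtimes S)\trianglelefteq A\rtimes S\), hence an ideal of \(B\); it contains \(I\), and it is contained in \(BIB\) because each fibre \(\Hilm_t I=\Hilm_t\cdot I\) lies in \(BIB\). Thus \(\iota\rtimes_\red S\) restricts to an isomorphism \(I\rtimes_\red S\congto BIB\), and \(A\cap BIB=r(i(I))=I\), so \(A/I\) embeds into \(B/BIB\) as a \(\Cst\)\nb-subalgebra; and \(\kappa\rtimes_\red S\) descends to a surjection \(p_I\colon B/BIB\onto A/I\rtimes_\red S\) that is the identity on \(A/I\) and is an isomorphism precisely when~\eqref{eq:sequence_to_be_exact} is exact in the middle for this~\(I\).

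Given this, the equivalence of~\ref{enu:separation_of_ideals_in_reduced1} and~\ref{enu:separation_of_ideals_in_reduced2} is short. By Lemma~\ref{lem:separate_induced}, \(A\) separates ideals in \(A\rtimes_\red S\) if and only if \(A/I\) detects ideals in \(B/BIB\) for all \(I\in\Ideals^{\Hilm}(A)\). Since \(p_I\) is injective on \(A/I\), we have \(\ker p_I\cap(A/I)=0\). Hence, if \(A/I\) detects ideals in \(B/BIB\), then \(\ker p_I=0\), so~\eqref{eq:sequence_to_be_exact} is exact for this \(I\), and transporting along the isomorphism \(p_I\) (which fixes \(A/I\)) shows that \(A/I\) detects ideals in \(A/I\rtimes_\red S\); conversely, if~\eqref{eq:sequence_to_be_exact} is exact for \(I\) and \(A/I\) detects ideals in \(A/I\rtimes_\red S\), then, again transporting along \(p_I\), \(A/I\) detects ideals in \(B/BIB\). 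Running this over all \(I\in\Ideals^{\Hilm}(A)\) turns ``\(A/I\) detects ideals in \(B/BIB\) for all \(I\)'' into exactly ``the action is exact and \(A/I\) detects ideals in \(A/I\rtimes_\red S\) for all \(I\)'', which is~\ref{enu:separation_of_ideals_in_reduced2}.

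For the last assertion, assume the equivalent conditions hold and fix \(I\in\Ideals^{\Hilm}(A)\). To see \(A/I\rtimes_\red S=A/I\rtimes_\ess S\), let \(N\) be the kernel of the canonical surjection \(A/I\rtimes_\red S\onto A/I\rtimes_\ess S\). Since \(A/I\) embeds as the unit fibre into both crossed products and this surjection restricts to the identity on \(A/I\), we get \(N\cap(A/I)=0\); as \(A/I\) detects ideals in \(A/I\rtimes_\red S\) by~\ref{enu:separation_of_ideals_in_reduced2}, it follows that \(N=0\). In particular, taking \(I=0\), \(A\rtimes_\red S=A\rtimes_\ess S\). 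To get essential exactness I would then use \(\kappa\rtimes_\red S\colon A\rtimes_\red S\to A/I\rtimes_\red S\) from Proposition~\ref{prop:exactness_of_universal_rep}: under the identifications just obtained it is a \Star{}homomorphism \(A\rtimes_\ess S\to A/I\rtimes_\ess S\) whose restriction to each fibre \(\Hilm_t\) is the quotient map onto \(\Hilm_t/\Hilm_t I\), and by exactness of the action its kernel equals the range of \(\iota\rtimes_\red S\), i.e.\ the ideal of \(A\rtimes_\ess S\) generated by~\(I\), which is \(\iota\rtimes_\ess S(I\rtimes_\ess S)\) by Lemma~\ref{lem:essential_crossed_product_ideal_embedding}. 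Hence Definition~\ref{defn:essential exactness} is satisfied.

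The main obstacle is the identification carried out in the first paragraph: establishing \(BIB=\iota\rtimes_\red S(I\rtimes_\red S)\), checking that \(A/I\) really sits inside \(B/BIB\) as a \(\Cst\)\nb-subalgebra (this is where \(A\cap BIB=I\) is used), and verifying that the descended map \(p_I\) is the identity on \(A/I\) and an isomorphism exactly under exactness. Once that is in place, everything reduces to transporting detection and separation along the (non-canonical but \(A/I\)-fixing) isomorphisms \(p_I\), which is routine.
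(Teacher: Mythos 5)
Your proposal is correct and follows essentially the same route as the paper: reduce via Lemma~\ref{lem:separate_induced} and Proposition~\ref{prop:invariant_ideals_in_regular} to detection in the quotients \((A\rtimes_\red S)/\iota\rtimes_\red S(I\rtimes_\red S)\), observe that detection forces \(\ker(\kappa\rtimes_\red S)\) to equal the range of \(\iota\rtimes_\red S\) (hence exactness), and get the last claim because the kernel of \(A/I\rtimes_\red S\onto A/I\rtimes_\ess S\) meets \(A/I\) trivially. You merely spell out details the paper leaves implicit, notably the identification \(BIB=\iota\rtimes_\red S(I\rtimes_\red S)\) and the explicit verification of Definition~\ref{defn:essential exactness} via Lemma~\ref{lem:essential_crossed_product_ideal_embedding}.
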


\begin{proof}
  Lemma~\ref{lem:separate_induced} and
  Proposition~\ref{prop:invariant_ideals_in_regular} show that~\(A\)
  separates ideals in \(A\rtimes_\red S\) if and only if~\(A/I\)
  detects ideals in
  \(A\rtimes_\red S/\iota_\red \rtimes S(I\rtimes_\red S)\) for all
  \(I\in\Ideals^{\Hilm}(A)\).  Then the action is exact because the
  kernel of \(\kappa \rtimes_{\red} S\) has to be
  \(\iota_\red \rtimes S(I\rtimes_\red S)\), and then
  \(A\rtimes_\red S/\iota_\red \rtimes S(I\rtimes_\red S) \cong
  A/I\rtimes S\) for all \(I\in\Ideals^{\Hilm}(A)\).  Thus
  \ref{enu:separation_of_ideals_in_reduced1}
  and~\ref{enu:separation_of_ideals_in_reduced2} are equivalent.
  Condition~\ref{enu:separation_of_ideals_in_reduced2} implies
  \(A/I\rtimes_\red S=A/I\rtimes_\ess S\) because otherwise
  \(A/I\rtimes_\ess S\) would be a quotient of \(A/I\rtimes_\red S\)
  by a nonzero ideal not detected by \(A/I\).
\end{proof}

We illustrate by an example what can go wrong with the exactness of
essential crossed products.  Our example is closely related to the
Reeb foliation or, more precisely, to its restriction to a
transversal.

\begin{example}
  Let \(\vartheta\colon \R\to\R\) be a homeomorphism with
  \(\vartheta(t) = t\) for \(t\le 0\) and \(\vartheta(t) > t\) for
  all \(t>0\).  Let~\(G\) be the germ groupoid of the transformation
  groupoid \(\R\times_\vartheta\Z\).  We claim that
  \(\Cst_\red(G) \cong \Cst_\ess(G)\).  To see this, we use that the
  restrictions of~\(G\) to \([0,\infty)\) and~\((-\infty,0)\) are
  Hausdorff.  Indeed, \(G|_{(0,\infty)}\) is the Hausdorff
  transformation groupoid \((0,\infty) \rtimes_\vartheta \Z\)
  because~\(\Z\) acts freely (and properly) on~\((0,\infty)\).
  Similarly, \(G|_{[0,\infty)}\) is the Hausdorff transformation
  groupoid \([0,\infty) \rtimes_\vartheta \Z\); the action of~\(\Z\)
  on~\([0,\infty)\) fixes~\(0\), but the germ of any \(n\in\Z\)
  at~\(0\) is nontrivial because~\(\vartheta^n\) acts nontrivially
  on \((0,\infty)\).  Therefore, the support of any nonzero element
  of \(\Cst_\red(G)\) must intersect \([0,\infty)\) and
  \((-\infty,0)\) in relatively open subsets.  Hence the support
  cannot be meagre, and this proves our claim.  This equality of
  reduced and essential groupoid \(\Cst\)\nb-algebras for~\(G\) is
  not inherited by the restriction to the closed invariant
  subset~\((-\infty,0]\).  Indeed, the restriction of~\(G\) to
  \((-\infty,0]\) is the non-Hausdorff group bundle with trivial
  fibre over \((-\infty,0)\) and the fibre~\(\Z\) at~\(0\) (see
  \cite{Kwasniewski-Meyer:Essential}*{Example~4.7} and
  Example~\ref{exa:group_bundle}).  In this case, the full and
  reduced groupoid \(\Cst\)\nb-algebras are obtained by gluing
  together \(\Cont_0((-\infty,0))\) and the fibre \(\Cst(\Z)\)
  at~\(0\).  However,
  \( \Cst_\ess(G|_{(-\infty,0]})=\Cont_0((-\infty,0])\), so
  \[
    \Cst_\red(G|_{(-\infty,0]}) \not\cong \Cst_\ess(G|_{(-\infty,0]}).
  \]
  Since the essential and reduced crossed products coincide for
  \(G\) and~\(G|_{(0,\infty)}\), but not for~\(G|_{(-\infty,0]}\),
  the following sequence of essential crossed products exists, but fails to be
  exact:
  \[
    0 \to \Cst_\ess(G|_{(0,\infty)}) \to \Cst_\ess(G)
    \to \Cst_\ess(G|_{(-\infty,0]}) \to 0.
  \]
  This is one way how essential crossed products may fail to be
  exact.  The restriction~\(G|_{\{0\}}\) is simply the group~\(\Z\).
  So
  \(\Cst_\red(G|_{\{0\}}) = \Cst_\ess(G|_{\{0\}}) \cong \Cst(\Z)\).
  The restriction \Star{}homomorphism
  \(\Cst(G|_{(-\infty,0]}) \to \Cst(G|_{\{0\}})\) does not descend
  to the essential crossed products.  That is, there is no canonical
  map from \(\Cst_\ess(G|_{(-\infty,0]})\) to
  \(\Cst_\ess(G|_{\{0\}})\).  This is the second way how essential
  crossed products may fail to be exact.

  Since \(G|_{[0,\infty)}\cong [0,\infty) \rtimes_\vartheta \Z\), it
  follows that
  \(\Cst(G|_{[0,\infty)}) \cong \Cst_\red(G|_{[0,\infty)})\).  By a
  diagram, it follows that the sequence
  \[
    0 \to \Cst_\red(G|_{(-\infty,0)})
    \to \Cst_\red(G)
    \to \Cst_\red(G|_{[0,\infty)})
    \to 0
  \]
  is exact.  Since also
  \(\Cst(G|_{(-\infty,0)}) \cong \Cst_\red(G|_{(-\infty,0)})\), the
  Five Lemma shows that \(\Cst(G) \cong \Cst_\red(G)\).  A similar
  argument shows \(\Cst(G|_U) \cong \Cst_\red(G|_U)\) for all
  locally closed \(G\)\nb-invariant subsets \(U\subseteq\R\).
  Therefore, \(G\) is inner exact.
\end{example}

\subsection{Amenability vs exactness}
\label{sec:amenability}

\begin{definition}
  Let~\(\Hilm\) be an \(S\)\nb-action by Hilbert bimodules on a
  \(\Cst\)\nb-algebra~\(A\).  We call the action~\(\Hilm\)
  \emph{amenable} if the regular representation
  \(\Lambda\colon A\rtimes S \to A\rtimes_\red S\) is an
  isomorphism.
\end{definition}

\begin{remark}
  By~\eqref{eq:kernel}, the action~\(\Hilm\) is amenable if and only
  if the weak conditional expectation \(E\colon A\rtimes S \to A''\)
  is faithful.
\end{remark}

\begin{lemma}
  \label{lem:permanence_of_amenability}
  Let~\(\Hilm\) be an \(S\)\nb-action by Hilbert bimodules on a
  \(\Cst\)\nb-algebra~\(A\) and let \(I\in\Ideals^{\Hilm}(A)\) be an
  invariant ideal.  If \(\Hilm|_{A/I}\) is amenable, then
  \[
    I\rtimes_\red S \into A\rtimes_\red S \onto A/I\rtimes_\red S
  \]
  is exact.  If this sequence is exact, then~\(\Hilm\) is amenable
  if and only if \(\Hilm|_I\) and~\(\Hilm|_{A/I}\) are amenable.
\end{lemma}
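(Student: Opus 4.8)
The plan is to stack the two sequences from Proposition~\ref{prop:exactness_of_universal_rep} into a commutative ladder
\[
  \begin{tikzcd}
    0 \ar[r] & I\rtimes S \ar[r, "\iota\rtimes S"] \ar[d,"\Lambda_I"'] &
    A\rtimes S \ar[r, "\kappa\rtimes S"] \ar[d,"\Lambda_A"] &
    A/I\rtimes S \ar[r] \ar[d,"\Lambda_{A/I}"] & 0 \\
    & I\rtimes_\red S \ar[r, "\iota\rtimes_\red S"'] &
    A\rtimes_\red S \ar[r, "\kappa\rtimes_\red S"'] &
    A/I\rtimes_\red S &
  \end{tikzcd}
\]
in which \(\Lambda_I,\Lambda_A,\Lambda_{A/I}\) denote the regular representations.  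These are always surjective; the top row is exact; the bottom row has \(\iota\rtimes_\red S\) injective, \(\kappa\rtimes_\red S\) surjective and \((\kappa\rtimes_\red S)\circ(\iota\rtimes_\red S)=0\); and the two squares commute by Proposition~\ref{prop:functoriality}\ref{item:functoriality0}, whose hypotheses hold for~\(\iota\) and~\(\kappa\) as recorded in the proof of Proposition~\ref{prop:exactness_of_universal_rep}.  Everything then comes out by diagram chasing.

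For the first assertion, assume \(\Lambda_{A/I}\) is injective and take \(x\in\ker(\kappa\rtimes_\red S)\).  Lift~\(x\) to \(\tilde x\in A\rtimes S\) along the surjection~\(\Lambda_A\).  Then \(\Lambda_{A/I}\bigl((\kappa\rtimes S)(\tilde x)\bigr)=(\kappa\rtimes_\red S)(x)=0\), so \((\kappa\rtimes S)(\tilde x)=0\), and exactness of the top row gives \(\tilde x=(\iota\rtimes S)(\tilde y)\) for some \(\tilde y\in I\rtimes S\).  Applying~\(\Lambda_A\) and commutativity, \(x=(\iota\rtimes_\red S)\bigl(\Lambda_I(\tilde y)\bigr)\) lies in the range of \(\iota\rtimes_\red S\).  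Hence the bottom row --~i.e.\ the sequence in the statement~-- is exact.

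Now suppose that the bottom row is exact.  If \(\Hilm|_I\) and \(\Hilm|_{A/I}\) are amenable, then \(\Lambda_I\) and \(\Lambda_{A/I}\) are isomorphisms, and since both rows are exact the short five lemma forces \(\Lambda_A\) to be an isomorphism, so~\(\Hilm\) is amenable.  Conversely, suppose \(\Lambda_A\) is an isomorphism.  Since \(\iota\rtimes S\) and \(\iota\rtimes_\red S\) are injective and \(\Lambda_A\circ(\iota\rtimes S)=(\iota\rtimes_\red S)\circ\Lambda_I\), the map \(\Lambda_I\) is injective, hence an isomorphism, so \(\Hilm|_I\) is amenable.  Finally, \(\Lambda_A\) carries the ideal \((\iota\rtimes S)(I\rtimes S)=\ker(\kappa\rtimes S)\) onto \((\iota\rtimes_\red S)(I\rtimes_\red S)=\ker(\kappa\rtimes_\red S)\) --~using surjectivity of~\(\Lambda_I\) and exactness of both rows~-- so it descends to an isomorphism of the quotient \(\Cst\)\nb-algebras; under the identifications \((A\rtimes S)/\ker(\kappa\rtimes S)\cong A/I\rtimes S\) and \((A\rtimes_\red S)/\ker(\kappa\rtimes_\red S)\cong A/I\rtimes_\red S\) this induced map is exactly \(\Lambda_{A/I}\), which is therefore an isomorphism; so \(\Hilm|_{A/I}\) is amenable.

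The one point that needs care is the commutativity of the ladder, namely that the regular representations intertwine \(\iota\rtimes S\) with \(\iota\rtimes_\red S\) and \(\kappa\rtimes S\) with \(\kappa\rtimes_\red S\); this is exactly what Proposition~\ref{prop:functoriality}\ref{item:functoriality0} provides once one knows, as in the proof of Proposition~\ref{prop:exactness_of_universal_rep}, that \(\iota\) and~\(\kappa\) satisfy the equivalent conditions there.  No genuine obstacle remains; one must only be mindful that the lower row is a priori exact just at its two outer spots, so exactness in the middle may be invoked only where the hypotheses license it.
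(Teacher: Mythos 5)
Your proof is correct and follows essentially the same route as the paper: the same commutative ladder of regular representations, the same lift-and-chase argument showing \(\ker(\kappa\rtimes_\red S)=\Lambda_A(\ker(\kappa\rtimes S))\) when \(\Lambda_{A/I}\) is injective, and a diagram chase for the equivalence of amenabilities. The only cosmetic difference is that the paper invokes the Snake Lemma to get the short exact sequence \(\ker(\Lambda_I)\into\ker(\Lambda_A)\onto\ker(\Lambda_{A/I})\) where you run the five-lemma-style chase explicitly.
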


\begin{proof}
  There is a commutative diagram
  \[
    \begin{tikzcd}
      I\rtimes S \ar [r, rightarrowtail, "\iota \rtimes S"] \ar[d, twoheadrightarrow,"\Lambda^I"] &
      A\rtimes S  \ar[r, twoheadrightarrow ,"\kappa \rtimes S"] \ar[d, twoheadrightarrow,"\Lambda^A"] &
      A/I\rtimes S   \ar[d, twoheadrightarrow,"\Lambda^{A/I}"] 
      \\
      I\rtimes_\red S \ar[r, rightarrowtail, "\iota \rtimes_\red S"'] &
      A\rtimes_\red S  \ar[r, twoheadrightarrow ,"\kappa \rtimes_\red S"'] &
      A/I\rtimes_\red S,
    \end{tikzcd}
  \]
  where the top horizontal sequence is exact, and \(\Lambda^I\),
  \(\Lambda^A\), \(\Lambda^{A/I}\) denote the respective regular
  representations.  If \(\Lambda^{A/I}\) is injective, then
  \[
    \ker(\kappa \rtimes_\red S)=\Lambda^A\left(\ker(\kappa \rtimes S)\right)=\Lambda^A\left(\iota \rtimes S(I\rtimes S )\right)
    =\iota \rtimes_\red S(I\rtimes_\red S ).
  \]
  Hence
  \(I\rtimes_\red S \into A\rtimes_\red S \onto A/I\rtimes_\red S\)
  is exact.  In general, if this sequence is exact, the Snake Lemma
  from homological algebra yields a short exact sequence
  \[
    \ker(\Lambda^I)
    \into \ker(\Lambda^A)
    \onto \ker(\Lambda^{A/I}).
  \]
  Hence \(\ker(\Lambda^A)=0\) if and only if \(\ker(\Lambda^I)=0\)
  and \(\ker(\Lambda^{A/I})=0\).
\end{proof}

\begin{remark}
  It is unclear whether the quotient action~\(\Hilm|_{A/I}\) for
  \(I\in\Ideals^{\Hilm}(A)\) is amenable if~\(\Hilm\) is (the proof
  of \cite{Kwasniewski-Szymanski:Pure_infinite}*{Lemma~3.9} is
  incorrect).  If so, then by
  Lemma~\ref{lem:permanence_of_amenability}, the amenable action is
  also exact.  Let \(S=G\) be a group.  If~\(\Hilm\) satisfies the
  approximation property in
  \cite{Exel:Amenability}*{Definition~4.4}, then also every
  restriction~\(\Hilm|_{A/I}\) satisfies the approximation property.
  Hence the approximation property of a Fell bundle over a group
  implies both amenability and exactness.  In particular, if there
  is an amenable group action~\(\Hilm\) such~that \(\Hilm|_{A/I}\)
  is not amenable for some \(I\in\Ideals^{\Hilm}(A)\), then the
  approximation property is strictly stronger then amenability (see
  also \cite{Exel:Amenability}*{Page~169}).
\end{remark}

\begin{example}
  \label{ex:amenable_groupoids_actions}
  Let \(\A=(A_\gamma)_{\gamma\in \Gr}\) be a Fell bundle over an
  étale, locally compact, Hausdorff groupoid~\(\Gr\).  Suppose
  that~\(\Gr\) is second countable and that each fibre~\(A_\gamma\)
  is separable.  If~\(\Gr\) is amenable, then the regular
  representation \(\Cst(\A)\onto \Cst_\red(\A)\) is an isomorphism
  by
  \cite{Sims-Williams:Amenability_for_Fell_bundles_over_groupoids}*{Theorem~1}.
  This applies also to every Fell bundle~\(\A_{A/I}\) over~\(\Gr\)
  where~\(I\) is a \(\Gr\)\nb-invariant ideal in \(A\).  Thus
  if~\(\Gr\) is amenable, then~\(\A\) is exact in the sense
  described in Example~\ref{ex:exactness_fell_bundles_groupoids}.
\end{example}

\begin{example}
  \label{ex:nuclearity_implies_amenabilty}
  If~\(\Hilm\) is a closed action of a countable inverse
  semigroup~\(S\) on a separable, commutative \(\Cst\)\nb-algebra
  \(A=\Cont_0(X)\) such that \(A\rtimes_\red S\) is nuclear, then
  the action~\(\Hilm\) is amenable and exact.  Indeed, the
  action~\(\Hilm\) corresponds to a twisted étale, locally compact,
  Hausdorff groupoid \((\Gr,\Sigma)\), where \(\Gr= X\rtimes S\) is
  the dual groupoid of~\(\Hilm\).  Equivalently, this corresponds to
  a Fell line bundle over~\(\Gr\).  Then~\(\Gr\) is amenable by
  \cite{Takeishi:Nuclearity}*{Theorem~5.4}.  Hence our claim follows
  from Example~\ref{ex:amenable_groupoids_actions}.  In particular,
  a twisted étale, locally compact, second countable, Hausdorff
  groupoid \((\Gr,\Sigma)\) with nuclear \(\Cst_\red(\Gr,\Sigma)\)
  is exact in the sense of
  Example~\ref{ex:exactness_twisted_groupoids}.
\end{example}

\section{Ideals and pure infiniteness for inverse semigroup crossed
  products}
\label{sec:crossed_infinite}

In this section we present efficient criteria for separation of
ideals and pure infiniteness in essential crossed products.

\subsection{Residual  aperiodicity and ideal structure}

Let \(\Hilm=(\Hilm_t)_{t\in S}\) be an action of a unital inverse
semigroup~\(S\) by Hilbert bimodules on a \(\Cst\)\nb-algebra~\(A\).

\begin{definition}[\cite{Kwasniewski-Meyer:Essential}*{Definition~6.1}]
  The action \(\Hilm=(\Hilm_t)_{t\in S}\) is \emph{aperiodic} if the
  Hilbert \(A\)\nb-bimodules \(\Hilm_t \cdot I_{1,t}^\bot\) are
  aperiodic for all \(t\in S\), where~\(I_{1,t}\) is defined
  in~\eqref{eq:Itu}.
\end{definition}

\begin{proposition}[\cite{Kwasniewski-Meyer:Essential}*{Proposition~6.3}]
  \label{pro:aperiodic_actions_vs_inclusions}
  Let~\(B\) be an \(S\)\nb-graded \(\Cst\)\nb-algebra with a grading
  \(\Hilm=(\Hilm_t)_{t\in S}\).  Let \(A\defeq \Hilm_1\subseteq B\)
  and turn~\(\Hilm\) into an \(S\)\nb-action on~\(A\).  If this
  action is aperiodic, then the inclusion \(A\subseteq B\) is
  aperiodic.  The converse holds if the grading on~\(B\) is
  topological, that is, the canonical quotient map
  \(A\rtimes S \onto A\rtimes_\ess S\) factors through \(A\onto B\).
\end{proposition}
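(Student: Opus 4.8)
The plan is to relate, for each \(t\in S\), the Hilbert \(A\)\nb-bimodule \(\Hilm_t\cdot I_{1,t}^\bot\) appearing in the definition of an aperiodic action to the sub\nb-bimodule \((\Hilm_t+A)/A\) of the Banach \(A\)\nb-bimodule \(B/A\) that, by Definition~\ref{def:aperiodic_inclusion}, controls aperiodicity of the inclusion \(A\subseteq B\).  The structural input is that \(I_{1,t}\oplus I_{1,t}^\bot\) is an essential ideal of~\(A\), so it acts nondegenerately on the Hilbert bimodule~\(\Hilm_t\); as the two summands have zero product, this gives an orthogonal direct sum decomposition \(\Hilm_t=\Hilm_t\cdot I_{1,t}\oplus\Hilm_t\cdot I_{1,t}^\bot\) into sub\nb-bimodules.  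Inside~\(B\) the first summand is absorbed into \(A=\Hilm_1\): every idempotent \(v\le t\) also satisfies \(v\le 1\), so the grading axioms give \(\Hilm_v=B_v\subseteq B_1=A\) as well as \(B_v\subseteq B_t=\Hilm_t\); unwinding the definition of~\(\vartheta_{1,t}\) in~\eqref{eq:Def-thetas}, this says exactly that \(\xi\delta_t=\vartheta_{1,t}(\xi)\delta_1\in A\) for \(\xi\in\Hilm_t\cdot I_{1,t}\).  Hence \((\Hilm_t+A)/A\) is the image of \(\Hilm_t\cdot I_{1,t}^\bot\) under the \(A\)\nb-bimodule map \(\Hilm_t\hookrightarrow B\onto B/A\).

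For the forward implication, assume the action is aperiodic, so each \(\Hilm_t\cdot I_{1,t}^\bot\) is aperiodic; then so is its image \((\Hilm_t+A)/A\) under the above map.  Since \(\sum_{t\in S}\Hilm_t\) is dense in~\(B\), the bimodule \(B/A\) is the closed linear span of the sub\nb-bimodules \((\Hilm_t+A)/A\).  A closed linear span of aperiodic sub\nb-bimodules is again aperiodic by the permanence properties of aperiodic Banach bimodules from~\cite{Kwasniewski-Meyer:Essential} --~this is the Kishimoto-type step in which, from finitely many elements, one extracts a single cut\nb-down that is simultaneously small for all of them.  Thus \(B/A\) is aperiodic, that is, \(A\subseteq B\) is aperiodic; note that this half does not use the topological grading hypothesis.

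For the converse, assume \(A\subseteq B\) is aperiodic and the grading is topological, that is, the surjection \(A\rtimes S\onto B\) has kernel contained in \(\Null_{EL}=\ker(A\rtimes S\onto A\rtimes_\ess S)\).  Then the canonical \(\Locmult\)-expectation \(EL\colon A\rtimes S\to\Locmult(A)\) descends to a generalised expectation \(EL\colon B\to\Locmult(A)\) with \(EL|_A=\Id_A\).  Its defining formula yields \(EL(\xi)=0\) for \(\xi\in\Hilm_t\cdot I_{1,t}^\bot\), because such~\(\xi\) is annihilated on the right by \(I_{1,t}\).  Fix \(t\in S\) and \(\xi\in\Hilm_t\cdot I_{1,t}^\bot\), regarded inside~\(B\).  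For \(y\in A\), contractivity of~\(EL\) gives \(\norm{y}=\norm{EL(\xi-y)}\le\norm{\xi-y}\), so \(\norm{\xi}\le 2\norm{\xi-y}\); thus the \(A\)\nb-bimodule map \(\Phi\colon\Hilm_t\cdot I_{1,t}^\bot\to B/A\), \(\eta\mapsto\eta+A\), satisfies \(\tfrac12\norm{\eta}\le\norm{\Phi(\eta)}\le\norm{\eta}\).  Now fix \(D\in\Her(A)\) and \(\varepsilon>0\) and apply aperiodicity of \(A\subseteq B\) to \(\xi\) (as an element of~\(B\)): there are \(a\in D^+\) with \(\norm{a}=1\) and \(y\in A\) with \(\norm{a\xi a-y}<\varepsilon/2\).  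Since \(\Hilm_t\cdot I_{1,t}^\bot\) is a sub\nb-bimodule, \(a\xi a\in\Hilm_t\cdot I_{1,t}^\bot\), so \(\norm{\Phi(a\xi a)}=\norm{a\xi a+A}\le\norm{a\xi a-y}<\varepsilon/2\), hence \(\norm{a\xi a}\le 2\norm{\Phi(a\xi a)}<\varepsilon\).  This verifies aperiodicity of \(\Hilm_t\cdot I_{1,t}^\bot\); as \(t\) was arbitrary, the action is aperiodic.  (Alternatively one could replace the factor \(2\) by using a genuine contractive slice map \(B\to\Hilm_t\cdot I_{1,t}^\bot\), which exists because the grading is topological.)

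I expect the main obstacle to be the bookkeeping in the first paragraph: one must check carefully that the Fell bundle data of the action --~the inclusion maps \(j_{u,t}\) and the identifications~\(\vartheta_{u,t}\)~-- are compatible with the ambient \(S\)\nb-grading, so that \(\Hilm_t\cdot I_{1,t}\) genuinely sits inside~\(A\) in~\(B\) and \((\Hilm_t+A)/A\) is genuinely a bimodule quotient of \(\Hilm_t\cdot I_{1,t}^\bot\).  The other imported ingredient to pin down is the permanence lemma for aperiodic Banach bimodules used in the forward direction, which is standard Kishimoto technology but not entirely trivial to set up.
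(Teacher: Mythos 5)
The paper offers no proof of this proposition---it is imported verbatim from \cite{Kwasniewski-Meyer:Essential}*{Proposition~6.3}---so your argument has to stand on its own. Your converse direction does: descending the canonical \(\Locmult\)-expectation to \(B\) (legitimate exactly because the grading is topological, since \(\ker(A\rtimes S\onto B)\subseteq\Null_{EL}\subseteq\ker EL\)), noting that it annihilates \(\Hilm_t\cdot I_{1,t}^\bot\), and using \(\norm{y}=\norm{EL(y-\xi)}\le\norm{y-\xi}\) to get \(\norm{\xi}\le 2\norm{\xi+A}\), is a clean and correct way to pull aperiodicity of \(B/A\) back to the sub-bimodules \(\Hilm_t\cdot I_{1,t}^\bot\).

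The forward direction, however, has a genuine gap at its very first step. The decomposition \(\Hilm_t=\Hilm_t\cdot I_{1,t}\oplus\Hilm_t\cdot I_{1,t}^\bot\) is false in general: an essential ideal of \(A\) need \emph{not} act nondegenerately on a Hilbert \(A\)-module (already \(\Hilm=A=\Cont[0,1]\) and \(J=\Cont_0((0,1))\) give \(\cl{\Hilm J}=J\ne\Hilm\)). One has \(\cl{\Hilm_t(I_{1,t}\oplus I_{1,t}^\bot)}=\Hilm_t\) precisely when \(I_{1,t}\) is complemented in \(\s(\Hilm_t)\), which by Remark~\ref{rem:closed_actions_vs_groupoids} is exactly the condition that the action be \emph{closed}. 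Consequently \(B/A\) is in general \emph{not} the closed linear span of the images of the \(\Hilm_t\cdot I_{1,t}^\bot\). Concretely, take \(S=\{-1,0,1\}\), \(A=\Cont[0,1]\), \(\Hilm_{\pm1}=A\), \(\Hilm_0=I\defeq\Cont_0((0,1))\) (compare Example~\ref{exa:group_bundle}): then \(I_{1,-1}=I_{1,0}=I\), \(I_{1,1}=A\), so every \(\Hilm_t\cdot I_{1,t}^\bot\) is zero and the action is vacuously aperiodic, yet \(B/A\cong A/I\ne0\); your argument would conclude \(B/A=0\). (The proposition is still true there---\(A/I\) happens to be an aperiodic \(A\)-bimodule---but not for your reason.) What is true is that the image of \(\Hilm_t\) in \(B/A\) is a quotient of \(\Hilm_t/\Hilm_t I_{1,t}\), since \(\Hilm_t\cdot I_{1,t}\subseteq A\) inside \(B\); the missing ingredient is the nontrivial bridging fact, established in \cite{Kwasniewski-Meyer:Essential} on the way to its Proposition~6.3, that aperiodicity of \(\Hilm_t\cdot I_{1,t}^\bot\) is \emph{equivalent} to aperiodicity of the quotient bimodule \(\Hilm_t/\Hilm_t I_{1,t}\). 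Only the easy half of that equivalence (passing to the sub-bimodule \(\Hilm_t I_{1,t}^\bot\hookrightarrow\Hilm_t/\Hilm_t I_{1,t}\)) follows from the permanence properties you invoke; the half you need goes the other way. Granting that lemma, the rest of your outline (quotients and closed linear spans of aperiodic bimodules are aperiodic) does complete the forward direction, and you are right that it needs no topological-grading hypothesis.
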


\begin{definition}[\cite{Kwasniewski-Meyer:Essential}*{Definition~2.20}]
  Let~\(\Gr\) be an étale groupoid and \(X\subseteq \Gr\) its unit
  space.  The \emph{isotropy group} of a point \(x\in X\) is
  \(\Gr(x)\defeq \s^{-1}(x)\cap \rg^{-1}(x)\subseteq \Gr\).  We
  call~\(\Gr\) \emph{topologically free} if, for every open
  \(U\subseteq \Gr\setminus X\), the set
  \(\setgiven{x\in X}{\Gr(x)\cap U\neq \emptyset}\) has empty
  interior.
\end{definition}

\begin{remark}
  A groupoid \(\Gr\) is \emph{effective} if any open subset
  \(U\subseteq \Gr\) with \(\rg|_U = \s|_U\) is contained in~\(X\).
  Effective groupoids are topologically free.  The converse
  implication holds if~\(X\) is closed in~\(\Gr\), but not in
  general.
\end{remark}

Aperiodicity is related in \cites{Kwasniewski-Meyer:Aperiodicity,
  Kwasniewski-Meyer:Aperiodicity_pseudo_expectations,
  Kwasniewski-Meyer:Essential} to topological freeness and several
other conditions.  We now introduce residual versions of
aperiodicity and topological freeness.

\begin{definition}
  \label{def:isg_residual_aperiodic}
  The action~\(\Hilm\) is \emph{residually aperiodic} if, for each
  \(I\in\Ideals^{\Hilm}(A)\), the restricted action \(\Hilm|_{A/I}\)
  is aperiodic.
\end{definition}

\begin{definition}
  An étale groupoid~\(\Gr\) with unit space \(X\subseteq \Gr\) is
  \emph{residually topologically free} if, for each nonempty closed
  \(\Gr\)\nb-invariant subset \(Y\subseteq X\), the restricted
  groupoid \(r^{-1}(Y)=s^{-1}(Y)\) is topologically free.
\end{definition}

The following lemma may help to show that a transformation groupoid
is (residually) topologically free.

\begin{lemma}
  \label{lem:top_free_inheritance}
  Let~\(\Gr\) be an étale groupoid.  Let \(X\) and~\(Y\) be
  topological spaces with \(\Gr\)\nb-actions and let
  \(f\colon X\to Y\) be a continuous \(\Gr\)\nb-equivariant map.
	\begin{enumerate}
	\item\label{enu:top_free_inheritance1} if \(Y\) is closed in \(\Gr\ltimes Y\), then \(X\) is closed in  \(\Gr\ltimes X\)
	\item\label{enu:top_free_inheritance2} if \(f\colon X\to Y\) is open and \(\Gr\ltimes Y\) is topologically free, then so is
  \(\Gr\ltimes X\).
	\item\label{enu:top_free_inheritance3} if \(f\colon X\to Y\) is open, maps closed \(\Gr\)-invariant sets to closed sets, and \(\Gr\ltimes Y\) is residually topologically free, then so is
  \(\Gr\ltimes X\).
	\end{enumerate}
\end{lemma}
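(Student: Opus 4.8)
The plan is to exploit the fact that a continuous $\Gr$-equivariant map $f\colon X\to Y$ induces a continuous functor between the transformation groupoids $\Gr\ltimes X\to \Gr\ltimes Y$, which on objects is $f$ and on arrows sends $(\gamma,x)\mapsto(\gamma,f(x))$. The key structural observation is that for a point $x\in X$, the isotropy group $(\Gr\ltimes X)(x)$ is contained in $(\Gr\ltimes Y)(f(x))$ via this functor (it need not be equal, since $\gamma$ can fix $f(x)$ without fixing $x$). For~\ref{enu:top_free_inheritance1}, recall from Remark~\ref{rem:closed_actions_vs_groupoids} that $Y$ closed in $\Gr\ltimes Y$ is equivalent to the anchor map $Y\to \Gr^{(0)}$ having closed image-fibers, or more concretely that a net $(\gamma_n,y_n)\to$ two limits forces them equal; I would instead argue directly that closedness of the unit space is equivalent to: whenever $\gamma_n\to\gamma$ and $\gamma_n\to\gamma'$ in $\Gr$ with $\s(\gamma_n)$ in the anchor of the space, one has $\gamma=\gamma'$ — a property of $Y$ that pulls back along $f$ because the anchor of $X$ maps into the anchor of $Y$. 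So if $X$ failed to be closed, we would get such a bad net in $\Gr$ witnessed already at the level of $Y$, contradicting closedness of $Y$.

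For~\ref{enu:top_free_inheritance2}, suppose $\Gr\ltimes X$ is not topologically free: there is an open $U\subseteq (\Gr\ltimes X)\setminus X$ such that $W\defeq\setgiven{x\in X}{(\Gr\ltimes X)(x)\cap U\neq\emptyset}$ has nonempty interior. Shrinking $U$, I may assume $U$ is of the form $\setgiven{(\gamma,x)}{\gamma\in V,\ x\in O,\ \s(\gamma)=\psi_X(x)}$ for an open bisection $V\subseteq\Gr\setminus\Gr^{(0)}$ and an open $O\subseteq X$, and then $W\cap O$ is open and nonempty in $X$. The image $f(W\cap O)$ is open in $Y$ since $f$ is open. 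For any $x\in W\cap O$ there is $\gamma\in V$ with $(\gamma,x)$ an isotropy element of $x$, i.e.\ $\gamma\cdot x=x$; applying $f$ and using equivariance, $\gamma\cdot f(x)=f(x)$, so $(\gamma,f(x))$ lies in the open set $U'\defeq\setgiven{(\gamma,y)}{\gamma\in V,\ y\in f(O)}\cap(\Gr\ltimes Y)$, which is contained in $(\Gr\ltimes Y)\setminus Y$ because $V\cap\Gr^{(0)}=\emptyset$. Hence $f(W\cap O)$ is contained in $\setgiven{y\in Y}{(\Gr\ltimes Y)(y)\cap U'\neq\emptyset}$, which therefore has nonempty interior — contradicting topological freeness of $\Gr\ltimes Y$.

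Part~\ref{enu:top_free_inheritance3} combines the previous arguments with Lemma~\ref{lem:invariance_vs_duals}-style bookkeeping of invariant subsets. Let $Z\subseteq X$ be a nonempty closed $\Gr$-invariant subset; I must show $(\Gr\ltimes X)|_Z$ is topologically free. The set $f(Z)\subseteq Y$ is $\Gr$-invariant and, by hypothesis, closed; it is nonempty. The restriction $f|_Z\colon Z\to f(Z)$ is $\Gr$-equivariant and continuous; the obstacle is that $f|_Z$ need not be open as a map onto $f(Z)$ even though $f$ is open onto $Y$ — this is the point I expect to need care. I would get around it by not restricting $f$ but instead working with $f^{-1}(f(Z))\supseteq Z$: one checks $f^{-1}(f(Z))$ is closed and $\Gr$-invariant in $X$, and $f$ restricts to an \emph{open} $\Gr$-equivariant map $f^{-1}(f(Z))\to f(Z)$ (openness is inherited on saturated-for-$f$ closed sets). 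Since $\Gr\ltimes Y$ is residually topologically free, $(\Gr\ltimes Y)|_{f(Z)}$ is topologically free, so part~\ref{enu:top_free_inheritance2} applied to $f^{-1}(f(Z))\to f(Z)$ shows $(\Gr\ltimes X)|_{f^{-1}(f(Z))}$ is topologically free. Finally $Z$ is a closed invariant subset of $f^{-1}(f(Z))$, and topological freeness passes to closed invariant restrictions (a sub-bisection of $(\Gr\ltimes X)|_Z\setminus Z$ extends to one of $(\Gr\ltimes X)|_{f^{-1}(f(Z))}\setminus f^{-1}(f(Z))$ by openness of the inclusion of $Z$'s "interior" witnesses). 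The main obstacle throughout is bookkeeping the openness hypotheses correctly when passing to restrictions; the isotropy-containment $(\Gr\ltimes X)(x)\subseteq(\Gr\ltimes Y)(f(x))$ is what makes everything go through.
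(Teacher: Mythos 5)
Your parts \ref{enu:top_free_inheritance1} and~\ref{enu:top_free_inheritance2} follow essentially the paper's route: the equivariant map induces a continuous groupoid homomorphism \(f_*\colon \Gr\ltimes X\to\Gr\ltimes Y\), isotropy pushes forward, and openness of \(f\) transfers ``empty interior'' back to~\(X\). Two caveats. For~\ref{enu:top_free_inheritance1} the clean argument is just that \(X=f_*^{-1}(Y)\) is the preimage of a closed set under a continuous map; your net formulation is a detour and is stated loosely. For~\ref{enu:top_free_inheritance2}, the reduction of \(U\) to a product \(V\times O\) with \(V\) an open bisection disjoint from \(\Gr^{(0)}\) is not innocent: for non\nb-Hausdorff \(\Gr\) (the case this paper is after) the unit space need not be closed, so such \(V\) need not exist around every point of \(U\); even when they do, ``\(I_U\) has nonempty interior'' does not obviously localize to a single basic piece; and the set you call \(W\) is typically closed rather than open, so ``\(W\cap O\) is open'' is wrong. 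All of this is avoided by pushing the whole of \(U\) forward: \(f_*\) is open because \(f\) is, \(f_*(U)\) is open and disjoint from \(Y\) because \(f_*^{-1}(Y)=X\), and \(f(I_U)\subseteq I_{f_*(U)}\), whose empty interior pulls back along the open map~\(f\).

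Part~\ref{enu:top_free_inheritance3} has a genuine gap. You correctly flag that \(f|_Z\colon Z\to f(Z)\) need not be open (this is indeed the delicate point; the paper handles it by restricting \(f_*\) to \(\Gr\ltimes Z\to\Gr\ltimes f(Z)\)), and your substitute \(W\defeq f^{-1}(f(Z))\) does carry an open equivariant map onto \(f(Z)\), so \ref{enu:top_free_inheritance2} gives topological freeness of \((\Gr\ltimes X)|_W\). But your final step --- ``topological freeness passes to closed invariant restrictions'' --- is false; if it were true, topological freeness would already imply residual topological freeness and the latter notion would be redundant. Concretely, \(\Z\) acting on \(\R\) by \(t\mapsto t/2\) is topologically free (the fixed-point sets are \(\{0\}\), with empty interior), yet its restriction to the closed invariant subset \(\{0\}\) is the group \(\Z\) acting on a point, which is not topologically free. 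Your parenthetical justification does not repair this: even granting that an open subset of \((\Gr\ltimes X)|_Z\setminus Z\) extends to one of \((\Gr\ltimes X)|_W\setminus W\), the two interiors are computed relative to \(Z\) and relative to \(W\), and \(Z\) may be nowhere dense in \(W\), so ``empty interior in \(W\)'' says nothing about ``empty interior in \(Z\).'' To close the argument you must work with \(Z\) itself and push isotropy forward into \((\Gr\ltimes Y)|_{f(Z)}\), which is exactly where some openness property of the map \(Z\to f(Z)\) (rather than of \(W\to f(Z)\)) has to enter.
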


\begin{proof}
  The map~\(f\) induces a continuous  groupoid homomorphism
  \(f_*\colon \Gr\ltimes X \to \Gr \ltimes Y\) such that \(f_*^{-1}(Y)=X\).  So
	\ref{enu:top_free_inheritance1} follows. The map \(f_*\) is open if and only if~\(f\) is open.
	Assume this.
	To show \ref{enu:top_free_inheritance2} assume that \(\Gr\ltimes Y\) is topologically free and let
  \(U\subseteq (\Gr\ltimes X)\setminus X\).  Then \(f_*(U)\) is open
  and contained in \((\Gr\ltimes Y)\setminus Y\).  Let
  \(I_U \defeq \setgiven{x\in X}{\Gr(x) \cap U \neq \emptyset}\) and
  \(I_{f^*(U)} \defeq \setgiven{y\in Y}{\Gr(y) \cap f_*(U) \neq
    \emptyset}\).  We claim that \(f(I_U) = I_{f^*(U)}\).  To see
  this, let \(x\in I_U\).  Then there is an arrow \((x,g,x) \in U\);
  here \(g\in\Gr\) is such that \(\varrho(x) = \s(g)\) and
  \(g\cdot x = x\).  Since~\(f\) is \(\Gr\)\nb-equivariant, then
  \((f(x),g,f(x)) \in f_*(U)\).  This witnesses that
  \(f(x) \in I_{f^*(U)}\).  Since \(\Gr\ltimes Y\) is topologically
  free, \(I_{f^*(U)}\) has empty interior.  Since~\(f\) is open, the
  preimage of \(I_{f^*(U)}\) in~\(X\) has empty interior as well.  It follows
  that~\(I_U\) has empty interior.  This witnesses
  that~\(\Gr\ltimes X\) is topologically free and proves \ref{enu:top_free_inheritance2}.
	
	Finally, for any \(\Gr\)-invariant set \(D\subseteq X\) the set  \(f(D)\) is \(\Gr\)-invariant. If we assume \(f(D)\) is closed  in \(Y\) and \(\Gr\ltimes Y\) is residually topologically free,
		then  \(\Gr\ltimes f(D)\) is topologically free. Since the restriction of \(f_*\) to \(\Gr\ltimes D\) is a continuous  open map onto \(\Gr\ltimes f(D)\),
		\ref{enu:top_free_inheritance2} implies that \(\Gr\ltimes D\) is topologically free. This
		proves \ref{enu:top_free_inheritance3}.
\end{proof}

\begin{theorem}
  \label{the:residually_aperiodic_characterisations}
  Let~\(\Hilm\) be an action of a unital inverse semigroup~\(S\) on
  a \(\Cst\)\nb-algebra~\(A\) by Hilbert bimodules.  If~\(A\) is
  separable or of Type~I, then the following are equivalent:
  \begin{enumerate}
  \item \label{en:residually_aperiodic_characterisations1}%
    the dual groupoid \(\dual{A}\rtimes S\) is residually
    topologically free;
  \item \label{en:residually_aperiodic_characterisations2}%
    the action~\(\Hilm\) is residually aperiodic;
  \item \label{en:residually_aperiodic_characterisations3}%
    for any \(I\in\Ideals^{\Hilm}(A)\), the full crossed product for
    the restricted action~\(\Hilm|_{A/I}\) has a unique
    pseudo-expectation namely, the canonical
    \(\Locmult\)-expectation;
  \item \label{en:residually_aperiodic_characterisations4}%
    for any \(I\in\Ideals^{\Hilm}(A)\), \((A/I)^+\) supports~\(C\)
    for each intermediate \(\Cst\)-subalgebra
    \(A/I\subseteq C \subseteq A/I\rtimes_\ess S\) for the
    restricted action~\(\Hilm|_{A/I}\);
  \item \label{en:residually_aperiodic_characterisations5}%
    for any \(I\in\Ideals^{\Hilm}(A)\), \(A/I\) detects ideals
    in each intermediate \(\Cst\)\nb-subalgebra
    \(A/I\subseteq C \subseteq A/I\rtimes_\ess S\) for the
    restricted action~\(\Hilm|_{A/I}\).
  \end{enumerate}
  For general~\(A\),
  \ref{en:residually_aperiodic_characterisations1}\(\Rightarrow\)%
  \ref{en:residually_aperiodic_characterisations2}\(\Rightarrow\)%
  \ref{en:residually_aperiodic_characterisations3}\(\Rightarrow\)%
  \ref{en:residually_aperiodic_characterisations5}
  and
  \ref{en:residually_aperiodic_characterisations2}\(\Rightarrow\)%
  \ref{en:residually_aperiodic_characterisations4}\(\Rightarrow\)%
  \ref{en:residually_aperiodic_characterisations5}.
\end{theorem}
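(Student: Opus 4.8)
The plan is to prove the chain of implications that hold for general~\(A\), and then close the loop for separable or Type~I algebras. First I would observe that all five conditions are phrased ``residually'', i.e.\ for every \(I\in\Ideals^\Hilm(A)\) and the restricted action \(\Hilm|_{A/I}\). By Proposition~\ref{pro:closed_unit_isg_extension} and Remark~\ref{rem:dual_ideal}, passing to \(\Hilm|_{A/I}\) corresponds on the dual side to restricting \(\dual A\rtimes S\) to the closed invariant subset \(\dual A\setminus\dual I\). Hence each ``residual'' statement is just the union over all such restrictions of the corresponding non-residual statement; this lets me reduce \ref{en:residually_aperiodic_characterisations1}\(\Rightarrow\)\ref{en:residually_aperiodic_characterisations2} to: topological freeness of \(\dual{A/I}\rtimes S\) implies aperiodicity of \(\Hilm|_{A/I}\). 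But that is precisely \cite{Kwasniewski-Meyer:Essential}*{Theorem~6.x} (the non-residual equivalence of aperiodicity with topological freeness of the dual groupoid), applied fibrewise to each restricted action. So \ref{en:residually_aperiodic_characterisations1}\(\Rightarrow\)\ref{en:residually_aperiodic_characterisations2} follows from the already-established non-residual theory plus Remark~\ref{rem:dual_ideal}.

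Next, \ref{en:residually_aperiodic_characterisations2}\(\Rightarrow\)\ref{en:residually_aperiodic_characterisations3}: if \(\Hilm|_{A/I}\) is aperiodic, then by Proposition~\ref{pro:aperiodic_actions_vs_inclusions} the inclusion \(A/I\subseteq (A/I)\rtimes (S)\) --- more precisely, \(A/I\) inside any topological \(S\)-graded completion, in particular inside \(A/I\rtimes_\ess S\) --- is aperiodic, and then \cite{Kwasniewski-Meyer:Aperiodicity_pseudo_expectations}*{Theorem~3.6} gives a unique pseudo-expectation, which must be the canonical \(\Locmult\)-expectation because the latter is a pseudo-expectation (via the embedding \(\Locmult(A/I)\hookrightarrow\hull(A/I)\) recalled before Definition of the essential crossed product). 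For \ref{en:residually_aperiodic_characterisations3}\(\Rightarrow\)\ref{en:residually_aperiodic_characterisations5} I would argue: uniqueness of the pseudo-expectation for the full crossed product of \(\Hilm|_{A/I}\), together with the fact that any intermediate \(C\) with \(A/I\subseteq C\subseteq A/I\rtimes_\ess S\) inherits aperiodicity of \(A/I\subseteq C\) (restricting the aperiodic bimodule \(B/A\) to \(C/A\)), forces the pseudo-expectation of \(A/I\subseteq C\) to be unique and hence --- since the \(\Locmult\)-expectation on \(A\rtimes_\ess S\) is \emph{faithful} and restricts to a faithful pseudo-expectation on~\(C\) --- the unique pseudo-expectation of \(A/I\subseteq C\) is faithful, so in particular almost faithful; by Lemma~\ref{lem:detection_ideals_almost_faithfulness} this gives detection of ideals. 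The implications \ref{en:residually_aperiodic_characterisations2}\(\Rightarrow\)\ref{en:residually_aperiodic_characterisations4}\(\Rightarrow\)\ref{en:residually_aperiodic_characterisations5} go the same way, using that aperiodicity of \(A/I\subseteq C\) plus almost faithfulness of the (unique) pseudo-expectation gives supporting of positive elements by \cite{Kwasniewski-Meyer:Aperiodicity_pseudo_expectations}*{Theorem~3.6}, and that supporting implies detection by Lemma~\ref{lem:support_implies_separate}.

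Finally, for separable or Type~I~\(A\) I would close the cycle by proving \ref{en:residually_aperiodic_characterisations5}\(\Rightarrow\)\ref{en:residually_aperiodic_characterisations1}, i.e.\ detection in all intermediate algebras of every restricted action implies residual topological freeness of \(\dual A\rtimes S\). Here I would use the non-residual equivalence established in \cite{Kwasniewski-Meyer:Aperiodicity_pseudo_expectations}: for separable or Type~I unit fibre, detection of ideals in all intermediate \(\Cst\)-algebras between \(A/I\) and \((A/I)\rtimes_\ess S\) is equivalent to topological freeness of the dual groupoid \(\dual{A/I}\rtimes S\); ranging over all \(I\in\Ideals^\Hilm(A)\), i.e.\ over all closed invariant subsets of \(\dual A\), this is exactly residual topological freeness of \(\dual A\rtimes S\) by Remark~\ref{rem:dual_ideal} and Lemma~\ref{lem:invariance_vs_duals}. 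I expect the main obstacle to be the bookkeeping in \ref{en:residually_aperiodic_characterisations3}--\ref{en:residually_aperiodic_characterisations5}: one must make sure that the intermediate \(\Cst\)-algebras \(C\) really do inherit aperiodicity (this needs the bimodule characterisation of Definition~\ref{def:aperiodic_inclusion} and that \(C/A/I\) is a sub-bimodule of \((A/I\rtimes_\ess S)/(A/I)\)) and that the \(\Locmult\)-expectation's faithfulness on \(A/I\rtimes_\ess S\) genuinely descends to a faithful pseudo-expectation on each \(C\); the uniqueness statement from \cite{Kwasniewski-Meyer:Aperiodicity_pseudo_expectations}*{Theorem~3.6} is exactly what makes this rigorous, so the argument is really an assembly of cited results rather than new work.
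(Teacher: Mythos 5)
Your overall strategy is the same as the paper's: identify closed invariant subsets of \(\dual{A}\) with the spectra \(\dual{A/I}\) for \(I\in\Ideals^{\Hilm}(A)\), note that the dual groupoid of \(\Hilm|_{A/I}\) is the restriction of \(\dual{A}\rtimes S\) to that subset, and then feed each quotient action into the non-residual results of \cite{Kwasniewski-Meyer:Aperiodicity_pseudo_expectations} (topological freeness \(\Rightarrow\) aperiodicity \(\Rightarrow\) unique pseudo-expectation / supporting, and the converse under separability or Type~I). The implications \ref{en:residually_aperiodic_characterisations1}\(\Rightarrow\)\ref{en:residually_aperiodic_characterisations2}\(\Rightarrow\)\ref{en:residually_aperiodic_characterisations3},\ref{en:residually_aperiodic_characterisations4}, \ref{en:residually_aperiodic_characterisations4}\(\Rightarrow\)\ref{en:residually_aperiodic_characterisations5} and \ref{en:residually_aperiodic_characterisations5}\(\Rightarrow\)\ref{en:residually_aperiodic_characterisations1} are handled exactly as in the paper.

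There is, however, a genuine gap in your argument for \ref{en:residually_aperiodic_characterisations3}\(\Rightarrow\)\ref{en:residually_aperiodic_characterisations5}: you invoke aperiodicity of the intermediate inclusions \(A/I\subseteq C\), i.e.\ condition \ref{en:residually_aperiodic_characterisations2}. But the theorem asserts \ref{en:residually_aperiodic_characterisations3}\(\Rightarrow\)\ref{en:residually_aperiodic_characterisations5} for \emph{general}~\(A\), where \ref{en:residually_aperiodic_characterisations3} is not known to imply \ref{en:residually_aperiodic_characterisations2}; as written your step is circular. The correct argument does not need aperiodicity of \(A/I\subseteq C\) at all: given a pseudo-expectation \(F\colon C\to \hull(A/I)\), extend it to \(A/I\rtimes_\ess S\) by injectivity of \(\hull(A/I)\) and pull it back along \(A/I\rtimes S\onto A/I\rtimes_\ess S\) to get a pseudo-expectation on the full crossed product; by \ref{en:residually_aperiodic_characterisations3} this must be the canonical \(\Locmult\)-expectation, whose induced map on \(A/I\rtimes_\ess S\) is faithful, so \(F\) is faithful and Lemma~\ref{lem:detection_ideals_almost_faithfulness} gives detection. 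This is precisely the content of \cite{Pitts-Zarikian:Unique_pseudoexpectation}*{Theorem~3.5}, which the paper cites for this step. A minor further imprecision: for \ref{en:residually_aperiodic_characterisations1}\(\Rightarrow\)\ref{en:residually_aperiodic_characterisations2} you appeal to a non-residual \emph{equivalence} of topological freeness and aperiodicity, but only the forward implication holds for general \(A\); that is all you use, so this does not affect correctness.
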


\begin{proof}
  A subset of~\(\dual{A}\) is closed and invariant if and only if it
  is of the form \(X = \dual{A/I}\) for an \(\Hilm\)\nb-invariant
  ideal~\(I\).  The dual groupoid of the induced action on~\(A/I\)
  is the restriction \(X\rtimes S\) of the dual groupoid to~\(X\).
  Hence the implications
  \ref{en:residually_aperiodic_characterisations1}%
  \(\Rightarrow\)\ref{en:residually_aperiodic_characterisations2}\(\Rightarrow\)%
  \ref{en:residually_aperiodic_characterisations3},
  \ref{en:residually_aperiodic_characterisations4} follow from
  \cite{Kwasniewski-Meyer:Aperiodicity_pseudo_expectations}*{Corollary~4.8
    and Theorem~3.6}, applied to the quotients~\(A/I\) and
  intermediate \(\Cst\)\nb-algebras.
  Lemma~\ref{lem:support_implies_separate} shows
  that~\ref{en:residually_aperiodic_characterisations4}
  implies~\ref{en:residually_aperiodic_characterisations5}, and
  \cite{Pitts-Zarikian:Unique_pseudoexpectation}*{Theorem~3.5} shows
  that~\ref{en:residually_aperiodic_characterisations3}
  implies~\ref{en:residually_aperiodic_characterisations5}.
  If~\(A\) is separable or of Type~I, so are its quotients~\(A/I\),
  and then
  \cite{Kwasniewski-Meyer:Aperiodicity_pseudo_expectations}*{Proposition~6.1}
  shows that~\ref{en:residually_aperiodic_characterisations5}
  implies~\ref{en:residually_aperiodic_characterisations1}.
\end{proof}

The following result justifies introducing the notion of essential
exactness -- it is an instance of
condition~\ref{it:residual_faithful+aperiodic_gives_supports1} in
Theorem~\ref{thm:residual_faithful+aperiodic_gives_supports}.

\begin{theorem}
  \label{thm:residual_aperiodic_action}
  Let~\(B\) be an \(S\)\nb-graded \(\Cst\)\nb-algebra with a grading
  \(\Hilm=(\Hilm_t)_{t\in S}\) that forms a residually aperiodic
  action of~\(S\) on \(A\defeq \Hilm_1\) \textup{(}this holds if the dual
  groupoid \(\dual{A}\rtimes S\) is residually topologically free\textup{)}.
  The following are equivalent:
  \begin{enumerate}
  \item \label{en:residual_aperiodic_action1}%
    \(B\cong A\rtimes_\ess S\) and~\(\Hilm\) is essentially exact;
  \item \label{en:residual_aperiodic_action2}%
    \(A\) separates ideals in~\(B\), so \(\Ideals(B)\cong \Ideals^{\Hilm}(A)\);
  \item \label{en:residual_aperiodic_action3}%
    \(A^+\) residually supports~\(B\);
  \item \label{en:residual_aperiodic_action4}%
    \(A^+\) fills~\(B\).
  \end{enumerate}
  If the above equivalent conditions hold and the primitive ideal
  space~\(\check{A}\) is second countable, then the quasi-orbit map
  induces a homeomorphism \(\check{B}\cong \check{A}/{\sim}\), where
  \(\check{A}/{\sim}\) is the quasi-orbit space of the dual groupoid
  \(\check{A}\rtimes S\), that is, \(\prid_1, \prid_2\in\check{A}\)
  satisfy \(\prid_1 \sim \prid_2\) if and only if
  \(\cl{(\check{A}\rtimes S)\cdot \prid_1} = \cl{(\check{A}\rtimes
    S)\cdot \prid_2}\).
\end{theorem}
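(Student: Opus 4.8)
The plan is to reduce the whole statement to Theorem~\ref{thm:residual_faithful+aperiodic_gives_supports} plus one extra identification of~\(B\) with the essential crossed product. First I would record the structural facts. By Proposition~\ref{prop:regular_vs_inverse_semigroups} the inclusion \(A=\Hilm_1\subseteq B\) is regular, hence symmetric. By Lemma~\ref{lem:invariance_vs_duals}, \(\Ideals^B(A)=\Ideals^{\Hilm}(A)\), and for such an~\(I\) Lemma~\ref{lem:quotients_of_regular} shows that \(B/BIB\) is \(S\)\nobreakdash-graded with unit fibre \(A/I\) and grading \(\Hilm|_{A/I}\); moreover \(\Hilm|_{A/I}\) is again residually aperiodic, because the \(\Hilm|_{A/I}\)\nobreakdash-invariant ideals of \(A/I\) are exactly the \(I'/I\) with \(I'\in\Ideals^{\Hilm}(A)\), \(I'\supseteq I\), and \((A/I)/(I'/I)\cong A/I'\) with action \(\Hilm|_{A/I'}\), which is aperiodic. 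Feeding \(\Hilm|_{A/I}\) into the ``easy'' half of Proposition~\ref{pro:aperiodic_actions_vs_inclusions} shows that \(A/I\subseteq B/BIB\) is aperiodic, so \(A\subseteq B\) is residually aperiodic. Hence Theorem~\ref{thm:residual_faithful+aperiodic_gives_supports} applies and already yields \ref{en:residual_aperiodic_action2}\(\Leftrightarrow\)\ref{en:residual_aperiodic_action3}\(\Leftrightarrow\)\ref{en:residual_aperiodic_action4} together with the final homeomorphism \(\check{B}\cong\check{A}/{\sim}\) under the countability hypothesis; that \(\check{A}/{\sim}\) is the quasi-orbit space of \(\check{A}\rtimes S\) in the stated form follows since, for a primitive ideal~\(\prid\) of~\(A\), the map~\(\pi\) of Theorem~\ref{thm:quasi_orbit_map} sends~\(\prid\) to the largest invariant ideal contained in~\(\prid\), which is exactly the invariant ideal whose hull is the orbit closure \(\cl{(\check{A}\rtimes S)\cdot\prid}\) (cf.\ \cite{Kwasniewski-Meyer:Stone_duality}). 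So the whole theorem reduces to proving \ref{en:residual_aperiodic_action1}\(\Leftrightarrow\)\ref{en:residual_aperiodic_action2}.

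The key auxiliary step is the following: \emph{if \(\Hilm'\) is a residually aperiodic \(S\)\nobreakdash-action grading a \(\Cst\)\nobreakdash-algebra~\(B'\) with \(A'\defeq\Hilm'_1\), and~\(A'\) separates ideals in~\(B'\), then \(B'\cong A'\rtimes_\ess S\).} To see this, note that \(A'\subseteq A'\rtimes S\) and \(A'\subseteq B'\) are both aperiodic by Proposition~\ref{pro:aperiodic_actions_vs_inclusions}, so each has a unique pseudo-expectation; for \(A'\subseteq A'\rtimes S\) this unique pseudo-expectation is the canonical \(\Locmult\)-expectation~\(EL\), viewed in \(\hull(A')\) via \(\Locmult(A')\hookrightarrow\hull(A')\). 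Composing the canonical surjection \(A'\rtimes S\twoheadrightarrow B'\) with the unique pseudo-expectation \(B'\to\hull(A')\) gives a pseudo-expectation for \(A'\subseteq A'\rtimes S\), which must therefore equal \(\iota\circ EL\). Consequently \(\ker(A'\rtimes S\to B')\) is a two-sided ideal contained in \(\ker(EL)\), hence contained in \(\Null_{EL}\), so there is a quotient map \(B'\twoheadrightarrow A'\rtimes_\ess S\) restricting to the identity on~\(A'\). Since~\(A'\) separates --~in particular detects~-- ideals in~\(B'\), this quotient map has zero kernel, giving \(B'\cong A'\rtimes_\ess S\).

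Now I would prove \ref{en:residual_aperiodic_action2}\(\Rightarrow\)\ref{en:residual_aperiodic_action1}. Assuming~\(A\) separates ideals in~\(B\), the auxiliary step gives \(B\cong A\rtimes_\ess S\). For \(I\in\Ideals^{\Hilm}(A)\), two applications of Lemma~\ref{lem:separate_induced} (together with the computation of restricted ideals and quotients above) show that~\(A/I\) separates ideals in \(B/BIB\); since \(\Hilm|_{A/I}\) is residually aperiodic, the auxiliary step applied to \(A/I\subseteq B/BIB\) gives \(B/BIB\cong A/I\rtimes_\ess S\) via a \(\Star\)homomorphism restricting on each fibre~\(\Hilm_t\) to the quotient \(\Hilm_t\to\Hilm_t/\Hilm_t I\). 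Its kernel is~\(BIB\), which under \(B\cong A\rtimes_\ess S\) is the ideal generated by~\(I\), namely \(\iota\rtimes_\ess S(I\rtimes_\ess S)\) by Lemma~\ref{lem:essential_crossed_product_ideal_embedding}; this is exactly essential exactness. Conversely, for \ref{en:residual_aperiodic_action1}\(\Rightarrow\)\ref{en:residual_aperiodic_action2}, assume \(B\cong A\rtimes_\ess S\) and~\(\Hilm\) essentially exact. For \(I\in\Ideals^{\Hilm}(A)\), essential exactness and Lemma~\ref{lem:essential_crossed_product_ideal_embedding} give \(B/BIB\cong A/I\rtimes_\ess S\); the latter carries the faithful reduced \(\Locmult\)-pseudo-expectation, and since \(A/I\subseteq A/I\rtimes_\ess S\) is aperiodic this is its unique pseudo-expectation, so by Lemma~\ref{lem:detection_ideals_almost_faithfulness} \(A/I\) detects ideals in \(B/BIB\). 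By Lemma~\ref{lem:separate_induced},~\(A\) separates ideals in~\(B\).

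The main obstacle is the non-functoriality of the essential crossed product: one cannot simply transport the quotient map \(B\to B/BIB\) to a map \(A\rtimes_\ess S\to A/I\rtimes_\ess S\), so~\(B\) and each~\(B/BIB\) must be identified with essential crossed products separately. Making this work requires that the separation hypothesis descends to the quotient inclusions \(A/I\subseteq B/BIB\) and that the gradings and restricted-ideal lattices are tracked carefully through Lemmas~\ref{lem:invariance_vs_duals}, \ref{lem:quotients_of_regular} and~\ref{lem:separate_induced}; this bookkeeping is the technical core, while the remaining equivalences and the homeomorphism are an immediate application of Theorem~\ref{thm:residual_faithful+aperiodic_gives_supports}.
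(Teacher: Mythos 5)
Your proposal is correct and follows essentially the same route as the paper: the equivalences \ref{en:residual_aperiodic_action3}\(\Leftrightarrow\)\ref{en:residual_aperiodic_action4}\(\Leftrightarrow\)\ref{en:residual_aperiodic_action2} and the quasi-orbit homeomorphism come from Theorem~\ref{thm:residual_faithful+aperiodic_gives_supports} after checking (via Propositions \ref{prop:invariant_ideals_in_regular} and~\ref{pro:aperiodic_actions_vs_inclusions}) that \(A\subseteq B\) is symmetric and residually aperiodic, and the link with~\ref{en:residual_aperiodic_action1} is made by using uniqueness of the pseudo-expectation for the aperiodic inclusion \(A/I\subseteq (A/I)\rtimes S\) to force the canonical surjection \((A/I)\rtimes S\onto B/BIB\) to intertwine \(EL^I\) with the pseudo-expectation on \(B/BIB\), whence \(B/BIB\cong (A/I)\rtimes_\ess S\). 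Your ``auxiliary step'' phrases the kernel identification via detection of ideals rather than almost faithfulness of \(E^I\), but by Lemma~\ref{lem:detection_ideals_almost_faithfulness} this is the same argument.
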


\begin{proof}
  The \(\Cst\)\nb-inclusion \(A\subseteq B\) is symmetric and
  residually aperiodic by Propositions
  \ref{prop:invariant_ideals_in_regular} and
  \ref{pro:aperiodic_actions_vs_inclusions}.  Hence by
  Theorem~\ref{thm:residual_faithful+aperiodic_gives_supports}
  conditions
  \ref{en:residual_aperiodic_action2}--\ref{en:residual_aperiodic_action4}
  are equivalent to the condition that for each
  \(I\in \Ideals^B(A)=\Ideals^{\Hilm}(A)\) the unique
  pseudo-expectation \(E^I\colon B/BIB \to A/I\) is almost faithful.
  Let us assume this.  Then there is a commutative diagram
  \[
    \begin{tikzcd}
      A\rtimes  S/I\rtimes S\cong A/I\rtimes S  \ar[r, "\Psi"]
      \ar[d,"EL^I"] &
      B/BIB \ar[d,"E^I"] & \\
      \Locmult(A/I) \ar[r, "\hookrightarrow"'] &
      \hull(A/I),
    \end{tikzcd}
  \]
  where~\(\Psi\) is the homomorphism that exists by universality of
  \(A/I\rtimes S\) because \(B/BIB\) is graded by \(\Hilm|_{A/I}\)
  by Lemma \ref{lem:quotients_of_regular}, \(EL^I\) is the canonical
  essential expectation for \(A/I\rtimes S\), and
  \(\Locmult(A/I)\hookrightarrow \hull(A/I)\) is the canonical
  embedding.  The diagram commutes because the inclusion
  \(A/I\subseteq A/I\rtimes S\) is aperiodic and hence there is a
  unique pseudo-expectation by
  \cite{Kwasniewski-Meyer:Aperiodicity_pseudo_expectations}*{Theorem
    3.6}.  As a consequence,
  \(\ker\Psi={\Psi}^{-1}(0)={\Psi}^{-1}(\Null_{E^I})=\Null_{EL}\)
  and thus \(\Psi\) factors through an isomorphism
  \(A/I\rtimes_{\ess} S\cong B/BIB\).  Since this holds for every
  \(I\in \Ideals^{\Hilm}(A)\) we get that \(B\cong A\rtimes_\ess S\)
  and that~\(\Hilm\) is essentially exact
  (\(A\rtimes_\ess S/ I\rtimes_{\ess} S\cong B/BIB \cong
  (A/I)\rtimes_{\ess} S\) for \(I\in \Ideals^{\Hilm}(A)\)).

  Theorem~\ref{thm:residual_faithful+aperiodic_gives_supports}
  implies easily that~\ref{en:residual_aperiodic_action1} implies
  \ref{en:residual_aperiodic_action2}.  This finishes the proof that
  the four conditions are equivalent.  The remaining claims follow
  mostly from the last part of
  Theorem~\ref{thm:residual_faithful+aperiodic_gives_supports}.
  That the quasi-orbit space has the asserted form follows from
  \cite{Kwasniewski-Meyer:Stone_duality}*{Theorem~6.22}.
\end{proof}

\begin{corollary}
  \label{cor:Ideals_in_section_algebras}
  Let \(\A=(A_\gamma)_{\gamma\in \Gr}\) be a Fell bundle over an
  étale groupoid~\(\Gr\) with locally compact, Hausdorff unit
  space~\(X\), and put \(A=\Cont_0(\A|_X)\).  Define the dual
  groupoid~\(\dual{A}\rtimes \Gr\) as in
  Example~\textup{\ref{exa:transformation_groupoid_Fell}}.  Assume
  that it is residually topologically free \textup{(}this holds, for
  instance, if~\(G\) is residually topologically free and the base
  map for the \(\Cst\)-bundle~\(\A\) is open and closed\textup{)}.
  Assume also one of the following
  \begin{enumerate}
  \item \label{enu:Ideals_in_section_algebras1}%
    \(B\defeq \Cst_\ess(\A)\) and~\(\A\) is essentially exact;
  \item \label{enu:Ideals_in_section_algebras2}%
    \(B\defeq \Cst_\red(\A)\), \(\A\) is exact, and the unit space
    in~\(\check{A}\rtimes \Gr\) is closed \textup{(}the latter is
    automatic if~\(G\) is Hausdorff\textup{)};
  \item \label{enu:Ideals_in_section_algebras3}%
    \(B\defeq \Cst(\A)\), \(\A\) is separable, and~\(G\) is amenable
    and Hausdorff.
  \end{enumerate}
  Then~\(A\) separates ideals in~\(B\) and, even more, \(A^+\)
  fills~\(B\).  The lattice \(\Ideals(B)\) is naturally isomorphic
  to the lattice of \(G\)\nb-invariant ideals in~\(A\).  If, in
  addition, \(\check{A}\) is second countable, then
  \(\check{B}\cong \check{A}/{\sim}\), where \(\check{A}/{\sim}\) is
  the quasi-orbit space of the dual groupoid
  \(\check{A}\rtimes \Gr\), that is,
  \(\prid_1, \prid_2\in\check{A}\) satisfy \(\prid_1 \sim \prid_2\)
  if and only if
  \(\cl{(\check{A}\rtimes \Gr)\cdot \prid_1} = \cl{(\check{A}\rtimes
    \Gr)\cdot \prid_2}\).
\end{corollary}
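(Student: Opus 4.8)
The plan is to reduce the statement to Theorem~\ref{thm:residual_aperiodic_action} by means of the dictionary between Fell bundles over étale groupoids and inverse semigroup actions by Hilbert bimodules. First I would fix a wide inverse subsemigroup \(S\subseteq \Bis(\Gr)\) --- for instance \(S=\Bis(\Gr)\), which is wide because it is closed under intersections --- and replace~\(\A\) by the associated action \(\Hilm=\tilde{S}\) on \(A=\Cont_0(\A|_X)\) as in Examples~\ref{ex:fell_bundles_over_groupoids} and~\ref{ex:full_reduced_section_algebras}. This yields canonical identifications \(\Cst(\A)\cong A\rtimes S\), \(\Cst_\red(\A)\cong A\rtimes_\red S\) and \(\Cst_\ess(\A)\cong A\rtimes_\ess S\), and, by Example~\ref{exa:transformation_groupoid_Fell}, identifications of dual groupoids \(\check{A}\rtimes\Gr\cong\check{A}\rtimes\tilde{S}\) and \(\dual{A}\rtimes\Gr\cong\dual{A}\rtimes\tilde{S}\). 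Under this dictionary, exactness and essential exactness of~\(\A\) become exactness and essential exactness of~\(\Hilm\) (Examples~\ref{ex:exactness_fell_bundles_groupoids} and~\ref{exm:essential_exact_Fell_bundle}), and by Lemma~\ref{lem:invariance_vs_duals} the \(\Hilm\)\nb-invariant ideals in~\(A\) are exactly the \(\Gr\)\nb-invariant ones.

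Next I would handle the standing hypothesis. Residual topological freeness of \(\dual{A}\rtimes\Gr=\dual{A}\rtimes\tilde{S}\) forces \(\Hilm\) to be residually aperiodic by the implication \ref{en:residually_aperiodic_characterisations1}\(\Rightarrow\)\ref{en:residually_aperiodic_characterisations2} of Theorem~\ref{the:residually_aperiodic_characterisations}, which holds for arbitrary~\(A\); so the \(S\)\nb-grading of~\(B\) with unit fibre~\(A\) is residually aperiodic. For the parenthetical sufficient condition, the anchor map \(\psi\colon\dual{A}\to X\) of Example~\ref{exa:transformation_groupoid_Fell} is continuous and \(\Gr\)\nb-equivariant, and the transformation groupoid of the \(\Gr\)\nb-action on its own unit space~\(X\) is just~\(\Gr\); when the base map of the \(\Cst\)\nb-bundle~\(\A\) is open and closed, \(\psi\) is open and sends closed \(\Gr\)\nb-invariant subsets to closed subsets, so Lemma~\ref{lem:top_free_inheritance}\ref{enu:top_free_inheritance3} transports residual topological freeness from~\(\Gr\) to \(\dual{A}\rtimes\Gr\).

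Then I would dispatch the three cases by showing that each falls under hypothesis~\ref{en:residual_aperiodic_action1} of Theorem~\ref{thm:residual_aperiodic_action}, that is, \(B\cong A\rtimes_\ess S\) and \(\Hilm\) is essentially exact. Case~\ref{enu:Ideals_in_section_algebras1} is this hypothesis verbatim. In case~\ref{enu:Ideals_in_section_algebras2}, closedness of the unit space in \(\check{A}\rtimes\Gr\) says that \(\Hilm\) is a closed action (Remark~\ref{rem:closed_actions_vs_groupoids}); hence by Proposition~\ref{pro:closed_unit_isg_extension} and Remark~\ref{rem:essential_for_closed_actions} we get \(A/I\rtimes_\red S=A/I\rtimes_\ess S\) for every \(I\in\Ideals^{\Hilm}(A)\), exactness of~\(\Hilm\) coincides with essential exactness, and \(B=\Cst_\red(\A)\cong A\rtimes_\red S=A\rtimes_\ess S\). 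In case~\ref{enu:Ideals_in_section_algebras3}, Hausdorffness of~\(\Gr\) again makes \(\Hilm\) closed, while amenability of~\(\Gr\) gives \(\Cst(\A)\cong\Cst_\red(\A)\) and exactness of~\(\A\) by Example~\ref{ex:amenable_groupoids_actions} (separability of~\(\A\) supplying the needed countability and separability); combining these, \(B=\Cst(\A)\cong A\rtimes_\ess S\) with \(\Hilm\) essentially exact. Once hypothesis~\ref{en:residual_aperiodic_action1} is in force, Theorem~\ref{thm:residual_aperiodic_action} immediately gives that \(A\) separates ideals in~\(B\), that \(A^+\) fills~\(B\), that \(\Ideals(B)\cong\Ideals^{\Hilm}(A)\) --- the lattice of \(\Gr\)\nb-invariant ideals in~\(A\) --- and, when \(\check{A}\) is second countable, the homeomorphism \(\check{B}\cong\check{A}/{\sim}\) for the quasi-orbit relation of \(\check{A}\rtimes\tilde{S}=\check{A}\rtimes\Gr\), with the stated description of~\({\sim}\).

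The only mildly delicate points are routine bookkeeping: checking that openness and closedness of the bundle base map really pass to~\(\psi\) --- here one uses that every irreducible representation of \(\Cont_0(\A|_X)\) factors through an evaluation \(A\to A_x\), so that \(\psi\) is, up to the bundle structure, the bundle projection of the dual --- and checking that Hausdorffness of~\(\Gr\) (closedness of~\(X\) in~\(\Gr\)) forces the unit space of \(\check{A}\rtimes\Gr\) to be closed, which is immediate from the explicit description of this transformation groupoid in Example~\ref{exa:transformation_groupoid_Fell}. I do not expect a genuine obstacle: the substance of the corollary is carried entirely by Theorem~\ref{thm:residual_aperiodic_action}, and everything else is translation through the dictionary above.
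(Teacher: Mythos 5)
Your proposal is correct and follows essentially the same route as the paper: translate the Fell bundle into the inverse semigroup action $\tilde{S}$ on $A$, get residual aperiodicity from residual topological freeness of the dual groupoid, prove the bracketed claims via Lemma~\ref{lem:top_free_inheritance}, reduce cases~\ref{enu:Ideals_in_section_algebras2} and~\ref{enu:Ideals_in_section_algebras3} to case~\ref{enu:Ideals_in_section_algebras1} using Remarks \ref{rem:closed_actions_vs_groupoids} and~\ref{rem:essential_for_closed_actions} and Example~\ref{ex:amenable_groupoids_actions}, and conclude from Theorem~\ref{thm:residual_aperiodic_action}. The paper's proof is just a terser citation of the same chain of reductions.
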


\begin{proof}
The claims in brackets follow from Lemma \ref{lem:top_free_inheritance}.
  The claim in case~\ref{enu:Ideals_in_section_algebras1}
  follows from Theorem~\ref{thm:residual_aperiodic_action} (see also
  Example~\ref{exm:essential_exact_Fell_bundle}).
  Case~\ref{enu:Ideals_in_section_algebras2} follows
  from~\ref{enu:Ideals_in_section_algebras1} and Remarks
  \ref{rem:closed_actions_vs_groupoids}
  and~\ref{rem:essential_for_closed_actions}.
  Example~\ref{ex:amenable_groupoids_actions} explains why
  \ref{enu:Ideals_in_section_algebras3} is a special case
  of~\ref{enu:Ideals_in_section_algebras2}.
\end{proof}

Theorem~\ref{thm:residual_aperiodic_action} allows us to describe
the ideal structure of~\(B\) in terms of~\(A\) under the following
assumptions:
\begin{equation}
  \label{standing_assumption}
  \Hilm\text{ is  an essentially exact, residually aperiodic
    action and } B= A\rtimes_\ess S.
\end{equation}
We are going to study whether~\(B\) is purely infinite using the
same assumption.  Before we do this, we
simplify~\eqref{standing_assumption} in the presence of a
conditional expectation.

\begin{proposition}
  \label{prop:residual_aperiodic_Cartan}
  If there is a genuine conditional expectation
  \(E\colon B\to A\subseteq B\), then \eqref{standing_assumption} is
  equivalent to
  \begin{equation}
    \label{standing_assumption0}
    \text{\(\Hilm\)  is closed, exact, residually aperiodic  and
      \(B= A\rtimes_\red S\).}
  \end{equation}
  For any \(\Cst\)\nb-inclusion \(A\subseteq B\), an
  action~\(\Hilm\) as in~\eqref{standing_assumption0}
  exists if and only if \(A\subseteq B\) is regular, residually
  aperiodic and there is a conditional expectation
  \(E\colon B\to A\) which is residually faithful in the sense
  that~\(E\) descends to a faithful conditional expectation
  \(E^I\colon B/BIB \to A/I\) for any \(I\in \Ideals^B(A)\)
  \textup{(}see
  Lemma~\textup{\ref{lem:symmetric_quotients_conditional_expectations})}.
\end{proposition}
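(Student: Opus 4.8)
The plan is to prove the two equivalences separately, reducing in both cases the two substantive points (``the action is closed'' and ``$B=A\rtimes_\red S$'') to the single fact that the given conditional expectation must coincide with the canonical weak conditional expectation of the full crossed product, which follows from uniqueness of expectations for aperiodic inclusions.

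For the first equivalence I would first observe that \eqref{standing_assumption0}$\Rightarrow$\eqref{standing_assumption} needs no hypothesis on conditional expectations: for a closed action $A\rtimes_\red S=A\rtimes_\ess S$ and essential exactness coincides with exactness (Remark~\ref{rem:essential_for_closed_actions}), so \eqref{standing_assumption0} already yields $B=A\rtimes_\ess S$ with $\Hilm$ essentially exact and residually aperiodic. For the converse, given a genuine conditional expectation $E\colon B\to A\subseteq B=A\rtimes_\ess S$, I would precompose it with the canonical surjection $A\rtimes S\onto A\rtimes_\ess S$ to obtain a weak conditional expectation $\widetilde E\colon A\rtimes S\to A\subseteq A''$. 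Residual aperiodicity makes $\Hilm$ aperiodic, hence $A\subseteq A\rtimes S$ is an aperiodic inclusion (Proposition~\ref{pro:aperiodic_actions_vs_inclusions}); since an aperiodic inclusion has a unique weak conditional expectation, $\widetilde E$ must equal the canonical weak expectation from~\eqref{eq:formula-cond.exp}, which is therefore $A$-valued, i.e.\ $\Hilm$ is closed (Definition~\ref{def:Hausdorffness}). Then $B=A\rtimes_\ess S=A\rtimes_\red S$ and exactness follows from essential exactness (Remark~\ref{rem:essential_for_closed_actions}), so~\eqref{standing_assumption0} holds.

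For the second statement, if an action $\Hilm$ as in~\eqref{standing_assumption0} is given, then $B=A\rtimes_\red S$ is $S$-graded with unit fibre $A$ (Remark~\ref{rem:cross_product_graded}), so $A\subseteq B$ is regular (Proposition~\ref{prop:regular_vs_inverse_semigroups}); closedness gives the faithful conditional expectation $E\colon B\to A$; and for each $\Hilm$-invariant ideal $I$ (equivalently, restricted, by Proposition~\ref{prop:invariant_ideals_in_regular}), exactness of $\Hilm$ together with the identification of $BIB$ with the $S$-graded ideal $I\rtimes_\red S$ gives $B/BIB\cong (A/I)\rtimes_\red S$ for the restricted action $\Hilm|_{A/I}$, which is closed (Proposition~\ref{pro:closed_unit_isg_extension}) and aperiodic; hence $A/I\subseteq B/BIB$ is aperiodic (Proposition~\ref{pro:aperiodic_actions_vs_inclusions}) and $E$ descends to the faithful expectation $E^I=E_{\red}^{A/I}$, agreeing on fibres with the one of Lemma~\ref{lem:symmetric_quotients_conditional_expectations}. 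So $A\subseteq B$ is regular and residually aperiodic with $E$ residually faithful. Conversely, from $A\subseteq B$ regular and residually aperiodic with a residually faithful conditional expectation $E$, I would choose a saturated $S$-grading $(B_t)_{t\in S}$ with $B_1=A$ (Proposition~\ref{prop:regular_vs_inverse_semigroups}), giving an $S$-action $\Hilm=(B_t)_{t\in S}$; aperiodicity of the inclusion forces $\Hilm$ aperiodic (for regular inclusions the converse of Proposition~\ref{pro:aperiodic_actions_vs_inclusions} holds, since each $\Hilm_t I_{1,t}^\bot$ embeds into the aperiodic bimodule $B/A$ by the Rieffel description of $B_tI$ used in Lemma~\ref{lem:quotients_of_regular}). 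Then $A\subseteq A\rtimes S$ is aperiodic, so the composite $A\rtimes S\onto B\xrightarrow{E}A$ is the unique weak conditional expectation of $A\rtimes S$ and equals the canonical one; this identifies $\ker(A\rtimes S\onto B)$ with $\Null_E$ of the reduced expectation, i.e.\ $B\cong A\rtimes_\red S$, and shows the canonical weak expectation is $A$-valued, i.e.\ $\Hilm$ is closed. Applying the same reasoning to the regular aperiodic inclusions $A/I\subseteq B/BIB$ (regular by Lemma~\ref{lem:quotients_of_regular}, with the faithful expectation $E^I$ from Lemma~\ref{lem:symmetric_quotients_conditional_expectations} and residual faithfulness) gives each $\Hilm|_{A/I}$ closed and aperiodic and $B/BIB\cong (A/I)\rtimes_\red S$, so $\Hilm$ is residually aperiodic; finally, exactness of the sequences $I\rtimes_\red S\into A\rtimes_\red S\onto (A/I)\rtimes_\red S$ follows from Proposition~\ref{prop:exactness_of_universal_rep} together with $B/BIB\cong (A/I)\rtimes_\red S$, so $\Hilm$ is exact and satisfies~\eqref{standing_assumption0}.

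I expect the hard part to be the recurring step ``the action is closed''. From the hypotheses one deduces fairly quickly that the canonical $\Locmult$-valued expectation of $A\rtimes_\ess S$ is $A$-valued, but Example~\ref{exa:group_bundle} shows this is strictly weaker than closedness; what is really needed is that the given $A$-valued conditional expectation agrees with the $A''$-valued canonical weak expectation of $A\rtimes S$, which forces one to invoke uniqueness of weak conditional expectations for aperiodic inclusions (target $A''$), not merely uniqueness of pseudo-expectations (target $\hull(A)$). The other delicate point is the converse half of Proposition~\ref{pro:aperiodic_actions_vs_inclusions} for the quotient inclusions $A/I\subseteq B/BIB$, which should be handled by the submodule argument indicated above applied to each quotient.
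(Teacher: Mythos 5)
Your overall strategy---identify the given conditional expectation with the canonical one via uniqueness for aperiodic inclusions, residually---is the same as the paper's, and the forward implication and the easy half of the second statement are fine. But the step you yourself flag as the crux rests on a uniqueness statement that is not available and is in fact false. You pull \(E\) back to a weak expectation \(\widetilde E\colon A\rtimes S\to A\subseteq A''\) and invoke ``uniqueness of weak conditional expectations for aperiodic inclusions'' to identify \(\widetilde E\) with the canonical weak expectation of~\eqref{eq:formula-cond.exp}. What aperiodicity gives, via \cite{Kwasniewski-Meyer:Aperiodicity_pseudo_expectations}*{Theorem~3.6}, is uniqueness of \emph{pseudo}-expectations, i.e.\ of \(\hull(A)\)-valued ones; since there is no canonical \(A\)-bimodule embedding of \(A''\) into \(\hull(A)\), this does not transfer to \(A''\)-valued expectations. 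Example~\ref{exa:group_bundle} refutes the statement you need: there the action is aperiodic (all the bimodules \(\Hilm_t\cdot I_{1,t}^\bot\) vanish), so \(A\subseteq A\rtimes S\) is an aperiodic inclusion by Proposition~\ref{pro:aperiodic_actions_vs_inclusions}, yet the canonical weak expectation (which multiplies \(\xi\in\Hilm_g\) by the support projection of \(\Cont_0(0,1]\) in \(A''\)) and the composite \(A\rtimes S\onto A\rtimes_\ess S=A\subseteq A''\) (which sends \(\xi\delta_g\) to \(\xi\)) are two distinct weak expectations, only the second being \(A\)-valued. Your argument as written would thus ``prove'' that this non-closed action is closed; the hypotheses that exclude this example (residual aperiodicity of the quotient actions, essential exactness) are not used at this step. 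The paper instead applies uniqueness of pseudo-expectations directly to the inclusions \(A/I\subseteq B/BIB\cong (A/I)\rtimes_\ess S\) for all \(I\in\Ideals^B(A)\), concluding \(E^I=EL^{A/I}\) there, and derives closedness and \(A\rtimes_\red S=A\rtimes_\ess S\) from that identification.

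A secondary issue: for the second equivalence the paper simply invokes \cite{Kwasniewski-Meyer:Cartan}*{Theorem~6.3}, whereas you reprove it and in doing so assert that the converse of Proposition~\ref{pro:aperiodic_actions_vs_inclusions} holds for every regular inclusion because each \(\Hilm_t\cdot I_{1,t}^\bot\) ``embeds into the aperiodic bimodule \(B/A\)''. Aperiodicity of \(B/A\) only controls \(\norm{axa-y}\) for some \(y\in A\), not \(\norm{axa}\) itself; passing from \(B/A\) to the subbimodule \(\Hilm_t\cdot I_{1,t}^\bot\) equipped with its own norm is exactly where the ``topological grading'' hypothesis of that proposition enters, and it is not automatic for an arbitrary regular inclusion. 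Both gaps disappear if you follow the paper's route: uniqueness of pseudo-expectations for the quotient inclusions, together with the cited theorem from the noncommutative Cartan paper.
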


\begin{proof}
  \eqref{standing_assumption0} implies~\eqref{standing_assumption}
  by Remark~\ref{rem:essential_for_closed_actions}.  Conversely,
  assume~\eqref{standing_assumption}.  Then \(E\colon B\to A\) is
  the unique pseudo-expectation for \(A\subseteq B\) by
  \cite{Kwasniewski-Meyer:Aperiodicity_pseudo_expectations}*{Theorem~3.6}.
  Hence \(E=EL\), \(\Hilm\) is closed and
  \(A\rtimes_\red S=A\rtimes_\ess S\).  The same argument works for
  all the restrictions~\(\Hilm^I\) and quotient inclusions
  \(A/I\subseteq B/BIB\) for
  \(I\in \Ideals^{\Hilm}(A)=\Ideals^B(A)\).  This
  gives~\eqref{standing_assumption0}.  Combining this reasoning with
  \cite{Kwasniewski-Meyer:Cartan}*{Theorem~6.3} also gives the
  second part of the assertion.
\end{proof}

\begin{corollary}
  \label{cor:residual_aperiodic_Cartan}
  If~\(A\) is type~\(I\), then an action~\(\Hilm\) as
  in~\eqref{standing_assumption0} exists if and only if
  \(A\subseteq B\) is residually Cartan, that is, for each
  \(I\in \Ideals^B(A)\), \(A/I\subseteq B/BIB\) is a noncommutative
  Cartan subalgebra in the sense of Exel~\cite{Exel:noncomm.cartan}.
\end{corollary}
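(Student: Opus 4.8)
The plan is to combine Proposition~\ref{prop:residual_aperiodic_Cartan} with the characterisation of Exel's noncommutative Cartan inclusions for type~\(I\) base algebras recorded in \cite{Kwasniewski-Meyer:Cartan}*{Theorem~4.3}. Concretely, I would use that for a type~\(I\) \(\Cst\)\nb-algebra~\(C\), a \(\Cst\)\nb-inclusion \(C\subseteq D\) is a noncommutative Cartan inclusion in the sense of Exel if and only if it is regular and aperiodic and admits a faithful conditional expectation \(D\to C\); such a conditional expectation is then the unique pseudo-expectation for \(C\subseteq D\) by \cite{Kwasniewski-Meyer:Aperiodicity_pseudo_expectations}*{Theorem~3.6}, hence the unique conditional expectation. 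Since every quotient of a type~\(I\) \(\Cst\)\nb-algebra is again type~\(I\), for \(A\) of type~\(I\) this criterion applies to every quotient \(A/I\) with \(I\in\Ideals^B(A)\).

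For the implication ``the action exists \(\Rightarrow\) residually Cartan'', I would first invoke Proposition~\ref{prop:residual_aperiodic_Cartan}: the existence of an action~\(\Hilm\) as in~\eqref{standing_assumption0} is equivalent to \(A\subseteq B\) being regular, residually aperiodic, and carrying a residually faithful conditional expectation \(E\colon B\to A\). Fixing \(I\in\Ideals^B(A)\) and passing to \(A/I\subseteq B/BIB\), this inclusion is regular by Lemma~\ref{lem:quotients_of_regular}, aperiodic by residual aperiodicity, and carries the faithful conditional expectation \(E^I\colon B/BIB\to A/I\) produced by residual faithfulness; moreover \(A/I\) is type~\(I\). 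The cited characterisation then identifies \(A/I\subseteq B/BIB\) as a noncommutative Cartan inclusion, and since \(I\) was an arbitrary restricted ideal, \(A\subseteq B\) is residually Cartan.

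For the converse, I would assume \(A\subseteq B\) is residually Cartan. The case \(I=0\) already gives that \(A\subseteq B\) is a noncommutative Cartan inclusion, hence regular --~so symmetric by Proposition~\ref{prop:regular_vs_inverse_semigroups}~-- with a faithful conditional expectation \(E\colon B\to A\). By Lemma~\ref{lem:symmetric_quotients_conditional_expectations}, \(E\) descends to a conditional expectation \(E^I\colon B/BIB\to A/I\) for every \(I\in\Ideals^B(A)\). Now for each such~\(I\), the residually Cartan hypothesis makes \(A/I\subseteq B/BIB\) a noncommutative Cartan inclusion; by the cited characterisation it is aperiodic, and its faithful Cartan conditional expectation is the unique pseudo-expectation, so it must coincide with~\(E^I\). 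Hence \(A\subseteq B\) is residually aperiodic and \(E\) is residually faithful, while regularity was already noted; Proposition~\ref{prop:residual_aperiodic_Cartan} then yields an action~\(\Hilm\) as in~\eqref{standing_assumption0}.

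I expect the only genuinely delicate point to be the bookkeeping around conditional expectations in the converse direction: one must ensure that the conditional expectation \(E^I\) obtained by descending the Cartan expectation of \(A\subseteq B\) is literally the faithful Cartan expectation of the quotient \(A/I\subseteq B/BIB\), and not merely some conditional expectation that happens to exist. This hinges on a genuine conditional expectation being in particular a pseudo-expectation, together with uniqueness of pseudo-expectations for aperiodic inclusions; once this is in place, the rest is a routine translation through Proposition~\ref{prop:residual_aperiodic_Cartan}, Lemmas~\ref{lem:quotients_of_regular} and~\ref{lem:symmetric_quotients_conditional_expectations}, and the permanence of type~\(I\) under quotients.
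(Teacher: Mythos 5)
Your argument is correct and is essentially the paper's proof, which simply combines the second part of Proposition~\ref{prop:residual_aperiodic_Cartan} with the characterisation of Exel's noncommutative Cartan inclusions (as regular, aperiodic inclusions with a faithful conditional expectation) from \cite{Kwasniewski-Meyer:Cartan} applied to the type~\(I\) quotients \(A/I\); you have just spelled out the details, including the correct use of uniqueness of pseudo-expectations for aperiodic inclusions to identify the descended expectation \(E^I\) with the Cartan expectation of the quotient. The only cosmetic discrepancy is the citation: the paper's proof invokes \cite{Kwasniewski-Meyer:Cartan}*{Theorem~6.3} rather than Theorem~4.3, but the content used is the same.
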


\begin{proof}
  Combine the second part of
  Proposition~\ref{prop:residual_aperiodic_Cartan} and
  \cite{Kwasniewski-Meyer:Cartan}*{Theorem~6.3}.
\end{proof}

\subsection{Pure infiniteness criteria}
\label{subsection:Pure_infiniteness_criteria}

In this section, we assume that~\(B\) is an \(S\)\nb-graded
\(\Cst\)\nb-algebra with a grading \(\Hilm=(\Hilm_t)_{t\in S}\)
and~\(A\) is the unit fibre of the grading.  We give pure
infiniteness criteria for~\(B\) under the
assumption~\eqref{standing_assumption}.  This
covers~\eqref{standing_assumption0} and the assumptions in
Corollary~\ref{cor:Ideals_in_section_algebras} as special cases.  In
view of Theorem~\ref{thm:residual_aperiodic_action}, the following
two theorems are immediate corollaries of Theorems
\ref{thm:Kirchberg_Sierakowski}
and~\ref{thm:pure_infiniteness_criteria}:

\begin{theorem}
  \label{thm:Kirchberg_Sierakowski2}
  Assume~\eqref{standing_assumption}.  Let \(\mathcal{F}\subseteq A^+\)
  fill~\(A\)
  and be invariant under \(\varepsilon\)\nb-cut-downs.
  Then~\(B\)
  is strongly purely infinite if and only if each pair of elements
  \(a,b\in \mathcal{F}\)
  has the matrix diagonalisation property in~\(B\).
\end{theorem}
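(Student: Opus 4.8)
The plan is to derive Theorem~\ref{thm:Kirchberg_Sierakowski2} as a direct consequence of Theorem~\ref{thm:Kirchberg_Sierakowski} together with Theorem~\ref{thm:residual_aperiodic_action}. Under the standing assumption~\eqref{standing_assumption}, the inclusion \(A\subseteq B\) is regular, hence symmetric, and residually aperiodic; moreover \(B\cong A\rtimes_\ess S\) and \(\Hilm\) is essentially exact. I would first invoke Theorem~\ref{thm:residual_aperiodic_action} to conclude that \(A^+\) fills~\(B\). The key point then is to upgrade this: if a subset \(\mathcal{F}\subseteq A^+\) fills~\(A\), then \(\mathcal{F}\) fills~\(B\).

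For this upgrading step, fix \(D\in\Her(B)\) and \(J\in\Ideals(B)\) with \(D\not\subseteq J\). Since \(A^+\) fills~\(B\) (by Theorem~\ref{thm:residual_aperiodic_action}), there is \(w\in B\) with \(ww^*\in D\) and \(a\defeq w^*w\in A^+\setminus J\). Now \(J\cap A\in\Ideals^B(A)\) is a restricted ideal, and since \(A\) separates ideals in~\(B\) (again Theorem~\ref{thm:residual_aperiodic_action}), the ideal \(J\) equals \(B(J\cap A)B\); in particular \(a\notin J\cap A\), so \(a\) is a nonzero element of the quotient \(A/(J\cap A)\). Because \(\mathcal{F}\) fills~\(A\) --~hence residually supports~\(A\) by Proposition~\ref{pro:filling_residually_supports}~-- the image of \(\mathcal{F}\) supports \(A/(J\cap A)\), so there is \(c\in\mathcal{F}\setminus\{0\}\) with \(c+(J\cap A)\precsim a+(J\cap A)\) in \(A/(J\cap A)\), and in particular \(c\notin J\cap A\). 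Lifting the Cuntz subequivalence and absorbing the error using an approximate unit of \(J\cap A\), exactly as in the proof of Proposition~\ref{prop:residual_supp_vs_filling}, one produces \(v\in B\) with \(vv^*\) inside the hereditary subalgebra \(aBa\subseteq D\) and \(v^*v=(\,\cdot\,-\varepsilon)_+\)-type cut-down of \(c\), lying in \(A^+\) and not in~\(J\); composing with \(w\) gives \(z\defeq w\,v\) with \(zz^*\in D\) and \(z^*z\in A^+\setminus J\). A slight additional argument replaces \(z^*z\) by an element of \(\mathcal{F}\) itself --~or rather, one notes that the Kirchberg--Sierakowski formalism in Theorem~\ref{thm:Kirchberg_Sierakowski} only requires \(\mathcal{F}\) to be invariant under \(\varepsilon\)-cut-downs, which is given, so the cut-down \((c-\varepsilon)_+\) is again in \(\mathcal{F}\).

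Having established that \(\mathcal{F}\) fills~\(B\) and is invariant under \(\varepsilon\)-cut-downs, the conclusion is immediate: Theorem~\ref{thm:Kirchberg_Sierakowski} applied to the \(\Cst\)\nb-algebra~\(B\) and the filling family \(\mathcal{F}\) states that \(B\) is strongly purely infinite if and only if each pair \(a,b\in\mathcal{F}\) has the matrix diagonalisation property in~\(B\), which is exactly the assertion. A cleaner route, and probably the one the authors intend given the phrase ``immediate corollaries'', is to observe that the upgrading step is already packaged: by Proposition~\ref{prop:residual_supp_vs_filling} for the symmetric inclusion \(A\subseteq B\), filling of~\(B\) by~\(A^+\) is equivalent to \(A^+\) residually supporting~\(B\); and a family \(\mathcal{F}\subseteq A^+\) that fills~\(A\) residually supports~\(A\), hence (via transitivity of~\(\precsim\) and the fact that \(A^+\) residually supports~\(B\)) residually supports~\(B\); then Proposition~\ref{prop:residual_supp_vs_filling} once more, or directly the argument above, shows \(\mathcal{F}\) fills~\(B\).

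The main obstacle is the upgrading step ``\(\mathcal{F}\) fills~\(A\) \(\Rightarrow\) \(\mathcal{F}\) fills~\(B\)'': one must carefully track ideals, using that every ideal of~\(B\) is induced from a restricted ideal of~\(A\) (so quotients of~\(B\) have quotients of~\(A\) as subalgebras, compatibly with the grading, via Lemma~\ref{lem:quotients_of_regular}), and one must handle the approximate-unit error-absorption exactly as in Proposition~\ref{prop:residual_supp_vs_filling}. Everything else is bookkeeping: checking that \(\varepsilon\)-cut-down invariance passes through, and that the hypotheses of Theorem~\ref{thm:Kirchberg_Sierakowski} are met verbatim. I do not expect any genuinely new difficulty beyond reusing the filling machinery already developed in Section~\ref{sec:detect_separate}.
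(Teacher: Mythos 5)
Your overall strategy is the right one and is essentially the paper's: Theorem~\ref{thm:residual_aperiodic_action} gives that \(A^+\) fills~\(B\), one upgrades ``\(\mathcal{F}\) fills~\(A\)'' to ``\(\mathcal{F}\) fills~\(B\)'', and Theorem~\ref{thm:Kirchberg_Sierakowski} finishes. The gap is in the upgrading step. First, a local error: \(\cl{aBa}\not\subseteq D\) in general for \(a=w^*w\) --- the hereditary subalgebra that \(D\) is guaranteed to contain is \(\cl{ww^*Bww^*}\), not \(\cl{w^*wBw^*w}\); composing with \(w\) does put \(zz^*=wvv^*w^*\) into \(D\), but it changes the other leg to \(z^*z=v^*(w^*w)v\), which is no longer \(v^*v\). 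More seriously, the route through residual supporting cannot work as written: mimicking the proof of Proposition~\ref{prop:residual_supp_vs_filling} produces \(z^*z=(gcg-\varepsilon)_+\), a cut-down of the \emph{perturbed} element \(gcg\) (with \(g=1-f\) coming from an approximate unit of \(J\cap A\)), not a cut-down of \(c\). Invariance of \(\mathcal{F}\) under \(\varepsilon\)\nb-cut-downs therefore does not place this element in \(\mathcal{F}\), while Definition~\ref{def:filling_family} requires \(z^*z\in\mathcal{F}\) exactly. Passing from ``\(\mathcal{F}\) fills \(A\)'' to the weaker ``\(\mathcal{F}\) residually supports \(A\)'' discards precisely the exact (non-approximate) relations you need; this is why Proposition~\ref{prop:residual_supp_vs_filling} is stated only for \(\mathcal{F}=A^+\), which is closed under \(x\mapsto(gxg-\varepsilon)_+\), whereas a general \(\mathcal{F}\) is not.

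The correct upgrading uses the filling of \(A\) by \(\mathcal{F}\) directly, with no quotients and no \(\varepsilon\)'s. Given \(D\in\Her(B)\) and \(J\in\Ideals(B)\) with \(D\not\subseteq J\), take \(w\) with \(ww^*\in D\) and \(a\defeq w^*w\in A^+\setminus J\) from \(A^+\) filling~\(B\). Now apply the filling of \(A\) to the hereditary subalgebra \(\cl{aAa}\in\Her(A)\) and the ideal \(J\cap A\in\Ideals(A)\) (note \(a\in\cl{aAa}\setminus(J\cap A)\)): this yields \(v\in A\setminus(J\cap A)\) with \(vv^*\in\cl{aAa}\) and \(v^*v\in\mathcal{F}\). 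Then transport \(v\) into \(D\) along~\(w\): the elements \(z_n\defeq w(a+n^{-1})^{-1/2}v\) converge in norm to some \(z\in B\) with \(z^*z=v^*v\in\mathcal{F}\setminus J\) and \(zz^*\in\cl{wBw^*}\subseteq D\) (the standard polar-decomposition argument, using that \(vv^*\in\cl{aBa}\) so \(v\in\cl{a^{1/2}B}\)). This yields the transitivity of filling that the paper implicitly relies on when calling the theorem an immediate corollary; the rest of your argument then goes through verbatim, and the cut-down invariance of \(\mathcal{F}\) is needed only to feed \(\mathcal{F}\) into Theorem~\ref{thm:Kirchberg_Sierakowski}, not for the transitivity itself.
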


\begin{theorem}
  \label{the:pure_infiniteness_for_crossed_products}
  Assume~\eqref{standing_assumption}.  Let
  \(\mathcal{F}\subseteq A^+\) residually support~\(A\).  Suppose
  that \(\Ideals^{\Hilm}(A)\) is finite or the projections in
  \(\mathcal{F}\) separate the ideals in~\(\Ideals^{\Hilm}(A)\).
  Then~\(B\) is strongly purely infinite \textup{(}with the ideal
  property\textup{)} if and only if every element in
  \(\mathcal{F}\setminus\{0\}\) is properly infinite in~\(B\).
\end{theorem}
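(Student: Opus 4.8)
The plan is to deduce this statement directly from Theorem~\ref{thm:pure_infiniteness_criteria}, exactly as the paragraph preceding the statement announces. The key is to verify that assumption~\eqref{standing_assumption} supplies precisely the hypotheses needed to invoke that earlier theorem with the given family~\(\mathcal{F}\). First I would recall that under~\eqref{standing_assumption} the grading \(\Hilm=(\Hilm_t)_{t\in S}\) makes \(A\subseteq B\) a regular, hence symmetric, \(\Cst\)\nb-inclusion (Propositions~\ref{prop:regular_vs_inverse_semigroups} and~\ref{prop:invariant_ideals_in_regular}), and it is residually aperiodic as an inclusion by Proposition~\ref{pro:aperiodic_actions_vs_inclusions}. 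Moreover, by Theorem~\ref{thm:residual_aperiodic_action}, essential exactness together with residual aperiodicity and \(B=A\rtimes_\ess S\) gives that \(A\) separates ideals in~\(B\), that \(A^+\) residually supports~\(B\), and that \(\Ideals(B)\cong\Ideals^B(A)=\Ideals^{\Hilm}(A)\).

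Next I would match the remaining hypotheses. Theorem~\ref{thm:pure_infiniteness_criteria} requires a family \(\mathcal{F}\subseteq A^+\) that residually supports~\(A\) — this is our hypothesis verbatim — together with the condition that \(\Ideals^B(A)\) be finite or that the projections in~\(\mathcal{F}\) separate the ideals in \(\Ideals^B(A)\). Under the identification \(\Ideals^B(A)=\Ideals^{\Hilm}(A)\) just recalled, this is exactly the alternative assumption we posit. Thus all hypotheses of Theorem~\ref{thm:pure_infiniteness_criteria} are in force, and its list of equivalences
\ref{item:pure_infiniteness_criteria1}--\ref{item:pure_infiniteness_criteria6}
holds. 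In particular, the condition that every element of \(\mathcal{F}\setminus\{0\}\) be properly infinite in~\(B\) is equivalent to \(B\) being strongly purely infinite, and also equivalent to \(B\) being purely infinite with the ideal property. This gives the asserted biconditional, with the parenthetical ``(with the ideal property)'' coming from the equivalence of \ref{item:pure_infiniteness_criteria4} and~\ref{item:pure_infiniteness_criteria6} in that theorem.

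The only point requiring a word of care — and the main (though minor) obstacle — is the bookkeeping between the ``inclusion'' language of Section~\ref{sec:detect_separate} and the ``action'' language here: one must be sure that \(\Ideals^B(A)\) computed for the regular inclusion \(A\subseteq B\) really coincides with \(\Ideals^{\Hilm}(A)\), and that residual aperiodicity of the action implies residual aperiodicity of the inclusion in the strong sense demanded by Theorem~\ref{thm:pure_infiniteness_criteria}. Both are already recorded: the ideal identification is Proposition~\ref{prop:invariant_ideals_in_regular} (the \((B_t)_{t\in S}\)\nb-invariant ideals are exactly the restricted ones), and for the quotients one uses Lemma~\ref{lem:quotients_of_regular} to see that \(A/I\subseteq B/BIB\) is again a regular \(S\)\nb-graded inclusion with grading \(\Hilm|_{A/I}\), so that aperiodicity of \(\Hilm|_{A/I}\) passes to the inclusion \(A/I\subseteq B/BIB\) via Proposition~\ref{pro:aperiodic_actions_vs_inclusions} (here one needs the grading on \(B/BIB\) to be topological, which holds because \(B=A\rtimes_\ess S\) and essential exactness identifies \(B/BIB\) with \((A/I)\rtimes_\ess S\), as in the proof of Theorem~\ref{thm:residual_aperiodic_action}). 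With these identifications in place, the theorem is an immediate corollary.
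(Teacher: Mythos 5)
Your proposal is correct and follows exactly the route the paper takes: the paper derives this theorem as an immediate corollary of Theorem~\ref{thm:pure_infiniteness_criteria}, using Theorem~\ref{thm:residual_aperiodic_action} to see that under~\eqref{standing_assumption} the family \(A^+\) residually supports~\(B\) and \(\Ideals(B)\cong\Ideals^{\Hilm}(A)=\Ideals^B(A)\). Your additional bookkeeping (regularity/symmetry of the inclusion, the identification of restricted with \(\Hilm\)\nb-invariant ideals, and the passage to quotient inclusions) just makes explicit what the paper leaves implicit.
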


In order to use these results, we need conditions that suffice for
\(a,b\in A^+\) to have the matrix diagonalisation property in~\(B\)
or for \(a\in A^+\setminus \{0\}\) to be properly infinite in~\(B\).
Checking the matrix diagonalisation property is usually difficult.
Nevertheless, the following lemma may be useful (see
\cites{Kirchberg-Sierakowski:Strong_pure,
  Kwasniewski-Meyer:Aperiodicity}):

\begin{lemma}
  \label{lem:from separation to diagonalisation}
  Let \(a,b\in A^+\setminus\{0\}\).  Suppose that for each
  \(c\in \Hilm_t\), \(t\in S\), and each \(\varepsilon >0\) there
  are \(n,m\in\N\) and \(a_i\in a \Hilm_{s_i}\), \(s_i\in S\), for
  \(i=1,\dotsc,n\) and \(b_j\in b\Hilm_{t_j}\), \(t_j\in S\), for
  \(j=1,\dotsc,m\) such that
  \[
    a \approx_\varepsilon  \sum_{i=1}^n a_i^*a_i,
    \quad
    b\approx_\varepsilon \sum_{i=1}^m b_i^* b_i,
    \quad
    \sum_{i,j=1, i\neq j}^{n}
    a_i^* a_j\approx_{\varepsilon} 0,
    \quad
    \sum_{i,j=1, i\neq j}^{n,m}
    b_i^* b_j\approx_{\varepsilon} 0
  \]
  and \(\sum_{i=1,j=1}^{n,m} a_i^* c b_j\approx_{\varepsilon} 0\).
  Then \(a,b\in A^+\setminus\{0\}\) have the matrix diagonalisation
  property in~\(B\).
  \end{lemma}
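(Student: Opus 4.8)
The plan is to establish the matrix diagonalisation property of $(a,b)$ directly. Fix $x\in B$ with $\left(\begin{smallmatrix}a&x^*\\x&b\end{smallmatrix}\right)\in M_2(B)^+$ and $\varepsilon>0$; we must produce $d_1,d_2\in B$ with $d_1^*ad_1\approx_\varepsilon a$, $d_2^*bd_2\approx_\varepsilon b$, $d_1^*xd_2\approx_\varepsilon 0$. Positivity of the $2\times2$ matrix forces $x^*x\le\|b\|\,a$ and $xx^*\le\|a\|\,b$, so $x$ lies in the closed bimodule generated by $a$ and $b$; combined with the density of $\sum_{t\in S}\Hilm_t$ in $B$, this lets me approximate $x$ to within any prescribed $\delta'>0$ by a finite sum $\sum_{k=1}^N u_k c_k v_k$ in which each $c_k\in\Hilm_{t_k}$ is homogeneous and $u_k,v_k$ are square roots of $a$ and $b$ (in the order forced by the positivity). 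For later norm control I would also truncate: fix a small $\delta\in(0,\min\{\|a\|,\|b\|\})$ and pass to $a_\delta:=a\,\chi_{[\delta,\infty)}(a)$ and $b_\delta:=b\,\chi_{[\delta,\infty)}(b)$, whose support projections $p,q$ satisfy $pap=a_\delta$, $qbq=b_\delta$ and $\|a-a_\delta\|,\|b-b_\delta\|\le\delta$; it is then enough to produce $d_1=pd_1$, $d_2=qd_2$ solving the problem for $a_\delta,b_\delta$ up to small errors, since $d_1^*ad_1=d_1^*a_\delta d_1$ and $d_2^*bd_2=d_2^*b_\delta d_2$.

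The core of the proof is to run the hypothesis on the finitely many homogeneous elements $c_1,\dots,c_N$ at once. As it is phrased for a single $c$, the first real step is to upgrade it to a finite-family version: for $\{c_1,\dots,c_N\}$ and any $\varepsilon_0>0$ there are $a_i\in a_\delta\Hilm_{s_i}$ and $b_j\in b_\delta\Hilm_{t_j}$ with $a_\delta\approx_{\varepsilon_0}\sum_i a_i^*a_i$, $b_\delta\approx_{\varepsilon_0}\sum_j b_j^*b_j$, $\sum_{i\ne i'}a_i^*a_{i'}\approx_{\varepsilon_0}0$, $\sum_{j\ne j'}b_j^*b_{j'}\approx_{\varepsilon_0}0$, and $\sum_{i,j}a_i^*c_k b_j\approx_{\varepsilon_0}0$ for every $k$. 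I would prove this by induction on $N$: given a decomposition handling $c_1,\dots,c_{N-1}$, the leftover cross terms $a_i^*c_N b_j$ are again homogeneous, so a fresh application of the single-element hypothesis — performed inside the corners $a_\delta\Hilm$, $b_\delta\Hilm$, so that the frames $(a_i)$, $(b_j)$ get refined rather than replaced — absorbs them while only mildly degrading the estimates already obtained. Getting this iteration to close with the errors honestly tracked is the step I expect to be the main obstacle; the single-element hypothesis is used exactly here.

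It remains to assemble and estimate. Writing $a_i=a_\delta\eta_i$, $b_j=b_\delta\zeta_j$, set $z_a:=\sum_i a_i=a_\delta\sum_i\eta_i$, $z_b:=\sum_j b_j=b_\delta\sum_j\zeta_j$, and $d_1:=a_\delta^{-1/2}z_a=a_\delta^{1/2}\sum_i\eta_i$, $d_2:=b_\delta^{-1/2}z_b=b_\delta^{1/2}\sum_j\zeta_j$, so that $d_1=pd_1$, $d_2=qd_2$ and $a_\delta^{1/2}d_1=z_a$, $b_\delta^{1/2}d_2=z_b$. The orthogonality relations give $\|z_a\|^2\le\|a_\delta\|+2\varepsilon_0$, $\|z_b\|^2\le\|b_\delta\|+2\varepsilon_0$, so (once $\varepsilon_0\le1$, using that $a_\delta,b_\delta$ have spectrum bounded below by $\delta$ on their ranges) $\|d_1\|,\|d_2\|\le M_0$ for a constant $M_0$ depending only on $\delta,\|a\|,\|b\|$; and they give $z_a^*z_a\approx_{2\varepsilon_0}a_\delta$, $z_b^*z_b\approx_{2\varepsilon_0}b_\delta$, whence $d_1^*ad_1=d_1^*a_\delta d_1=z_a^*z_a\approx_{2\varepsilon_0+\delta}a$ and similarly $d_2^*bd_2\approx_{2\varepsilon_0+\delta}b$. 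For the cross term, $d_1^*xd_2$ is within $M_0^2\delta'$ of $\sum_k d_1^*(u_kc_kv_k)d_2$; matching the truncated square roots in $d_1^*,d_2$ against the $u_k,v_k$ (here the order in which the approximation of $x$ was taken matters, and one uses $a_\delta^{1/2}a^{1/2}=a_\delta$, $b^{1/2}b_\delta^{1/2}=b_\delta$) collapses each summand to $z_a^*c_k z_b=\sum_{i,j}a_i^*c_k b_j\approx_{\varepsilon_0}0$, so $d_1^*xd_2\approx_{M_0^2\delta'+N\varepsilon_0}0$. Finally one fixes the parameters in order — $\varepsilon$, then $\delta$ small, then $M_0$ (now determined), then $\delta'<\varepsilon/(2M_0^2)$, which pins down $N$, then $\varepsilon_0$ small relative to $\varepsilon$ and to $\varepsilon/N$ — so that all three displayed approximations fall below $\varepsilon$. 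Besides the finite-family upgrade, the two points needing care are this parameter bookkeeping and the algebraic identities that move the truncations through $x$; the rest is routine.
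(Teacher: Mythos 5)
Your overall strategy --- reduce a general $x$ with $\left(\begin{smallmatrix}a&x^*\\x&b\end{smallmatrix}\right)\in M_2(B)^+$ to finitely many homogeneous pieces and then feed these to the hypothesis --- has the right shape, but the step you yourself flag as the main obstacle, the ``finite-family upgrade,'' is a genuine gap, and the induction you sketch for it does not close. The hypothesis is a statement about the fixed pair $(a,b)$ and a \emph{single} $c\in\Hilm_t$: it hands you frames $(a_i)$, $(b_j)$ that depend on $c$, and it provides no mechanism for ``refining'' a previously chosen frame, nor can it be ``performed inside the corners $a_\delta\Hilm$, $b_\delta\Hilm$'' --- the cut-downs $a_\delta=a\chi_{[\delta,\infty)}(a)$ and their support projections live in $A''$ rather than $A$, and the hypothesis yields $a_i\in a\Hilm_{s_i}$, not $a_i\in a_\delta\Hilm_{s_i}$. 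Even granting some refinement procedure, the cross-term estimates $\sum_{i,j}a_i^*c_kb_j\approx_{\varepsilon_0}0$ already secured for $c_1,\dotsc,c_{N-1}$ would not survive replacing the $a_i$'s and $b_j$'s, so the errors cannot be ``honestly tracked'' in the way you hope.

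The iteration that does work is multiplicative in the $d$'s rather than by refining frames: having produced $d_1^{(1)},d_2^{(1)}$ killing $c_1$, one applies the hypothesis to the transformed elements $d_1^{(1)*}c_2\,d_2^{(1)}$ --- which are again sums of homogeneous elements because $\mathcal{S}\defeq\bigcup_{t\in S}\Hilm_t$ is a multiplicative semigroup with $\mathcal{S}^*\CC\mathcal{S}\subseteq\CC$ and $A\mathcal{S}A\subseteq\mathcal{S}$ --- and takes products $d_1^{(1)}d_1^{(2)}\cdots$ at the end, the estimates composing because the $d$'s built from frames have controlled norm. This is exactly the content of Lemma~5.6 of \cite{Kirchberg-Sierakowski:Filling_families}, and it is how the paper argues: it only verifies the matrix diagonalisation property \emph{with respect to} $\CC=\mathcal{S}=\bigcup_{t\in S}\Hilm_t$ in the sense of Definition~4.6 there, by applying the hypothesis with $c\defeq a^{1/2}xb^{1/2}$ for a single homogeneous $x$ and setting $d_1=\sum_i x_i$, $d_2=\sum_j y_j$ where $a_i=a^{1/2}x_i$, $b_j=b^{1/2}y_j$; the passage to arbitrary $x\in B$ is delegated entirely to that cited lemma. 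A self-contained version of your argument would essentially have to reprove that lemma, and the multiplicative composition of the $d$'s is the idea your proposal is missing.
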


\begin{proof}
  Let \(\CC\defeq \bigcup_{t\in S} \Hilm_t\)
  and \(\mathcal{S}\defeq \bigcup_{t\in S} \Hilm_t\).
  We claim that \(a, b\in A^+\setminus\{0\}\)
  have the matrix diagonalisation property with respect to \(\CC\)
  and~\(\mathcal{S}\)
  as introduced in
  \cite{Kirchberg-Sierakowski:Filling_families}*{Definition~4.6}.
  Indeed, let \(x\in \Hilm_t\)
  be such that
  \(\left(\begin{smallmatrix} a&x^*\\x&b \end{smallmatrix}\right) \in
  M_2(B)^+\)
  and let \(\varepsilon >0\).
  Let \(a_i\in a \Hilm_{s_i}\)
  and \(b_j\in b \Hilm_{t_j}\)
  satisfy the conditions described in
  the assertion
  with \(c\defeq a^{\nicefrac12}x b^{\nicefrac12}\).
  We may write \(a_i=a^{\nicefrac12}x_i\)
  and \(b_j=b^{\nicefrac12}y_j\) for some \(x_i\), \(y_j\).  Let
  \(d_1\defeq \sum_{i=1}^n x_i\) and
  \(d_2\defeq \sum_{j=1}^n y_j\).
  The assumed estimates 
  imply that
  \[
    d_1^*a d_1 \approx_{2\varepsilon} a,\qquad
    d_2^* b d_2\approx_{2\varepsilon} b,\qquad
    d_1^* x d_2\approx_\varepsilon 0.
  \]
  This proves our claim.  Clearly, \(\mathcal{S}\) is a
  multiplicative subsemigroup of~\(B\),
  \(\mathcal{S}^* \mathfrak{S}\mathcal{S}\subseteq \mathfrak{S}\),
  \(A \mathcal{S} A\subseteq \mathcal{S}\), and the closed linear
  span of \(\CC\) is \(B\).  Thus \(a, b\in A^+\setminus\{0\}\) have
  the matrix diagonalisation property in~\(B\) by
  \cite{Kirchberg-Sierakowski:Filling_families}*{Lemma~5.6}.
\end{proof}

Now we will focus on ways to check whether
\(a\in A^+\setminus \{0\}\) is properly infinite in~\(B\).  The
following definition generalises
\cite{Kwasniewski-Szymanski:Pure_infinite}*{Definition~5.1} and
\cite{Kwasniewski-Meyer:Aperiodicity}*{Definition~5.5} from groups
to inverse semigroups.  A crucial point is that the properties
depend only on the Fell bundle \(\Hilm=(\Hilm_t)_{t\in S}\), not on
the norm in~\(B\).

\begin{definition}
  \label{def:infinite_elements_B}
  An element \(a\in A^+\setminus\{0\}\) is called
  \begin{enumerate}
  \item \label{en:infinite_elements_B1}%
    \emph{\(\Hilm\)\nb-infinite} if there is
    \(b\in A^+\setminus\{0\}\) such that for each
    \(\varepsilon >0\), there are \(n,m\in\N\), \(t_i\in S\) and
    \(a_i \in a\Hilm_{t_i}\) for \(1 \le i \le n+m\), such that
    \[
      a \approx_\varepsilon  \sum_{i=1}^{n} a_i^*a_i,
      \quad
      b\approx_\varepsilon \sum_{i=n+1}^{n+m} a_j^* a_j,
      \quad
      \sum_{\twolinesubscript{i,j=1}{i\neq j}}^{n+m}
      \norm{a_i^* a_j}\le \varepsilon;
    \]
  \item \label{en:infinite_elements_B1res}%
    \emph{residually \(\Hilm\)\nb-infinite} if \(a+I\) is
    \(\Hilm|_{A/I}\)\nb-infinite for all
    \(I\in \Ideals^{\Hilm}(A)\) with \(a\notin I\);
  \item \label{en:infinite_elements_B2}%
    \emph{properly \(\Hilm\)\nb-infinite} if for all
    \(\varepsilon >0\) there are \(n,m\in\N\), \(t_i\in S\) and
    \(a_i \in a\Hilm_{t_i}\) for \(1 \le i \le n+m\), such that
    \[
      a \approx_\varepsilon  \sum_{i=1}^{n} a_i^*a_i,
      \quad
      a\approx_\varepsilon \sum_{j=n+1}^m a_j^* a_j,
      \quad
      \sum_{\twolinesubscript{i,j=1}{i\neq j}}^{n+m}
      \norm{a_i^* a_j}\le \varepsilon;
    \]
  \item \emph{\(\Hilm\)\nb-paradoxical} if the condition
    in~\ref{en:infinite_elements_B2} holds with \(\varepsilon =0\),
    that is, there are \(n,m\in\N\), \(t_i\in S\), and
    \(a_i \in a\Hilm_{t_i}\) for \(1 \le i \le n+m\), such that
    \[
      a = \sum_{i=1}^n a_i^* a_i,\qquad
      a = \sum_{j=n+1}^{n+m} a_j^* a_j,\qquad
      a_i^* a_j =0 \quad\text{for }i\neq j.
    \]
  \end{enumerate}
\end{definition}

\begin{lemma}
  \label{lem:Fell_infiniteness}
  If \(a\in A^+\setminus\{0\}\) is \(\Hilm\)\nb-infinite, then it is
  infinite in~\(B\).  If \(a\in A^+\setminus\{0\}\) is
  properly \(\Hilm\)\nb-infinite, then it is properly infinite
  in~\(B\).
\end{lemma}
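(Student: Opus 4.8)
The plan is to exploit the near-orthogonality of the vectors $a_i$ to collapse the square-sums $\sum a_i^*a_i$ appearing in Definition~\ref{def:infinite_elements_B} into single products $x^*x$ and $y^*y$, thereby producing directly the elements demanded by Definition~\ref{def:kinds_of_elements}. The data defining ($\Hilm$\nb-)infiniteness on the level of the Fell bundle is tailor-made for this.

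First suppose $a\in A^+\setminus\{0\}$ is $\Hilm$\nb-infinite with witness $b\in A^+\setminus\{0\}$. Since $A=\Hilm_1$ is a $\Cst$\nb-subalgebra of~$B$, we have $a,b\in B^+\setminus\{0\}$, and I claim the same~$b$ witnesses that~$a$ is infinite in~$B$. Fix $\varepsilon>0$; as $\varepsilon$ is arbitrary it suffices to produce $x,y\in aB$ with $x^*x\approx_{2\varepsilon}a$, $y^*y\approx_{2\varepsilon}b$ and $x^*y\approx_{2\varepsilon}0$. Apply the definition of $\Hilm$\nb-infinite to obtain $n,m\in\N$, elements $t_i\in S$ and $a_i\in a\Hilm_{t_i}$ for $1\le i\le n+m$ with
\[
  a\approx_\varepsilon\sum_{i=1}^{n}a_i^*a_i,\qquad
  b\approx_\varepsilon\sum_{i=n+1}^{n+m}a_i^*a_i,\qquad
  \sum_{\twolinesubscript{i,j=1}{i\neq j}}^{n+m}\norm{a_i^*a_j}\le\varepsilon .
\]
Set $x\defeq\sum_{i=1}^{n}a_i$ and $y\defeq\sum_{i=n+1}^{n+m}a_i$. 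Writing each $a_i=a\xi_i$ with $\xi_i\in\Hilm_{t_i}\subseteq B$ (the product taken in~$B$, landing in $\Hilm_1\Hilm_{t_i}\subseteq\Hilm_{t_i}$ by the grading axioms), we see $x=a\sum_{i\le n}\xi_i\in aB$ and likewise $y\in aB$. Expanding $x^*x=\sum_{i,j\le n}a_i^*a_j$ and noting that its off-diagonal part is a subsum of $\sum_{i\neq j}\norm{a_i^*a_j}\le\varepsilon$, we get $x^*x\approx_\varepsilon\sum_{i\le n}a_i^*a_i\approx_\varepsilon a$, hence $x^*x\approx_{2\varepsilon}a$. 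The same computation gives $y^*y\approx_{2\varepsilon}b$, while $x^*y=\sum_{i\le n<j\le n+m}a_i^*a_j$ consists only of off-diagonal terms, so $\norm{x^*y}\le\varepsilon$. Thus~$a$ is infinite in~$B$.

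The second assertion follows by running the identical argument with~$b$ replaced by~$a$: if~$a$ is properly $\Hilm$\nb-infinite, then for each $\varepsilon>0$ the decomposition in Definition~\ref{def:infinite_elements_B}\ref{en:infinite_elements_B2} yields, via $x\defeq\sum_{i=1}^{n}a_i$ and $y\defeq\sum_{i=n+1}^{n+m}a_i$ in $aB$, the estimates $x^*x\approx_{2\varepsilon}a$, $y^*y\approx_{2\varepsilon}a$ and $x^*y\approx_{2\varepsilon}0$, which is precisely the condition that~$a$ be properly infinite in~$B$.

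There is no real obstacle here; the only care needed is the elementary bookkeeping showing that the single bound $\sum_{i\neq j}\norm{a_i^*a_j}\le\varepsilon$ simultaneously controls each of the three error terms, together with the observation that a finite sum of elements of the spaces $a\Hilm_{t_i}$ still lies in~$aB$ because $\Hilm_1\cdot\Hilm_{t_i}\subseteq\Hilm_{t_i}\subseteq B$.
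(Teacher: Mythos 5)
Your proof is correct and is essentially the paper's own argument: the same choice $x=\sum_{i\le n}a_i$, $y=\sum_{i>n}a_i$, the same expansion separating diagonal from off-diagonal terms, and the same observation that the single bound $\sum_{i\neq j}\norm{a_i^*a_j}\le\varepsilon$ controls all three error terms, with the properly infinite case handled by setting $b=a$. Nothing further is needed.
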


\begin{proof}
  First let \(a\in A^+\setminus\{0\}\) be \(\Hilm\)\nb-infinite.
  Let \(b\in A^+\setminus\{0\}\) be as in Definition
  \ref{def:infinite_elements_B}.\ref{en:infinite_elements_B1}.  For
  \(\varepsilon >0\), there are \(n,m\in\N\) and \(t_i\in S\) and
  \(a_i\in a \Hilm_{t_i}\) for \(i=1,\dotsc,n+m\), \(n,m\in\N\) as
  in Definition
  \ref{def:infinite_elements_B}.\ref{en:infinite_elements_B1}.  Let
  \(x\defeq \sum_{i=1}^n a_i\) and \(y\defeq\sum_{i=n+1}^m a_i\).
  Then $x, y \in aB$.  Simple estimates such as
  \[
    \norm{x^* x - a} = \norm[\bigg]{\sum_{j=1}^n a_j^* a_j - a +
      \sum_{i,j=1, i\neq j}^{n} a_i^* a_j} \le
    \varepsilon + \varepsilon
  \]
  show that \(x^*x\approx_{2\varepsilon} a\),
  \(y^*y\approx_{2\varepsilon} b\) and
  \(y^*x \approx_{\varepsilon} 0\).  Hence~\(a\) is infinite
  in~\(B\) (see Definition~\ref{def:kinds_of_elements}).  The proof
  when~\(a\) is properly \(\Hilm\)\nb-infinite is the same with
  \(b=a\).
\end{proof}

Let us compare the definitions of infinite and properly infinite
elements in Definition~\ref{def:kinds_of_elements} to the
definitions of \(\Hilm\)\nb-infinite and properly
\(\Hilm\)\nb-infinite elements in
Definition~\ref{def:infinite_elements_B}.  There are two
differences.  First, we now choose the elements $x, y \in aB$ in the
subalgebra \(A\rtimes_\alg S\), so that we may write them as a
finite sum \(\sum a_i\) with \(a_i \in a\Hilm_{t_i}\).  Secondly, we
estimate each product~\(a_i^* a_j\) for \(i\neq j\) separately.  The
first change does not achieve much because \(A\rtimes_\alg S\) is
dense in~\(B\) and we only aim for approximate equalities anyway.
The second change simplify the estimates a lot because
\(\norm{a_i^* a_j}\) is computed in the Hilbert \(A\)\nb-bimodule
\(\Hilm_{t_i^* t_j}\), whereas the norm estimates in
Definition~\ref{def:kinds_of_elements} involve the \(\Cst\)\nb-norm
of~\(B\).  For an \(\Hilm\)\nb-paradoxical element, we even assume
the products~\(a_i^* a_j\) for \(i\neq j\) to vanish exactly.  This
is once again much easier to check.  Paradoxical elements are also
important because they are related to paradoxical decompositions,
which were studied already by Banach and Tarski.  In the setting of
purely infinite crossed products, their importance was highlighted
by R\o{}rdam and
Sierakowski~\cite{Rordam-Sierakowski:Purely_infinite}.  The
implications among our infiniteness conditions hinted at above are
summarised in the following proposition:

\begin{proposition}
  \label{prop:proper_infiniteness_conditions}
  Assume that~\(A\) separates ideals in~\(B\).  Consider the
  following conditions \(a\in A^+\setminus \{0\}\) may satisfy:
  \begin{enumerate}
  \item \label{en:proper_infiniteness_conditions1}%
    \(a\) is properly infinite in \(B\);
  \item \label{en:proper_infiniteness_conditions2}%
    for each \(\varepsilon >0\) there are \(n,m\in\N\),
    \(t_i\in S\), and \(a_i \in a\Hilm_{t_i}\) for
    \(1 \le i \le n+m\), such that
    \[
      a \approx_\varepsilon  \sum_{i,j=1}^{n} a_i^*a_j,
      \qquad
      a\approx_\varepsilon \sum_{i,j=n+1}^m a_i^* a_j,
      \qquad
      \sum_{i,j=1, i\neq j}^{n+m}
      a_i^* a_j\approx_{\varepsilon} 0;
    \]
  \item \label{en:proper_infiniteness_conditions3}%
    \(a\) is residually \(\Hilm\)\nb-infinite;
  \item \label{en:proper_infiniteness_conditions4}%
    \(a\) is properly \(\Hilm\)\nb-infinite;
  \item \label{en:proper_infiniteness_conditions41}%
    \(a\) is \(\Hilm\)\nb-paradoxical.
  \end{enumerate}
  Then \ref{en:proper_infiniteness_conditions1}\(\Leftrightarrow\)%
  \ref{en:proper_infiniteness_conditions2}\(\Leftarrow\)%
  \ref{en:proper_infiniteness_conditions3}\(\Leftarrow\)%
  \ref{en:proper_infiniteness_conditions4}\(\Leftarrow\)%
  \ref{en:proper_infiniteness_conditions41}.
\end{proposition}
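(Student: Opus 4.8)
The plan is to establish the chain of implications from right to left, treating the four arrows separately. The two easy ones are \ref{en:proper_infiniteness_conditions41}\(\Rightarrow\)\ref{en:proper_infiniteness_conditions4} and \ref{en:proper_infiniteness_conditions4}\(\Rightarrow\)\ref{en:proper_infiniteness_conditions3}. The first is immediate: if the defining equalities of \(\Hilm\)-paradoxicality hold with \(\varepsilon=0\), then in particular they hold with each \(\norm{a_i^*a_j}\le\varepsilon\) for every \(\varepsilon>0\), which is exactly proper \(\Hilm\)-infiniteness. For \ref{en:proper_infiniteness_conditions4}\(\Rightarrow\)\ref{en:proper_infiniteness_conditions3}: suppose \(a\) is properly \(\Hilm\)-infinite and let \(I\in\Ideals^{\Hilm}(A)\) with \(a\notin I\). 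Applying the quotient homomorphism \(\Hilm_{t_i}\to\Hilm_{t_i}/\Hilm_{t_i}I\) to the witnessing elements \(a_i\in a\Hilm_{t_i}\) produces elements in \((a+I)(\Hilm_{t_i}/\Hilm_{t_i}I)\), and since the quotient maps are contractive, all the approximate identities and norm bounds descend. This shows \(a+I\) is properly \(\Hilm|_{A/I}\)-infinite, hence \(\Hilm|_{A/I}\)-infinite with \(b=a+I\) (note \(a+I\neq0\) since \(a\notin I\)), which is precisely residual \(\Hilm\)-infiniteness.

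The more substantial arrow is \ref{en:proper_infiniteness_conditions3}\(\Rightarrow\)\ref{en:proper_infiniteness_conditions2}. Here I would use that \(A\) separates ideals in \(B\), so by Proposition~\ref{prop:invariant_ideals_in_regular} (applied to the grading \((\Hilm_t)_{t\in S}\)) one has \(\Ideals(B)=\Ideals^A(B)\cong\Ideals^{\Hilm}(A)\), and every ideal of \(B\) is of the form \(BIB\) for a unique \(I\in\Ideals^{\Hilm}(A)\). Given \(a\in A^+\setminus\{0\}\) residually \(\Hilm\)-infinite and \(\varepsilon>0\), let \(J\) be the ideal of \(B\) generated by \(a\) and put \(I\defeq J\cap A\in\Ideals^{\Hilm}(A)\); then \(a\notin I\). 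Residual \(\Hilm\)-infiniteness gives \(b\in(A/I)^+\setminus\{0\}\) and, passing to the quotient, elements \(\bar a_i\in(a+I)(\Hilm_{t_i}/\Hilm_{t_i}I)\) witnessing that \(a+I\) is \(\Hilm|_{A/I}\)-infinite at level \(\varepsilon/2\), say. I then lift: choose \(a_i\in a\Hilm_{t_i}\) (using \(\Hilm_{t_i}I=I\Hilm_{t_i}\) and that \(a\Hilm_{t_i}\) surjects onto \((a+I)(\Hilm_{t_i}/\Hilm_{t_i}I)\)) with \(a_i+\Hilm_{t_i}I=\bar a_i\). The relations \(a\approx_{\varepsilon/2}\sum_{i=1}^n a_i^*a_i\) and \(\sum_{i\neq j}\norm{a_i^*a_j}\le\varepsilon/2\) need only hold modulo \(I\); but the point of the set-up with the ideal \(J=BIB\) is that \(a\in J\) itself, so working inside the hereditary structure — or using an approximate unit of \(I\) as in the proof of Proposition~\ref{prop:residual_supp_vs_filling} — one corrects the lifts to achieve genuine estimates in \(B\). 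Rewriting \(\sum_{i=1}^n a_i^*a_i=\sum_{i,j=1}^n a_i^*a_j-\sum_{i\neq j,\,i,j\le n}a_i^*a_j\), the cross-term has norm \(\le\varepsilon/2\), so \(a\approx_{\varepsilon}\sum_{i,j=1}^n a_i^*a_j\); and the ``\(b\)''-half of the relations is simply discarded, since condition~\ref{en:proper_infiniteness_conditions2} only constrains the diagonal sums and the global off-diagonal sum. This yields \ref{en:proper_infiniteness_conditions2}. \textbf{I expect the lifting/correction argument here to be the main obstacle}, since one must turn approximate relations modulo \(I\) into approximate relations in \(B\), and the clean way to do this is exactly the approximate-unit trick already used for symmetric inclusions.

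Finally, the equivalence \ref{en:proper_infiniteness_conditions1}\(\Leftrightarrow\)\ref{en:proper_infiniteness_conditions2}. For \ref{en:proper_infiniteness_conditions2}\(\Rightarrow\)\ref{en:proper_infiniteness_conditions1}: given the \(a_i\), set \(x\defeq\sum_{i=1}^n a_i\in aB\) and \(y\defeq\sum_{i=n+1}^{n+m}a_i\in aB\); then \(x^*x=\sum_{i,j=1}^n a_i^*a_j\approx_\varepsilon a\), similarly \(y^*y\approx_\varepsilon a\), and \(x^*y=\sum_{i=1,\,j=n+1}^{n,\,n+m}a_i^*a_j\) is a sub-sum of the global off-diagonal sum, hence \(\approx_\varepsilon 0\) — matching Definition~\ref{def:kinds_of_elements}\ref{en:infinite_elements_B2}, up to the harmless constant adjustments already used in the proof of Lemma~\ref{lem:Fell_infiniteness}. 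The converse \ref{en:proper_infiniteness_conditions1}\(\Rightarrow\)\ref{en:proper_infiniteness_conditions2} follows by approximating the witnessing \(x,y\in aB\) for proper infiniteness by elements of the dense \(*\)-subalgebra \(A\rtimes_\alg S\), writing these approximants as finite sums \(\sum a_i\) with \(a_i\in a\Hilm_{t_i}\) (absorbing the leading \(a\) factor), and observing that the three required estimates in~\ref{en:proper_infiniteness_conditions2} are weaker, grouped versions of \(x^*x\approx a\), \(y^*y\approx a\), \(x^*y\approx 0\) — these transfer with an adjusted \(\varepsilon\) by the triangle inequality. This closes the cycle and completes the proof.
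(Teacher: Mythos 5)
Your handling of \ref{en:proper_infiniteness_conditions41}\(\Rightarrow\)\ref{en:proper_infiniteness_conditions4}\(\Rightarrow\)\ref{en:proper_infiniteness_conditions3} and of the equivalence \ref{en:proper_infiniteness_conditions1}\(\Leftrightarrow\)\ref{en:proper_infiniteness_conditions2} is correct and essentially the paper's argument. The problem is the implication \ref{en:proper_infiniteness_conditions3}\(\Rightarrow\)\ref{en:proper_infiniteness_conditions2}, where your proposal has a genuine gap.

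Two concrete issues. First, your choice of ideal is inconsistent: if \(J\) is the ideal of \(B\) generated by \(a\) and \(I\defeq J\cap A\), then \(a\in J\cap A=I\), so residual \(\Hilm\)\nb-infiniteness --- which only says something about those \(I\) with \(a\notin I\) --- gives you no information for this particular \(I\). Second, and more fundamentally, residual \(\Hilm\)\nb-infiniteness only yields, in each quotient, witnesses for \emph{infiniteness}: the second block of elements satisfies \(b\approx_\varepsilon\sum a_j^*a_j\) for \emph{some} nonzero \(b\), not for \(a\) itself. Condition~\ref{en:proper_infiniteness_conditions2}, by contrast, requires \emph{both} diagonal sums to approximate \(a\); there is no ``\(b\)-half to discard''. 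No lifting or approximate-unit manipulation will upgrade infiniteness in a single quotient to proper infiniteness. The missing ingredient is \cite{Kirchberg-Rordam:Non-simple_pi}*{Proposition~3.14}: \(a\in B^+\) is properly infinite if and only if its image in every quotient \(B/J\), \(J\in\Ideals(B)\), is zero or infinite. Since \(A\) separates ideals in \(B\), every \(J\in\Ideals(B)\) equals \(BIB\) for an \(\Hilm\)\nb-invariant ideal \(I\idealin A\), and \(a\notin J\) if and only if \(a\notin I\); applying Lemma~\ref{lem:Fell_infiniteness} to the graded inclusion \(A/I\subseteq B/BIB\) converts \(\Hilm|_{A/I}\)\nb-infiniteness of \(a+I\) into infiniteness of the image of \(a\) in \(B/BIB\). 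This gives \ref{en:proper_infiniteness_conditions3}\(\Rightarrow\)\ref{en:proper_infiniteness_conditions1}, and then \ref{en:proper_infiniteness_conditions2} follows from the equivalence \ref{en:proper_infiniteness_conditions1}\(\Leftrightarrow\)\ref{en:proper_infiniteness_conditions2} that you already proved correctly.
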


\begin{proof}
  The implications
  \ref{en:proper_infiniteness_conditions41}\(\Rightarrow\)%
  \ref{en:proper_infiniteness_conditions4}\(\Rightarrow\)%
  \ref{en:proper_infiniteness_conditions3} are straightforward.  By
  \cite{Kirchberg-Rordam:Non-simple_pi}*{Proposition 3.14}, \(a\) is
  properly infinite if and only if it is residually infinite.
  Since~\(A\) separates ideals in~\(B\), any ideal in~\(B\) comes
  from an invariant ideal in~\(A\), as in the definition that~\(a\)
  is residually \(\Hilm\)\nb-infinite.  Together with
  Lemma~\ref{lem:Fell_infiniteness}, this shows
  that~\ref{en:proper_infiniteness_conditions3}
  implies~\ref{en:proper_infiniteness_conditions1}.

  According to Definition~\ref{def:kinds_of_elements},
  \(a\in A^+\setminus\{0\}\) is properly infinite in~\(B\) if and
  only if, for all \(\varepsilon >0\), there are \(x,y\in a\cdot B\)
  with \(x^*x\approx_\varepsilon a\), \(y^*y\approx_\varepsilon b\)
  and \(x^*y\approx_\varepsilon 0\).  Without loss of generality, we
  may pick \(x,y\in a \cdot (\sum_{t\in S}\Hilm_t)\) because
  \(\sum_{t\in S}\Hilm_t\) is dense in \(B\).  So
  \(x = \sum_{j=1}^n a_j\) and \( y = \sum_{j=n+1}^{n+m} a_j\) for
  some \(n,m\in\N\), \(t_i\in S\), and \(a_i \in a\Hilm_{t_i}\) for
  \(1 \le i \le n+m\).  The relations \(x^*x\approx_\varepsilon a\),
  \(y^*y\approx_\varepsilon b\) and \(x^*y\approx_\varepsilon 0\)
  translate to those described in
  \ref{en:proper_infiniteness_conditions2}.  This proves that
  \ref{en:proper_infiniteness_conditions1}
  and~\ref{en:proper_infiniteness_conditions2} are equivalent.
\end{proof}

It is unclear whether the implications in
Proposition~\ref{prop:proper_infiniteness_conditions} may be
reversed.

\begin{remark}
  The example of graph \(\Cst\)\nb-algebras shows that it may be
  much easier to check that an element is residually
  \(\Hilm\)\nb-infinite than that it is properly
  \(\Hilm\)\nb-infinite (see also
  \cite{Kwasniewski-Szymanski:Pure_infinite}*{Remark~7.10}).
\end{remark}

\begin{corollary}
  \label{cor:pure_infiniteness_for_crossed_products}
  Assume~\eqref{standing_assumption}.  Let
  \(\mathcal{F}\subseteq A^+\) residually support~\(A\).  Suppose
  that \(\Ideals^{\Hilm}(A)\) is finite or that~\(\mathcal{F}\)
  consists of projections, or that the projections
  in~\(\mathcal{F}\) separate the ideals in~\(\Ideals^{\Hilm}(A)\).
  If every element in \(\mathcal{F}\setminus\{0\}\) is residually
  \(\Hilm\)\nb-infinite, then \(A\rtimes_\ess S\) is purely infinite
  and has the ideal property.
\end{corollary}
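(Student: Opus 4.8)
The plan is to deduce this directly from Theorem~\ref{the:pure_infiniteness_for_crossed_products} by checking its one remaining hypothesis, namely that every element of \(\mathcal{F}\setminus\{0\}\) is properly infinite in \(B\defeq A\rtimes_\ess S\). First I would record the consequences of the standing assumption~\eqref{standing_assumption}: by Theorem~\ref{thm:residual_aperiodic_action}, \(A\) separates ideals in \(B\), so \(\Ideals(B)\cong\Ideals^{\Hilm}(A)\) and every ideal in \(B\) is of the form \(BIB\) for some \(\Hilm\)\nb-invariant ideal \(I\idealin A\), with \(B/BIB\cong (A/I)\rtimes_\ess S\). Next I would dispose of the side condition: since \(\mathcal{F}\) residually supports \(A\), Lemma~\ref{lem:support_implies_separate}.\ref{lem:support_implies_separate2} shows that \(\mathcal{F}\) separates the ideals in \(\Ideals(A)\), hence also those in the sublattice \(\Ideals^{\Hilm}(A)\); thus if \(\mathcal{F}\) consists of projections, the projections in \(\mathcal{F}\) automatically separate the ideals in \(\Ideals^{\Hilm}(A)\). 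So all three alternatives listed in the corollary are subsumed by the hypothesis of Theorem~\ref{the:pure_infiniteness_for_crossed_products} that \(\Ideals^{\Hilm}(A)\) is finite or the projections in \(\mathcal{F}\) separate the ideals in \(\Ideals^{\Hilm}(A)\).

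The substantive step is to pass from ``residually \(\Hilm\)\nb-infinite'' to ``properly infinite in \(B\)''. This is exactly the implication \ref{en:proper_infiniteness_conditions3}\(\Rightarrow\)\ref{en:proper_infiniteness_conditions1} of Proposition~\ref{prop:proper_infiniteness_conditions}, whose hypothesis ``\(A\) separates ideals in \(B\)'' we have just verified. Concretely, for \(a\in\mathcal{F}\setminus\{0\}\) and any \(J\in\Ideals(B)\) with \(a\notin J\), write \(J=BIB\) with \(I\in\Ideals^{\Hilm}(A)\) and \(a\notin I\); then \(a+I\) is \(\Hilm|_{A/I}\)\nb-infinite by Definition~\ref{def:infinite_elements_B}.\ref{en:infinite_elements_B1res}, hence infinite in \(B/J\cong (A/I)\rtimes_\ess S\) by Lemma~\ref{lem:Fell_infiniteness} applied to the restricted action. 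So \(a\) is residually infinite in \(B\), and by \cite{Kirchberg-Rordam:Non-simple_pi}*{Proposition~3.14} (invoked inside Proposition~\ref{prop:proper_infiniteness_conditions}) it is therefore properly infinite in \(B\).

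Finally I would feed this into Theorem~\ref{the:pure_infiniteness_for_crossed_products}: with \(\mathcal{F}\subseteq A^+\) residually supporting \(A\), the side condition on \(\Ideals^{\Hilm}(A)\) satisfied, and every element of \(\mathcal{F}\setminus\{0\}\) properly infinite in \(B\), that theorem gives that \(B=A\rtimes_\ess S\) is strongly purely infinite and has the ideal property; in particular it is purely infinite with the ideal property, which is the assertion. The only delicate point is the separation of ideals in \(B\) coming from Theorem~\ref{thm:residual_aperiodic_action}: it is what licenses the identification \(B/BIB\cong (A/I)\rtimes_\ess S\) (so the residual \(\Hilm\)\nb-infiniteness hypothesis is tested against the correct quotient algebras) and what makes Proposition~\ref{prop:proper_infiniteness_conditions} applicable; everything else is bookkeeping and citation.
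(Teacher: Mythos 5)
Your proposal is correct and follows exactly the route the paper takes: the paper's proof is literally ``Combine Theorem~\ref{the:pure_infiniteness_for_crossed_products} and Proposition~\ref{prop:proper_infiniteness_conditions}.'' Your additional bookkeeping — checking via Theorem~\ref{thm:residual_aperiodic_action} and Lemma~\ref{lem:support_implies_separate} that \(A\) separates ideals (so Proposition~\ref{prop:proper_infiniteness_conditions} applies and the quotients are the right ones) and that the ``projections'' alternative is subsumed — is exactly what the paper leaves implicit.
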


\begin{proof}
  Combine Theorem~\ref{the:pure_infiniteness_for_crossed_products}
  and Proposition~\ref{prop:proper_infiniteness_conditions}.
\end{proof}

We may simplify our conditions further if~\(A\) is commutative.
Then \(A\rtimes_\ess S \cong \Cst_\ess(\Gr,\Sigma)\) for a twisted
\'etale groupoid~\(\Gr\) with object space~\(\hat{A}\).  The
twist~\(\Sigma\) is always locally trivial.  Therefore, the
bisections that trivialise the twist~\(\Sigma\) form a wide inverse
subsemigroup~\(S'\) among all bisections of~\(\Gr\) (see
\cite{BussExel:Regular.Fell.Bundle}*{Theorem~7.2}).  Then
\(\Cst_\ess(\Gr,\Sigma) \cong A\rtimes_\ess S'\).  The action
of~\(S'\) on~\(A\) is equivalent to a twisted action as in
\cite{BussExel:Regular.Fell.Bundle}*{Definition~4.1}, that is,
each~\(\Hilm_t\) for \(t\in S\) comes from an isomorphism between
two ideals in~\(A\).  We assume this because it allows us to
identify elements of~\(\Hilm_t\) with \(\Cont_0\)\nb-functions on
\(\s(\Hilm_t) \subseteq \hat{A}\).  This discussion  shows how
to turn any inverse semigroup action on a commutative
\(\Cst\)\nb-algebra into a twisted action by partial automorphisms.

\begin{lemma}
  \label{lem:strict_infiniteness}
  Assume that~\(A\) is commutative and that~\(S\) acts on~\(A\) by a
  twisted action by partial automorphisms as in
  Example~\textup{\ref{ex:twisted actions}}.  Equip~\(\dual{A}\)
  with the dual action of~\(S\).  Let \(a\in A^+\) and
  \(V \defeq \setgiven{x\in\dual{A}}{a(x)\neq0}\).  Consider the
  following conditions:
  \begin{enumerate}
  \item \label{enu:strict_infiniteness1}%
    the condition in
    Definition~\textup{\ref{def:infinite_elements_B}.%
      \ref{en:infinite_elements_B1}}
    holds with \(\varepsilon=0\);
  \item \label{enu:strict_infiniteness2}%
    there are \(b\in (aAa)^+\setminus\{0\}\), \(n\in\N\) and
    \(t_i\in S\), \(a \in a \Hilm_{t_i}\) for \(1\le i \le n\) such
    that
    \[
      a =\sum_{i=1}^n a_i^*a_i,
      \quad\text{ and }\quad
      a_i^* a_j=0, \quad  a_i^*b=0 \qquad
      \text{for  all }\, i,j=1,\dotsc,n, i\neq j;
    \]
  \item \label{enu:strict_infiniteness3}%
    there are \(n\in\N\), \(t_1,\dotsc,t_{n}\in S\), and open
    subsets \(V_1,\dotsc,V_{n}\subseteq V\) such that
    \begin{enumerate}
    \item \(V_i\)
      is contained in the domain of~\(t_i\) for \(1\le i\le n\);
    \item \((t_i\cdot V_i)\cap (t_j\cdot V_j)=\emptyset\)
      if \(1\le i<j\le n\);
    \item \(V=\bigcup_{i=1}^n V_i\)
      and \(\cl{\bigcup_{i=1}^{n} t_i\cdot V_i}\subsetneq V\);
    \end{enumerate}
  \item \label{enu:strict_infiniteness4}%
    \(a\) is \(\Hilm\)\nb-infinite.
  \end{enumerate}
  Then \ref{enu:strict_infiniteness1}\(\Leftrightarrow\)%
  \ref{enu:strict_infiniteness2}\(\Rightarrow\)%
  \ref{enu:strict_infiniteness3}\(\Rightarrow\)\ref{enu:strict_infiniteness4}.
  The implications
  \ref{enu:strict_infiniteness1}\(\Leftrightarrow\)%
  \ref{enu:strict_infiniteness2}\(\Rightarrow\)\ref{enu:strict_infiniteness4}
  hold in full generality.
\end{lemma}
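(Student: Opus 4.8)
The plan is to work almost entirely on the spectrum side, using the identification of elements of $\Hilm_t$ with $\Cont_0$-functions on $\s(\Hilm_t)\subseteq\dual A$, and the fact that for $a\in a\Hilm_{t_i}$ and $a\in a\Hilm_{t_j}$ the product $a_i^*a_j$, computed in the Hilbert bimodule $\Hilm_{t_i^*t_j}$, corresponds to a function supported in $t_i\cdot(\text{something})\cap t_j\cdot(\text{something})$. First I would prove $\ref{enu:strict_infiniteness1}\Leftrightarrow\ref{enu:strict_infiniteness2}$: condition \ref{enu:strict_infiniteness1} is exactly Definition~\ref{def:infinite_elements_B}.\ref{en:infinite_elements_B1} with $\varepsilon=0$, so it provides $b\in A^+\setminus\{0\}$ and $a_i\in a\Hilm_{t_i}$ for $1\le i\le n+m$ with $a=\sum_{i=1}^n a_i^*a_i$, $b=\sum_{i=n+1}^{n+m}a_i^*a_i$ and all cross terms $a_i^*a_j$ vanishing for $i\neq j$. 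Setting $b'\defeq a b a\in(aAa)^+$ and noting that $b'\neq 0$ since the supports of $b$ and $a$ overlap (because $a_i^*a_i\le a$ forces $\supp a_i\subseteq V$ and one checks $b'\ne0$ from $a\precsim$-type estimates, or more directly: replace $b$ by $ab a$ which is Cuntz-below $b$ hence still permits the same decomposition of its support), together with the orthogonality $a_i^*b=0$ for $i\le n$ which follows from $a_i^*a_j=0$ for $i>n$ by taking a suitable combination, gives \ref{enu:strict_infiniteness2}. For the converse, \ref{enu:strict_infiniteness2} already has the form required in Definition~\ref{def:infinite_elements_B}.\ref{en:infinite_elements_B1} with $\varepsilon=0$, taking $m=1$ and $a_{n+1}\defeq b^{1/2}$ (viewed in $a\Hilm_1$ after replacing $a$ by $aAa$, using $b\in aAa$); the orthogonality conditions transfer directly. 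The implication $\ref{enu:strict_infiniteness2}\Rightarrow\ref{enu:strict_infiniteness4}$ is then immediate from the definition of $\Hilm$-infinite, since we have produced the required decomposition exactly (a fortiori approximately). This establishes the ``full generality'' chain $\ref{enu:strict_infiniteness1}\Leftrightarrow\ref{enu:strict_infiniteness2}\Rightarrow\ref{enu:strict_infiniteness4}$.

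Next, under the commutativity hypothesis, I would prove $\ref{enu:strict_infiniteness2}\Rightarrow\ref{enu:strict_infiniteness3}$ by translating into open supports. Given the data in \ref{enu:strict_infiniteness2}, put $V_i\defeq\setgiven{x\in V}{a_i(x)\ne0}$ for $1\le i\le n$ — more precisely, since $a=\sum a_i^*a_i$ and in the commutative setting this reads $a(x)=\sum_i|a_i|^2(t_i^{-1}\cdot x)$ up to the twisting cocycle (which has modulus one and so does not affect supports), the sets $t_i^{-1}\cdot(\text{supp of }a_i\text{ inside the range})$ cover $V$; relabel so that $V_i\subseteq\s(\Hilm_{t_i})=$ domain of $t_i$, giving (a). The disjointness $a_i^*a_j=0$ for $i\ne j$ says precisely that the functions $a_i,a_j$ have disjoint supports after transporting to a common fibre, which unwinds to $(t_i\cdot V_i)\cap(t_j\cdot V_j)=\emptyset$, giving (b). For (c): $V=\bigcup_i V_i$ from the covering just noted, and the strict inclusion $\cl{\bigcup_i t_i\cdot V_i}\subsetneq V$ comes from the presence of the nonzero $b\in(aAa)^+$ with $a_i^*b=0$ for all $i$ — its open support is a nonempty subset of $V$ disjoint from every $t_i\cdot V_i$, hence disjoint from the closure of their union (after possibly shrinking the $V_i$ slightly to keep the closure inside $V$, using outer regularity of open sets in a locally compact Hausdorff space, which is where one must be a little careful).

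Finally, $\ref{enu:strict_infiniteness3}\Rightarrow\ref{enu:strict_infiniteness4}$: given $V_1,\dots,V_n$ as in \ref{enu:strict_infiniteness3}, choose functions $f_i\in\Cont_0(V_i)$ with $\sum_i|f_i|^2=|a|$ on $V$ (a partition-of-unity argument on the open cover, available since the $V_i$ cover $V$), set $a_i\defeq a^{1/2}\cdot(\text{the element of }\Hilm_{t_i}\text{ corresponding to }\bar f_i)$ so that $a_i\in a\Hilm_{t_i}$, and let $b\defeq a\cdot\chi$ where $\chi$ is a bump function supported in the nonempty open set $V\setminus\cl{\bigcup_i t_i\cdot V_i}$; then $\sum_i a_i^*a_i=a$, the cross terms vanish because $(t_i\cdot V_i)\cap(t_j\cdot V_j)=\emptyset$, and $a_i^*b=0$ because the support of $b$ (inside $V$, transported by $t_i^{-1}$) misses $V_i$. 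This witnesses that $a$ is $\Hilm$-infinite (with exact equalities, hence also for every $\varepsilon>0$), completing the chain $\ref{enu:strict_infiniteness1}\Leftrightarrow\ref{enu:strict_infiniteness2}\Rightarrow\ref{enu:strict_infiniteness3}\Rightarrow\ref{enu:strict_infiniteness4}$ in the commutative case. The main obstacle I anticipate is the bookkeeping in the $\ref{enu:strict_infiniteness2}\Rightarrow\ref{enu:strict_infiniteness3}$ step: one must pass from the algebraic disjointness/covering relations among the $a_i$ (which live in different bimodule fibres, each carrying its own ideal of definition and a unitary $2$-cocycle) to honest disjointness of open subsets of the single space $\dual A$, and the shrinking needed to force the \emph{strict} inclusion in (c) while preserving (a) and (b) requires care with closures in the non-compact, possibly non-second-countable setting.
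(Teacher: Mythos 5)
Your overall strategy (translate everything into open supports on \(\dual{A}\) and use a partition of unity) is the same as the paper's, but your step \ref{enu:strict_infiniteness1}\(\Rightarrow\)\ref{enu:strict_infiniteness2} has a genuine gap. You take the element \(b=\sum_{j=n+1}^{n+m}a_j^*a_j\) supplied by Definition~\ref{def:infinite_elements_B}.\ref{en:infinite_elements_B1} and propose \(b'\defeq aba\). Two things fail. First, \(b'\) can vanish: for \(a_j\in a\Hilm_{t_j}\) the \emph{right} inner product \(a_j^*a_j\) lives in the source ideal \(\s(\Hilm_{t_j})\) and need not meet the support of \(a\) at all (only \(a_ja_j^*\) is forced into \(aAa\)); e.g.\ if \(t_j\) carries a set far away from \(V\) onto a piece of \(V\), then \(b\) is supported off \(V\) and \(aba=0\). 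Your remark that ``the supports of \(b\) and \(a\) overlap'' is unjustified. Second, even when \(b'\neq0\), the orthogonality \(a_i^*b'=0\) does not follow from \(a_i^*a_j=0\): the latter is a disjointness statement on the range side, whereas \(a_i^*(a_j^*a_j)=0\) amounts to disjointness on the source side, and \(a_i^*a_j=0\) does not imply \(a_ja_i=0\) (already false for matrix units \(a_i=e_{12}\), \(a_j=e_{21}\)). The correct choice — and the one the paper makes — is \(b'\defeq\sum_{j=n+1}^{n+m}a_ja_j^*\): this lies in \((aAa)^+\) automatically because \(a_j\in a\Hilm_{t_j}\), it is nonzero because some \(a_j\neq0\), and \(a_i^*b'=\sum_j(a_i^*a_j)a_j^*=0\) for \(i\le n\) directly from the hypothesis. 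This also explains why the equivalence holds in full generality, without commutativity.

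A second, more easily repaired, issue is in \ref{enu:strict_infiniteness3}\(\Rightarrow\)\ref{enu:strict_infiniteness4}: you assert an exact finite partition \(\sum_i\abs{f_i}^2=a\) on all of \(V\) with \(f_i\) supported in \(V_i\). Since \(V\) is a possibly non-compact (and not necessarily normal or second countable) open subset of \(\dual{A}\), a finite subordinate partition of unity on all of \(V\) need not exist, and the definition of \(\Hilm\)\nb-infinite does not ask for exact identities anyway. The paper fixes \(\varepsilon>0\), restricts to the compact set \(K=\setgiven{x\in\dual{A}}{a(x)\ge\varepsilon}\subseteq\bigcup_i V_i\), takes a partition of unity \(w_1,\dotsc,w_n\) subordinate to this cover of \(K\), and sets \(a_i\defeq(a-\varepsilon)_+^{1/2}w_i^{1/2}\), which yields \(\sum a_i^*a_i=(a-\varepsilon)_+\approx_\varepsilon a\) together with the exact orthogonality relations. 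Your arguments for \ref{enu:strict_infiniteness2}\(\Rightarrow\)\ref{enu:strict_infiniteness1}, for \ref{enu:strict_infiniteness2}\(\Rightarrow\)\ref{enu:strict_infiniteness3}, and the observation that an open set disjoint from \(\bigcup_i t_i\cdot V_i\) is disjoint from its closure, are in line with the paper's proof.
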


\begin{proof}
  If~\ref{enu:strict_infiniteness2} holds, then taking
  \(m\defeq 1\), \(t_{n+1}\defeq1\) and \(a_{n+1}\defeq\sqrt{b}\) we
  get~\ref{enu:strict_infiniteness1}.  Conversely,
  if~\ref{enu:strict_infiniteness1} holds, then there are
  \(t_i\in S\), \(a_i \in a \Hilm_{t_i}\) for \(i=1,\dotsc,n+m\),
  such that \(a =\sum_{i=1}^n a_i^*a_i\),
  \(\sum_{i=n+1}^{n+m}a_i^*a_i\neq 0\) and \(a_i^* a_j=0\) for
  \(i\neq j\).  Thus putting \(b\defeq\sum_{i=n+1}^{n+m}a_ia_i^*\)
  gives~\ref{enu:strict_infiniteness2}.  This shows
  that~\ref{enu:strict_infiniteness1}
  and~\ref{enu:strict_infiniteness2} are equivalent.

  Now let \(b\in (aAa)^+\setminus\{0\}\), \(n\in\N\), and
  \(t_i\in S\), \(a_i \in a \Hilm_{t_i}\) for \(i=1,\dotsc,n\) be as
  in~\ref{enu:strict_infiniteness2}.  We identify the
  fibres~\(\Hilm_t\) for \(t\in S\) with spaces of sections of the
  associated line bundle over~\(\Gr\).  Put
  \(U_i\defeq \setgiven{\gamma\in \Gr}{ \norm{a_i(\gamma)} >0}\) for
  \(i=1,\dotsc,n\) and
  \(W\defeq \setgiven{x\in X}{\norm{b(\gamma)} >0}\).  Then
  \(V_i\defeq s(U_i)=\setgiven{x\in X}{(a_i^* a_i)(x)>0} \subseteq
  V\) is contained in the domain of~\(t_i\) for \(1\le i\le n\).
  The equality \(a =\sum_{i=1}^n a_i^*a_i\) implies that
  \(V=\bigcup_{i=1}^n s(U_i)=\bigcup_{i=1}^n V_i\).  Similarly,
  \(a_i^* a_j=0\) holds if and only if
  \((t_i\cdot V_i)\cap (t_j\cdot V_j)=r(U_{i})\cap
  r(U_{j})=\emptyset\) for all \(i\neq j\) and \(a_i^*b=0\) holds if
  and only if
  \(W\subseteq V \setminus \bigcup_{i=1}^n r(U_i)=V \setminus
  \bigcup_{i=1}^n (t_i\cdot V_i)\); here we identify functions in
  \(\Hilm_t\) with \(\Cont_0\)\nb-functions on bisections.  Such
  a~\(W\) exists if and only if
  \(\cl{\bigcup_{i=1}^n t_i\cdot V_i}\subsetneq V\).
  Hence~\ref{enu:strict_infiniteness2}
  implies~\ref{enu:strict_infiniteness3}.

  Next we show that~\ref{enu:strict_infiniteness3}
  implies~\ref{enu:strict_infiniteness4}.  Let
  \(t_1,\dotsc,t_{n}\in S\), and \(V_1,\dotsc,V_{n}\subseteq V\) be
  as in~\ref{enu:strict_infiniteness3}.  Let
  \(b\in A^+\setminus\{0\}\) be any function that vanishes outside
  the open set
  \(V \bigm\backslash \cl{\bigcup_{i=n+1}^{n} t_i\cdot V_i}\).  Fix
  \(\varepsilon>0\).  Let
  \(K \defeq \setgiven{x\in\dual{A}}{a(x)\ge\varepsilon}\).  Let
  \(w_1,\dotsc, w_n\in A\) be a partition of unity subordinate to
  the open covering \(K\subseteq \bigcup_{i=1}^n V_i\).  Let
  \(a_i \defeq (a-\varepsilon)_+^{1/2} \cdot w_i^{1/2}\) for
  \(i=1,\dotsc,n\).  These functions vanish outside~\(K\),
  and~\(a_i\) belongs to the domain of~\(t_i\).
  Since~\(\Hilm_{t_i}\) comes from a partial automorphism, we may
  view~\(a_i\) as an element of~\(\Hilm_{t_i}\).  It belongs to
  \(a\cdot \Hilm_{t_i}\) because the support of~\(a_i\) is contained
  in~\(V\).  The product~\(a_i^* a_j\) is defined using the Fell
  bundle structure.  If \(i\neq j\), then \(a_i^* a_j=0\) because
  \((t_i\cdot V_i)\cap (t_j\cdot V_j)=\emptyset\).  Similarly, we
  get~\(a_i^* b=0\).  And
  \[
  \sum_{i=1}^n a_i^* a_i
  = \sum_{i=1}^n (a-\varepsilon)_+ \cdot w_i
  = (a-\varepsilon)_+ \approx_\varepsilon a.
  \]
  Hence \(a\)   is \(\Hilm\)-infinite.
\end{proof}

\begin{remark}
  For strongly boundary group actions
  (see~\cite{Laca-Spielberg:Purely_infinite}) and, more generally,
  for filling actions
  (see~\cite{Jolissaint-Robertson:Simple_purely_infinite})
  condition~\ref{enu:strict_infiniteness3} in
  Lemma~\ref{lem:strict_infiniteness} holds for every nonempty open
  subset~\(V\).  Thus if~\(\Hilm\) comes from such an action, then
  every element in \(A^+\setminus \{0\}\) is \(\Hilm\)\nb-infinite.
  This also holds when~\(A\) is noncommutative (see
  \cite{Kwasniewski-Szymanski:Pure_infinite}*{Lemma~5.12}).
\end{remark}

\begin{remark}
  An \'etale, Hausdorff, locally compact groupoid~\(H\) is
  \emph{locally contracting} if for each nonempty open set~\(U\) in
  the unit space~\(H^0\) of~\(H\) there is a bisection
  \(B\subseteq H\) with \(\cl{\rg(B)} \subsetneq \s(B) \subseteq U\)
  (see~\cite{Anantharaman-Delaroche:Purely_infinite}).  Given a wide
  inverse subsemigroup \(S\subseteq \Bis(H)\), we may strengthen
  this criterion by requiring \(B\subseteq t\) for some \(t\in S\).
  Then we may rewrite \(\cl{\rg(B)} \subsetneq \s(B) \subseteq U\)
  as follows: there is \(t\in S\) and \(V\subseteq U\) contained in
  the domain of~\(t\) with \(\cl{t\cdot V} \subsetneq V\).  This is
  the case \(n=1\) of condition~\ref{enu:strict_infiniteness3} in
  Lemma~\ref{lem:strict_infiniteness}.  As a result, if the dual
  groupoid \(\dual{A} \rtimes S\) is locally contracting, then for
  any \(a\in A^+ \setminus \{0\}\) there is \(0 \neq a_2 \le a\)
  that is \(\Hilm\)\nb-infinite; namely, choose \(U=\supp a\) and
  then~\(a_2\) with \(\supp a_2 = V\) and \(a_2 \le a\) for~\(V\) as
  above.
\end{remark}

Condition~\ref{enu:strict_infiniteness3} in
Lemma~\ref{lem:strict_infiniteness} could be relaxed so that it
still implies \(\Hilm\)\nb-infiniteness, by using compact subsets
of~\(V\).  We formulate the relevant condition implying
\(\Hilm\)\nb-proper infiniteness:

\begin{lemma}
  \label{lem:properly_infinite_A_comm}
  Retain the assumptions of
  Lemma~\textup{\ref{lem:strict_infiniteness}}.  In particular, let
  \(a\in A^+\) and \(V \defeq \setgiven{x\in\dual{A}}{a(x)\neq 0}\).
  If for each compact subset \(K\subseteq V\) there are
  \(n,m\in\N\), \(t_1,\dotsc,t_{n+m}\in S\), and open subsets
  \(V_1,\dotsc,V_{n+m}\subseteq V\) such that
  \((t_i\cdot V_i)\cap (t_j\cdot V_j)=\emptyset\) if
  \(1\le i<j\le n+m\), \(K\subseteq \bigcup_{i=1}^n V_i\) and
  \(K\subseteq \bigcup_{i=n+1}^{n+m} V_i\), then \(a\) is
  \(\Hilm\)-properly infinite.
 \end{lemma}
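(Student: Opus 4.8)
The plan is to follow the proof of the implication \ref{enu:strict_infiniteness3}$\Rightarrow$\ref{enu:strict_infiniteness4} in Lemma~\ref{lem:strict_infiniteness}, but to exploit the two given coverings of the compact set in order to produce the \emph{pair} of approximate decompositions of~$a$ demanded by Definition~\ref{def:infinite_elements_B}.\ref{en:infinite_elements_B2}. First I would fix $\varepsilon>0$; we may assume $\varepsilon<\norm{a}$, since otherwise $(a-\varepsilon)_+=0$ and the decomposition with all summands equal to~$0$ already works. Set $K\defeq\setgiven{x\in\dual{A}}{a(x)\ge\varepsilon}$; as $a\in\Cont_0(\dual{A})^+$ this is a compact subset of~$V$. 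Applying the hypothesis to~$K$ produces $n,m\in\N$, elements $t_1,\dots,t_{n+m}\in S$ and open sets $V_1,\dots,V_{n+m}\subseteq V$ with $(t_i\cdot V_i)\cap(t_j\cdot V_j)=\emptyset$ for $i\neq j$, $K\subseteq\bigcup_{i=1}^n V_i$ and $K\subseteq\bigcup_{i=n+1}^{n+m}V_i$; here, as in Lemma~\ref{lem:strict_infiniteness}.\ref{enu:strict_infiniteness3}, each~$V_i$ is understood to lie in the domain of~$t_i$.

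Next I would choose a partition of unity $w_1,\dots,w_n\in A$ subordinate to $K\subseteq\bigcup_{i=1}^n V_i$ (so $0\le w_i\le1$, $\supp w_i\subseteq V_i$, and $\sum_{i=1}^n w_i\equiv1$ on~$K$) and, independently, a partition of unity $w_{n+1},\dots,w_{n+m}\in A$ subordinate to $K\subseteq\bigcup_{i=n+1}^{n+m}V_i$. Put $a_i\defeq(a-\varepsilon)_+^{\nicefrac12}\cdot w_i^{\nicefrac12}$ for $1\le i\le n+m$. Then~$a_i$ is supported in $\supp(a-\varepsilon)_+\cap\supp w_i\subseteq K\cap V_i$, in particular in the domain of~$t_i$ and in~$V$; since~$\Hilm_{t_i}$ comes from a partial automorphism, $a_i$ may be regarded as an element of $a\cdot\Hilm_{t_i}$, exactly as in the proof of Lemma~\ref{lem:strict_infiniteness}. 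For $i\neq j$ the product $a_i^*a_j$, formed via the Fell-bundle structure inside $\Hilm_{t_i^*t_j}$, is supported on a subset of $(t_i\cdot V_i)\cap(t_j\cdot V_j)=\emptyset$, so $a_i^*a_j=0$ and the cross-term sum in Definition~\ref{def:infinite_elements_B}.\ref{en:infinite_elements_B2} vanishes identically. On the other hand $\sum_{i=1}^n a_i^*a_i=(a-\varepsilon)_+\cdot\sum_{i=1}^n w_i=(a-\varepsilon)_+$, because $(a-\varepsilon)_+$ vanishes off~$K$ while $\sum_{i=1}^n w_i\equiv1$ on~$K$, and likewise $\sum_{i=n+1}^{n+m}a_i^*a_i=(a-\varepsilon)_+$. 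Since $(a-\varepsilon)_+\approx_\varepsilon a$ and $\varepsilon>0$ was arbitrary, this shows that~$a$ is properly $\Hilm$-infinite.

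Every estimate above is routine, so there is no serious analytic obstacle; the one place that genuinely needs care---and it is precisely the subtlety already negotiated in Lemma~\ref{lem:strict_infiniteness}---is the dictionary between the combinatorics of the open sets $V_i$ on~$\dual{A}$ and the Fell-bundle algebra. Concretely, one must verify that each~$a_i$, a function in~$A$ supported in the domain of~$t_i$, really defines an element of the fibre~$\Hilm_{t_i}$ lying in $a\cdot\Hilm_{t_i}$, and that the translates (under the dual partial homeomorphisms) of the supports of~$a_i$ and~$a_j$ for $i\neq j$ meet only inside the prescribed disjoint sets $t_i\cdot V_i$, so that the products $a_i^*a_j$ vanish exactly and not merely approximately.
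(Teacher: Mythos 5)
Your proof is correct and follows essentially the same route as the paper: take $K=\setgiven{x\in\dual{A}}{a(x)\ge\varepsilon}$, apply the hypothesis, pick the two partitions of unity subordinate to the two coverings of~$K$, set $a_i=(a-\varepsilon)_+^{\nicefrac12}w_i^{\nicefrac12}$, and verify the relations of Definition~\ref{def:infinite_elements_B}.\ref{en:infinite_elements_B2} exactly as in the implication \ref{enu:strict_infiniteness3}$\Rightarrow$\ref{enu:strict_infiniteness4} of Lemma~\ref{lem:strict_infiniteness}. The paper's proof is just a terser version of the same argument.
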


\begin{proof}
  Fix \(\varepsilon>0\).  Let
  \(K \defeq \setgiven{x\in\dual{A}}{a(x)\ge\varepsilon}\).  Choose
  \(n,m,t_i,V_i\) as in the assumption of the lemma.  Let
  \(w_1,\dotsc, w_n\in A\) and \(w_{n+1},\dotsc, w_{n+m}\in A\) be
  partitions of unity subordinate to the open coverings
  \(K\subseteq \bigcup_{i=1}^n V_i\) and
  \(K\subseteq \bigcup_{i=n+1}^{n+m} V_i\), respectively.  Let
  \(a_i \defeq (a-\varepsilon)_+^{1/2} \cdot w_i^{1/2}\) for
  \(i=1,\dotsc,n+m\).  As in the proof of the implication
  \ref{enu:strict_infiniteness3}\(\Rightarrow\)\ref{enu:strict_infiniteness4}
  in Lemma~\ref{lem:strict_infiniteness} one sees that
  treating~\(a_i\) as an element of~\(\Hilm_{t_i}\), the
  elements~\(a_i\) satisfy the relations in
  Definition~\ref{def:infinite_elements_B}.%
  \ref{en:infinite_elements_B2}.
\end{proof}

Now we assume, in addition, that the spectrum~\(\dual{A}\) is
totally disconnected.  This implies that the compact open bisections
form a basis for the topology and that~\(A\) is spanned by
projections.  We are going to see that a projection is
\(\Hilm\)\nb-paradoxical if and only if its support is
\((2,1)\)-paradoxical as defined in~\cite{Boenicke-Li:Ideal}.  Such
open subsets give purely infinite elements in the type semigroup
considered in \cites{Boenicke-Li:Ideal, Rainone-Sims:Dichotomy,
  Ma:Purely_infinite_groupoids}.

\begin{definition}[\cite{Boenicke-Li:Ideal}]
  \label{def:two_one_paradoxical}
  Let~\(\Gr\) be an ample groupoid.  We say that a compact open set
  \(V\subseteq G^0\) is \emph{\((2,1)\)-paradoxical} if there are
  \(n,m\in\N\) and compact open bisections \(U_i\subseteq \Gr\) for
  \(1\le i \le n+m\) such that \(\rg(U_i) \subseteq V\) for
  \(1\le i \le n+m\) and
  \[
    V = \bigsqcup_{i=1}^n \s(U_i),\qquad
    V = \bigsqcup_{i=n+1}^{n+m} \s(U_i),\qquad
    \rg(U_i) \cap \rg(U_j) = \emptyset \quad\text{for }i\neq j.
  \]
\end{definition}

\begin{proposition}
  \label{prop:paradoxicality}
  Let~\(\Hilm\) be an action of an inverse semigroup~\(S\) by
  Hilbert bimodules on a commutative \(\Cst\)\nb-algebra~\(A\) with
  totally disconected spectrum~\(\dual{A}\); equivalently, the dual
  groupoid \(\Gr\defeq \dual{A}\rtimes S\) is ample.  A projection
  \(a\in A^+\) is \(\Hilm\)\nb-paradoxical if and only if its support
  \(V \defeq \setgiven{x\in\dual{A}}{a(x)\neq 0}\) is \((2,1)\)-paradoxical.
\end{proposition}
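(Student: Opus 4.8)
The plan is to pass, as explained just before Lemma~\ref{lem:strict_infiniteness}, to the picture where $S$ acts on $A\cong\Cont_0(\dual{A})$ by a twisted action by partial automorphisms and each $\Hilm_t$ is identified with $\Cont_0$-sections of the (locally trivial) line bundle over the bisection $t\subseteq\Gr=\dual{A}\rtimes S$, as in the proof of Lemma~\ref{lem:strict_infiniteness}. Since $a$ is a projection, its support $V$ is compact open and the twist trivialises over $V$, so $a=\chi_V$. I will use the following dictionary, already implicit in the proof of Lemma~\ref{lem:strict_infiniteness} and valid because $\dual{A}$, hence $\Gr$, is totally disconnected: for $c\in\Hilm_t$ with open support $U=\setgiven{\gamma}{c(\gamma)\neq0}\subseteq t$ one has $\s(U)=\setgiven{x}{(c^*c)(x)>0}$ and $\rg(U)=t\cdot\s(U)$, and $c\in a\Hilm_t$ forces $\rg(U)\subseteq V$; conversely, for any compact open bisection $U\subseteq t\in S$ with $\rg(U)\subseteq V$ the section $\chi_U$ lies in $a\Hilm_t$ with $\chi_U^*\chi_U=\chi_{\s(U)}$, and for two such $U,U'$ one has $\chi_U^*\chi_{U'}=0$ if and only if $\rg(U)\cap\rg(U')=\emptyset$.

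For the implication ``$\Hilm$-paradoxical $\Rightarrow$ $(2,1)$-paradoxical'', I would start from sections $a_i\in a\Hilm_{t_i}$, $1\le i\le n+m$, with $a=\sum_{i=1}^n a_i^*a_i=\sum_{j=n+1}^{n+m}a_j^*a_j$ and $a_i^*a_j=0$ for $i\neq j$. Their supports $U_i=\setgiven{\gamma}{a_i(\gamma)\neq0}$ are open bisections with $\rg(U_i)\subseteq V$ and, by the dictionary, with $\rg(U_i)\cap\rg(U_j)=\emptyset$ for $i\neq j$; the first relation gives $V=\bigcup_{i=1}^n\s(U_i)$, an open cover that need not be disjoint or by compact open sets. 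Using total disconnectedness of $\dual{A}$ and compactness of $V$ I choose a finite partition $V=\bigsqcup_l P_l$ into nonempty compact open sets with $P_l\subseteq\s(U_{i(l)})$ for some $i(l)\le n$, and set $U_l'\defeq U_{i(l)}\cap\s^{-1}(P_l)$, a compact open bisection with $\s(U_l')=P_l$ and $\rg(U_l')\subseteq\rg(U_{i(l)})\subseteq V$; I do the same for the second group. All ranges remain pairwise disjoint: they only shrink, pieces from distinct original indices already have disjoint ranges, and two pieces from the same original index sit inside one bisection with disjoint sources and hence disjoint ranges. Thus $V=\bigsqcup_l\s(U_l')$ on each group while all ranges are disjoint, so $V$ is $(2,1)$-paradoxical.

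For the converse, I would start from a $(2,1)$-paradoxical decomposition by compact open bisections $U_i$, with $\rg(U_i)\subseteq V$, $V=\bigsqcup_{i=1}^n\s(U_i)=\bigsqcup_{j=n+1}^{n+m}\s(U_j)$ and $\rg(U_i)\cap\rg(U_j)=\emptyset$ for $i\neq j$. First I refine so that every piece lies inside a member of $S$: since $S$ is wide, $\{t\cap U_i\}_{t\in S}$ is an open cover of the compact set $U_i$, and since $\Gr$ is totally disconnected, compact open bisections form a basis, so $U_i$ can be partitioned into finitely many compact open bisections each contained in some $t\in S$. Re-indexing (and checking, exactly as above, that disjoint sources inside one $U_i$ force disjoint ranges) this is again a $(2,1)$-paradoxical decomposition, now with each $U_i\subseteq t_i\in S$. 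Then I put $a_i\defeq\chi_{U_i}\in a\Hilm_{t_i}$; by the dictionary $\sum_{i=1}^n a_i^*a_i=\sum_{i=1}^n\chi_{\s(U_i)}=\chi_V=a$ (using the disjoint union $V=\bigsqcup_{i=1}^n\s(U_i)$), likewise $\sum_{j=n+1}^{n+m}a_j^*a_j=a$, and $a_i^*a_j=0$ for $i\neq j$ because $\rg(U_i)\cap\rg(U_j)=\emptyset$. Hence $a$ is $\Hilm$-paradoxical.

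I expect the main obstacle to be purely the combinatorics of the totally disconnected setting: upgrading the bare supports of the sections $a_i$ (which are only open bisections) to \emph{compact} open bisections, converting the algebraic identity $a=\sum a_i^*a_i$ into a genuine partition $V=\bigsqcup\s(U_i)$ and back, and cutting the given compact open bisections down into pieces lying inside members of the wide inverse semigroup $S$ --- all the while preserving pairwise disjointness of the ranges $\rg(U_i)$. Everything else reduces to the support dictionary already established in the proof of Lemma~\ref{lem:strict_infiniteness}.
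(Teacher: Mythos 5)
Your proof is correct and follows essentially the same route as the paper's: translate between sections supported on open bisections and the bisections themselves, use ampleness and compactness of \(V\) to refine to compact open bisections lying inside members of \(S\) (equivalently, of the trivialising wide subsemigroup), and check that the refinement preserves pairwise disjointness of the ranges. Your explicit disjointification of the sources in the forward direction is, if anything, slightly more careful than the paper's, which passes directly from \(a=\sum_i a_i^*a_i\) to \(V=\bigsqcup_i\s(U_i)\).
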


\begin{proof}
  Suppose first that \(a\in A^+\setminus\{0\}\) is
  \(\Hilm\)\nb-paradoxical.  That is, there are \(n,m\in\N\),
  \(t_1,\dotsc,t_{n+m}\in S\), and \(a_i \in a\Hilm_{t_i}\) such
  that \(a = \sum_{i=1}^n a_i^* a_i = \sum_{i=n+1}^{n+m} a_i^* a_i\)
  and \(a_i^* a_j =0\) for \(i\neq j\).  Let \(1\le i\le n+m\).
  Recall that we may treat~\(\Hilm_{t_i}\) as spaces of
  sections~\(A_{U_i}\) of a line bundle over
  \(\Gr=\dual{A}\rtimes S\) that are supported on open bisections
  \(U_i\in \Bis(\Gr)\).  Thus
  \(U_i\defeq \setgiven{\gamma\in \Gr}{\norm{a_i(\gamma)} >0}\) is
  an open bisection of~\(\Gr\) contained in~\(U_i\).  Since
  \(a_i\in aA_{U_i}\), we have
  \(r(U_i)=\setgiven{x\in \dual{A}}{(a_i a_i^*)(x)>0} \subseteq V\).
  And \(a_i^* a_j=0\) implies that
  \(r(U_{i})\cap r(U_{j})=\emptyset\) for all \(i\neq j\).  Since
  \(\setgiven{x\in X}{(a_i^* a_i)(x)>0} = s(U_i)\), the equalities
  \(a =\sum_{i=1}^n a_i^*a_i\) and
  \(a =\sum_{i=n+1}^{n+m} a_i^*a_i\) imply
  \(V = \bigsqcup_{i=1}^n \s(U_i)\) and
  \(V = \bigsqcup_{i=n+1}^{n+m} \s(U_i)\).  Hence the family
  \(U_i\in \Bis(\Gr)\) for \(1\le i \le n+m\) has all the desired
  properties, except that~\(U_i\) need not be compact.  However,
  since~\(\Gr\) is ample, every~\(U_i\) is a union of some compact
  open bisections.  Since~\(V\) is compact and
  \(V = \bigsqcup_{i=1}^n \s(U_i) = \bigsqcup_{i=n+1}^{n+m}
  \s(U_i)\), we may, in fact, replace each~\(U_i\) for
  \(1\le i \le n+m\) by a finite union of compact open bisections.
  This gives a compact open bisection.

  Conversely, let \(U_i\subseteq \Gr\) for \(1\le i \le n+m\) be a
  family of bisection as in Definition~\ref{def:two_one_paradoxical}.  Let
  \(S'\subseteq \Bis(\Gr)\) be the family of open compact bisections
  that trivialise the twist, that is, the restrictions of the
  associated line bundle over~\(\Gr\) to sets in~\(S'\) are trivial.
  Note that~\(S'\) forms an inverse semigroup and a basis for the
  topology of~\(\Gr\); this holds for the family of all open
  bisections that trivialise the twist, by the proof of
  \cite{BussExel:Regular.Fell.Bundle}*{Theorem~7.2}, and for the
  family of all compact open bisections because~\(\Gr\) is ample.
  Since
  \(V=\bigsqcup_{i=1}^n \s(U_i)= \bigsqcup_{j=n+1}^{n+m} \s(U_j)\)
  is compact, for each \(i=1,\dotsc,n\) we may find a finite family
  of sets \((U_{i,j})_{j=1}^{n_i} \subseteq S'\) such that
  \(\bigcup_{j=1}^{n_i} U_{i,j}\subseteq U_i\) and
  \(V=\bigcup_{i=1}^n \bigcup_{j=1}^{n_i} s(U_{i,j})\).  Since the
  bisections \((U_{i,j})_{j=1}^{n_i}\subseteq U_i\) are closed and
  open, we may arrange that the sets \((s(U_{i,j}))_{j=1}^{n_i}\)
  are pairwise disjoint.  Then the sets
  \((U_{i,j})_{i=1,j=1}^{n,n_i}\) are pairwise disjoint, and since
  \(\bigcup_{j=1}^{n_i} U_{i,j}\subseteq U_i\), for
  \(i=1,\dotsc,n\), also \((r(U_{i,j}))_{i=1,j=1}^{n,n_i}\) are
  pairwise disjoint.  We put \(a_{i,j}\defeq 1_{U_{i,j}}\), for
  \(i=1,\dots ,n\), \(j=1,\dotsc,n_i\).  By the choice of bisections
  in~\(S'\), we may treat~\(a_{i,j}\) as an element of the space
  \(\Contc(U_{ij})\) of sections of the line bundle over~\(\Gr\).
  By the construction of the Fell bundle over \(\dual{A}\rtimes S\),
  by passing if necessary to smaller sets, we may assume that each
  space \(\Contc(U_{i,j})\) is contained in~\(\Hilm_{t_{ij}}\) for
  some \(t_{ij}\in S\).  Hence \(a_{i,j}\in \Hilm_{t_{ij}}\) for all
  \(i,j\).  Using the Fell bundle structure, we get
  \[
    \sum_{i=1,j=1}^{n, n_i} a_{i,j}^* \cdot a_{i,j}
    = \sum_{i=1,j=1}^{n, n_i} 1_{s(U_{i,j})} = 1_V=a.
  \]
  Similarly, we get
  \(\sum_{i=n+1,j=1}^{n+m, n_i} a_{i,j}^* \cdot a_{i,j}=a\) and
  \(a_{i,j}^* a_{i',j'}=0\) for all \((i,j)\neq (i',j')\).  Hence
  \(a\) is \(\Hilm\)\nb-paradoxical.
\end{proof}

\begin{corollary}
  \label{cor:purely_infinite_groupoid}
  Let \((\Gr,\Sigma)\) be an essentially exact twisted groupoid
  where~\(\Gr\) is ample and residually topologically free with
  locally compact Hausdorff \(X\defeq \Gr^0\).  If every compact
  open subset of~\(X\) is \((2,1)\)-paradoxical, then the essential
  \(\Cst\)\nb-algebra \(\Cst_{\ess}(\Gr,\Sigma)\) is purely infinite
  \textup{(}and has the ideal property\textup{)}.
\end{corollary}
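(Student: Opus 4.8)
The plan is to present \(\Cst_\ess(\Gr,\Sigma)\) as an essential inverse semigroup crossed product and then to apply Corollary~\ref{cor:pure_infiniteness_for_crossed_products} with~\(\mathcal{F}\) the set of all projections in \(A\defeq\Cont_0(X)\).

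First I would fix the crossed-product picture. As in the discussion preceding Lemma~\ref{lem:strict_infiniteness}, the bisections of~\(\Gr\) that trivialise the twist~\(\Sigma\) form a wide inverse subsemigroup \(S\subseteq\Bis(\Gr)\), and the associated action~\(\Hilm\) of~\(S\) by Hilbert bimodules on the commutative \(\Cst\)\nb-algebra~\(A\) has dual groupoid \(\dual{A}\rtimes S\cong\Gr\) and satisfies \(A\rtimes_\ess S\cong\Cst_\ess(\Gr,\Sigma)\) (see also Examples~\ref{ex:essential_partial_algebras} and~\ref{exm:essential_exact_Fell_bundle}). Since~\(\Gr\) is ample, \(\dual{A}\cong X\) is totally disconnected. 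Essential exactness of \((\Gr,\Sigma)\) means that~\(\Hilm\) is essentially exact, and residual topological freeness of \(\Gr\cong\dual{A}\rtimes S\) makes~\(\Hilm\) residually aperiodic by Theorem~\ref{the:residually_aperiodic_characterisations} (the implication \ref{en:residually_aperiodic_characterisations1}\(\Rightarrow\)\ref{en:residually_aperiodic_characterisations2}). Hence the standing assumption~\eqref{standing_assumption} holds with \(B\defeq A\rtimes_\ess S\cong\Cst_\ess(\Gr,\Sigma)\).

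Next I would check the remaining hypotheses of Corollary~\ref{cor:pure_infiniteness_for_crossed_products} for \(\mathcal{F}\defeq\setgiven{p\in A}{p=p^*=p^2}\). The elements of~\(\mathcal{F}\) are exactly the indicator functions of compact open subsets of~\(X\), and these form a basis of the topology of~\(X\) because~\(\Gr\) is ample; hence Example~\ref{exa:noncommutative_bases} shows that~\(\mathcal{F}\) residually supports~\(A\), and the separation-of-ideals condition in Corollary~\ref{cor:pure_infiniteness_for_crossed_products} is automatic since~\(\mathcal{F}\) consists of projections. It then remains to show that every nonzero \(p\in\mathcal{F}\) is residually \(\Hilm\)\nb-infinite. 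So fix \(I\in\Ideals^{\Hilm}(A)\) with \(p\notin I\); by Lemma~\ref{lem:invariance_vs_duals} it corresponds to an open \(\Gr\)\nb-invariant set \(W\subseteq X\), and by Remark~\ref{rem:dual_ideal} the restricted action \(\Hilm|_{A/I}\) lives on \(A/I\cong\Cont_0(X\setminus W)\) and has dual groupoid \(\Gr|_{X\setminus W}\), again ample. The image of~\(p\) in~\(A/I\) is the indicator function of the compact open set \(V\setminus W\subseteq X\setminus W\), where \(V\defeq\supp p\); this set is nonempty precisely because \(p\notin I\). By Proposition~\ref{prop:paradoxicality} applied to \(\Hilm|_{A/I}\), it suffices to check that \(V\setminus W\) is \((2,1)\)-paradoxical in \(\Gr|_{X\setminus W}\): then \(p+I\) is \(\Hilm|_{A/I}\)\nb-paradoxical, hence properly \(\Hilm|_{A/I}\)\nb-infinite, hence \(\Hilm|_{A/I}\)\nb-infinite, which is what ``residually \(\Hilm\)\nb-infinite'' demands.

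The crux is therefore the permanence of \((2,1)\)-paradoxicality under passing to a closed invariant subset \(Y\subseteq X\) (here \(Y\defeq X\setminus W\)). I would first observe that every compact open \(V'\subseteq Y\) has the form \(V'=O\cap Y\) for some compact open \(O\subseteq X\): write \(V'=\tilde V\cap Y\) with \(\tilde V\subseteq X\) open, cover~\(V'\) by finitely many basic compact open subsets of~\(X\) contained in~\(\tilde V\), and let~\(O\) be their union. By hypothesis~\(O\) is \((2,1)\)-paradoxical in~\(\Gr\), witnessed by compact open bisections \(U_1,\dots,U_{n+m}\subseteq\Gr\) with \(\rg(U_i)\subseteq O\), \(O=\bigsqcup_{i=1}^n\s(U_i)=\bigsqcup_{i=n+1}^{n+m}\s(U_i)\), and \(\rg(U_i)\cap\rg(U_j)=\emptyset\) for \(i\neq j\). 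Each \(U_i\cap\Gr|_Y\) is a compact open bisection of~\(\Gr|_Y\) with \(\s(U_i\cap\Gr|_Y)=\s(U_i)\cap Y\) and \(\rg(U_i\cap\Gr|_Y)=\rg(U_i)\cap Y\) --~using that~\(Y\) is invariant, that~\(\s\) and~\(\rg\) are injective on bisections, and that \(\s(U_i)\) and \(\rg(U_i)\) are compact open in~\(X\). Intersecting the decompositions of~\(O\) with~\(Y\) shows that \(V'=O\cap Y\) is \((2,1)\)-paradoxical in~\(\Gr|_Y\). Applying this with \(V'=V\setminus W\) completes the verification, and Corollary~\ref{cor:pure_infiniteness_for_crossed_products} then gives that \(\Cst_\ess(\Gr,\Sigma)\cong B\) is purely infinite and has the ideal property. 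I expect this restriction lemma for \((2,1)\)-paradoxicality to be the only point requiring genuine argument; the rest is bookkeeping that translates between the twisted-groupoid data and the crossed-product formalism of Sections~\ref{sec:isg_crossed}--\ref{sec:crossed_infinite}.
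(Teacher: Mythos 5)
Your proposal is correct and follows the same route as the paper: realise \(\Cst_\ess(\Gr,\Sigma)\) as \(A\rtimes_\ess S\) for the wide inverse semigroup of twist-trivialising bisections, verify the standing assumption~\eqref{standing_assumption}, and feed the projections of \(A=\Cont_0(X)\) into Corollary~\ref{cor:pure_infiniteness_for_crossed_products} via Proposition~\ref{prop:paradoxicality}. The only difference is that you spell out the step the paper leaves implicit — that \((2,1)\)-paradoxicality of all compact open subsets of~\(X\) passes to the restricted groupoid \(\Gr|_{X\setminus W}\) over a closed invariant subset, which is exactly what is needed to upgrade ``\(\Hilm\)\nb-paradoxical'' to ``residually \(\Hilm\)\nb-infinite'' — and your argument for that permanence is correct.
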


\begin{proof}
  View \(\Cst_{\ess}(\Gr,\Sigma)\) as the essential crossed product
  by an inverse semigroup action~\(\Hilm\) on \(\Cont_0(X)\) as in
  Examples \ref{ex:essential_groupoid_algebras}
  and~\ref{ex:essential_partial_algebras}.  The assertion follows
  from Proposition~\ref{prop:paradoxicality} and
  Corollary~\ref{cor:pure_infiniteness_for_crossed_products}.
\end{proof}

\begin{remark}
  \label{rem:paradoxicality_for_groupoids}
  When~\(\Gr\) is Hausdorff, then
  \(\Cst_{\ess}(\Gr,\Sigma)=\Cst_{\red}(\Gr,\Sigma)\) and
  \((\Gr,\Sigma)\) is essentially exact if and only if it is inner
  exact.  Thus Corollary~\ref{cor:purely_infinite_groupoid}
  generalises the pure infiniteness criteria in
  \cites{Boenicke-Li:Ideal, Rainone-Sims:Dichotomy}, where the
  authors considered Hausdorff ample groupoids without a twist.
  They proved, in addition, that if the type semigroup associated
  to~\(G\) is almost unperforated, then the implication in
  Corollary~\ref{cor:purely_infinite_groupoid} may be reversed.  We
  will generalise this and some other results of
  Ma~\cite{Ma:Purely_infinite_groupoids} to étale twisted groupoids
  in the forthcoming paper~\cite{Kwasniewski-Meyer:Type_semigroups}.
\end{remark}

\begin{bibdiv}
  \begin{biblist}
\bib{Abadie-Abadie:Ideals}{article}{
  author={Abadie, Beatriz},
  author={Abadie, Fernando},
  title={Ideals in cross sectional \(\textup C^*\)\nobreakdash -algebras of Fell bundles},
  journal={Rocky Mountain J. Math.},
  volume={47},
  date={2017},
  number={2},
  pages={351--381},
  issn={0035-7596},
  doi={10.1216/RMJ-2017-47-2-351},
  review={\MR {3635363}},
}

\bib{Anantharaman-Delaroche:Purely_infinite}{article}{
  author={Anantharaman-Delaroche, Claire},
  title={Purely infinite $C^*$\nobreakdash -algebras arising from dynamical systems},
  journal={Bull. Soc. Math. France},
  volume={125},
  date={1997},
  number={2},
  pages={199--225},
  issn={0037-9484},
  review={\MR {1478030}},
  eprint={http://www.numdam.org/item?id=BSMF_1997__125_2_199_0},
}

\bib{AnantharamanDelaroch:Weak_containment}{article}{
  author={Anantharaman-Delaroche, Claire},
  title={Some remarks about the weak containment property for groupoids and semigroups},
  date={2016},
  note={\arxiv {1604.01724}},
}

\bib{Ara-Mathieu:Local_multipliers}{book}{
  author={Ara, Pere},
  author={Mathieu, Martin},
  title={Local multipliers of $C^*$-algebras},
  series={Springer Monographs in Mathematics},
  publisher={Springer-Verlag London, Ltd., London},
  date={2003},
  pages={xii+319},
  isbn={1-85233-237-9},
  review={\MR {1940428}},
  doi={10.1007/978-1-4471-0045-4},
}

\bib{MR3690239}{article}{
  author={B\'edos, Erik},
  author={Norling, Magnus D.},
  title={On Fell bundles over inverse semigroups and their left regular representations},
  journal={New York J. Math.},
  volume={23},
  date={2017},
  pages={1013--1044},
  issn={1076-9803},
  review={\MR{3690239}},
  eprint={http://nyjm.albany.edu/j/2017/23_1013.html},
}

\bib{Boenicke-Li:Ideal}{article}{
  author={B\"onicke, {Ch}ristian},
  author={Li, Kang},
  title={Ideal structure and pure infiniteness of ample groupoid \(\textup {C}^*\)\nobreakdash -algebras},
  journal={Ergodic Theory Dynam. Systems},
  issn={0143-3857},
  volume={40},
  number={1},
  date={2020},
  pages={34--63},
  review={\MR {4038024}},
  doi={10.1017/etds.2018.39},
}

\bib{BussExel:Regular.Fell.Bundle}{article}{
  author={Buss, Alcides},
  author={Exel, Ruy},
  title={Twisted actions and regular Fell bundles over inverse semigroups},
  journal={Proc. Lond. Math. Soc. (3)},
  volume={103},
  date={2011},
  number={2},
  pages={235--270},
  issn={0024-6115},
  review={\MR {2821242}},
  doi={10.1112/plms/pdr006},
}

\bib{BussExel:Fell.Bundle.and.Twisted.Groupoids}{article}{
  author={Buss, Alcides},
  author={Exel, Ruy},
  title={Fell bundles over inverse semigroups and twisted \'etale groupoids},
  journal={J. Operator Theory},
  volume={67},
  date={2012},
  number={1},
  pages={153--205},
  issn={0379-4024},
  review={\MR {2881538}},
  eprint={http://www.theta.ro/jot/archive/2012-067-001/2012-067-001-007.html},
}

\bib{BussExel:InverseSemigroupExpansions}{article}{
  author={Buss, Alcides},
  author={Exel, Ruy},
  title={Inverse semigroup expansions and their actions on \(C^*\)\nobreakdash -algebras},
  journal={Illinois J. Math.},
  volume={56},
  date={2012},
  number={4},
  pages={1185--1212},
  issn={0019-2082},
  eprint={http://projecteuclid.org/euclid.ijm/1399395828},
  review={\MR {3231479}},
}

\bib{Buss-Exel-Meyer:Reduced}{article}{
  author={Buss, Alcides},
  author={Exel, Ruy},
  author={Meyer, Ralf},
  title={Reduced \(C^*\)\nobreakdash -algebras of Fell bundles over inverse semigroups},
  journal={Israel J. Math.},
  date={2017},
  volume={220},
  number={1},
  pages={225--274},
  issn={0021-2172},
  review={\MR {3666825}},
  doi={10.1007/s11856-017-1516-9},
}

\bib{Buss-Meyer:Actions_groupoids}{article}{
  author={Buss, Alcides},
  author={Meyer, Ralf},
  title={Inverse semigroup actions on groupoids},
  journal={Rocky Mountain J. Math.},
  issn={0035-7596},
  date={2017},
  volume={47},
  number={1},
  pages={53--159},
  doi={10.1216/RMJ-2017-47-1-53},
  review={\MR {3619758}},
}

\bib{Cuntz:Dimension_functions}{article}{
  author={Cuntz, Joachim},
  title={Dimension functions on simple $C^*$\nobreakdash -algebras},
  journal={Math. Ann.},
  volume={233},
  date={1978},
  number={2},
  pages={145--153},
  issn={0025-5831},
  review={\MR {0467332}},
  doi={10.1007/BF01421922},
}

\bib{Exel:Amenability}{article}{
  author={Exel, Ruy},
  title={Amenability for Fell bundles},
  journal={J. Reine Angew. Math.},
  volume={492},
  date={1997},
  pages={41--73},
  issn={0075-4102},
  review={\MR {1488064}},
  doi={10.1515/crll.1997.492.41},
}

\bib{Exel:Inverse_combinatorial}{article}{
  author={Exel, Ruy},
  title={Inverse semigroups and combinatorial $C^*$\nobreakdash -algebras},
  journal={Bull. Braz. Math. Soc. (N.S.)},
  volume={39},
  date={2008},
  number={2},
  pages={191--313},
  issn={1678-7544},
  review={\MR {2419901}},
  doi={10.1007/s00574-008-0080-7},
}

\bib{Exel:noncomm.cartan}{article}{
  author={Exel, Ruy},
  title={Noncommutative Cartan subalgebras of $C^*$\nobreakdash -algebras},
  journal={New York J. Math.},
  issn={1076-9803},
  volume={17},
  date={2011},
  pages={331--382},
  eprint={http://nyjm.albany.edu/j/2011/17-17.html},
  review={\MR {2811068}},
}

\bib{Frank:Injective_local_multiplier}{article}{
  author={Frank, Michael},
  title={Injective envelopes and local multiplier algebras of $C^*$-algebras},
  journal={Int. Math. J.},
  volume={1},
  date={2002},
  number={6},
  pages={611--620},
  issn={1311-6797},
  review={\MR {1860642}},
  note={\arxiv {math/9910109v2}},
}

\bib{Green:Local_twisted}{article}{
  author={Green, Philip},
  title={The local structure of twisted covariance algebras},
  journal={Acta Math.},
  volume={140},
  date={1978},
  number={3-4},
  pages={191--250},
  issn={0001-5962},
  review={\MR {0493349}},
  doi={10.1007/BF02392308},
}

\bib{Ionescu-Williams:Remarks_ideal_structure}{article}{
  author={Ionescu, Marius},
  author={Williams, Dana P.},
  title={Remarks on the ideal structure of Fell bundle $C^*$\nobreakdash -algebras},
  journal={Houston J. Math.},
  volume={38},
  date={2012},
  number={4},
  pages={1241--1260},
  issn={0362-1588},
  review={\MR {3019033}},
  eprint={http://www.math.uh.edu/~hjm/restricted/pdf38(4)/13ionescu.pdf},
}

\bib{Jolissaint-Robertson:Simple_purely_infinite}{article}{
  author={Jolissaint, Paul},
  author={Robertson, Guyan},
  title={Simple purely infinite $C^*$\nobreakdash -algebras and $n$\nobreakdash -filling actions},
  journal={J. Funct. Anal.},
  volume={175},
  date={2000},
  number={1},
  pages={197--213},
  issn={0022-1236},
  review={\MR {1774856}},
  doi={10.1006/jfan.2000.3608},
}

\bib{Kirchberg-Rordam:Non-simple_pi}{article}{
  author={Kirchberg, Eberhard},
  author={R\o rdam, Mikael},
  title={Non-simple purely infinite $C^*$\nobreakdash -algebras},
  journal={Amer. J. Math.},
  volume={122},
  date={2000},
  number={3},
  pages={637--666},
  issn={0002-9327},
  doi={10.1353/ajm.2000.0021},
  review={\MR {1759891}},
}

\bib{Kirchberg-Rordam:Infinite_absorbing}{article}{
  author={Kirchberg, Eberhard},
  author={R\o rdam, Mikael},
  title={Infinite non-simple $C^*$\nobreakdash -algebras: absorbing the Cuntz algebra~$\mathcal O_\infty $},
  journal={Adv. Math.},
  volume={167},
  date={2002},
  number={2},
  pages={195--264},
  issn={0001-8708},
  review={\MR {1906257}},
  doi={10.1006/aima.2001.2041},
}

\bib{Kirchberg-Sierakowski:Strong_pure}{article}{
  author={Kirchberg, Eberhard},
  author={Sierakowski, Adam},
  title={Strong pure infiniteness of crossed products},
  journal={Ergodic Theory Dynam. Systems},
  volume={38},
  date={2018},
  number={1},
  pages={220--243},
  issn={0143-3857},
  review={\MR {3742544}},
  doi={10.1017/etds.2016.25},
}

\bib{Kirchberg-Sierakowski:Filling_families}{article}{
  author={Kirchberg, Eberhard},
  author={Sierakowski, Adam},
  title={Filling families and strong pure infiniteness},
  date={2015},
  status={eprint},
  note={\arxiv {1503.08519v2}},
}

\bib{Kumjian:Diagonals}{article}{
  author={Kumjian, Alexander},
  title={On $C^*$\nobreakdash -diagonals},
  journal={Canad. J. Math.},
  volume={38},
  date={1986},
  number={4},
  pages={969--1008},
  issn={0008-414X},
  review={\MR {854149}},
  doi={10.4153/CJM-1986-048-0},
}

\bib{Kwasniewski:Cuntz-Pimsner-Doplicher}{article}{
  author={Kwa\'sniewski, Bartosz Kosma},
  title={$C^*$\nobreakdash -algebras generalizing both relative Cuntz--Pimsner and Doplicher--Roberts algebras},
  journal={Trans. Amer. Math. Soc.},
  volume={365},
  date={2013},
  number={4},
  pages={1809--1873},
  issn={0002-9947},
  review={\MR {3009646}},
  doi={10.1090/S0002-9947-2012-05748-5},
}

\bib{Kwasniewski:Topological_freeness}{article}{
  author={Kwa\'sniewski, Bartosz Kosma},
  title={Topological freeness for Hilbert bimodules},
  journal={Israel J. Math.},
  volume={199},
  date={2014},
  number={2},
  pages={641--650},
  issn={0021-2172},
  review={\MR {3219552}},
  doi={10.1007/s11856-013-0057-0},
}

\bib{Kwasniewski:Crossed_products}{article}{
  author={Kwa\'sniewski, Bartosz Kosma},
  title={Crossed products by endomorphisms of $C_0(X)$-algebras},
  journal={J. Funct. Anal.},
  volume={270},
  date={2016},
  number={6},
  pages={2268--2335},
  issn={0022-1236},
  review={\MR {3460241}},
  doi={10.1016/j.jfa.2016.01.015},
}

\bib{Kwasniewski-Meyer:Aperiodicity}{article}{
  author={Kwa\'sniewski, Bartosz Kosma},
  author={Meyer, Ralf},
  title={Aperiodicity, topological freeness and pure outerness: from group actions to Fell bundles},
  journal={Studia Math.},
  issn={0039-3223},
  volume={241},
  number={3},
  date={2018},
  pages={257--303},
  doi={10.4064/sm8762-5-2017},
  review={\MR {3756105}},
}

\bib{Kwasniewski-Meyer:Stone_duality}{article}{
  author={Kwa\'sniewski, Bartosz Kosma},
  author={Meyer, Ralf},
  title={Stone duality and quasi-orbit spaces for generalised \(\textup {C}^*\)\nobreakdash -inclusions},
  journal={Proc. Lond. Math. Soc. (3)},
  volume={121},
  date={2020},
  number={4},
  pages={788--827},
  issn={0024-6115},
  review={\MR {4105787}},
  doi={10.1112/plms.12332},
}

\bib{Kwasniewski-Meyer:Essential}{article}{
  author={Kwa\'sniewski, Bartosz Kosma},
  author={Meyer, Ralf},
  title={Essential crossed products by inverse semigroup actions: Simplicity and pure infiniteness},
  journal={Doc. Math.},
  date={2021},
  volume={26},
  pages={271--335},
  doi={10.25537/dm.2021v26.271-335},
  review={\MR {4246403}},
}

\bib{Kwasniewski-Meyer:Cartan}{article}{
  author={Kwa\'sniewski, Bartosz Kosma},
  author={Meyer, Ralf},
  title={Noncommutative Cartan \(\textup {C}^*\)\nobreakdash -subalgebras},
  journal={Trans. Amer. Math. Soc.},
  volume={373},
  date={2020},
  number={12},
  pages={8697--8724},
  issn={0002-9947},
  review={\MR {4177273}},
  doi={10.1090/tran/8174},
}

\bib{Kwasniewski-Meyer:Aperiodicity_pseudo_expectations}{article}{
  author={Kwa\'sniewski, Bartosz Kosma},
  author={Meyer, Ralf},
  title={Aperiodicity: the almost extension property and uniqueness of pseudo-expectations},
   journal={Int. Math. Res. Not. IMRN},
  date={2022},
  volume={2022},
  number={18},
  pages={14384--14426},
  issn={1073-7928},
  doi={10.1093/imrn/rnab098},
}

\bib{Kwasniewski-Meyer:Type_semigroups}{article}{
  author={Kwa\'sniewski, Bartosz Kosma},
  author={Meyer, Ralf},
  title={Type semigroups for locally compact \'etale groupoids and twisted groupoid \(\textup {C}^*\)\nobreakdash -algebras},
  status={in preparation},
  date={2022},
}

\bib{Kwasniewski-Szymanski:Pure_infinite}{article}{
  author={Kwa\'sniewski, Bartosz Kosma},
  author={Szyma\'nski, Wojciech},
  title={Pure infiniteness and ideal structure of \(\textup {C}^*\)\nobreakdash -algebras associated to Fell bundles},
  journal={J. Math. Anal. Appl.},
  volume={445},
  date={2017},
  number={1},
  pages={898--943},
  issn={0022-247X},
  doi={10.1016/j.jmaa.2013.10.078},
  review={\MR {3543802}},
}

\bib{Laca-Spielberg:Purely_infinite}{article}{
  author={Laca, Marcelo},
  author={Spielberg, Jack},
  title={Purely infinite $C^*$\nobreakdash -algebras from boundary actions of discrete groups},
  journal={J. Reine Angew. Math.},
  volume={480},
  date={1996},
  pages={125--139},
  issn={0075-4102},
  review={\MR {1420560}},
  doi={10.1515/crll.1996.480.125},
}

\bib{Ma:Purely_infinite_groupoids}{article}{
  author={Ma, Xin},
  title={Purely infinite locally compact Hausdorff \'etale groupoids and their \(\textup C^*\)\nobreakdash -algebras},
 journal={Int. Math. Res. Not. IMRN},
  date={2022},
  number={11},
  pages={8420--8471},
  issn={1073-7928},
  review={\MR{4425841}},
  doi={10.1093/imrn/rnaa360},
}

\bib{Nagy-Reznikoff:Pseudo-diagonals}{article}{
  author={Nagy, Gabriel},
  author={Reznikoff, Sarah},
  title={Pseudo-diagonals and uniqueness theorems},
  journal={Proc. Amer. Math. Soc.},
  volume={142},
  date={2014},
  number={1},
  pages={263--275},
  issn={0002-9939},
  review={\MR {3119201}},
  doi={10.1090/S0002-9939-2013-11756-9},
}

\bib{Pasnicu-Phillips:Spectrally_free}{article}{
  author={Pasnicu, Cornel},
  author={Phillips, N. Christopher},
  title={Crossed products by spectrally free actions},
  journal={J. Funct. Anal.},
  volume={269},
  date={2015},
  number={4},
  pages={915--967},
  issn={0022-1236},
  review={\MR {3352760}},
  doi={10.1016/j.jfa.2015.04.020},
}

\bib{Pasnicu-Rordam:Purely_infinite_rr0}{article}{
  author={Pasnicu, Cornel},
  author={R\o rdam, Mikael},
  title={Purely infinite $C^*$\nobreakdash -algebras of real rank zero},
  journal={J. Reine Angew. Math.},
  volume={613},
  date={2007},
  pages={51--73},
  issn={0075-4102},
  review={\MR {2377129}},
  doi={10.1515/CRELLE.2007.091},
}

\bib{Pitts-Zarikian:Unique_pseudoexpectation}{article}{
  author={Pitts, David R.},
  author={Zarikian, Vrej},
  title={Unique pseudo-expectations for $C^*$-inclusions},
  journal={Illinois J. Math.},
  volume={59},
  date={2015},
  number={2},
  pages={449--483},
  issn={0019-2082},
  review={\MR {3499520}},
  eprint={http://projecteuclid.org/euclid.ijm/1462450709},
}

\bib{Rainone-Sims:Dichotomy}{article}{
  author={Rainone, Timothy},
  author={Sims, Aidan},
  title={A dichotomy for groupoid $C^*$-algebras},
  journal={Ergodic Theory Dynam. Systems},
  volume={40},
  date={2020},
  number={2},
  pages={521--563},
  issn={0143-3857},
  review={\MR {4048304}},
  doi={10.1017/etds.2018.52},
}

\bib{Renault:Cartan.Subalgebras}{article}{
  author={Renault, Jean},
  title={Cartan subalgebras in $C^*$\nobreakdash -algebras},
  journal={Irish Math. Soc. Bull.},
  number={61},
  date={2008},
  pages={29--63},
  issn={0791-5578},
  review={\MR {2460017}},
  eprint={http://www.maths.tcd.ie/pub/ims/bull61/S6101.pdf},
}

\bib{Rordam-Sierakowski:Purely_infinite}{article}{
  author={R\o rdam, Mikael},
  author={Sierakowski, Adam},
  title={Purely infinite $C^*$\nobreakdash -algebras arising from crossed products},
  journal={Ergodic Theory Dynam. Systems},
  volume={32},
  date={2012},
  number={1},
  pages={273--293},
  issn={0143-3857},
  review={\MR {2873171}},
  doi={10.1017/S0143385710000829},
}

\bib{Sierakowski:IdealStructureCrossedProducts}{article}{
  author={Sierakowski, Adam},
  title={The ideal structure of reduced crossed products},
  journal={M\"unster J. Math.},
  volume={3},
  date={2010},
  pages={237--261},
  issn={1867-5778},
  review={\MR {2775364}},
  eprint={http://nbn-resolving.de/urn:nbn:de:hbz:6-16409474751},
}

\bib{Sims-Williams:Amenability_for_Fell_bundles_over_groupoids}{article}{
  author={Sims, Aidan},
  author={Williams, Dana P.},
  title={Amenability for Fell bundles over groupoids},
  journal={Illinois J. Math.},
  volume={57},
  date={2013},
  number={2},
  pages={429--444},
  issn={0019-2082},
  eprint={http://projecteuclid.org/euclid.ijm/1408453589},
  review={\MR {3263040}},
}

\bib{Takeishi:Nuclearity}{article}{
  author={Takeishi, Takuya},
  title={On nuclearity of $C^*$\nobreakdash -algebras of Fell bundles over \'etale groupoids},
  journal={Publ. Res. Inst. Math. Sci.},
  volume={50},
  date={2014},
  number={2},
  pages={251--268},
  issn={0034-5318},
  review={\MR {3223473}},
  doi={10.4171/PRIMS/132},
}

  \end{biblist}
\end{bibdiv}
\end{document}